\numberwithin{equation}{subsection}
\numberwithin{table}{section}
\par \vspace{\baselineskip}%
 \noindent \textbf{Acknowledgements.}}%
\par \vspace{\baselineskip}}
\newlist{clist}{enumerate}{1}
\setlist*[clist]{label=(\roman*), nosep}
\crefname{thm}{Theorem}{Theorems}
\crefname{cor}{Corollary}{Corollaries}
\crefname{dfn}{Definition}{Definitions}
\crefname{fct}{Fact}{Facts}
\crefname{lem}{Lemma}{Lemmas}
\crefname{prp}{Proposition}{Propositions}
\crefname{rmk}{Remark}{Remarks}
\crefname{exm}{Example}{Examples}
\crefname{section}{\S\!}{\S\S\!}
\crefname{subsection}{\S\!}{\S\S\!}
\crefname{table}{Table}{Tables}
\crefname{equation}{}{}
\theoremstyle{definition}
\newtheorem{thm}{Theorem}[subsection]
\newtheorem{dfn}[thm]{Definition}
\newtheorem{lem}[thm]{Lemma}
\newtheorem{prp}[thm]{Proposition}
\newtheorem{exm}[thm]{Example}
\newtheorem{fct}[thm]{Fact}
\newtheorem{rmk}[thm]{Remark}
\newcommand{\ul}{\underline}
\newcommand{\wh}{\widehat}
\newcommand{\wt}{\widetilde}
\newcommand{\inj}{\hookrightarrow}
\newcommand{\lto}{\longrightarrow}
\newcommand{\mto}{\mapsto}
\newcommand{\lmto}{\longmapsto}
\newcommand{\linj}{\lhook\joinrel\longrightarrow}
\newcommand{\lsto}{\xrightarrow{\ \sim \ }}
\newcommand{\ep}{\epsilon}
\newcommand{\ve}{\varepsilon}
\newcommand{\vt}{\vartheta}
\newcommand{\pd}{\partial}
\newcommand{\sqd}{1/4}
\newcommand{\hf}{\frac{1}{2}}
\newcommand{\thf}{\tfrac{1}{2}}
\newcommand{\shf}{1/2}
\newcommand{\ceq}{\coloneqq}
\newcommand{\CvC}{(C^\vee_1,C_1)}
\newcommand{\bC}{\mathbb{C}}
\newcommand{\bD}{\mathbb{D}}
\newcommand{\bH}{\mathbb{H}}
\newcommand{\bW}{\mathbb{W}}
\newcommand{\bK}{\mathbb{K}}
\newcommand{\bN}{\mathbb{N}}
\newcommand{\bL}{\mathbb{L}}
\newcommand{\bF}{\mathbb{F}}
\newcommand{\bR}{\mathbb{R}}
\newcommand{\bZ}{\mathbb{Z}}
\newcommand{\bfs}{\mathbf{s}}
\newcommand{\bft}{\mathbf{t}}
\newcommand{\bfw}{\mathbf{w}}
\newcommand{\clA}{\mathcal{A}}
\newcommand{\clB}{\mathcal{B}}
\newcommand{\clE}{\mathcal{E}}
\newcommand{\clM}{\mathcal{M}}
\newcommand{\clQ}{\mathcal{Q}}
\newcommand{\clW}{\mathcal{W}}
\newcommand{\hW}{\wh{\clW}}
\newcommand{\wtS}{\wt{S}}
\newcommand{\ai}{(a^*)^{-1}}
\newcommand{\bi}{(b^*)^{-1}}
\newcommand{\ci}{(c^*)^{-1}}
\newcommand{\di}{(d^*)^{-1}}
\newcommand{\add}{a^*}
\newcommand{\bd}{b^*}
\newcommand{\cd}{c^*}
\newcommand{\dd}{d^*}
\newcommand{\fS}{\mathfrak{S}}
\newcommand{\frc}{\mathfrak{c}}
\newcommand{\ulk}{\ul{k}}
\newcommand{\ull}{\ul{l}}
\newcommand{\tsp}{\textup{sp}}
\newcommand{\tAW}{\textup{AW}}
\newcommand{\tMR}{\textup{MR}}
\newcommand{\SA}{\mathrm{SOL}_{\textup{bqKZ}}^{A_1}}
\newcommand{\SC}{\mathrm{SOL}_{\textup{bqKZ}}^{(C^\vee_1,C_1)}}
\newcommand{\SKZ}{\mathrm{SOL}_{\textup{bqKZ}}}
\newcommand{\SMR}{\mathrm{SOL}_{\textup{bMR}}}
\newcommand{\SAW}{\mathrm{SOL}_{\textup{bAW}}}
\newcommand{\SMRW}{\mathrm{SOL}_{\textup{bMR}}^{\bW^*}}
\newcommand{\SAWW}{\mathrm{SOL}_{\textup{bAW}}^{\bW^*}}
\newcommand{\SOL}{\mathrm{SOL}}
\newcommand{\br}[1]{\langle#1\rangle}
\newcommand{\abs}[1]{\left|{#1}\right|}
\newcommand{\sbr}[1]{\left\{#1\right\}}
\newcommand{\rst}[2]{\left.#1\right|_{#2}}
\newcommand{\qhg}[2]{{}_{#1}\phi_{#2}}
\newcommand{\vwp}[2]{{}_{#1}W_{#2}}
\newcommand{\bnm}[3]{\genfrac{[}{]}{0pt}{}{#1}{#2}_{#3}}
\newcommand{\pair}[1]{\langle #1 \rangle}
\newcommand{\HGcomma}{{\normalcomma}\mskip\HGmuskip}
\newcommand*\qHG[7][8]{%
 \begingroup
  \HGmuskip=#1mu\relax
  \mathchardef\normalcomma=\mathcode`,
  \mathcode`\,=\string"8000
  \begingroup\lccode`\~=`\,
  \lowercase{\endgroup\let~}\HGcomma
  {}_{#2}\phi_{#3}{\left[\genfrac..{0pt}{}{#4}{#5};#6,#7\right]}%
 \endgroup}
\DeclareMathOperator{\tu}{t}
\DeclareMathOperator{\tr}{tr}
\DeclareMathOperator{\GL}{GL}
\DeclareMathOperator{\End}{End}
\DeclareMathOperator{\Hom}{Hom}
\DeclareMathOperator{\Res}{Res}
\begin{document}

\title[Rank one bispectral qKZ and Macdonald]{A review of rank one bispectral correspondence of quantum affine KZ equations and Macdonald-type eigenvalue problems}
\author{Kohei Yamaguchi, Shintarou Yanagida}
\date{2023.02.12} 
\keywords{Macdonald-Koornwinder polynomials, Askey-Wilson polynomials, (double) affine Hecke algebras, quantum affine Knizhnik-Zamolodchikov equations, bispectral problems}
\address{Graduate School of Mathematics, Nagoya University.
 Furocho, Chikusaku, Nagoya, Japan, 464-8602.}
\email{d20003j@math.nagoya-u.ac.jp, yanagida@math.nagoya-u.ac.jp}
\thanks{K.Y.\ is supported by JSPS Fellowships for Young Scientists (No.\ 22J11816).
 S.Y.\ is supported by JSPS KAKENHI Grant Number 19K03399.}

\begin{abstract}
This note consists of two parts. The first part (\S 1 and \S 2) is a partial review of the works by van Meer and Stokman (2010), van Meer (2011) and Stokman (2014) which established a bispectral analogue of the Cherednik correspondence between quantum affine Knizhnik-Zamolodchikov equations and the eigenvalue problems of Macdonald type. In this review we focus on the rank one cases, i.e., on the reduced type $A_1$ and the non-reduced type $(C_1^\vee,C_1)$, to which the associated Macdonald-Koornwinder polynomials are the Rogers polynomials and the Askey-Wilson polynomials, respectively. We give detailed computations and formulas that may be difficult to find in the literature. The second part (\S 3) is a complement of the first part, and is also a continuation of our previous study (Y.-Y., 2022) on the parameter specialization of Macdonald-Koornwinder polynomials, where we found four types of specialization of the type $(C_1^\vee,C_1)$ parameters (which could be called the Askey-Wilson parameters) to recover the type $A_1$. In this note, we show that among the four specializations there is only one which is compatible with the bispectral correspondence discussed in the first part.
\end{abstract}

\maketitle
{\small \tableofcontents}

\setcounter{section}{-1}
\section{Introduction}\label{s:0}

This note is written for two purposes. The first purpose is to give a partial review of the bispectral correspondence \cite{vMS,vM,S} between quantum affine Knizhnik-Zamolodchikov equations and the eigenvalue problems of Macdonald type, and the review forms the major part of this text (\cref{s:A,s:CC}). The second purpose is to study the relationship between the bispectral correspondence and the parameter specialization investigated in the authors' previous study \cite{YY}, and it is fulfilled in \cref{s:sp}. We summarize these two contents in the following \cref{ss:0:1} and \cref{ss:0:2}, respectively.

\subsection{Rank one review of bispectral correspondence}\label{ss:0:1}

The first part (\cref{s:A}, \cref{s:CC}) is devoted to the review of the bispectral correspondence between QAKZ solutions and Macdonald-type eigenvalue problems, established by the works \cite{vM,vM,S}. 

Let us begin by recalling on the original Cherednik's correspondence. We refer to \cite[\S1.3]{C05} for an exposition of this correspondence. In \cite{C92a}, Cherednik introduced his QAKZ equations for arbitrary  reduced root systems and for the type $\GL_n$. Let $H=H(k)$ be the affine Hecke algebra of type $\GL_n$ with complex parameter $k$. Hereafter we will call $k$ the Hecke parameter. Also, let $T \ceq \Hom_{\textup{Group}}(\Lambda,\bC^\times)$ be the algebraic torus associated with the weight lattice $\Lambda$. Then the QAKZ equations are $q$-difference equations for functions of the torus variable $t \in T$ valued in a (left) $H$-module $M$ satisfying certain conditions. In \cite{C92b}, Cherednik constructed a correspondence between solutions of the QAKZ equations for the principal series representation $M_\gamma$ with central character $\gamma \in T$, and eigenfunctions of the $q$-difference operators of Macdonald type. 

Below we explain the correspondence for the type $\GL_n$. In this case, we can identify $\Lambda=\bZ^n$ and put $t=(t_1,\dotsc,t_n), \gamma=(\gamma_1,\dotsc,\gamma_n) \in T$.
For a nonzero complex parameter $q \in \bC$, which will be called the quantum parameter, let $\SOL_{\textup{Mac}}(k,q)_\gamma$ be the eigenspace of the Macdonald-Ruijsenaars $q$-difference operators of type $\GL_n$, i.e., 
\[
 \SOL_{\textup{Mac}}(k,q)_\gamma \ceq 
 \sbr{f(t)\in\clM(T)\mid L_p^t f(t)=p(\gamma) f(t), \ \forall p \in \bC[T]^{\fS_n}},
\]
where $\clM(T)$ is the set of meromorphic functions on $T$, and $L_p^t$ denotes the Macdonald-Ruijsenaars $q$-difference operator \cite{R,M95} associated to each symmetric polynomial $p$ which acts on the functions of $t$. For example, to the first elementary symmetric polynomial $e(z)=z_1+\dotsb+z_n$, the operator $L_e^t$ is given by
\begin{align}\label{eq:0:D1}
 L_e^t \ceq \sum_{i=1}^n\prod_{j \ne i}\frac{kt_i-k^{-1}t_j}{t_i-t_j}T_{q,t_i}.
\end{align}
Here we used the $q$-shift operator $T_{q,t_i}$ for $i=1,\dots,n$:
\[
 (T_{q,t_i} f)(t_1,\dots,t_n) = f(t_1,\dots,qt_i,\dots,t_n), \quad f(t)\in\clM(T).
\]
Moreover, let $\SOL_{\textup{qKZ}}(k,q)_\gamma$ be the QAKZ equations of type $\GL_n$, i.e., 
\[
 \SOL_{\textup{qKZ}}(k,q)_\gamma \ceq 
 \sbr{f(t)\in H_0^{\clM(T)} \mid C^\gamma_{\tu(\lambda)}(t) f(q^{-\lambda}t)=f(t),\ \lambda\in\Lambda},
\]
where $H_0=H_0(k)$ is the finite Hecke algebra of type $A_{n-1}$ and $H_0^{\clM(T)}\ceq \clM(T)\otimes_{\bC}H_0$.
We omit the precise definition of the $q$-difference operators $C^\gamma_{\tu(\lambda)}(t)$. We will explain in detail the case of type $A_1$ and $\CvC$ in \cref{s:A} and \cref{s:CC}, respectively. 
Cherednik's correspondence for the type $\GL_n$ is now described as
\begin{align}\label{eq:0C}
 \chi_+\colon \SOL_{\textup{qKZ}}(k,q)_\gamma \lto \SOL_{\textup{Mac}}(k,q)_\gamma.
\end{align}

A bispectral analogue of Cherednik's correspondence is investigated by van Meer and Stokman \cite{vMS} for type $\GL$, who introduced the bispectral QAKZ equations using Cherednik's duality anti-involution $*\colon \bH \to \bH$ of the double affine Hecke algebra $\bH$ (see \cref{dfn:A:DAHA}). The bispectral QAKZ equations are consistent systems of $q$-difference equations for functions on the product torus $T \times T$, and splits up into two subsystems. Denoting by $(t,\gamma) \in T \times T$ the variable, we have:
\begin{itemize}
\item 
The first subsystem only acts on $t$, and for a fixed $\gamma$, the equations in $t$ are Cherednik's QAKZ equations for the principal series representation $M_\gamma$ of the affine Hecke algebra $H \subset \bH$.
\item
For a fixed $t \in T$, the equations in $\gamma$ are essentially the QAKZ equations for $M_{t^{-1}}$ of the image $H^* \subset \bH$.
\end{itemize}
This argument can be extended to arbitrary reduced and non-reduced root systems, as done by van Meer \cite{vM} for reduced types and by Takeyama \cite{T} for the non-reduced type $(C^\vee_n,C_n)$. 

After the build-up of bispectral QAKZ equations, it is rather straightforward, except for one issue, to make an analogue of Cherednik's construction of correspondence to the bispectral eigenvalue problems of Macdonald-type. Below we explain the case of type $\GL_n$ again.
Let $\SOL_{\textup{bMac}}(k,q)$ be the bispectral eigenspace of the Macdonald-Ruijsenaars $q$-difference operators of type $\GL_n$, i.e., 
\[
 \SOL_{\textup{bMac}}(k,q)\ceq\sbr{f(t,\gamma)\in\clM(T\times T) \ \middle| 
  \begin{array}{l}
   L_p^t f(t,\gamma)=p(\gamma) f(t,\gamma) \\ L_e^\gamma f(t,\gamma)=p(t) f(t,\gamma)
  \end{array} \ \forall p \in \bC[T]^{\fS_n}}
\]
where $\clM(T\times T)$ is the set of meromorphic function on $T\times T$, and $L_p^t,L_p^\gamma$ denote the Macdonald-Ruijsenaars $q$-difference operators attached to each symmetric polynomial $p$, acting on functions of $t$ and $\gamma$, respectively. For the first elementary symmetric polynomial $e(z)=z_1+\dotsb+z_n$, they are given by 
\[
 L_e^t \ceq \sum_{i=1}^n\prod_{j\neq i}\frac{kt_i-k^{-1}t_j}{t_i-t_j}T_{q,t_i},\quad 
 L_e^\gamma \ceq \sum_{i=1}^n \prod_{j \ne i}
 \frac{k^{-1}\gamma_i-k\gamma_j}{\gamma_i-\gamma_j}T^{-1}_{q,\gamma_i}.
\]
Note that $L_e^t$ is the same as \eqref{eq:0:D1}, and the parameters $q^{-1},k^{-1}$ in $L_p^\gamma$ are the reciprocal of those in $L_p^t$. 

Next, let $\SOL_{\textup{bqKZ}}(k,q)$ be the solution space of the bispectral QAKZ equations of type $\GL_n$, i.e., 
\[
 \SOL_{\textup{bqKZ}}(k,q)\ceq\sbr{f(t,\gamma)\in H_0^{\clM(T\times T)} \ \middle| 
  \begin{array}{l}
   C_{(\tu(\lambda),e)}(t,\gamma) f(q^{-\lambda}t,\gamma)=f(t,\gamma)\\
   C_{(e,    \tu(\mu))}(t,\gamma) f(t,q^\mu\gamma)=f(t,\gamma)
 \end{array} \ \forall \lambda,\mu\in\Lambda},
\]
where $H_0^{\clM(T\times T)}\ceq \clM(T\times T)\otimes_{\bC}H_0$.
We omit the exact definitions of the $q$-difference operators $C_{(\tu(\lambda),e)}(t,\gamma)$ and $C_{(e,\tu(\mu))}(t,\gamma)$, and refer to \cref{s:A} and \cref{s:CC} for the explanation for type $A_1$ and $\CvC$. 

Mimicking \eqref{eq:0C}, the resulting bispectral correspondence is written as
\[
 \chi_+\colon \SOL_{\textup{bqKZ}}(k,q) \lto \SOL_{\textup{bMac}}(k,q).
\]
The issue here is the existence of (some nice) asymptotically free solutions of the bispectral QAKZ equations, i.e., the non-emptiness of the source, which was carefully proved for type $\GL_n$ in \cite[\S5, Appendix]{vM}. The same argument works with minor modifications for reduced and non-reduced root types (see \cite[\S3]{S}). 

In this note, we give a review of the bispectral correspondence explained so far. Since the correspondence itself looks rather abstract, we decided to concentrate on the rank one cases and give detailed computations. 
\begin{itemize}
\item
In \cref{s:A}, we consider the reduced root system of type $A_1$. 
The associated Macdonald-Koornwinder polynomials are the Rogers polynomials.

\item
In \cref{s:CC}, we consider the non-reduced root system of type $(C_1^\vee,C_1)$.
The associated polynomials are the Askey-Wilson polynomials.
\end{itemize}
The $\GL_2$ case could be included, but it is essentially the same with $A_1$, and we will not treat it.

\subsection{Specialization of parameters in the rank one bispectral problems}\label{ss:0:2}

The second part (\cref{s:sp}) is a complement of the first part, and is also a continuation of the paper \cite{YY} on the parameter specializations of Macdonald-Koornwinder polynomials. There we classify all the specializations based on the affine root systems, which appear as subsystems of the type $(C^\vee_n,C_n)$ system. The obtained parameter specializations are compatible with degeneracies of the Macdonald-Koornwinder inner product to the subsystem inner products.

In the rank one case \cite[\S2.6]{YY}, where the polynomials in question are Askey-Wilson polynomials, we discovered four ways of specializing the type $\CvC$ parameters to recover the type $A_1$. \cref{tab:AWsp} is an excerpt from \cite[\S2.6, Table 2]{YY}. 

\begin{table}
\centering
\begin{tabular}{c||c|c|cccc}
 type & Dynkin & orbits & \multicolumn{4}{c}{Hecke parameters} \\ 
 \hline \hline 
 $(C^\vee_1,C_1)$
 &\multirow{2}*{
  \dynkin[mark=o, edge length=.75cm, arrows=false, labels={0,1}, labels*={*,*}] C[1]1}
 &\multirow{2}*{$O_1 \sqcup O_2 \sqcup O_3 \sqcup O_4$} 
 &\multirow{2}*{$k_0$} & \multirow{2}*{$k_1$} & \multirow{2}*{$l_0$} & \multirow{2}*{$l_1$} \\
 Askey-Wilson & &  \\
 \hline
 \multirow{2}*{$A_1$}
 & \multirow{4}*{\dynkin[mark=o, edge length=.75cm, arrows=false, labels*={0,1}] C[1]1} 
 & $O_1$
 & $1$ & $t$ & $1$ & $t$ \\
 & 
 & $O_3$
 & $t$ & $1$ & $t$ & $1$ \\
 Rogers & 
 & $O_2$
 & $1$ & $t^2$ & $1$ & $1$ \\
 &
 & $O_4$
 & $t^2$ & $1$ & $1$ & $1$ 
\end{tabular}
\caption{Type $A_1$ subsystems in $(C^\vee_1,C_1)$ and parameter specializations}
\label{tab:AWsp}
\end{table}

In \cref{s:sp}, we study the relation between our parameter specializations and the bispectral correspondence.  First, let us recall that the bispectral correspondence is constructed using the duality anti-involution $*$ of the DAHA $\bH$. As discussed in \cref{ss:CC:H} \eqref{eq:CC:*ku}, the duality anti-involution $*$ of $\bH$ acts on the Hecke parameters in the way 
\[
 (k_1^*,k_0^*,l_1^*,l_0^*) = (k_1,l_1,k_0,l_0).
\]
Then, we see from \cref{tab:AWsp} that the specialization corresponding to the orbit $O_2$ is the only one which is compatible with the bispectral correspondence reviewed in the first part. Under this specialization, we obtain the following commutative diagram (\cref{thm:4:sp}).
\begin{center}
\begin{tikzcd}
 \SC \ar[rr,hook,"\chi_+^{\CvC}"  ] \ar[d,hook,"\tsp"'] & & \SAW \ar[d,hook,"\tsp"] \\ 
 \SA \ar[rr,hook,"\chi_+^{A_1}"']                       & & \SMR
\end{tikzcd}
\end{center}


\subsection*{Notation and terminology}
The following is the notation and terminology used throughout this note.
\begin{itemize}
\item
We denote by $\bN = \bZ_{\ge 0} \ceq \{0,1,2,\ldots\}$ the set of non-negative integers.

\item
We denote by $\delta_{i,j}$ the Kronecker delta on a set $I \ni i,j$.

\item
We denote the unit of a group by $e$ or $1$.

\item
Linear spaces are those over the complex number field $\bC$ unless otherwise stated, and we denote by $\Hom(V,W)$ and $\End(V)$ the linear spaces of $\bC$-linear homomorphisms $V \to W$ and of endomorphisms $V \to V$. We also denote by $\otimes$ the standard tensor product $\otimes_{\bC}$ over $\bC$.

\item
A ring or an algebra means a unital associative one unless otherwise stated.

\item
We denote $\bC^{\times} \ceq \bC \setminus \{0\}$, regarded as the multiplicative group.

\item
We use the Gasper-Rahman basic hypergeometric notation \cite{GR} for $q$-shifted factorials
\begin{align*}
 (x;q)_\infty \ceq \prod_{n=0}^\infty (1-x q^n), \quad 
 (x_1,\dotsc,x_r;q)_\infty \ceq \prod_{i=1}^r (x_i;q)_\infty,
\end{align*}
which are understood as complex numbers if they converge (e.g., if $x,x_i,q \in \bC$ and $\abs{q}<1$),
and as formal series of $q$ otherwise. For $n \in \bN$, we set
\begin{align}\label{eq:0:fqf}
 (x;q)_n \ceq \frac{(x;q)_\infty}{(x q^{n+1};q)_\infty}, \quad 
 (x_1,\dotsc,x_r;q)_n \ceq \prod_{i=1}^r (x_i;q)_n.
\end{align}
\item 
We also use the symbol in \cite{GR} of the basic hypergeometric series
\begin{align}\label{eq:0:qHG}
 \qHG{r+1}{r}{a_1,\dotsc,a_{r+1}}{b_1,\dotsc,b_r}{q}{z} \ceq 
 \sum_{n=0}^\infty \frac{(a_1,\dotsc,a_{r+1};q)_n}{(q,b_1,\dotsc,b_r;q)_n} z^n.
\end{align}
\item 
We will also use the $q$-binomial coefficient 
\begin{align}\label{eq:0:qbin}
 \bnm{\beta}{n}{q} \ceq \frac{(q^{\beta-n+1};q)_n}{(q;q)_n}
\end{align}
for $\beta \in \bC$ and $n \in \bN$. 
Note that we have $\bnm{m}{n}{q}=\frac{(q;q)_m}{(q;q)_n \, (q;q)_{m-n}}$ for $m,n \in \bN$ with $m \ge n$.
\end{itemize}

\section{Type \texorpdfstring{$A_1$}{A1}}\label{s:A}

\subsection{Extended affine Hecke algebra}\label{ss:A:H}

Here we recall the extended affine Hecke algebra of type $A_1$ and the basic representation.

\subsubsection{The extended affine Weyl group of type $A_1$}\label{sss:A:W}

We begin by recalling the extended affine Weyl group of the affine root system of type $A_1$.
For the details, see \cite[\S1, \S2, \S6.1]{M}, \cite[\S2.1]{vMS} and \cite[\S2.1]{vM}.

\begin{rmk}
Let us note beforehand that we are working in the untwisted affine root system \cite[(1.4.1)]{M}, although \cite{vM} works in the twisted affine system \cite[(1.4.2)]{M}. Since we only consider the type $A_1$, there is no essential difference, but there are some notational differences. For example, we define the extended affine Weyl group $W$ as the semi-direct product $W_0 \ltimes \tu(P)$ using the weight lattice $P$, although in \cite{vM} it is defined as $W_0 \ltimes \tu(P^\vee)$ using the coweight lattice $P^\vee$.
\end{rmk}

We consider the one-dimensional real Euclidean space $(V,\pair{\cdot,\cdot})$ with 
\begin{align}\label{eq:A:V}
 V = \bR\alpha, \quad \pair{\alpha,\alpha} = 2.
\end{align}
Let $ F$ be the space of affine real functions on $V$, which is identified with the real vector space $V \oplus \bR c$ by the map $(u \mto \pair{v,u}+r) \mto v+rc$ for $u,v \in V$ and $r \in \bR$. Using the gradient map $D\colon  F \to V$, $v+rc \mto v$, we extend the inner product $\pair{\cdot,\cdot}$ on $V$ to a positive semi-definite bilinear form on $ F$ by $\pair{f,g} \ceq \pair{D(f),D(g)}$ for $f,g \in  F$. 

Let $S(A_1) \ceq \{\pm\alpha+nc \mid n \in \bZ\} \subset  F$ be the affine root system $S(A_1)$ in the sense of Macdonald \cite{M}. A basis of $S(A_1)$ is given by $\{a_1 \ceq \alpha, a_0 \ceq c-\alpha\}$, and the associated simple reflections $s_i\colon V \to V$ for $i=0,1$ are given by
\begin{align}\label{eq:A:W0V}
 s_i(v) \ceq v-a_i(v) D(a_i^\vee) \quad (v \in V),
\end{align}
where $a_i^\vee \ceq 2a_i/\pair{a_i,a_i} = a_i \in  F$.  Explicitly, we have
\begin{align}\label{eq:A:W0onV}
 s_1(r\alpha) = -r\alpha, \quad s_0(r\alpha) = (1-r)\alpha \qquad (r\in\bR).
\end{align}

We denote by $W_0 \subset O(V,\pair{\cdot,\cdot})$ the subgroup generated by $s_1$. It is the Weyl group of the irreducible root system $R(A_1)=\{\pm\alpha\}$ of type $A_1$ in the sense of Bourbaki, and as an abstract group, we have $W_0 = \langle s_1 \mid s_1^2 \rangle \cong \fS_2$, the symmetric group of degree $2$. Let us also denote the fundamental weight $\varpi$ and the weight lattice $\Lambda$ of the root system $R(A_1)$ by
\[
 \varpi \ceq \thf\alpha, \quad \Lambda \ceq \bZ\varpi \subset V. 
\]
Then the $W_0$-action \eqref{eq:A:W0onV} preserves $\Lambda$. 

We denote by $\tu(\Lambda) \ceq \sbr{\tu(\lambda)\mid \lambda\in\Lambda}$ the abelian group with relations $\tu(\lambda) \tu(\mu)=\tu(\lambda+\mu)$ for $\lambda,\mu\in\Lambda$. The group $\tu(\Lambda)$ acts on $V$ by translation:
\begin{align}\label{eq:A:LamonV}
 \tu(\lambda)v = v+\lambda \quad (\lambda \in L, \ v \in V).
\end{align}
Then \emph{the extended affine Weyl group $W$ of $S(A_1)$} is defined to be the semi-direct product group 
\begin{align}\label{eq:A:W=W0P}
 W \ceq W_0 \ltimes \tu(\Lambda)
\end{align}
which acts on $V$ faithfully. In other words, the group $W$ is determined by $W_0$ and $\tu(\Lambda)$, and by the additional relations
\begin{align}\label{eq:A:W0P}
 s_1 \tu(\lambda) s_1=\tu(s_1(\lambda)) \quad (\lambda\in\Lambda)
\end{align} 
with $s_1(\lambda)$ given by \eqref{eq:A:W0onV}. 

The group $W$ is generated by $s_1,s_0$ and $\tu(\varpi)$. It is convenient to introduce $u \ceq \tu(\varpi)s_1$. By \eqref{eq:A:W0P}, we have $u^2=\tu(\varpi)\tu(s_1(\varpi))=\tu(\varpi)\tu(-\varpi)=e$. Also, by \eqref{eq:A:W0P} and \eqref{eq:A:W0onV}, we can check $s_0(v)=us_1u(v)$ for any $v \in V$. Thus, as an abstract group, $W$ is generated by $s_1,s_0,u$ with defining relations 
\begin{align}\label{eq:A:Wrel}
 s_1^2 = s_0^2 = u^2 = e, \quad us_1 = s_0u.
\end{align}

For later use, we write down a few relations in $W$.
\begin{gather}
\label{eq:A:tep}
 \tu(\varpi) = us_1 = s_0u, \quad \tu(-\varpi) = s_1u = us_0. \\
\label{eq:A:tep2}
 \tu(\alpha)=\tu(2\varpi)=us_1us_1=s_0s_1.
\end{gather}

\subsubsection{The extended affine Hecke algebra of type $A_1$}

Here we recall the extended affine Hecke algebra $H$ associated to the affine root system $S(A_1)$. For the detail, see \cite[\S4, \S6.1]{M} and \cite[\S2.2, \S2.3]{vM}. 
Hereafter we fix nonzero complex numbers $k \in \bC^\times$.
\begin{rmk}\label{rmk:A:N}
Our parameter $k$ correspond to $\tau$ in \cite{M}.
\end{rmk}

\begin{dfn}\label{dfn:A:H(k)}
\emph{The extended affine Hecke algebra of type $A_1$}, denoted by
\[
 H = H(k) = H^{A_1}(k),
\]
 is the $\bC$-algebra generated by $T_1$, $T_0$ and $U$ with fundamental relations 
\begin{align}\label{eq:A:Hrel}
 (T_i-k)(T_i+k^{-1})=0 \quad (i=1,0), \quad U^2=1,\quad UT_1=T_0U.
\end{align}
\end{dfn}

By comparing \eqref{eq:A:Wrel} and \eqref{eq:A:Hrel}, we see that $H$ is a deformation of the group ring $\bC[W]$ of the extended affine Weyl group $W$ of $S(A_1)$ explained above. 

In order to attach an element $T_w \in H$ to each $w \in W$, let us recall from \cite[\S2.2]{M} that we have the length function and reduced expressions in $W$. The group $W$ is an extension of the affine Weyl group $W_S \ceq \br{s_1,s_0 \mid s_1^2, s_0^2}$ of $S(A_1)$ by the automorphism $u$ of the Dynkin diagram of $S(A_1)$, so that any element $w \in W$ can be written as $w=w'u^r$ with $w' \in W_S$ and $r\in\{0,1\}$. The group $W_S$ is a Coxeter group, so that it has the length function $\ell(\cdot)$ and reduced expression of each element. Now, let $w'=s_{i_1} \dotsm s_{i_l}$ be a reduced expression in $W_S$ with $l = \ell(w')$. Then we define the length of $w \in W$ to be $\ell(w) \ceq \ell(w')=l$, and call the expression $w = s_{i_1} \dotsm s_{i_l}u^r \in W$ a reduced expression of $w$.

Now, for $w \in W$, take a reduced expression $w=s_{i_1} \cdots s_{i_l} u^r$ and define
\begin{align*}
 T_w \ceq T_{i_1} \dotsm T_{i_l} U^r \in H.
\end{align*}
Then $T_w$ is independent of the choice of reduced expression. By convention we have $T_e = 1$, the unit of the ring $H$.

Next we introduce \emph{the Dunkl operator} to be 
\begin{align}\label{eq:A:Y}
 Y \ceq U T_1 \in H.
\end{align}
By \cref{eq:A:Hrel}, $Y$ is invertible and 
\[
 Y^{-1} = T_1^{-1} U = (T_1-k+k^{-1})U.
\] 
Also note that these can be regarded as deformations of the translations $\tu(\pm\varpi) \in W$ given in \cref{eq:A:tep}. Let us also define 
\begin{align*}
 Y^{\lambda} \ceq Y^l \in H \quad (\lambda=l\varpi\in\Lambda, \ l\in\bZ).
\end{align*}
In particular, we have
\begin{align}\label{eq:A:Y2}
 Y^{\alpha}=Y^{2\varpi}=Y^2 = U T_1 U T_1 = T_0 T_1,
\end{align}
which corresponds to \eqref{eq:A:tep2}. We denote by $\bC[Y^{\pm1}] \subset H$ the ring of Laurent polynomials in $Y$. We have an isomorphism of $\bC$-linear spaces
\begin{align}\label{eq:A:H=H0Y}
 H \cong H_0 \otimes \bC[Y^{\pm1}], 
\end{align}
where 
\begin{align}\label{eq:A:H0}
 H_0 = H_0(k) \ceq \bC T_e + \bC T_{s_1} = \bC + \bC T_1
\end{align}
is the subalgebra of $H$ generated by $T_1$. We call $H_0$ \emph{the finite Hecke algebra of type $A_1$}.

\subsubsection{The basic representation and the double affine Hecke algebra of type $A_1$}

Next, we review the basic representation of the extended affine Hecke algebra $H=H(k)$, mainly following \cite[\S6.1]{M}. See also \cite[Theorem 3.2.1]{C05} and references therein. 

Below we choose and fix a parameter $q^{\shf} \in \bC^\times$. The extended affine Weyl group $W$ acts on the ring of Laurent polynomials
\begin{align}\label{eq:A:Cx}
 \bC[x^{\pm1}], \quad x \ceq e^{\varpi} = e^{\alpha/2}
\end{align}
by letting the generators $s_1,s_0,u$ operate as 
\begin{align}\label{eq:A:Wx}
 (s_{1,q}f)(x)=f(x^{-1}), \quad (s_{0,q}f)(x)=f(qx^{-1}), \quad (u_qf)(x)=f(q^{\shf}x^{-1}),
\end{align}
where we indicated the dependence on $q$ explicitly. 

Now, using the parameter $k \in \bC^\times$, and define $b(x;k),c(x;k) \in \bC(x)$ by
\begin{align}\label{eq:A:cd}
 c(x;k) \ceq \frac{k^{-1} -k x}{1-x}, \quad
 b(x;k) \ceq k-c(x;k) = \frac{k-k^{-1}}{1-x}.
\end{align}
Then, denoting $x_1 \ceq x^2$ and $x_0 \ceq qx^{-2}$, we have an algebra embedding 
\begin{gather}
\label{eq:A:rho}
 \rho_{k,q}\colon H(k) \linj \End(\bC[x^{\pm1}]), \\
\label{eq:A:T1}
 \rho_{k,q}(T_i) \ceq c(x_i;k)s_{i,q}+b(x_i;k) = k+c(x_i;k)(s_{i,q}-1), \quad
 \rho_{k,q}(U) \ceq u_q.
\end{gather}
Note that the image is in $\End(\bC[x^{\pm1}]) \subsetneq \End(\bC(x))$.
We call $\rho_{k,q}$ \emph{the basic representation of $H(k)$}. 


Using the basic representation $\rho_{k,q}$, we introduce:

\begin{dfn}\label{dfn:A:DAHA}
\emph{The double affine Hecke algebra (DAHA) of type $A_1$}, denoted as
\[
 \bH = \bH(k,q) = \bH^{A_1}(k,q),
\]
is defined to be the subalgebra of $\End(\bC[x^{\pm1}])$ generated by $X^{\pm1} \ceq \text{(the multiplication operator by $x^{\pm1}$)}$ and the image $\rho_{k,q}\bigl(H(k)\bigr)$. 
\end{dfn}

As an abstract algebra, the DAHA $\bH$ of type $A_1$ is presented with generators $T_1,U,X$ and relations
\begin{align}\label{eq:A:absDAHA}
 (T_1-k)(T_1+k^{-1})=0, \quad U^2=1, \quad T_1 X T_1 = X^{-1}, \quad U X U = q^{1/2} U^{-1}.
\end{align}
See \cite[\S4.7]{M} and \cite{C05} for the detail. 
The map $\rho_{k,q}$ of \eqref{eq:A:rho} extends to the embedding $\rho_{k,q}\colon \bH \inj \End(\bC[x^{\pm1}])$.

We have the Poincar\'{e}-Birkhoff-Witt type decomposition of $\bH$ as a $\bC$-linear space:
\begin{align}\label{eq:A:PBW}
 \bH \cong \bC[X^{\pm1}] \otimes H_0 \otimes \bC[Y^{\pm1}].
\end{align} 
This decomposition is compatible with $H \cong H_0 \otimes \bC[Y^{\pm 1}]$ in \eqref{eq:A:H=H0Y} under the identification of $H=H(k)$ with the faithful image $\rho_{k,q}\bigl(H\bigr) \subset \End(\bC[x^{\pm1}])$.
Below we often identify $X^{\pm1}$ and $x^{\pm1}$, and denote the decomposition \eqref{eq:A:PBW} as $\bH = \bC[x^{\pm1}] \otimes H_0 \otimes \bC[Y^{\pm1}]$.

Let us also recall \emph{the duality anti-involution} introduced by Cherednik (\cite{C95}, \cite[(4.7.6)]{M}). It is the unique $\bC$-algebra anti-involution 
\begin{align}\label{eq:A:*}
 *\colon \bH(k,q) \lto \bH(k^*,q), \quad h \lmto h^*
\end{align}
such that, denoting by $X^\lambda \ceq (\text{the multiplication operator by $x^l$})$ for $\lambda=l\varpi\in\Lambda$, $l \in \bZ$, we have
\begin{align*}
 T_1^* = T_1, \quad 
 (Y^\lambda)^* = X^{-\lambda}, \quad 
 (X^\lambda)^* = Y^{-\lambda}  \quad (\lambda\in\Lambda), \quad 
 k^* = k.
\end{align*}
Here and hereafter we use the redundant symbol $k^*$ for the comparison with type $\CvC$ (see \eqref{eq:CC:*}).

Finally, we denote by
\begin{align}\label{eq:A:H*}
 H(k)^* \subset \bH(k^*,q) = \bH(k,q)
\end{align}
the image of $H(k) \subset \bH(k,q)$ under the duality anti-involution $*$.
Then $H(k)^*$ is equal to the subalgebra of $\bH(k,q)$ generated by the finite Hecke algebra $H_0(k)$ (see \eqref{eq:A:H0}) and $X^{\pm1}=x^{\pm1}$.

\subsection{Bispectral quantum Knizhnik-Zamolodchikov equation}\label{ss:A:qKZ}

Let us explain the bispectral qKZ equation of the affine root system $S(A_1)$, mainly following \cite[\S 3.2]{vM}. Hereafter we fix the parameters $q^{\shf},k \in \bC^\times$, and consider the basic representation $\rho_{k,q}\colon H(k) \inj \End(\bC[x^{\pm1}])$ of the affine Hecke algebra $H(k)$ in \eqref{eq:A:rho} and the DAHA $\bH(k,q)$ in \cref{dfn:A:DAHA}.

\subsubsection{The affine intertwiners of type $A_1$}

Following \cite[\S1.3]{C05}, \cite[\S 2.3]{vMS} and \cite[Proposition 3.3]{vM}, we introduce the affine intertwines of type $A_1$. Corresponding to the generators $s_1,s_0,u$ of the extended Weyl group $W$ (and $T_1,T_0,U$ of $H(k)$), we define $\wtS_1,\wtS_0,\wtS_u \in \End(\bC[x^{\pm1}])$ by
\begin{align}\label{eq:A:wtS01p}
 \wtS_i = \wtS_i(k,q) \ceq d_i(x;k,q)s_{i,q} \quad (i=1,0), \quad \wtS_u = \wtS_u(q) \ceq u_q,
\end{align}
where $s_{i,q}$ and $u_q$ are the operators in \eqref{eq:A:Wx}, and the function $d_i(x)$ is given by 
\begin{align}\label{eq:A:di}
 d_i(x) = d(x_i;k,q) \ceq k^{-1}-kx_i, \quad x_1 \ceq x^2, \ x_0 \ceq qx^{-2}.
\end{align}
The elements $\wtS_1$, $\wtS_0$ and $\wtS_u$ belong to the subalgebra $\bH \subset \End(\bC[x^{\pm1}])$ since
\begin{align}\label{eq:A:wtS0'}
 \wtS_i = (1-x_i)(\rho_{k,q}(T_i)-k)+k^{-1}-kx_i, \quad \wtS_u = \rho_{k,q}(U)
\end{align} 
More generally, for each $w \in W$, taking a reduced expression $w=s_{j_1}\cdots s_{j_l}u^r$ with $j_1,\dotsc,j_l,r \in \{0,1\}$, we define the element $\wtS_w \in \bH$ by 
\begin{align}\label{eq:A:wtS}
 \wtS_w \ceq d_{j_1}(x) \cdot (s_{j_1}d_{j_2})(x) \cdot \dotsm \cdot 
            (s_{j_1} \dotsm s_{j_{l-1}}d_{j_l})(x) \cdot w_q.
\end{align}
Here we used the action of $s_i$'s on functions in $x$ and the operator $w_q$, both given in \eqref{eq:A:Wx}. Note that this definition includes \eqref{eq:A:wtS01p} by setting $\wtS_1 = \wtS_{s_1}$ and $\wtS_0 = \wtS_{s_0}$. The element $\wtS_w \in \bH$ is independent of the choice of reduced expression $w=s_{j_1}\cdots s_{j_l}u^r$, since 
\begin{align}\label{eq:A:dw}
 d_w(x) \ceq d_{j_1}(x) \cdot (s_{j_1}d_{j_2})(x) \cdot \dotsm \cdot (s_{j_1} \dotsm s_{j_{l-1}}d_{j_l})(x)
\end{align}
depends only on $w$ \cite[(2.2.9)]{M}. 
Moreover, by \cite[Proposition 3.3 (ii)]{vM}, we have
\begin{align}\label{eq:A:wtS'}
 \wtS_w = \wtS_{j_1} \dotsm \wtS_{j_l} \wtS_u^r. 
\end{align}
We call the elements $\wtS_w$ in \eqref{eq:A:wtS} \emph{the affine intertwiners of type $A_1$}.

\begin{rmk}
Our affine intertwines are obtained from those in \cite{vM} by replacing $k,x$ with $k^{-1},x^{-1}$. We made this replacement to simplify the comparison with the type $\CvC$ discussed in \cref{s:sp}.
\end{rmk}

\subsubsection{The double extended Weyl group}\label{sss:A:bW}

Extending the representation space $\bC[x^{\pm1}]$ of the basic representation $\rho_{k,q}$ (see \eqref{eq:A:Cx} and \eqref{eq:A:rho}), we introduce 
\begin{align}\label{eq:A:bL}
 \bL \ceq \bC[x^{\pm1}] \otimes \bC[\xi^{\pm1}] = \bC[x^{\pm1},\xi^{\pm1}].
\end{align}
We sometimes call $x$ \emph{the geometric variable} and $\xi$ \emph{the spectral variable}.

\begin{rmk}\label{rmk:A:L}
The papers \cite{vMS,vM,S} considered (for a root system of arbitrary type) the ring $\bL' \ceq \bC[T \times T] \cong \bC[T] \otimes \bC[T]$ of regular functions on the product $T \times T$, where $T \ceq \Hom_{\textup{Group}}(\Lambda,\bC^\times)$ is the algebraic torus associated to the lattice $\Lambda$. In loc.\ cit., the value of $t \in T$ at $\lambda \in \Lambda$ is written as $t^\lambda \in \bC^\times$, and a point of $T \times T$ is denoted by $(t,\gamma) \in T \times T$. For the type $A_1$ we are considering, the lattice is $\Lambda=\bZ\varpi$, and there is a natural identification $\bL' \cong \bL$ given by $(t \mto t^{\varpi})\mto x$ and $(\gamma \mto \gamma^{\varpi})\mto \xi$. The geometric and spectral variables $x,\xi$ are called the coordinate (functions) of $T \times T$ in loc.\ cit. The formulas and arguments given in the following text are obtained from those in loc.\ cit.\ by replacing $f(t,\gamma) \in \bL'$ with $f(x,\xi) \in \bL$.
\end{rmk}

Then the DADA $\bH=\bH(k,q)$ in \cref{dfn:A:DAHA} has a structure of an $\bL$-module by 
\begin{align}\label{eq:A:H-Lmod}
 (f \otimes g)h \ceq f(X) \cdot h \cdot g(Y)
\end{align}
for $f=f(x) \in \bC[x^{\pm1}] \subset \bL$, $g=g(\xi) \in \bC[\xi^{\pm1}] \subset \bL$ and $h \in \bH$. Here $X \in \bH$ denotes the multiplication operator by $x$ (see \cref{dfn:A:DAHA}), and $Y \in H = \rho_{k,q}(H) \subset \bH$ denotes the Dunkl operator \eqref{eq:A:Y}. The $\cdot$ in the right hand side means to take the multiplication of the ring $\bH$. Note that the PBW type decomposition \eqref{eq:A:PBW} yields the natural $\bL$-module isomorphism 
\begin{align}\label{eq:A:bH=LH0}
 \bH \cong H_0^\bL \ceq \bL \otimes H_0,
\end{align}
where in the right hand side $\bL$ acts on the first tensor component $\bL$ by ring multiplication.


We turn to the introduction of \emph{the double extended Weyl group $\bW$}, following \cite[\S3.1]{vMS} and \cite[\S3.2]{vM}. Let $\iota$ denote the nontrivial element of the group $\bZ_2\ceq \bZ/2\bZ$.
We define the group $\bW$ as the semi-direct product 
\begin{align}\label{eq:A:bW}
 \bW \ceq \bZ_2 \ltimes (W \times W),
\end{align}
where $\iota \in \bZ_2$ acts on the product $W \times W$ of the extended affine Weyl group $W$ by 
\begin{align*}
 \iota (w,w') = (w',w) \iota \quad (w,w' \in W).
\end{align*}

The group $\bW$ acts on $\bL$ as follows. 
We define an involution $\diamond\colon W \to W$ by
\begin{align}\label{eq:A:diamond}
 w^\diamond \ceq w,\quad \tu(\lambda)^\diamond \ceq \tu(-\lambda)
\end{align}
for $w \in W_0$ and $\lambda\in\Lambda$. Then the $\bW$-action on $\bL$ is given by
\begin{align}\label{eq:A:bWbL} 
 (w f)(x) \ceq (w_q f)(x), \quad (w'g)(\xi) \ceq ((w'^\diamond)_qg)(\xi), \quad 
 (\iota F)(x,\xi) = F(\xi^{-1},x^{-1})
\end{align}
for $w \in W = W \times \{e\} \subset \bW$, $w' \in W = \{e\} \times W \subset \bW$ and $f=f(x), g=g(\xi), F=F(x,\xi) \in \bL$. Here $w_q$ denotes the $W$-action in \eqref{eq:A:Wx}. 

\begin{rmk}\label{rmk:A:iota}
The element $\iota \in \bW$ is designed to be consistent with the duality anti-involution $*$ \eqref{eq:A:*} and the actions of $\bW$ and $\bH$ on $\bL$.
\end{rmk}

Now, following \cite[\S3.1]{vMS} and \cite[\S3.2]{vM}, we define $\wt{\sigma}_{(w,w')}, \wt{\sigma}_\iota \in \End(\bH)$ by
\begin{align}\label{eq:A:wtsigma}
 \wt{\sigma}_{(w,w')}(h) \ceq \wtS_w \cdot h \cdot (\wtS_{w'})^*, \quad 
 \wt{\sigma}_{\iota }(h) \ceq h^* \quad (h \in \bH).
\end{align}
Here $*$ denotes the anti-involution \eqref{eq:A:*}, and $\cdot$ denotes the multiplication of the ring $\End(\bC[x^{\pm1}])$ (or the composition of operators on $\bC[x^{\pm1}]$). The action is well defined since $\wtS_w \in \bH$.

\begin{fct}[{\cite[Lemma 3.2]{vMS}, \cite[Lemma 3.5]{vM}}]\label{fct:A:wsHL}
For $h \in \bH$, $f \in \bL$ and $w,w'\in W$, we have 
\begin{align}\label{eq:A:wsHL}
 \wt{\sigma}_{(w,w')}(f h) = ((w,w')f)\wt{\sigma}_{(w,w')}(h), \quad 
 \wt{\sigma}_{\iota }(f h) = (\iota f)\wt{\sigma}_{\iota }(h).
\end{align}
\end{fct}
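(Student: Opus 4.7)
By $\bC$-bilinearity of the $\bL$-action on $\bH$, it suffices to verify both identities in \eqref{eq:A:wsHL} for elementary tensors $f = f_1(x) f_2(\xi)$, for which \eqref{eq:A:H-Lmod} gives $f h = f_1(X) \cdot h \cdot f_2(Y)$. The problem then reduces to sliding $\wtS_w$, $(\wtS_{w'})^*$, or $*$ past $f_1(X)$ on the left and $f_2(Y)$ on the right, and matching the resulting multipliers with the $\bW$-action \eqref{eq:A:bWbL} on $\bL$.

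The $\iota$ case is essentially immediate: using $(AB)^* = B^* A^*$ together with $(X^\lambda)^* = Y^{-\lambda}$ and $(Y^\lambda)^* = X^{-\lambda}$ from \eqref{eq:A:*}, one computes
\[
 \wt{\sigma}_\iota(f h) = (f_1(X) \cdot h \cdot f_2(Y))^* = f_2(X^{-1}) \cdot h^* \cdot f_1(Y^{-1}),
\]
while the definition of $\iota$ in \eqref{eq:A:bWbL} gives $(\iota f)(x,\xi) = f_2(x^{-1}) f_1(\xi^{-1})$. Under the $\bL$-action this last expression is precisely $(\iota f) \cdot h^* = (\iota f) \wt{\sigma}_\iota(h)$.

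For the $(w,w')$ case, the crucial commutation is
\[
 \wtS_w \cdot X^\lambda = (w_q x^\lambda)(X) \cdot \wtS_w \qquad (\lambda \in \Lambda),
\]
which follows from the factorization $\wtS_w = d_w(X) \cdot w_q$ coming from \eqref{eq:A:wtS}, together with the fact that $w_q$ is an algebra automorphism of $\bC[x^{\pm 1}]$: this gives $w_q \cdot X^\lambda = (w_q x^\lambda)(X) \cdot w_q$ as operators, and $d_w(X)$ commutes with $(w_q x^\lambda)(X)$ inside the commutative subring $\bC[X^{\pm 1}]$. Extending $\bC$-linearly in $\lambda$ produces $\wtS_w \cdot f_1(X) = (w_q f_1)(X) \cdot \wtS_w$. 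Applying the anti-involution $*$ to the analogous identity with $w'$ in place of $w$, and using $(X^\lambda)^* = Y^{-\lambda}$, yields the dual commutation $f_2(Y) \cdot (\wtS_{w'})^* = (\wtS_{w'})^* \cdot ((w'^\diamond)_q f_2)(Y)$. Substituting these two commutations into $\wtS_w \cdot f_1(X) h f_2(Y) \cdot (\wtS_{w'})^*$ and recalling the $\bW$-action \eqref{eq:A:bWbL} gives the desired equality.

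The one delicate point---and the main (indeed only) obstacle---is the appearance of the involution $\diamond$ in the dual commutation. Applying $*$ literally produces the substitution $x \mto y^{-1}$ inside $(w'_q x^\lambda)$, so one must verify the identity $(w'_q x^\lambda)(y^{-1}) = ((w'^\diamond)_q y^{-\lambda})(y)$. Both sides are multiplicative in $\lambda \in \Lambda$ and in $w' \in W$, so this reduces to a direct check on the generators $s_1, u$ of $W$ (with $\lambda = \varpi$) using \eqref{eq:A:Wx} and \eqref{eq:A:diamond}. This compatibility is precisely what dictates the $\diamond$-twist built into the second-factor action of \eqref{eq:A:bWbL}, in line with \cref{rmk:A:iota}.
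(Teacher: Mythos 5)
Your proof is correct. Note first that the paper itself contains no proof of this statement: it is quoted as a Fact from \cite[Lemma 3.2]{vMS} and \cite[Lemma 3.5]{vM} (and the $\CvC$ analogue \eqref{eq:CC:wsHL} is likewise deferred to the same source), so there is no internal argument to compare against; your proof is essentially the standard one from those references --- reduce by bilinearity to monomials, use the factorization $\wtS_w = d_w(X)\, w_q$ from \eqref{eq:A:wtS} and \eqref{eq:A:dw} to commute $\wtS_w$ past $f_1(X)$, and transport the resulting geometric commutation to the spectral side through the anti-involution $*$ of \eqref{eq:A:*}, with the $\iota$ case an immediate consequence of $(X^\lambda)^*=Y^{-\lambda}$, $(Y^\lambda)^*=X^{-\lambda}$. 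One observation collapses the point you single out as delicate: in rank one the involution $\diamond$ of \eqref{eq:A:diamond} is exactly conjugation by $s_1$, since both are group automorphisms of $W=W_0\ltimes\tu(\Lambda)$, both fix $s_1$, and $s_1\tu(\lambda)s_1=\tu(s_1\lambda)=\tu(-\lambda)$ by \eqref{eq:A:W0P}. Meanwhile the substitution $y\mapsto y^{-1}$ produced by applying $*$ to $\bC[X^{\pm1}]$ is precisely the operator $s_{1,q}$ of \eqref{eq:A:Wx}. Hence your identity $(w'_q x^\lambda)(y^{-1})=((w'^\diamond)_q y^{-\lambda})(y)$ is nothing but $s_{1,q}\circ w'_q\circ s_{1,q}=(s_1w's_1)_q$, which holds for all $w'\in W$ at once because $w\mapsto w_q$ is a group action --- no generator-by-generator check is needed (though the one you performed is also correct, as one can confirm against the DAHA relation $Y U^* = q^{-1/2}\,U^* Y^{-1}$ following from \eqref{eq:A:absDAHA}).
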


\subsubsection{The cocycles}

Below we denote the field of meromorphic functions of variables $x$ and $\xi$ by 
\[
 \bK \ceq \clM(x,\xi),
\]
and set
\begin{align}\label{eq:A:H0K}
 H_0^{\bK} \ceq \bK \otimes H_0.
\end{align}
An element $f \in H_0^{\bK}$ is regarded as a meromorphic function of $x,\xi$ valued in $H_0 \subset \End_{\bC}(\bC[x^{\pm1}])$. 
Also, we have a $\bC$-linear isomorphism $H_0^{\bK} \cong \bK \otimes_{\bL} \bH$ by \eqref{eq:A:bH=LH0}, and $f \in H_0^\bK$ can be expressed as
\begin{align}\label{eq:A:finH0K}
 f = \sum_{w \in W_0} f_w T_w, \quad f_w \in \bK.
\end{align}
The $\bW$-action on $\bL$ given by \eqref{eq:A:bWbL} naturally extends to that on $\bK$. 
Now the group $\bW$ acts on $H_0^\bK$ by
\begin{align}\label{eq:A:bWH0K}
 \bfw f \ceq \sum_{w \in W_0} (\bfw f_w) T_w
\end{align}
for $f=\sum_{w \in W_0}f_wT_w \in H_0^\bK$ and $\bfw \in \bW$.

By \cref{fct:A:wsHL}, we can extend the maps $\wt{\sigma}_{(w,w')}$ and $\wt{\sigma}_{\iota}$ uniquely to $\bC$-linear endomorphisms of $H_0^{\bK} \cong \bK \otimes_{\bL} \bH$ such that the formulas \eqref{eq:A:wsHL} are valid for $f \in \bK$ and $h \in H_0^{\bK}$. We denote them by the same symbols $\wt{\sigma}_{(w,w')}, \wt{\sigma}_{\iota} \in \End_{\bC}(H_0^{\bK})$.

\begin{fct}[{\cite[Theorem 3.3]{vMS}, \cite[Theorem 3.6]{vM}}]\label{fct:A:tau}
There is a unique group homomorphism 
\begin{align*}
 \tau\colon \bW \lto \GL_{\bC}(H_0^\bK)
\end{align*}
satisfying 
\begin{align}\label{eq:A:tau}
 \tau(w,w') (f) = d_w(x)^{-1} d_{w'}(\xi^{-1})^{-1} \cdot \wt{\sigma}_{(w,w')}(f), \quad
 \tau(\iota)(f) = \wt{\sigma}_{\iota}(f)
\end{align}
for $w,w'\in W$ and $f \in H_0^\bK$. Here we used the function $d_w$ given by \eqref{eq:A:dw},
and $\cdot$ denotes the $\bK$-action given by \eqref{eq:A:H-Lmod}. Moreover, we have 
\[
 \tau(\bfw)(gf) = wg\tau(\bfw)(f)
\]
for $g \in \bK$, $f \in H_0^\bK$ and $\bfw \in \bW$. 
\end{fct}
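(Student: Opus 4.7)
My approach is to define $\tau$ on the generators of $\bW$ using the formulas \eqref{eq:A:tau}, then establish the $\bK$-equivariance and verify that the defining relations of $\bW$ are respected. Uniqueness is automatic once $\tau$ is fixed on generators, since the generators exhaust $\bW$; existence is the content of the theorem.

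The $\bK$-equivariance $\tau(\bfw)(gf) = (\bfw g)\tau(\bfw)(f)$ is an immediate consequence of \cref{fct:A:wsHL} extended to $H_0^\bK$: the normalizing factors $d_w(x)^{-1}$ and $d_{w'}(\xi^{-1})^{-1}$ lie in the commutative field $\bK$, so they commute past any $g \in \bK$, while the intertwining of the $\bK$-action with $(w,w')$ or $\iota$ is supplied by $\wt{\sigma}$. This step is essentially formal.

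The core of the proof is checking that $\tau$ respects the relations of $\bW$. It suffices to verify three families: (R1) the relations within each $W$-factor, namely $s_i^2 = u^2 = e$ and $us_1 = s_0 u$; (R2) the commutativity of the two $W$-factors, $\tau(w,e)\tau(e,w') = \tau(e,w')\tau(w,e)$; and (R3) $\iota^2 = e$ together with the conjugation identity $\iota(w,w')\iota = (w',w)$. The key input for (R1) is Macdonald's $1$-cocycle identity $d_{ww'}(x) = d_w(x)(w d_{w'})(x)$ whenever $\ell(ww') = \ell(w) + \ell(w')$, combined with the composition rule $\wtS_w \wtS_{w'} = \wtS_{ww'}$ in the same length-additive regime which is evident from \eqref{eq:A:wtS'}; together these give multiplicativity of $\tau$ for length-additive products. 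For the quadratic relation $s_i^2 = e$, a short direct calculation yields $\wtS_{s_i}^2 = d_i(x)(s_i d_i)(x)$, and the normalization in $\tau(s_i,e)$ exactly cancels this scalar to produce the identity. Family (R2) reduces to the strict commutativity of left multiplication by $\wtS_w$ and right multiplication by $(\wtS_{w'})^*$ on $\bH$, while the scalar prefactors in $x$ and $\xi$ commute trivially. For (R3), $\tau(\iota)^2 = \mathrm{id}$ is immediate from $h^{**} = h$, and the conjugation identity follows from the explicit form \eqref{eq:A:wtS} of $\wtS_w$ together with the behavior of $*$ and of the scalar factor $d_w$ under the swap $x \leftrightarrow \xi^{-1}$ implemented by $\iota$.

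The main obstacle is the bookkeeping required when $\ell(w_1 w_2) < \ell(w_1) + \ell(w_2)$, where $\wtS_{w_1}\wtS_{w_2}$ no longer equals $\wtS_{w_1 w_2}$ on the nose, and one must use the quadratic relations to reduce. Fortunately, in type $A_1$ the only such collisions come from the squares $s_i^2 = e$ and $u^2 = e$ handled by the quadratic calculation above, so the full verification collapses to a finite and essentially routine check controlled entirely by the short presentation \eqref{eq:A:Wrel} of $W$.
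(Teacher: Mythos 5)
Your proposal is correct and is essentially the argument of the cited sources \cite[Theorem 3.3]{vMS}, \cite[Theorem 3.6]{vM}; the present paper states the result as a Fact and does not reproduce a proof, so the comparison is with those originals, whose proofs run exactly along your lines. The three ingredients you isolate --- Macdonald's cocycle identity $d_{ww'}(x)=d_w(x)\,(w d_{w'})(x)$ in the length-additive regime together with $\wtS_w\wtS_{w'}=\wtS_{ww'}$, the quadratic computation $\wtS_i^2=d_i(x)\,(s_id_i)(x)$ whose scalar the normalization in $\tau$ cancels, and the anti-involution property of $*$ (with $X^*=Y^{-1}$ converting the prefactor $d_w(x)^{-1}$ into $d_w(\xi^{-1})^{-1}$) for the relations involving $\iota$ --- are precisely the inputs used there, and your observation that in type $A_1$ all length collisions reduce to $s_i^2=u^2=e$ and $us_1=s_0u$ (the affine Weyl group being infinite dihedral, with no braid relations) correctly collapses the verification to a finite check.
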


\begin{rmk}
In \cite[Theorem 3.6]{vM}, the action of $\tau(w,w')$ is written using $d_{w'}^\diamond(Y)$, which is equal to $d_{w'}(Y^{-1})$ according to \cite[Proof of Lemma 3.2]{vMS}.
\end{rmk}

Now we recall a terminology of non-abelian group cohomology. Let $G$ be a group, and $M$ be a $G$-group. We denote by $m^g \in M$ the action of $g \in G$ on $m \in M$. Then, a ($1$-)cocycle means a map $z\colon G \to M$ such that $z(g_1 g_2) = z(g_1)z(g_2)^{g_1} $ for any $g_1,g_2 \in G$.

Recall that $\bW$ acts on $H_0^\bK$ by \eqref{eq:A:bWH0K}. This action makes the group $\GL_{\bK}(H_0^{\bK})$ into a $\bW$-group by
\[
 (\bfw,A) \lmto \bfw A \bfw^{-1} \quad (\bfw \in \bW, \ A \in \GL_{\bK}(H_0^{\bK})).
\]

\begin{fct}[{\cite[Corollary 3.4]{vMS}, \cite[Corollary 3.8]{vM}}]\label{fct:A:Cw}
The map 
\begin{align}\label{eq:A:Cw}
 \bfw \lmto C_{\bfw} \ceq \tau(\bfw) \bfw^{-1}
\end{align}
is a cocycle of $\bW$ with values in the $\bW$-group $\GL_{\bK}(H_0^\bK)$. 
In other words, for any $\bfw,\bfw'\in \bW$, we have $C_{\bfw} \in \GL_{\bK}(H_0^{\bK})$ and 
\begin{align}\label{eq:A:cocycle}
 C_{\bfw \bfw'} = C_{\bfw} \bfw C_{\bfw'} \bfw^{-1}.
\end{align} 
\end{fct}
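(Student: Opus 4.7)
The plan is to deduce the corollary directly from \cref{fct:A:tau}, splitting the verification into two parts: (i) showing that $C_\bfw\ceq \tau(\bfw)\bfw^{-1}$ lies in $\GL_{\bK}(H_0^{\bK})$, and (ii) verifying the cocycle identity \eqref{eq:A:cocycle}. The key observation that makes both parts essentially automatic is that while $\tau(\bfw)$ is only $\bfw$-semilinear over $\bK$ (by the last displayed equation of \cref{fct:A:tau}), and while the $\bW$-action \eqref{eq:A:bWH0K} on $H_0^{\bK}$ is also $\bfw$-semilinear (since it acts coefficient-wise on the expansion $f=\sum_{w \in W_0} f_w T_w$), the two semilinearities cancel when composed.

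For (i), I would first check $\bK$-linearity by a direct computation: for $g\in\bK$ and $f\in H_0^{\bK}$, applying the $\bW$-semilinearity of \eqref{eq:A:bWH0K} gives $\bfw^{-1}(gf)=(\bfw^{-1}g)\,\bfw^{-1}(f)$, and then applying \cref{fct:A:tau} yields
\[
 C_\bfw(gf)=\tau(\bfw)\bigl((\bfw^{-1}g)\bfw^{-1}(f)\bigr)=\bigl(\bfw(\bfw^{-1}g)\bigr)\tau(\bfw)\bfw^{-1}(f)=g\cdot C_\bfw(f).
\]
For invertibility I would exhibit the inverse as $C_\bfw^{-1}=\bfw\,\tau(\bfw^{-1})$: since $\tau$ is a group homomorphism we have $\tau(\bfw)\tau(\bfw^{-1})=\mathrm{id}=\tau(\bfw^{-1})\tau(\bfw)$, so both compositions $C_\bfw\circ(\bfw\,\tau(\bfw^{-1}))$ and $(\bfw\,\tau(\bfw^{-1}))\circ C_\bfw$ collapse to the identity; $\bK$-linearity of the inverse follows by the same semilinearity cancellation as above.

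For (ii), using that $\tau$ is a group homomorphism (\cref{fct:A:tau}) and that the $\bW$-action on $H_0^{\bK}$ is a group action, I simply compute
\[
 C_\bfw\cdot \bfw\, C_{\bfw'}\, \bfw^{-1}
  = \tau(\bfw)\bfw^{-1}\cdot \bfw\,\tau(\bfw')\bfw'^{-1}\,\bfw^{-1}
  = \tau(\bfw)\tau(\bfw')(\bfw\bfw')^{-1}
  = \tau(\bfw\bfw')(\bfw\bfw')^{-1}
  = C_{\bfw\bfw'},
\]
which is \eqref{eq:A:cocycle}. I expect no real obstacle here: the whole statement is a formal consequence of \cref{fct:A:tau} together with the semilinearity compatibility between $\tau$ and the $\bW$-action, and the only place where one must be careful is in part (i) to make sure that the two semilinear twists cancel (rather than compound) so that $C_\bfw$ is genuinely $\bK$-linear and not merely $\bC$-linear.
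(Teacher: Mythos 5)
Your proof is correct, and it takes the standard route: the paper itself states this result as a Fact without proof, citing \cite[Corollary 3.4]{vMS} and \cite[Corollary 3.8]{vM}, and your formal derivation from \cref{fct:A:tau} is precisely the argument in those references. In particular, the cancellation of the two semilinear twists (giving genuine $\bK$-linearity of $C_{\bfw}$), the explicit inverse $C_{\bfw}^{-1}=\bfw\,\tau(\bfw^{-1})$, and the one-line verification of \eqref{eq:A:cocycle} using that $\tau$ is a group homomorphism and that $\bfw'^{-1}\bfw^{-1}=(\bfw\bfw')^{-1}$ as operators on $H_0^{\bK}$ are exactly the points the cited proofs rest on.
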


Note that the cocycles $C_{\bfw}$ depend on the parameters $(k,q)$. 
Also note that, by the natural isomorphism 
\begin{align}\label{eq:A:GLHK}
 \GL_{\bK}(H_0^{\bK}) \cong \bK \otimes \GL_{\bC}(H_0),
\end{align}
we can regard an element $C_{\bfw} \in \GL_{\bK}(H_0^{\bK})$ as a meromorphic function of $x,\xi$ valued in $\GL_{\bC}(H_0)$. To stress this point, we denote it as
\begin{align}\label{eq:A:Ctg}
 C_{\bfw}(x,\xi). 
\end{align}

\subsubsection{The bispectral bispectral quantum KZ equations of type $A_1$}

Let us focus on the cocycles associated to the translations in $\bW$, i.e., the elements in the subgroup 
\[
 \tu(\Lambda) \times \tu(\Lambda) \subset W \times W \subset \bW. 
\]
Recalling $\Lambda=\bZ\varpi$, we denote
\begin{align}\label{eq:A:Clm}
 C_{l,m} \ceq C_{(\tu(l\varpi), \tu(m\varpi))} \quad (l,m \in \bZ).
\end{align}

\begin{dfn}[{\cite[Dfn.\ 3.7]{vMS}, \cite[Dfn.\ 3.9]{vM}, c.f.\ \cite[Dfn.\ 3.2]{S}}]
\label{dfn:A:SKZ}
The system of $q$-difference equations 
\begin{align*}
 C_{l,m}(x,\xi) f(q^{-l}x,q^m\xi) = f(x,\xi) \quad (l,m \in \bZ)
\end{align*}
for $f \in H_0^{\bK}$ \emph{the bispectral quantum KZ equations} (\emph{the bqKZ equations} for short) \emph{of type $A_1$}. The solution space is denote by 
\[
 \SA(k,q) \ceq \{f \in H_0^\bK \mid \text{$f$ satisfies the bqKZ equations of type $A_1$}\}.
\]
\end{dfn}

\begin{rmk}\label{rmk:A:SKZ}
The solution space is denoted by $\mathrm{SOL}$ in \cite{vMS,vM}, and by $\mathcal{K}_{k,q}$ in \cite{S}. Our symbol is a modification of the notation $Sol_{QAKZ}$ in \cite[Theorem 1.3.8]{C05}.
\end{rmk}

\subsubsection{The cocycle values}

As before, let $H=H(k)$ be the affine Hecke algebra of type $A_1$, $H_0=H_0(k)$ be the subalgebra of $H$ generated by $T_1$, and $H_0^{\bK} \ceq \bK \otimes H_0$. We can write down the cocycles $C_{1,0}$ and $C_{0,1}$ by the following representations of the affine Hecke algebra $H$ and its duality anti-involution image $H^*$ (see \cref{eq:A:H*}).

\begin{dfn}\label{dfn:3:etaLR}
$H_0^{\bK}$ has the following left $H$-module structure and the right $H^*$-module structure:
We define an algebra homomorphism $\eta_L\colon H \to \End_{\bK}(H_0^\bK)$ by
\begin{align}\label{eq:A:etaL}
 \eta_L(A)\Bigl(\sum_{w \in W_0} f_w T_w\Bigr) \ceq \sum_{w \in W_0} f_w(A T_w) 
 \quad (A \in H), 
\end{align}
using the expression \eqref{eq:A:finH0K} of an element of $H_0^{\bK}$. 
We also define an algebra anti-homomorphism $\eta_R\colon H^* \to \End_{\bK}(H_0^\bK)$ by
\begin{equation}\label{eq:A:etaR}
 \eta_R(A)\Bigl(\sum_{w \in W_0} f_w T_w\Bigr) \ceq \sum_{w \in W_0} f_w(T_w A)
 \quad (A \in H^*).
\end{equation}
\end{dfn}

\begin{rmk}
The map $\eta_L$ was introduced in \cite[\S4.1]{vMS} and \cite[\S4.1]{vM}, denoted by $\eta$, under the name of \emph{the formal principal series representation of $H$}, since it is a formal version of the principal series representation used in \cite{C92b,C94}. We borrowed the symbol $\eta_R$ from \cite[\S4.2]{T}.
\end{rmk}

\begin{lem}[{c.f. \cite[(5.3)]{vM}}]\label{lem:A:C10C01}
Regarding the cocycles $C_{1,0},C_{0,1}$ as $\GL(H_0)$-valued meromorphic functions of $x,\xi$ (see \eqref{eq:A:Ctg}), we have
\begin{align}
\label{eq:A:C10}
 &C_{1,0}(x,\xi) = R^L_0(x_0)\eta_L(U), \\
\label{eq:A:C01}
 &C_{0,1}(x,\xi) = R^R_0(\xi_0')\eta_R(U^*),
\end{align}
where we denoted $x_0 \ceq qx^{-2}$, $\xi_0' \ceq q\xi^2$ and 
\begin{align*}
 R^L_i(z) &\ceq c(z,k)^{-1}\bigl(\eta_L(T_i)-b(z;k)\bigr) \quad 
              = c(z;k)^{-1}\bigl(\eta_L(T_i)-k\bigr)+1, \\
 R^R_i(z) &\ceq c(z,k^*)^{-1}\bigl(\eta_R(T_i^*)-b(z;k^*)\bigr) 
              = c(z;k^*)^{-1}\bigl(\eta_R(T_i^*)-(k^*)^{-1}\bigr)+1,
\end{align*}
using $c(z;k),b(z;k)$ in \eqref{eq:A:cd} and the duality anti-involution $*$ in \eqref{eq:A:*}.
We also used the redundant notation $k^*=k$. 
\end{lem}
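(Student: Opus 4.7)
The key is to apply the cocycle relation \eqref{eq:A:cocycle} to the reduced decomposition $\tu(\varpi) = s_0 u$ from \eqref{eq:A:tep}, which in $\bW$ gives $(\tu(\varpi), e) = (s_0, e)(u, e)$ and $(e, \tu(\varpi)) = (e, s_0)(e, u)$. Thus
\begin{align*}
 C_{1,0} = C_{(s_0, e)} \cdot (s_0, e)\, C_{(u, e)}\, (s_0, e)^{-1}, \quad
 C_{0,1} = C_{(e, s_0)} \cdot (e, s_0)\, C_{(e, u)}\, (e, s_0)^{-1}.
\end{align*}

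For the length-$0$ factors, $\ell(u) = 0$ implies $d_u \equiv 1$ via \eqref{eq:A:dw}, so \eqref{eq:A:tau} and \eqref{eq:A:wtsigma} give $\tau(u, e)(f) = \wtS_u \cdot f = \rho_{k,q}(U)\cdot f$ and $\tau(e, u)(f) = f \cdot \wtS_u^* = f \cdot U^*$. Under the identification $H_0^\bK \cong \bK \otimes_\bL \bH$ coming from \eqref{eq:A:bH=LH0}, left multiplication by $U \in H$ is, by definition \eqref{eq:A:etaL}, the operator $\eta_L(U)$, and right multiplication by $U^* \in H^*$ is $\eta_R(U^*)$ by \eqref{eq:A:etaR}. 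For the length-$1$ factors, I substitute \eqref{eq:A:wtS0'} into the definition \eqref{eq:A:tau}. Using $d_{s_0}(x) = k^{-1} - k x_0$ and $c(x_0;k) = (k^{-1} - k x_0)/(1 - x_0)$:
\[
 \tau(s_0, e)(f) = \frac{\wtS_0\, f}{k^{-1} - k x_0} = \bigl(c(x_0;k)^{-1}(\eta_L(T_0) - k) + 1\bigr) f = R_0^L(x_0)\, f,
\]
the last equality by $b(z;k) = k - c(z;k)$. The analogous computation of $\tau(e, s_0)$ using $\wtS_0^*$, with $\xi$ playing the role that $x^{-1}$ played on the left and the anti-involution $*$ converting left multiplication into right multiplication, yields $\tau(e, s_0)(f) = R_0^R(\xi_0')\, f$ with $\xi_0' = q\xi^2$.

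Finally, I combine these ingredients. Because the $\bW$-action on $H_0^\bK$ only affects the scalar coefficients in $\bK$ (not the $H_0$-part), conjugating the $(u, e)$-factor by $(s_0, e)$ only shifts the $x$-dependence of the operator $\eta_L(U)$; since $\eta_L(U)$ has constant $\bK$-coefficients, the conjugation is trivial. Combining with $C_{(s_0, e)}$, accounting for the built-in $(s_0, e)^{-1}$ shift in \eqref{eq:A:Cw}, the product collapses to $R_0^L(x_0) \eta_L(U)$, establishing \eqref{eq:A:C10}. The derivation of \eqref{eq:A:C01} follows by exactly parallel reasoning on the "right" side via $\eta_R$ and $*$.

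The main obstacle is careful bookkeeping across three identifications: the PBW decomposition \eqref{eq:A:PBW} of $\bH$; the $\bL$-module structure \eqref{eq:A:H-Lmod} which identifies $Y$-dependence on the right of $\bH$ with the spectral variable $\xi$ in $\bK$; and the conversion of left multiplication by elements of $H$ (respectively right multiplication by elements of $H^*$) into the $\bK$-linear operators $\eta_L$ (respectively $\eta_R$) on $H_0^\bK$. A subtle point is verifying the two equivalent forms of $R_0^R(z)$, since applying $*$ exchanges left and right and interacts with the Hecke relation $(T_0 - k)(T_0 + k^{-1}) = 0$, producing the $(k^*)^{-1}$ (rather than $k^*$) that appears in the second expression for $R_0^R$.
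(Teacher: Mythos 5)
Your derivation is correct and is essentially the paper's own proof in light repackaging: the paper computes $C_{1,0}=\tau(\tu(\varpi),e)\,(\tu(\varpi),e)^{-1}$ in one stroke, using $\tu(\varpi)=s_0u$, $\wtS_{s_0u}=\wtS_0\wtS_u$ and $d_{s_0u}(x)=k^{-1}-kx_0$, whereas you feed the same data through the cocycle relation \eqref{eq:A:cocycle} as a length-one factor, a length-zero factor and a conjugation (which is exactly how the paper itself handles $C_{2,0},C_{0,2}$ in \cref{lem:A:C20C02}). One notational caution: it is $C_{(s_0,e)}=\tau(s_0,e)(s_0,e)^{-1}$, not $\tau(s_0,e)$ itself, that equals $R^L_0(x_0)$; your display for $\tau(s_0,e)(f)$ elides the $(s_0,e)$-shift on the coefficients, although you do account for it when assembling.

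Two substantive points need repair, though neither affects the final formulas \eqref{eq:A:C10}--\eqref{eq:A:C01}. First, $\eta_L(U)$ does \emph{not} have constant $\bK$-coefficients: since $U=T_1Y^{-1}$ and $UT_1=Y$, one has $\eta_L(U)T_e=\xi^{-1}T_1$ and $\eta_L(U)T_1=\xi\,T_e$. What makes the conjugation $(s_0,e)\,C_{(u,e)}\,(s_0,e)^{-1}$ trivial is the weaker, true statement that the coefficients of $\eta_L(U)$ depend only on $\xi$, while $(s_0,e)$ acts only on $x$ by \eqref{eq:A:bWbL}; dually, $\eta_R(U^*)$ has coefficients depending only on $x$ (as $U^*=XT_1$) and $(e,s_0)$ moves only $\xi$. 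Second, your closing ``subtle point'' is not a subtlety but a typo in the statement, and the mechanism you propose for it does not exist. Since $*$ fixes $k$ (recall $k^*=k$) and is an algebra anti-involution, $T_i^*$ satisfies the same quadratic relation as $T_i$, and $b(z;k^*)=k^*-c(z;k^*)$ by \eqref{eq:A:cd}; hence the second displayed form of $R^R_i$ must read $c(z;k^*)^{-1}\bigl(\eta_R(T_i^*)-k^*\bigr)+1$, exactly parallel to $R^L_i$. No interaction of $*$ with the Hecke relation can produce $-(k^*)^{-1}$ there, and the paper's own computation confirms this: it ends with $\bigl((\eta_R(T_0^*)-k)c(qY^2;k)^{-1}+1\bigr)\eta_R(U^*)$, i.e.\ with $-k^*$ (symmetrically, the $-k^{-1}$ in its intermediate display for $C_{1,0}$ is the mirror typo, as the correct constant there is $-k=-b(z;k)-c(z;k)+2k$ evaluated consistently with \eqref{eq:A:wtS0'}). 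When a statement's two printed forms are inconsistent, flag the inconsistency and prove the one your computation yields, rather than supplying a post hoc justification for the misprint.
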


\begin{proof}
We first calculate $C_{1,0}=C_{(\tu(\varpi),e)}=\tau(\tu(\varpi),e) \, (\tu(\varpi),e)^{-1}$. We have $\tu(\varpi)=us_1=s_0u$ by \eqref{eq:A:tep}. Then, using \eqref{eq:A:Cw} and \eqref{eq:A:tau}, for any element $f=\sum_{w \in W_0}f_w T_w \in H_0^{\bK}$ ($f_w \in \bK$), we have
\begin{align*}
  C_{1,0}f
&=\tau(s_0u,e) \, (s_0u,e)^{-1} \Bigl(\sum_{w\in W_0}f_wT_w\Bigr) 
 =\tau(s_0u,e) \Bigl(\sum_{w\in W_0} \bigl((s_0u,e)^{-1}f_w\bigr) T_w\Bigr) \\
&=d_{s_0u}(x)^{-1}\wt{\sigma}_{(s_0u,e)} 
  \Bigl(\sum_{w\in W_0}(s_0u,e)^{-1}f_wT_w\Bigr) \\
&=d_{s_0u}(x)^{-1}
  \Bigl(\sum_{w\in W_0}\bigl((s_0u,e)(s_0u,e)^{-1}f_w\bigr) \wtS_{s_0u}T_w\Bigr) 
 =d_{s_0u}(x)^{-1} \Bigl(\sum_{w\in W_0} f_w \wtS_{s_0u}T_w\Bigr).
\end{align*}
Now, by \eqref{eq:A:wtS0'}, we have 
\[
 \wtS_{s_0u} = \wtS_0 \wtS_u = 
 \bigl((1-x_0)(\rho_{k,q}(T_0)-k)+k^{-1}-kx_0\bigr)\rho_{k,q}(U).
\]
On the other hand, \eqref{eq:A:dw} and \eqref{eq:A:di} yield $d_{s_0u}(x) = k^{-1}-kx_0$, and by \eqref{eq:A:cd}, we have
\begin{align}\label{eq:A:d0pi-1}
 d_{s_0u}(x)^{-1} (1-x_0) = c(x_0;k)^{-1}.
\end{align}
Then, using \cref{dfn:3:etaLR}, we have 
\[
 C_{1,0}f = \bigl(c(x_0;k)^{-1}(\eta_L(T_0)-k^{-1})+1\bigr)\eta_L(U)(f),
\]
which yields \eqref{eq:A:C10}.

Similarly, the action of $C_{0,1}$ on $f=\sum_{w \in W_0}f_w T_w \in H_0^{\bK}$ is computed as
\begin{align*}
  C_{0,1}f
 &=\tau(e,s_0u) (e,s_0u)^{-1} \Bigl(\sum_{w\in W_0}f_wT_w\Bigr) 
 =d_{s_0u}(\xi^{-1})^{-1} \cdot \Bigl(\sum_{w \in W_0} f_w T_w \wtS^*_{s_0u}\Bigr),
\end{align*}
where $\cdot$ denotes the $\bK$-action (see \eqref{eq:A:H-Lmod}).
By \eqref{eq:A:wtS01p} and \eqref{eq:A:wtS0'}, we have
\begin{align*}
 \wtS^*_{s_0u} = \wtS^*_{u} \wtS^*_0 = 
 \rho_{k,q}(U)^* \bigl((\rho_{k,q}(T_0)^*-k)(1-q^{-1}Y^{-2})+k^{-1}-kq^{-1}Y^{-2}\bigr).
\end{align*}
Now recall that a function $g(\xi)$ acts on $H_0$ by the right multiplication of $g(Y)$ (see \eqref{eq:A:H-Lmod}). Then, by \eqref{eq:A:d0pi-1} and \cref{dfn:3:etaLR}, we have
\[
 C_{0,1} f = \bigl((\eta_R(T_0^*)-k)c(qY^2;k)^{-1}+1\bigr)\eta_R(U^*)(f), 
\]
which yields \eqref{eq:A:C01}.
\end{proof}

\begin{rmk}\label{rmk:A:CewCwe}
A few comments on \cref{lem:A:C10C01} are in order.
\begin{enumerate}
\item 
By \cite[Remark 4.4]{vM}, we have 
\begin{align}\label{eq:A:CewCwe}
 C_{(e,w)}(x,\xi) = C_\iota C_{(w,e)}(\xi^{-1},x^{-1}) C_\iota
\end{align}
for any $w \in W$, where we used the notation \eqref{eq:A:Ctg}. The result of \cref{lem:A:C10C01} is consistent with this equality.
\item
As shown in \cite[Lemma 4.3]{vMS}, the rational function 
\[
 R_i(z) \ceq c(z,k)^{-1}\bigl(\eta_L(T_i)-b(z;k)\bigr)
\]
valued in $\End(H_0)$ satisfies the Yang-Baxter equation $R_0(z)R_1(zz')R_0(z')=R_1(z')R_0(zz')R_1(z)$.
In the terminology \cite[\S1.3.6]{C05}, $R_i(z)$ is called the baxterization of $T_i$.
\end{enumerate}
\end{rmk}

For later use, let us cite the following two facts.

\begin{fct}[{\cite[Lemma 5.1]{vM}}]\label{fct:A:lim}
Let $\clA \ceq \bC[x^{-1}] \subset \bL=\bC[x^{\pm1},\xi^{\pm1}]$, and $\clQ_0(\clA)$ be the subring of the quotient field $\clQ(\clA)=\bC(x)$ consisting of rational functions which are regular at $x^{-1}=0$. Considering $\clQ_0(\clA) \otimes \bC[\xi^{\pm1}]$ as a subring of $\bC(x,\xi)$, we have 
\begin{align}\label{eq:A:10A}
 C_{1,0} \in (\clQ_0(\clA) \otimes \bC[\xi^{\pm1}]) \otimes \End(H_0).
\end{align}
Moreover, setting $C^{(0)}_{1,0} \ceq \rst{C_{1,0}}{x^{-1}=0} \in \bC[\xi^{\pm1}] \otimes \End(H_0)$, we have
\begin{align}\label{eq:A:100}
 C^{(0)}_{1,0} = k \eta_L(T_1Y^{-1}T_1^{-1}).
\end{align}
Similarly, defining $\clB \ceq \bC[\xi] \subset \bL$, and $\clQ_0(\clB) \subset \clQ(\clB)$ to be the subring consisting of rational functions which are regular at $\xi=0$, we have
\[
 C_{0,1} \in (\bC[x^{\pm1}] \otimes \clQ_0(\clB)) \otimes \End(H_0).
\] 
Moreover, setting $C^{(0)}_{0,1} \ceq \rst{C_{0,1}}{\xi=0} \in \bC[x^{\pm1}] \otimes \End(H_0)$, we have
\[
 C^{(0)}_{0,1} = k^* \eta_R(T_1Y^{-1}T_1^{-1}).
\]
\end{fct}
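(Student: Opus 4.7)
The plan is to read off both assertions directly from the explicit cocycle formulas \eqref{eq:A:C10}--\eqref{eq:A:C01} of \cref{lem:A:C10C01}, namely $C_{1,0} = R^L_0(x_0)\eta_L(U)$ with $x_0 = qx^{-2}$ and $C_{0,1} = R^R_0(\xi_0')\eta_R(U^*)$ with $\xi_0' = q\xi^2$.

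First, for $C_{1,0}$, the only $x$-dependence is carried by the scalar
\[
 c(x_0;k)^{-1} = \frac{1-qx^{-2}}{k^{-1}-kqx^{-2}},
\]
a rational function of $x^{-1}$ whose denominator does not vanish at $x^{-1}=0$; thus $c(x_0;k)^{-1} \in \clQ_0(\clA)$ and tends to $k$ in the limit. The remaining elements $\eta_L(T_0)$ and $\eta_L(U)$ depend only on the Dunkl operator $Y$ and not on $X$: since $T_0 = YU$ and $U = YT_1^{-1}$ lie in $H = H_0 \otimes \bC[Y^{\pm 1}]$, their PBW expansions introduce factors $Y^\lambda$ which, under the $\bL$-action \eqref{eq:A:H-Lmod}, become polynomials in $\xi^{\pm 1}$. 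This yields the membership \eqref{eq:A:10A}, and evaluating at $x^{-1}=0$ gives
\[
 C^{(0)}_{1,0} = \bigl(k\,\eta_L(T_0)+1-k^2\bigr)\eta_L(U).
\]

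To identify this with $k\,\eta_L(T_1 Y^{-1} T_1^{-1})$, I would perform a short computation in $H$: from $Y=UT_1$ one obtains $Y^{-1}=T_1^{-1}U$ and hence $T_1 Y^{-1} T_1^{-1} = UT_1^{-1}$; expanding $T_1^{-1} = T_1 - k + k^{-1}$ and using the braid relation $UT_1 = T_0 U$ yields
\[
 k\,T_1 Y^{-1} T_1^{-1} = k\,UT_1 + (1-k^2)U = (k\,T_0 + 1-k^2)U,
\]
so applying the algebra homomorphism $\eta_L$ reproduces $C^{(0)}_{1,0}$ exactly. The $C_{0,1}$ claims follow by the same argument with $(x,X,\eta_L,T_0,U,k)$ replaced by $(\xi,Y,\eta_R,T_0^*,U^*,k^*)$ throughout---here one uses that $\eta_R(T_0^*)$ and $\eta_R(U^*)$ depend only on $x$, since $T_0^*,U^* \in H^*$ are built from $T_1$ and $X^{\pm 1}$ by the duality $*$, and that $c(\xi_0';k^*)^{-1} \to k^* = k$ as $\xi \to 0$---or, alternatively, one can deduce it from the symmetry \eqref{eq:A:CewCwe} by conjugating the already-computed $C^{(0)}_{1,0}$ by the $(x,\xi)$-independent element $C_\iota$.

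The main obstacle is careful bookkeeping rather than any deep calculation: one must track which factors live in $\bC[x^{\pm 1}]$ versus in $\bC[\xi^{\pm 1}]$ through the $\bL$-module structure \eqref{eq:A:H-Lmod}, and, for the $C_{0,1}$ half, correctly match the antihomomorphism $\eta_R$ acting on $H^*$-elements (obtained via $*$) against the claimed formula so that the multiplication order in $\End(H_0)$ is preserved.
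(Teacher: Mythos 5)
Your proposal is correct and takes essentially the same route as the paper: the paper likewise reads off \eqref{eq:A:10A} and \eqref{eq:A:100} from the explicit expression \eqref{eq:A:C10} of \cref{lem:A:C10C01}, using the limit $c(x_0;k)^{-1}\to k$ at $x^{-1}=0$ and exactly the Hecke identities $Y^{-1}=T_1^{-1}U$, $T_1^{-1}=T_1-k+k^{-1}$, $UT_1=T_0U$ to identify $C^{(0)}_{1,0}=\bigl(k\,\eta_L(T_0)+1-k^2\bigr)\eta_L(U)=k\,\eta_L(T_1Y^{-1}T_1^{-1})$. The only difference is cosmetic: the paper proves just the $C_{1,0}$ half and dismisses $C_{0,1}$ as similar, whereas you spell out both halves (and your alternative of conjugating $C^{(0)}_{1,0}$ by $C_\iota$ via \eqref{eq:A:CewCwe} is equally valid).
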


\begin{proof}
We only show the statements for $C_{1,0}$ using \cref{lem:A:C10C01}. Let us denote $A(x) \approx A_0$ if $A(x) = A_0 + O(x^{-1})$ by expansion in terms of $x^{-1}$. Then we have $c(x_0;k)=c(qx^{-2};k) \approx k$, and the expression \eqref{eq:A:C10} yields
\[
 C_{1,0} \approx C_{1,0}^{(0)} \ceq \bigl(k(\eta_L(T_0)-k)+1\bigr)\eta_L(U) = 
 k \eta_L(T_1 Y^{-1} T_1^{-1}),
\]
where we used $T_0U=UT_1$ and $T_1^{-1}=T_1-k+k^{-1}$ in $H=H(k)$ from \eqref{eq:A:Hrel}, and $Y^{-1}=T_1^{-1}U$ from \eqref{eq:A:Y}. Thus we have \eqref{eq:A:10A} and \eqref{eq:A:100}.
\end{proof}

For the next fact, note that we have $\wtS_w^* \in H \subset \bH$ for all $w \in W_0$.

\begin{fct}[{\cite[Lemma 4.2]{vMS}}]\label{fct:A:Lem42}
For $w \in W_0$, we set
\[
 \tau_w \ceq \eta_L(\wtS_{w^{-1}}^*) T_e \in \bC[\{e\} \times T] \otimes H_0 \subset H_0^{\bK}.
\]
Then the following statements hold.
\begin{enumerate}
\item
$\{\tau_w \mid w \in W_0\}$ is a $\bK$-basis of $H_0^{\bK}$ consisting of simultaneous eigenfunctions for the $\eta_L$-action of $\bC[Y^{\pm1}] \subset \bH$ on $H_0^{\bK}$.
\item
For $p \in \bC[T]$ and $w \in W_0$, we have
\[
 \eta_L(p(Y))(\gamma) \, \tau_w(\gamma) = (w^{-1}p)(\gamma) \, \tau_w(\gamma)
\]
as $H_0$-valued regular functions in $\gamma \in T$.
\end{enumerate}
\end{fct}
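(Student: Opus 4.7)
The plan is to derive (2) by transporting the built-in intertwining of $\wtS_v \in \bH$ with the $X$-action through the duality anti-involution $*$ of \eqref{eq:A:*} to obtain a dual intertwining with the $Y$-action, and then to deduce (1) by an explicit change-of-basis computation that is completely concrete in rank one.

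For the dual intertwining, I would first note that from the formula \eqref{eq:A:wtS0'} together with the cross relation $T_1 X T_1 = X^{-1}$ of \eqref{eq:A:absDAHA}, the identity
\begin{equation*}
 \wtS_v \cdot X^\lambda = X^{v \lambda} \cdot \wtS_v
 \qquad (v \in W_0, \ \lambda \in \Lambda)
\end{equation*}
holds in $\bH$. Applying $*$, which is an anti-homomorphism satisfying $T_1^* = T_1$ and $(X^\lambda)^* = Y^{-\lambda}$, converts this into $Y^{-\lambda} \wtS_v^* = \wtS_v^* \, Y^{-v\lambda}$. Setting $v = w^{-1}$, replacing $\lambda$ by $-\lambda$, and rewriting the result in terms of a general $p \in \bC[\xi^{\pm 1}]$ yields
\begin{equation*}
 p(Y) \, \wtS_{w^{-1}}^* = \wtS_{w^{-1}}^* \cdot (w^{-1} p)(Y),
\end{equation*}
where the $W_0$-action on $\bC[\xi^{\pm 1}]$ is the standard one, which is consistent with the $\bW$-action on $\xi$ in \eqref{eq:A:bWbL} because $(w^{-1})^\diamond = w^{-1}$ for $w \in W_0$ by \eqref{eq:A:diamond}. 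The note preceding the statement gives $\wtS_{w^{-1}}^* \in H$, so $\eta_L(\wtS_{w^{-1}}^*)$ is defined. Applying the $\bK$-linear algebra homomorphism $\eta_L$ then gives
\begin{equation*}
 \eta_L(p(Y)) \, \tau_w
 = \eta_L\bigl(p(Y) \wtS_{w^{-1}}^*\bigr) T_e
 = \eta_L\bigl(\wtS_{w^{-1}}^* (w^{-1} p)(Y)\bigr) T_e
 = (w^{-1} p)(\xi) \, \tau_w,
\end{equation*}
where the last equality uses $\eta_L\bigl((w^{-1} p)(Y)\bigr) T_e = (w^{-1} p)(\xi) \, T_e$ in $H_0^\bK$, which in turn follows from the PBW decomposition $H \cong H_0 \otimes \bC[Y^{\pm 1}]$ combined with the $\bK$-module structure \eqref{eq:A:H-Lmod}. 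Evaluating at $\gamma \in T$ then yields (2), and the regularity in $\gamma$ and $H_0$-valuedness (as well as $\tau_w \in \bC[\{e\} \times T] \otimes H_0$) are immediate from $\wtS_{w^{-1}}^* \in H = H_0 \otimes \bC[Y^{\pm 1}]$.

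For (1), I would verify the basis claim directly. In the rank one setting $W_0 = \{e, s_1\}$, so $\tau_e = T_e$, and from \eqref{eq:A:wtS0'} one computes $\wtS_{s_1}^* = T_1 - T_1 Y^{-2} + (k^{-1} - k)$, which gives $\tau_{s_1} = (k^{-1} - k) T_e + (1 - \xi^{-2}) T_1$ in $H_0^\bK = \bK T_e \oplus \bK T_1$. The transition matrix from $\{T_e, T_1\}$ to $\{\tau_e, \tau_{s_1}\}$ is triangular with non-vanishing determinant $1 - \xi^{-2} \in \bK$, so $\{\tau_e, \tau_{s_1}\}$ is a $\bK$-basis, and the simultaneous eigenfunction property then follows from (2) already proved. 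The main technical point is tracking the $\diamond$ involution of \eqref{eq:A:diamond} and the sign flip $(X^\lambda)^* = Y^{-\lambda}$ under $*$ so that the eigenvalue comes out as $(w^{-1} p)(\gamma)$ rather than $(w p)(\gamma)$; in rank one this bookkeeping is essentially trivialized by $W_0 = \{e, s_1\}$, but it is the step where one would need to be most careful when extending the argument.
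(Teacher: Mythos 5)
Your proof is correct. Note that the paper itself gives no proof of this statement: it is imported as a Fact from \cite[Lemma 4.2]{vMS} (the note ``$\wtS_w^* \in H$'' placed just before it is the only supporting remark), so there is no internal argument to compare against; measured against the source, your route is essentially the standard one. The key identity $Y^\lambda\,\wtS_{w^{-1}}^* = \wtS_{w^{-1}}^*\,Y^{w^{-1}\lambda}$, obtained by applying the anti-involution $*$ to the intertwining relation $\wtS_v X^\lambda = X^{v\lambda}\wtS_v$, is exactly the mechanism behind \cite[Lemma 4.2]{vMS}, and your bookkeeping checks out: from $\wtS_1 = (1-X^2)T_1 + (k^{-1}-k)$ one indeed gets $\wtS_{s_1}^* = T_1(1-Y^{-2}) + (k^{-1}-k)$, and one can verify $Y\wtS_1^* = \wtS_1^*Y^{-1}$ directly using $T_1 Y^{-1} = Y T_1^{-1}$ (which follows from $Y = UT_1$). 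The only genuine difference is in part (1): the general proof in \cite{vMS} establishes the basis property by a triangularity argument for the transition matrix with respect to the Bruhat order on $W_0$, of which your explicit rank-one computation $\tau_e = T_e$, $\tau_{s_1} = (k^{-1}-k)T_e + (1-\xi^{-2})T_1$ with determinant $1-\xi^{-2} \ne 0$ in $\bK$ is precisely the $A_1$ instance; this buys concreteness at the (acceptable, given the paper's rank-one scope) cost of not generalizing verbatim to higher rank.
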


We close this subsection with:

\begin{lem}\label{lem:A:C20C02}
The cocycles $C_{2,0}$ and $C_{0,2}$ are given by
\begin{align*}
 C_{2,0} = R^L_0(x_0)R^L_1(x_1'), \quad C_{0,2} = R^R_0(\xi_0')R^R_1(\xi_1')
\end{align*}
Here we used the notation of \cref{lem:A:C10C01}:
$x_0 \ceq qx^{-2}$, $\xi_0' \ceq q\xi^2$ and 
\begin{align*}
 R^L_i(z) &\ceq c(x_i,k)^{-1}\bigl(\eta_L(T_i)-b(x_i;k)\bigr) \quad 
              = c(x_i;k)^{-1}\bigl(\eta_L(T_i)-k\bigr)+1, \\
 R^R_i(z) &\ceq c(\xi_i,k^*)^{-1}\bigl(\eta_R(T_i^*)-b(\xi_i;k^*)\bigr) 
                = c(\xi_i;k^*)^{-1}\bigl(\eta_R(T_i^*)-(k^*)^{-1}\bigr)+1.
\end{align*}
We further used $x_1' \ceq q^2x^{-2}$ and $\xi_1' \ceq q^2\xi^2$.
\end{lem}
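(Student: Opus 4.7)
The plan is to derive both identities from the cocycle relation \eqref{eq:A:cocycle} applied to the factorization $\tu(2\varpi) = \tu(\varpi) \cdot \tu(\varpi)$, combined with the explicit formulas \eqref{eq:A:C10}--\eqref{eq:A:C01} for $C_{1,0}$ and $C_{0,1}$ from \cref{lem:A:C10C01}.

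For the first identity, taking $\bfw = \bfw' = (\tu(\varpi),e)$ in \eqref{eq:A:cocycle} yields
\[
 C_{2,0} = C_{1,0} \cdot (\tu(\varpi),e)\,C_{1,0}\,(\tu(\varpi),e)^{-1},
\]
where the conjugation acts only on the $\bK$-coefficient of $C_{1,0}(x,\xi) = R^L_0(x_0)\eta_L(U)$ via \eqref{eq:A:bWbL}. Using $\tu(\varpi) = u s_1$ and \eqref{eq:A:Wx}, one computes $(\tu(\varpi)f)(x) = f(q^{-1/2}x)$, so the function $x_0 = qx^{-2}$ is sent to $q(q^{-1/2}x)^{-2} = q^2 x^{-2} = x_1'$. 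Hence $(\tu(\varpi),e)C_{1,0}(\tu(\varpi),e)^{-1} = R^L_0(x_1')\eta_L(U)$, and therefore
\[
 C_{2,0} = R^L_0(x_0)\,\eta_L(U)\,R^L_0(x_1')\,\eta_L(U).
\]
Invoking $\eta_L(U)^2 = \eta_L(U^2) = 1$ together with the Hecke-algebra identity $UT_0U = T_1$ (equivalent to $UT_1 = T_0 U$ in \eqref{eq:A:Hrel}) collapses the middle factor:
\[
 \eta_L(U)\,R^L_0(x_1')\,\eta_L(U) = c(x_1';k)^{-1}\bigl(\eta_L(T_1)-k\bigr) + 1 = R^L_1(x_1'),
\]
which gives $C_{2,0} = R^L_0(x_0)R^L_1(x_1')$ as desired.

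The derivation of $C_{0,2}$ runs in parallel but requires two adjustments. First, by \eqref{eq:A:diamond} the action of $(e,\tu(\varpi))$ on $\bK$ is that of $\tu(-\varpi)_q$ on the $\xi$-variable; the analogous computation gives $(e,\tu(\varpi))g(\xi) = g(q^{1/2}\xi)$, and consequently $\xi_0' = q\xi^2 \mapsto q(q^{1/2}\xi)^2 = q^2\xi^2 = \xi_1'$. Second, since $\eta_R$ is an algebra anti-homomorphism and $*$ is an anti-involution on $\bH$,
\[
 \eta_R(U^*)\eta_R(T_0^*)\eta_R(U^*) = \eta_R(U^* T_0^* U^*) = \eta_R\bigl((UT_0U)^*\bigr) = \eta_R(T_1^*),
\]
and $\eta_R(U^*)^2 = \eta_R\bigl((U^*)^2\bigr) = 1$. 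Feeding these into the same cocycle argument as above yields $\eta_R(U^*)R^R_0(\xi_1')\eta_R(U^*) = R^R_1(\xi_1')$, and hence $C_{0,2} = R^R_0(\xi_0')R^R_1(\xi_1')$.

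No significant obstacle is expected: the proof is a single application of the cocycle property together with standard relations in the (double) affine Hecke algebra. The two delicacies to watch are (i) tracking correctly the two shift actions, namely $x \mapsto q^{-1/2}x$ for $\tu(\varpi)$ on the geometric variable versus $\xi \mapsto q^{1/2}\xi$ for the $\diamond$-twisted action on the spectral variable, and (ii) respecting the order reversal imposed by the anti-multiplicativity of $\eta_R$ when combining it with the anti-involution $*$.
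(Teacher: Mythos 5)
Your proposal is correct and matches the paper's intended argument: the paper's own proof of \cref{lem:A:C20C02} simply invokes the cocycle relation \eqref{eq:A:cocycle} together with ``a similar calculation'' to \cref{lem:A:C10C01}, which is exactly what you carry out, and indeed the paper's proof of the $(C^\vee_1,C_1)$-analogue (\cref{lem:CC:C10C01}) uses the identical pattern $C_{\bfw\bfw'}=C_{\bfw}\,\bfw C_{\bfw'}\bfw^{-1}$ with the shifted argument $x_1'=q^2x^{-2}$. Your two flagged delicacies --- the shift $x\mapsto q^{-1/2}x$ versus $\xi\mapsto q^{1/2}\xi$ coming from the $\diamond$-twist, and the order reversal for the anti-homomorphism $\eta_R$ combined with $U^*T_0^*U^*=(UT_0U)^*=T_1^*$ --- are handled correctly, and your implicit use of the fact that the matrix entries of $\eta_L(U)$ (resp.\ $\eta_R(U^*)$) depend only on $\xi$ (resp.\ $x$), hence are fixed by the relevant conjugation, is also sound.
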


\begin{proof}
It is a consequence of the cocycle relation \eqref{eq:A:cocycle} and a similar calculation of \cref{lem:A:C10C01}. We omit the detail.
\end{proof}

\subsection{Bispectral Macdonald-Ruijsenaars equations}\label{ss:A:bMR}

As in the previous \cref{ss:A:qKZ}, we fix generic complex numbers $q^{\shf}$ and $k$.

We consider the crossed product algebra (the smash product algebra) 
\[
 \bD_q^{\bW} \ceq \bW \ltimes \bC(x,\xi), 
\]
where $\bW$ acts as field automorphisms on $\bC(x,\xi)$ by \eqref{eq:A:bWbL}, 
and also the subalgebra $\bD_q$ of $\bD_q^\bW$ defined by
\[
 \bD_q \ceq \bigl(\tu(\Lambda)\times\tu(\Lambda)\bigr) \ltimes \bC(x,\xi) \subset \bD_q^\bW,
\]
where $\tu(\Lambda)\times\tu(\Lambda)$ is regarded as a subgroup of $W \times W \subset \bW$.
The subalgebra $\bD_q$ is identified with the algebra of $q$-difference operators on $\bC(x,\xi)$.
We can expand each $D \in \bD_q^\bW$ as
\begin{align}\label{eq:A:D}
 D = \sum_{\bfw \in \bW} f_{\bfw} \bfw = \sum_{\bfs \in W_0 \times W_0} D_{\bfs} \bfs
\end{align}
with $f_{\bfw} \in \bC(x,\xi)$ and $D_{\bfs} = \sum_{\bft \in \tu(\Lambda)\times\tu(\Lambda)} g_{\bft \bfs} \bft \in \bD_q$. Then we define the restriction map $\Res\colon \bD_q^\bW \to \bD_q$ to be the $\bC(x,\xi)$-linear map
\begin{align}\label{eq:A:Res}
 \Res(D) \ceq \sum_{\bfs \in W_0 \times W_0} D_{\bfs}.
\end{align} 

Next, we introduce two realizations of the basic representation $\rho$ of $H$. One is given by 
\begin{align}\label{eq:A:rhox}
 \rho^x_{1/k,q}\colon H(1/k) \lto \bD_q^{\bW}
\end{align}
which is the map $\rho_{1/k,q}$ from \eqref{eq:A:rho}, regarded as an algebra homomorphism from $H(1/k)$ to the subalgebra $\bC(x)[W\times\sbr{e}]$ of $\bD^\bW_q$. The other is given by
\begin{align}\label{eq:A:rhoxi}
 \rho^\xi_{k,1/q}\colon H(k) \lto \bD_q^\bW
\end{align}
defined as the map $\rho_{k,1/q}$ from \eqref{eq:A:rho}, regarded as an algebra homomorphism from $H(1/k)$ to the subalgebra $\bC(\xi)[\sbr{e}\times W]$ of $\bD^\bW_q$.

\begin{dfn}\label{dfn:A:DxDY}
For $h \in H(1/k)$, we define
\[
 D_h^x \ceq \rho^x_{1/k,q}(h) \in \bD^\bW_q.
\]
Also, for $h' \in H(k)$, we define 
\[
 D_{h'}^\xi \ceq \rho^\xi_{k,1/q}(h') \in \bD^\bW_q.
\]
\end{dfn}

\begin{rmk}\label{rmk:A:rho}
Our choice \eqref{eq:A:rhox} and \eqref{eq:A:rhoxi} of the basic representations affects the parameters in the bispectral correspondence \eqref{eq:A:chi+} of quantum Knizhnik-Zamolodchikov and Macdonald-Ruijsenaars equations. Our argument is equivalent to \cite[\S6.2]{vMS} and \cite[\S6.1]{vM}, and opposite to \cite[Definition 2.17]{S}. See \cref{dfn:CC:Dxxi} for the $\CvC$ case. 
\end{rmk}

Let $\bC[z^{\pm1}]^{W_0}$ denote the ring of Laurent polynomials of variable $z$ which are invariant under the $W_0$-action $s_1(z) \ceq z^{-1}$. Using the restriction map $\Res$ in \eqref{eq:A:Res}, we introduce:

\begin{dfn}\label{dfn:A:LxLY}
For $p \in \bC[z^{\pm1}]^{W_0}$, we define $L_p^x,L_p^\xi \in \bD_q$ by
\begin{align}\label{eq:A:Lp}
 L_p^x = L_p^x(k,q) \ceq \Res(D^x_{p(Y)}), \quad  L_p^\xi = L_p^\xi(k,q) \ceq \Res(D^\xi_{p(Y)}),
\end{align}
where we regard $p(Y) \in H(1/k)$ for $L_p^x$, and $p(Y) \in H(k)$ for $L_p^\xi$.
\end{dfn}

Since we have $\bC[z^{\pm1}]^{W_0} \cong \bC[z+z^{-1}]$, it is natural to introduce:

\begin{dfn}\label{dfn:A:p1}
We denote $p_1 \ceq z+z^{-1}$, the generator of the invariant ring $\bC[z^{\pm1}]^{W_0}$.
\end{dfn}

Using the function $c(\cdot;k)$ in \eqref{eq:A:cd}, we can write down 
\[
 L_{p_1}^x, L_{p_1}^\xi \in \bD_q \subset \End\bigl(\bC(x,\xi)\bigr).
\]
Let us denote the action of $w \in W$ on functions of $x$ given in \eqref{eq:A:Wx} as 
\[
 w^x \in \End(\bC(x)) \subset \End\bigl(\bC(x,\xi)\bigr).
\]
Explicitly, for $f=f(x) \in \bC(x)$, we have
\begin{align}\label{eq:A:wx}
 (s_0^xf)(x) \ceq f(q x^{-1}), \quad (s_1^xf)(x) = f(x^{-1}), \quad 
 (u^xf)(x) = f(q^{\shf}x^{-1}), \quad (\tu(\varpi)^xf)(x) = f(q^{\shf}x). 
\end{align}
Recall that it is compatible with $\rho^x_{1/k,q}$ in \eqref{eq:A:rhox}. 
We also denote by 
\[
 w^\xi \in \End(\bC(\xi)) \subset \End\bigl(\bC(x,\xi)\bigr)
\]
the action on functions $g=g(\xi) \in \bC(\xi)$. It is given by 
\begin{align}\label{eq:A:wxi}
 (s_0^\xi g)(\xi) \ceq g(q^{-1}\xi^{-1}), \quad (s_1^\xi g)(\xi) = g(\xi^{-1}), \quad 
 (u^\xi g)(\xi) = g(q^{-\shf}\xi^{-1}), \quad (\tu(\varpi)^\xi g)(\xi) = g(q^{-\shf}\xi),
\end{align}
and is compatible with $\rho^\xi_{k,1/q}$ in \eqref{eq:A:rhoxi}. 

\begin{prp}\label{prp:A:Lp1}
We have
\begin{align}\label{eq:A:Lp1}
 L_{p_1}^{x}(k,q) = A(x)T_{q^{\shf},x} + A(x^{-1})T_{q^{-\shf},x}, \quad
 L_{p_1}^\xi(k,q) = A^*(\xi^{-1})T_{q^{\shf},\xi} + A^*(\xi)T_{q^{-\shf},\xi} 
\end{align}
with 
\begin{align*}
 A(z) \ceq c(z^2;k) = \frac{k^{-1}-kz^2}{1-z^2}, \quad A^*(z) \ceq c(z^2;k^*) = A(z).
\end{align*}
Here we used the redundant notation $k^*=k$ for the comparison with $\CvC$ case (\cref{prp:CC:Lp1}).
\end{prp}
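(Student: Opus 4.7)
The plan is to verify both formulas by direct computation. Starting from $p_1(Y) = Y + Y^{-1}$ with $Y = UT_1$ and $Y^{-1} = T_1^{-1}U$, I apply the basic representation from \eqref{eq:A:T1}. For $L_{p_1}^x$, using $\rho^x_{1/k,q}$ (so that $Y \in H(1/k)$ and $T_1^{-1} = T_1 - 1/k + k$), a direct calculation shows that $\rho^x(Y + Y^{-1})$, as an operator on $\bC(x)$, is a sum of three explicit terms: two pure translations $g(x) \cdot \tu(\pm\varpi)^x$ whose coefficients involve $c(\cdot;1/k)$, and a third term of the form $g_C(x) \cdot u^x$ with $g_C$ built from $b(qx^{-2};1/k)$, $b(x^2;1/k)$, and the scalar $k - 1/k$ coming from the quadratic relation.

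Next, I apply the restriction map $\Res$ from \eqref{eq:A:Res}. The two pure-translation summands lie in $\bD_q$ and contribute directly to $D_e$. For the third summand, I use the identity $u = \tu(\varpi) s_1$ from \eqref{eq:A:tep} to decompose $(u,e) = (\tu(\varpi),e)(s_1,e)$ in $\bW$; the translation factor $(\tu(\varpi),e)$ becomes part of $D_{(s_1,e)}$, while $\Res$ discards the $(s_1,e)$ label. Combining, $L_{p_1}^x$ takes the form of two shifts $\tu(\pm\varpi)^x$ whose coefficients can be simplified using the key identity $c(z;k') + b(z;k') = k'$ from \eqref{eq:A:cd}; straightforward algebraic manipulation then identifies those two coefficients with $A(x)$ and $A(x^{-1})$, giving the first claimed formula. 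The computation of $L_{p_1}^\xi$ is entirely parallel: it uses $\rho^\xi_{k,1/q}$, the $\diamond$-twisted $\bW$-action on the $\xi$-factor from \eqref{eq:A:bWbL} (which shifts in the opposite direction), and the decomposition $(e,u) = (e,\tu(\varpi))(e,s_1)$; the same identity $c+b=k$ collapses the expression to $A^*(\xi^{-1}) T_{q^{1/2},\xi} + A^*(\xi) T_{q^{-1/2},\xi}$, with $A^* = A$ via the redundant identification $k^* = k$.

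The main obstacle is bookkeeping of conventions. The $\bW$-action on $\bL$ defined by \eqref{eq:A:Wx} and \eqref{eq:A:wx} extends the generators to a right action, so $(w_1 w_2)^x = w_2^x \circ w_1^x$; moreover, the $\diamond$-twist in \eqref{eq:A:bWbL} reverses the sign of $q$-shifts on the $\xi$-factor relative to those on the $x$-factor. Both conventions must be respected when decomposing $u$ into its translation and $W_0$ components for $\Res$, since mishandling the order of the semidirect decomposition would interchange the two Macdonald-Ruijsenaars coefficients. Once these conventions are fixed, the underlying manipulation is the elementary identity $c+b=k$ applied twice.
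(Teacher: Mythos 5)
Your proposal is correct and follows essentially the same route as the paper's proof: expand $\rho^x_{1/k,q}(UT_1+T_1^{-1}U)$ via the basic representation, apply $\Res$ using the decompositions $u=\tu(\varpi)s_1$ and $s_1u=\tu(-\varpi)$, and collapse the two surviving shift coefficients with the identity $c+b=k'$ (the paper packages the same cancellation as $k+k^{-1}-c(z;k^{-1})=c(z;k)$ together with $c(z;k^{-1})=c(z^{-1};k)$, and obtains the $\xi$-side by the formal substitution $(x,k,q)\mapsto(\xi,k^{-1},q^{-1})$ rather than redoing the parallel computation). Your intermediate three-term decomposition (translations with $c(\cdot;1/k)$ coefficients plus $g_C(x)\,u^x$ built from $b(qx^{-2};1/k)$, $b(x^2;1/k)$ and $k-k^{-1}$) checks out, and your caveat that the substitution formulas \eqref{eq:A:Wx} define a right action whose mishandling in the semidirect decomposition would interchange the two coefficients is accurate, so the only differences from the paper are presentational.
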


\begin{proof}
Let us compute $L_{p_1}^x = \Res(D_{Y+Y^{-1}}^x)$. Since $Y=U T_1$ and $u=\tu(\varpi)s_1$, using \eqref{eq:A:Hrel} and \eqref{eq:A:T1}, we have
\begin{align*}
  D^x_{Y+Y^{-1}} 
&=\rho^{x}_{1/k,q}(UT_1+T_1^{-1}U) \\
&=\bigl(\tu(\varpi)^x s_1^x\bigr) \bigl(k^{-1}+c(x^2;k^{-1})(s_1^x-1)\bigr)+
  \bigl(k+c(x^2;k^{-1})(s_1^x-1)\bigr) \bigl(\tu(\varpi)^x s_1^x\bigr).
\end{align*}
Then, using 
\begin{align*}
&\Res\bigl(\tu(\varpi)^x s_1^x\bigr) = \tu(\varpi)^x, \quad 
 \Res\bigl(\tu(\varpi)^x s_1^x(s_1^x-1)\bigr) = 0, \\
&\Res\bigl((s_1^x-1)\tu(\varpi)^x s_1^x\bigr) = \tu(-\varpi)^x-\tu(\varpi)^x, 
\end{align*}
$k+k^{-1}-c(x^2;k^{-1})=c(x^2;k)$ and $c(x^2;k^{-1})=c(x^{-2};k)$, we have 
\begin{align*}
  \Res(D^x_{Y+Y^{-1}})
&=k^{-1}\tu(\varpi)^x + k\tu(\varpi)^x + c(x^2;k^{-1})(\tu(-\varpi)^x - \tu(\varpi)^x) \\
&=\bigl(k+k^{-1}-c(x^2;k^{-1})\bigr)\tu(\varpi)^x + c(x^2;k^{-1})\tu(-\varpi)^x \\
&=c(x^2;k)\tu(\varpi)^x + c(x^{-2};k)\tu(-\varpi)^x.
\end{align*}
By \eqref{eq:A:wx}, we obtain the first half of \eqref{eq:A:Lp1}.

For $L^\xi_{p_1}$, we replace $(x,k,q)$ in $L^x_{p_1}$ with $(\xi,k^{-1},q^{-1})$ and calculate
\begin{align*}
 L_{p_1}^\xi(k,q) 
=c(\xi^2;k^{-1})\tu(-\varpi)^\xi + c(\xi^{-2};k^{-1})\tu(\varpi)^\xi
=c(\xi^{-2};k)\tu(-\varpi)^\xi + c(\xi^2;k)\tu(\varpi)^\xi.
\end{align*}
Then, by \eqref{eq:A:wxi}, we obtain the second half of \eqref{eq:A:Lp1}.
\end{proof}

\begin{rmk}\label{rmk:A:Lp1}
By the expression \eqref{eq:A:cd} of $c(\cdot;k)$ and \eqref{eq:A:wx}, the formula of $L^x_{p_1} \in \bD_q$ in \eqref{eq:A:Lp1} can be rewritten by 
\[
 L^x_{p_1}(k,q) = \frac{kx-k^{-1}x^{-1}}{x-x^{-1}}T_{q^{ \shf},x} +
                  \frac{k^{-1}x-kx^{-1}}{x-x^{-1}}T_{q^{-\shf},x},
\]
where $T_{q,x}$ denotes the $q$-shift operator acting on a function $f$ in $x$ as $(T_{q,x}f)(x) = f(q x)$. 
Similarly, for $L^\xi_{p_1}$, recalling $\tu(\varpi)^{\xi}=T_{q^{1/2},\xi}^{-1}$ from \eqref{eq:A:wxi}, we have 
\[ 
 L^\xi_{p_1}(k,q) = \frac{k^{-1}\xi-k\xi^{-1}}{\xi-\xi^{-1}}T_{q^{ \shf},\xi} +
                    \frac{k\xi-k^{-1}\xi^{-1}}{\xi-\xi^{-1}}T_{q^{-\shf},\xi}.
\]
Now let us recall the Macdonald $q$-difference operator of type $\GL_2$ \cite[Chap.\ VI]{Mp}, or the two-variable trigonometric Ruijsenaars operator \cite{R}:
\[
 D_{\tMR}(x_1,x_2;q,t) \ceq 
 \frac{t x_1-x_2}{x_1-x_2}T_{q,x_1} + \frac{t x_2-x_1}{x_2-x_1}T_{q,x_2}
\]
The specialization $D_{\tMR}(x,x^{-1};q,t)$ is essentially equal to the Macdonald $q$-difference operator of type $A_1$ (see \cite[(9.13)]{Mp} and \cite[\S6.3]{M}).
Comparing these operators, we have
\begin{align*}
 L^{x}_{p_1}(k,q) &= k^{-1}D_{\tMR}(x,x^{-1};q^{1/2},k^2), \\
 L^\xi_{p_1}(k,q) &= k \, D_{\tMR}(\xi,\xi^{-1};q^{1/2},k^{-2}) = k^{-1}D_{\tMR}(\xi^{-1},\xi;q^{1/2},k^2).
\end{align*}Lem42
In particular, using the action \eqref{eq:A:bWbL} of $\iota$ and noting $\iota T_{q,x} \iota=T_{q,\xi}$, we have
\[
 L^\xi_{p_1} = \iota L^x_{p_1} \iota.
\]
See \cite[Lemma 6.2]{vM} for a generalization of this relation.
\end{rmk}

Now we reach the main object in this \cref{ss:A:bMR}.

\begin{dfn}\label{dfn:A:bMR}
The following system of eigen-equations for $f=f(x,\xi) \in \bK = \clM(x,\xi)$ is called \emph{the bispectral Macdonald-Ruijsenaars equation of type $A_1$}, and \emph{the bMR equation} for short.
\begin{align}\label{eq:A:bMR}
\begin{cases}
 (L^{x}_{p_1}(k,q)f)(x,\xi) &= p_1(\xi^{-1}) f(x,\xi) \\  
 (L^\xi_{p_1}(k,q)f)(x,\xi) &= p_1(x) f(x,\xi)
\end{cases}.
\end{align}
The solution space is denoted as 
\[
 \SMR(k,q) \ceq \{f \in \bK \mid \text{$f$ satisfies \eqref{eq:A:bMR}}\}.
\]
\end{dfn}

\begin{rmk}
Continuing \cref{rmk:A:SKZ}, the solution space is denoted as $\mathrm{BiSP}$ in \cite{vMS,vM}.
Our symbol is a modification of $Sol_{Mac}$ in \cite[Theorem 1.3.8]{C05}.
\end{rmk}

\subsection{Bispectral qKZ/MR correspondence}\label{ss:A:qKZ/MR}

The works \cite{vMS,vM} established the following correspondence between the two solution spaces $\SA(k,q)$ (\cref{dfn:A:SKZ}) and $\SMR(k,q)$ (\cref{dfn:A:bMR}).

\begin{dfn}\label{dfn:A:chi+}
We define a $\bK$-linear function $\chi_+\colon H_0 \to \bC$ by
\begin{align}\label{eq:A:chi+Tw}
 \chi_+(T_w) \ceq k^{\ell(w)}
\end{align}
for the basis element $T_w \in H_0$ ($w \in W_0$). It is extended to $H_0^\bK$ as
\begin{align}\label{eq:A:chi+K}
 \chi_+\colon H_0^\bK \lto \bK, \quad 
 \sum_{w \in W_0} f_w T_w \lmto \sum_{w\in W_0} f_w \chi_+(T_w),
\end{align}
where we used the expression \eqref{eq:A:finH0K}.
\end{dfn}

\begin{rmk}
This is a bispectral analogue of the map $\tr$ in \cite[\S 1.3.4, Theorem 1.3.8]{C05}.
\end{rmk}

\begin{fct}[{\cite[Theorem 6.16, Corollary 6.21]{vMS}, \cite[Theorem 6.6]{vM}}]\label{fct:A:chi+}
Assume $0<q<1$. Then the map $\chi_+$ restricts to an injective $\bF$-linear $\bW_0$-equivariant map
\begin{align}\label{eq:A:chi+}
 \chi_+\colon \SA(k,q) \lto \SMR(k,q),
\end{align}
where $\bF$ is the subspace of $\bK = \clM(x,\xi)$ defined by
\[
 \bF \ceq \sbr{f(x,\xi) \in \bK \mid \bigl((\tu(\lambda),\tu(\mu))f\bigr)(x,\xi)=f(x,\xi), 
 \ \forall \, (\lambda,\mu) \in \Lambda\times\Lambda},
\]
and $\bW_0$ is the subgroup of $\bW$ defined by
\[
 \bW_0 \ceq \bZ_2 \ltimes (W_0\times W_0) \subset \bW.
\]
\end{fct}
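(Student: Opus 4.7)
The plan is to decompose the statement into four components: $\bF$-linearity, $\bW_0$-equivariance, the image landing in $\SMR(k,q)$, and injectivity. The first is immediate, since $\chi_+$ is $\bK$-linear by construction and $\bF \subset \bK$. The second is essentially a bookkeeping computation: one evaluates $\chi_+ \circ \tau(\bfw_0)$ on the basis $\{T_w\}_{w \in W_0}$ of $H_0$ for the generators of $\bW_0 = \bZ_2 \ltimes (W_0 \times W_0)$, and, using the explicit formula \eqref{eq:A:tau} together with the closed forms of $\wtS_{s_1}$, $\wtS_{s_1}^*$ and $\wt{\sigma}_\iota$, checks that the result equals the geometric $\bW_0$-action \eqref{eq:A:bWbL} on $\bK$ applied after $\chi_+$. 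The two prefactors $d_{s_1}(x)^{-1}$ and $d_{s_1}(\xi^{-1})^{-1}$ should cancel against the $T_1$-contributions from applying $\chi_+$.

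For the core step -- that $\chi_+(f) \in \SMR(k,q)$ whenever $f \in \SA(k,q)$ -- I would start with the bqKZ equation at $(l,m) = (2,0)$, namely $C_{2,0}(x,\xi) f(q^{-2}x,\xi) = f(x,\xi)$, and use \cref{lem:A:C20C02} to rewrite the left-hand side in terms of $R^L_0(x_0)$ and $R^L_1(x_1')$. The structural reason this produces an MR eigen-equation is Bernstein's theorem: $p_1(Y) = Y + Y^{-1}$ generates the center $\bC[Y^{\pm1}]^{W_0}$ of the affine Hecke algebra $H$, and its $\eta_L$-action on $H_0^\bK$, once composed with the Hecke character $\chi_+$, collapses to the Macdonald--Ruijsenaars operator $L^x_{p_1}(k,q)$ acting on $\bK$. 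The precise identity to prove is
\[
 \chi_+ \circ \eta_L(p_1(Y)) = L^x_{p_1}(k,q) \circ \chi_+
 \quad\text{on } H_0^\bK,
\]
from which the translation recursion in $x$ encoded by $C_{2,0}$ translates into the eigen-equation with eigenvalue $p_1(\xi^{-1})$ on $\chi_+(f)$. The second bMR equation is obtained symmetrically from $(l,m) = (0,2)$, using the right action $\eta_R$ and the duality anti-involution $*$.

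For injectivity, I would use the asymptotic analysis enabled by $0<q<1$. By \cref{fct:A:lim}, $C_{1,0}$ is regular at $x^{-1}=0$ with boundary value $C_{1,0}^{(0)} = k\,\eta_L(T_1 Y^{-1} T_1^{-1})$. Expanding $f \in \SA(k,q)$ as a power series in $x^{-1}$ and applying the qKZ recursion, the leading coefficient is an eigenvector of $C_{1,0}^{(0)}$, hence by \cref{fct:A:Lem42} a $\bK$-linear combination $\sum_{w \in W_0} g_w \tau_w$. If $\chi_+(f)=0$, then $\sum_w g_w \chi_+(\tau_w) = 0$; the functions $\chi_+(\tau_w)$ should be linearly independent over the ring of $W_0$-invariant meromorphic functions in $\xi$ (this is a separate computation using Fact A:Lem42(2) and the non-vanishing of the MR eigenvalues at generic $\xi$), forcing all $g_w = 0$. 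An inductive argument on the series coefficients then yields $f=0$.

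The principal obstacle is the identification in the third step: although the informal centrality argument ``$p_1(Y)$ central in $H$ $\Rightarrow$ commutes with $\chi_+$ up to $L^x_{p_1}$'' is conceptually clean, translating it into a precise identity at the level of the cocycle $C_{2,0}$ -- which is written in terms of $R^L_0, R^L_1$ rather than $Y^2=T_0 T_1$ directly -- requires delicate PBW bookkeeping inside $\bH$, together with care about how the $\bK$-module structure on $H_0^\bK$ mixes $\eta_L$ of Hecke generators with right multiplication by $Y$. This is exactly the point at which the rank-one assumption simplifies matters, since $W_0 = \{e,s_1\}$ allows one to expand everything in the two-element basis $\{T_e, T_1\}$ and verify the intertwining identity by direct calculation.
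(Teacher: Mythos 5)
Your part (i) (the $\bF$-linearity and $\bW_0$-equivariance check) is fine and is the same cancellation the paper packages as \cref{lem:A:Lem66}. The genuine gap is in your core step. The ``precise identity to prove,'' $\chi_+ \circ \eta_L(p_1(Y)) = L^x_{p_1}(k,q) \circ \chi_+$ on $H_0^\bK$, is false as stated: by \eqref{eq:A:etaL} the map $\eta_L(p_1(Y))$ lies in $\End_{\bK}(H_0^{\bK})$ and $\chi_+$ is $\bK$-linear, so the left-hand side commutes with multiplication by every $g \in \bK$, while $L^x_{p_1}$ contains the genuine shifts $T_{q^{\pm 1/2},x}$ and does not. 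Concretely, from $Y T_1 = (k-k^{-1})Y + T_1 Y^{-1}$ and $Y^{-1}T_1 = T_1 Y - (k-k^{-1})Y$ one gets $\chi_+\bigl(\eta_L(p_1(Y))T_1\bigr) = k(\xi+\xi^{-1})$, whereas $L^x_{p_1}\chi_+(T_1) = k\bigl(A(x)+A(x^{-1})\bigr) = k(k+k^{-1})$; the two disagree already on the single element $T_1$. The identity can only hold after restriction to bqKZ solutions, where the $q$-shifts of $\chi_+(f)$ are traded for cocycle values, and establishing \emph{that} is the actual heart of part (ii) in the paper: one reduces to matrix coefficients $\phi^U_{\chi_+,v}$ of the fundamental solution $U$ (\cref{fct:A:Prp5.13}, built from the asymptotically free solution $\Phi$ of \cref{fct:A:Lem5.1} --- which is also the only place the hypothesis $0<q<1$ enters, a hypothesis your plan for this step never uses), then applies the twisted homomorphism $\vt'$ with the invariance condition $\chi_+(C_{\bfs}U)=\chi_+(U)$ (\cref{fct:A:Lem65}, fed by \cref{lem:A:Lem66}), the identities $\vt'(D^x_{p(Y)})U = \eta_L(p(Y)^\dagger)U$ and $p(Y)^\dagger = p(Y^{-1})$ (\cref{fct:A:Prp69,fct:A:Lem610}), and finally the spectral decomposition of \cref{fct:A:Lem42}. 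None of this machinery appears in your outline, and the rank-one PBW bookkeeping you propose cannot substitute for it, since the obstruction above is structural, not computational.

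A second concrete defect: your starting relation $(l,m)=(2,0)$ has the wrong shift scale. The relation $C_{2,0} = R^L_0(x_0)R^L_1(x_1')$ of \cref{lem:A:C20C02} compares $f$ at integer powers of $q$ in $x$, whereas $L^x_{p_1}$ of type $A_1$ in \eqref{eq:A:Lp1} involves the half-shifts $T_{q^{\pm 1/2},x}$ coming from $\tu(\pm\varpi)$; the relevant cocycle is $C_{1,0} = R^L_0(x_0)\eta_L(U)$ of \cref{lem:A:C10C01}, with the factor $\eta_L(U)$ reflecting the extended Weyl group element $u$. No integer-shift relation can imply a half-shift eigen-equation, so the derivation as planned would not produce \eqref{eq:A:bMR}. (The $C_{2,0}$ formula is what matters for the $\CvC$ specialization in \cref{s:sp}, via \eqref{eq:sp:CC}, which is likely the source of the slip.) On injectivity the paper itself defers to \cite[Corollary 6.21]{vMS}, so your sketch is optional; but as written it does not close: linear independence of the two functions $\chi_+(\tau_w)$ over $\bK$ is vacuous since $\bK$ is a field, and the coefficients $g_w$ in your expansion are general elements of $\bK$ rather than $W_0$-invariant functions of $\xi$, so the independence you invoke does not apply to them --- the actual argument in \cite{vMS} works over $\bF$ with the basis $\{\tau(e,w)\Phi\}$ of $\SKZ$ and distinct leading asymptotics, not with the $\tau_w$ of \cref{fct:A:Lem42}.
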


\begin{rmk}
As mentioned in \cref{rmk:A:rho}, we follow the arguments in \cite{vMS,vM} giving the bispectral correspondence $\chi_+\colon \SKZ(k,q) \to \SMR(k,q)$. The claim in \cite[Theorem 3.1]{S} is based on the correspondence $\chi_+\colon \SKZ(1/k,q) \to \SMR(k,q)$, $\chi_+(T_w) = k^{-\ell(w)}$.
\end{rmk}

Let us explain the outline of the proof. We abbreviate $\SKZ \ceq \SKZ(k,q)$ and $\SMR \ceq \SMR(k,q)$. The proof is divided into three parts.
\begin{clist}
\item 
$\chi_+$ restricts to an $\bF$-linear $\bW_0$-equivariant map $\chi_+\colon \SKZ \to \bK$.
\item
The image $\chi_+(\SKZ)$ is contained in $\SMR$.
\item
$\chi_+\colon \SKZ \to \SMR$ is injective
\end{clist}
We omit the part (iii), and refer to \cite[Corollary 6.21]{vMS} for the detail.
For the part (i), we give a preliminary lemma.

\begin{lem}[{\cite[Lemma 6.6]{vMS}}]\label{lem:A:Lem66}
For each $\bfw \in \bW_0$ and $F \in H_0^{\bK}$, we have
\[
 \chi_+(C_{\bfw} F) = \chi_+(F).
\]
\end{lem}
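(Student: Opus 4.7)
The plan is to exploit the cocycle relation \eqref{eq:A:cocycle} to reduce the identity to a check on a set of generators of $\bW_0$, then to handle each generator by direct computation using the character property of $\chi_+$. The map $\chi_+$ is $\bK$-linear by construction and $\bW$-equivariant: for $F = \sum_v f_v T_v$ and $\bfw \in \bW$, the action \eqref{eq:A:bWH0K} only touches the coefficients, so $\chi_+(\bfw F) = \sum_v (\bfw f_v) k^{\ell(v)} = \bfw \chi_+(F)$. Combined with \eqref{eq:A:cocycle}, if the identity holds for $\bfw$ and $\bfw'$ individually, then
\begin{equation*}
\chi_+(C_{\bfw\bfw'} F)
  = \chi_+\bigl(C_\bfw \cdot (\bfw C_{\bfw'} \bfw^{-1}) F\bigr)
  = \chi_+\bigl((\bfw C_{\bfw'} \bfw^{-1}) F\bigr)
  = \bfw \, \chi_+(C_{\bfw'} \bfw^{-1} F)
  = \bfw \, \chi_+(\bfw^{-1} F)
  = \chi_+(F).
\end{equation*}
Since $\bW_0 = \bZ_2 \ltimes (W_0 \times W_0)$ is generated by the three elements $(s_1,e)$, $(e,s_1)$, and $\iota$, it suffices to verify the lemma for each of these.

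For $\bfw = \iota$: since $T_1^* = T_1$, every $T_v$ with $v \in W_0 = \{e, s_1\}$ is $*$-invariant, so the unique extension of $\wt\sigma_\iota$ to $H_0^\bK$ satisfying \eqref{eq:A:wsHL} sends $\sum_v f_v T_v \mapsto \sum_v (\iota f_v) T_v$, which coincides with the $\iota$-action itself. Consequently $C_\iota = \tau(\iota)\iota^{-1} = \iota\circ\iota^{-1} = \mathrm{id}_{H_0^\bK}$, so the identity is immediate.

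For $\bfw = (s_1, e)$: a computation analogous to that in the proof of \cref{lem:A:C10C01} (with $\tu(\varpi) = s_0 u$ replaced by the shorter reduced expression $s_1$) yields $C_{(s_1, e)} F = d_{s_1}(x)^{-1} \wtS_{s_1} \cdot F$, where the dot denotes left multiplication in $\bH$ followed by the PBW projection back to $H_0^\bK$. Reading off the decomposition $\wtS_{s_1} = d_{s_1}(x) + (1 - X^2)(T_1 - k)$ from \eqref{eq:A:wtS0'}, and using that $H_0 = \bC + \bC T_1$ is commutative with $\chi_+$ its algebra character $T_1 \mapsto k$, one obtains
\begin{equation*}
\chi_+(\wtS_{s_1} F)
  = d_{s_1}(x) \, \chi_+(F) + (1 - x^2) \, \chi_+\bigl((T_1 - k) F\bigr)
  = d_{s_1}(x) \, \chi_+(F),
\end{equation*}
since $\chi_+((T_1 - k) F) = (\chi_+(T_1) - k) \chi_+(F) = 0$ by the character property. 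Dividing by $d_{s_1}(x)$ gives the identity. The case $\bfw = (e, s_1)$ is parallel, using $\wtS_{s_1}^* = d_{s_1}(\xi^{-1}) + (T_1 - k)(1 - Y^{-2})$ acting by right multiplication, and the same cancellation $\chi_+(F (T_1 - k)) = \chi_+(F)(\chi_+(T_1) - k) = 0$ yields the claim.

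The main obstacle is not algebraic but bookkeeping: one must carefully track the translation between elements of $\bH$ in PBW form $X^a T_w Y^b$ and their representatives in $H_0^\bK \cong \bK \otimes H_0$, remembering that in the $\bL$-action \eqref{eq:A:H-Lmod} the variable $X$ acts by left multiplication and $Y$ by right multiplication. Once these identifications are set, the lemma boils down to the elementary identity $\chi_+(T_1 - k) = 0$, which is built into the definition of $\chi_+$ as the one-dimensional character of the finite Hecke algebra $H_0$.
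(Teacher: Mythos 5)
Your proof is correct and follows essentially the same route as the paper's: reduction to the generators $\iota$, $(s_1,e)$, $(e,s_1)$ of $\bW_0$ (via the cocycle relation \eqref{eq:A:cocycle} and the $\bW$-equivariance of $\chi_+$, which the paper leaves implicit), followed by killing the nontrivial part of the intertwiner with the character identity $\chi_+\bigl((T_1-k)h\bigr)=0$ coming from $\chi_+(T_1)=k$. The only cosmetic difference is that the paper disposes of the $(e,s_1)$ case via the conjugation identity $C_{(e,s_1)}=C_\iota C_{(s_1,e)}C_\iota$ from \eqref{eq:A:CewCwe}, whereas you redo the mirrored computation directly with $\wtS_{s_1}^*$ acting by right multiplication; both are valid.
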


\begin{proof}
First, we have $\chi_+ \circ C_\iota = \chi_+$ since, for any $w \in W_0$, the element $T_w \in H_0 \subset H_0^{\bK}$ satisfies $C_\iota(T_w)=T_{w^{-1}}$. Second, since $C_{(e,s_1)}=C_\iota C_{(s_1,e)} C_{\iota}$ by \cref{rmk:A:CewCwe}, \eqref{eq:A:CewCwe}, it is sufficient to show $\chi_+ \circ C_{(s_1,e)}=\chi_+$. 
But it is a consequence of 
\begin{align}\label{eq:A:Lem66}
 C_{(s_1,e)} h = c(x_1;k,q)^{-1}(\eta_{L}(T_1)-k)h+h, \quad
 \chi_+(T_1)=k, \quad \chi_+\circ\eta_L=\eta_L\circ\chi_+
\end{align}
for any $h \in H_0$.
\end{proof}

\begin{proof}[Part \textup{(i)} of the proof of \cref{fct:A:chi+}]
We first show that $\chi_+$ restricts to an $\bF$-linear $\bW_0$-equivariant map $\SKZ \to \bK$. By \eqref{eq:A:Cw}, \cref{lem:A:Lem66} and \eqref{eq:A:bWbL}, for any $f \in H_0^\bK$ and $w \in \bW_0$, we have
\begin{align*}
 \chi_+(\tau(w)f)  =   \chi_+(C_w w f) = \chi_+(w f) = w\bigl(\chi_+(f)\bigr).
\end{align*}
Hence $\chi_+$ is $\bW_0$-equivariant. Then, by \cref{dfn:A:SKZ}, \cref{eq:A:chi+Tw} and \eqref{eq:A:chi+K}, we obtain the $\bW_0$-equivariant and $\bF$-linear map $\chi_+\colon \SKZ \to \bK$ by restriction.
\end{proof}

The part (ii) of the proof consists of several arguments, and we may say that this part is one of the main body of \cite{vMS}. It is further divided into the following steps.
\begin{itemize}
\item 
Describe of $\SKZ$ in terms of the basic asymptotically free solution $\Phi$.
\item
Analyze the map $\chi_+$ using $\Phi$.
\end{itemize}

The first step requires the following \cref{fct:A:Lem5.1} and \cref{fct:A:Prp5.13}.

\begin{fct}[{\cite[\S\S5.1--5.2]{vMS}, \cite[\S5.2]{vM}, \cite[\S3.2]{S}}]\label{fct:A:Lem5.1}
Denote $w_0 \ceq s_1 \in W_0$. Let 
\begin{align}\label{eq:A:clW}
 \clW(x,\xi) = \clW(x,\xi;k,q) \in \bK = \clM(x,\xi)
\end{align}
be a meromorphic function satisfying the $q$-difference equations (quasi-periodicity)
\begin{align}\label{eq:A:clW-eq}
 \clW(q^{l/2}x,\xi) = (k/\xi)^l \clW(x,\xi) \quad (l \in \bZ)
\end{align}
and the self-duality
\begin{align}\label{eq:A:clW-sd}
 \clW(\xi^{-1},x^{-1};k^*,q) = \clW(x,\xi;k,q).
\end{align}
Here we used the redundant notation $k^*=k$ for the comparison with the $\CvC$ case \eqref{eq:CC:clW-sd}.
Then, there is a unique element $\Psi \in H_0^{\bK}$ satisfying the following conditions (i)--(iii).
\begin{clist}
\item 
We have the self-dual solution 
\[
 \Phi \ceq \clW \Psi \in \SKZ(k,q), \quad \iota(\Phi)=\Phi.
\]

\item
We have a series expansion 
\[
 \Psi(t,\gamma) = \sum_{m,n \in \bN} K_{m,n} x^{-2m} \xi^{2n} \quad (K_{\alpha,\beta} \in H_0)
\]
for $(x,\xi) \in B_\ve^{-1} \times B$ with $B_\ve$ being some open ball of radius $\ve>0$, which is normally convergent on compact subsets of $B_\ve^{-1} \times B_\ve$.
\item
$K_{0,0}=T_{w_0}$.
\end{clist}
The solution $\Phi$ is called \emph{the basic asymptotically free solution of the bqKZ equation} in \cite[Definition 5.5]{vMS}, \cite[Definition 5.5]{vM} and \emph{the self-dual basic Harish-Chandra series} in \cite[Definition 3.8]{S}.
\end{fct}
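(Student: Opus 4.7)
The plan is to construct $\Psi=\sum_{m,n\ge 0}K_{m,n}\,x^{-2m}\xi^{2n}$ as a formal series with coefficients $K_{m,n}\in H_0$, force the coefficients via the bqKZ equations, normalize the leading term by $K_{0,0}=T_{w_0}$, verify convergence on a small polydisk, and finally deduce $\iota(\Phi)=\Phi$. By the cocycle identity \eqref{eq:A:cocycle}, the bqKZ system of \cref{dfn:A:SKZ} is generated by the two equations
\[
 C_{1,0}(x,\xi)\,\Phi(q^{-1}x,\xi)=\Phi(x,\xi),\qquad
 C_{0,1}(x,\xi)\,\Phi(x,q\xi)=\Phi(x,\xi),
\]
so only these need be analyzed. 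Using \eqref{eq:A:clW-eq} with $l=-2$ one reads off $\clW(q^{-1}x,\xi)=(\xi/k)^{2}\clW(x,\xi)$, and combining \eqref{eq:A:clW-eq} with the self-duality \eqref{eq:A:clW-sd} one derives the companion identity $\clW(x,q\xi)=(kx)^{-2}\clW(x,\xi)$. Substituting $\Phi=\clW\Psi$ rewrites the two generating equations as
\[
 \wh{C}_{1,0}\,\Psi(q^{-1}x,\xi)=\Psi(x,\xi),\qquad
 \wh{C}_{0,1}\,\Psi(x,q\xi)=\Psi(x,\xi),
\]
with $\wh{C}_{1,0}\ceq(\xi/k)^{2}C_{1,0}$ and $\wh{C}_{0,1}\ceq(kx)^{-2}C_{0,1}$.

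The central step is the leading-order analysis at $x^{-1}=0$ and $\xi=0$. By \cref{fct:A:lim}, $C_{1,0}$ is regular at $x^{-1}=0$ with leading value $C_{1,0}^{(0)}=k\,\eta_L(T_1Y^{-1}T_1^{-1})\in\bC[\xi^{\pm1}]\otimes\End(H_0)$, and analogously $C_{0,1}^{(0)}=k\,\eta_R(T_1Y^{-1}T_1^{-1})\in\bC[x^{\pm1}]\otimes\End(H_0)$ is the leading value of $C_{0,1}$ at $\xi=0$. Plugging the Ansatz into the two $\wh C$-equations and matching the coefficient of $x^{0}\xi^{0}$ on both sides produces eigenvalue constraints on $K_{0,0}$ for the action of a specific element of $\bC[Y^{\pm1}]$ via $\eta_L$ and for its $*$-image via $\eta_R$. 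By \cref{fct:A:Lem42}, the $\tau_w$ form a $\bK$-basis of simultaneous $\eta_L(\bC[Y^{\pm1}])$-eigenvectors; the quasi-periodicity eigenvalues of $\clW$ are designed to match precisely those carried by the basis element $T_{w_0}$, so the normalization $K_{0,0}=T_{w_0}$ of condition (iii) is both compatible with, and uniquely picked out by, the leading equations.

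Next, expanding both $\wh C$-equations in powers of $x^{-2}$ and $\xi^{2}$ produces, for each $(m,n)\ne(0,0)$, a pair of relations of the shape
\[
 (A_{m,n}-I)\,K_{m,n}=F_{m,n}\bigl(\{K_{m',n'}:m'+n'<m+n\}\bigr),
\]
where $A_{m,n}\in\End(H_0)$ is determined by $C_{1,0}^{(0)}$, $C_{0,1}^{(0)}$ and the $q$-multipliers coming from the shifts. For generic $(k,q)$ an eigenvalue non-resonance check shows that each $A_{m,n}-I$ is invertible on $H_0$, so $K_{m,n}$ is uniquely determined by the lower-order coefficients. Majorant bounds on $\|K_{m,n}\|$ following \cite[\S5, Appendix]{vM} (using regularity of $C_{1,0}$ and $C_{0,1}$ on the relevant polydisks together with uniform lower bounds on the inverses of $A_{m,n}-I$) then promote the formal series to an actual holomorphic family on $B_\ve^{-1}\times B_\ve$ for some $\ve>0$, establishing conditions (ii) and the membership $\Phi=\clW\Psi\in\SA(k,q)$ required by (i).

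It remains to upgrade (i) with the self-duality $\iota(\Phi)=\Phi$, which is not an extra constraint but a consequence of uniqueness. The cocycle identity \eqref{eq:A:cocycle} together with $\iota\in\bW$ implies that $\iota(\Phi)$ is again a solution of the bqKZ system; the hypothesis \eqref{eq:A:clW-sd}, i.e.\ $\iota(\clW)=\clW$, together with the symmetry $T_{w_0}^{*}=T_{w_0}$ (since $w_0=s_1$ is an involution fixed by $*$) shows that $\iota(\Phi)$ again has an expansion of the shape (ii) with leading coefficient $T_{w_0}$, so the uniqueness just proved forces $\iota(\Phi)=\Phi$. The main technical obstacle will be step three: while the triangular recursion is straightforward to set up, proving that the operators $A_{m,n}-I$ remain uniformly invertible away from a discrete resonance locus in the parameters, and that the $\|K_{m,n}\|$ admit honest geometric majorants giving a positive radius of convergence, is the substantive part of the argument, carried out in detail in the appendix of \cite{vM}.
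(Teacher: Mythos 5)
Your proposal is correct and follows essentially the same route as the paper: conjugating the cocycles by $\clW$ to get a system regular at $x^{-1}=0$ and $\xi=0$, computing the leading operators via \cref{fct:A:lim} and \cref{fct:A:Lem42}, solving the resulting triangular recursion with majorant estimates, and deducing $\iota(\Phi)=\Phi$ from uniqueness — this is exactly the scheme the paper carries out explicitly for the $\CvC$ analogue (\cref{prp:CC:Lem5.1} via \cref{lem:CC:D00}), where your recursion-plus-convergence step is packaged into the citation of \cite[Theorem A.6]{vMS}. One small sharpening: the leading operators act on $H_0$ with eigenvalues in $\{0,1\}$ (cf.\ \eqref{eq:CC:Hol}), so the operators $A_{m,n}-I$ in your recursion are invertible for every $0<q<1$ and no genericity assumption on $(k,q)$ is actually needed for the non-resonance step, which is therefore less of an obstacle than your closing paragraph suggests.
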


\begin{rmk}
The function $\clW$ is designed so that the element $\clW(x,\xi)T_{w_0}=\clW(x,\xi)T_1$ is a solution of the formal asymptotic form of the quantum KZ equation $C_{(l\varpi,e)}(x,\xi)f(q^{-l/2}x,\xi)=f(x,\xi)$ in the region $\abs{x} \gg 0$. Indeed, noting that we are working in $H(1/k)$, recall from \eqref{eq:A:100} the asymptotic form of $C_{(\varpi,e)} = C_{1,0}$ in this region: 
\[
 C_{1,0} \approx C_{1,0}^{(0)} = k\eta_L(T_1Y^{-1}T_1^{-1}). 
\]
The definition \eqref{eq:A:etaL} of the map $\eta_L$ and the $\bK$-module structure \eqref{eq:A:H-Lmod} yield $\eta_L(T_1Y^{-1}T_1^{-1})T_1=Y^{-1}T_1=\xi^{-1}T_1$. Thus we have
\begin{align*}
 C_{1,0}^{(0)}(x,\xi)\bigl(\clW(q^{-1/2}x,\xi)T_1\bigr) = \clW(x,\xi) T_1 
&\iff k \xi^{-1}\clW(q^{-1/2}x,\xi)T_1 = \clW(x,\xi) T_1 \\
&\iff \clW(q^{-1/2}x,\xi) = k^{-1} \xi \clW(t,\gamma),
\end{align*}
which holds by \eqref{eq:A:clW-eq}. See also the argument in \cite[\S5.1]{vMS}.
We give an example of such $\clW$ in \cref{exm:A:clW}.
\end{rmk}

\begin{fct}[{\cite[(5.18), Lem.\ 5.12, Prop.\ 5.13]{vMS}, \cite[Prop.\ 5.12]{vM}}]
\label{fct:A:Prp5.13}
Denoting $w_0 \ceq s_1 \in W_0$, we define $U \in \End_{\bK}\bigl(H_0^{\bK}\bigr) = \bK \otimes \End\bigl(H_0\bigr)$ by 
\[
 U(k^{-\ell(w)} T_{w_0} T_{w^{-1}}) \ceq \tau(e,w) \Phi \quad (w \in W_0).
\]
Then the following statements hold.
\begin{enumerate}
\item
$U$ is an invertible $\End(H_0)$-valued solution of the bqKZ equation. In particular, under the natural isomorphism $\bK \otimes \End(H_0) \cong \End_{\bK}(H_0^{\bK})$, we have $U \in \GL_{\bK}(H_0^{\bK})$.
\item \label{i:fct:A:Prp5.13:U'}
$U' \in \bK \otimes \End\bigl(H_0\bigr)$ is an $\End\bigl(H_0\bigr)$-valued meromorphic solution of the bqKZ equation if and only if $U'=U F$ for some $F \in \bF \otimes \End\bigl(H_0\bigr)$.
\item
$U \in \GL_{\bK}\bigl(H_0^{\bK}\bigr)$ restricts to an $\bF$-linear isomorphism $U\colon H_0^{\bF} \to \SKZ$.
\item
$\{\tau(e,w) \Phi \mid w \in W_0\}$ is an $\bF$-basis of $\SKZ$.
\end{enumerate}
\end{fct}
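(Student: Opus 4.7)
The plan is a four-step argument mirroring the item numbering, with Steps (1) and (2) doing most of the real work and Steps (3)--(4) following formally.

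\textbf{Step (1).} First I would verify well-definedness: the set $\{k^{-\ell(w)} T_{w_0} T_{w^{-1}} \mid w \in W_0\}$ is a $\bC$-basis of $H_0$ (for $W_0 = \{e, s_1\}$ it reduces to $\{k^{-1}, T_1\}$, which spans $H_0 = \bC \oplus \bC T_1$), so $U$ extends uniquely by $\bK$-linearity to an element of $\bK \otimes \End(H_0) = \End_\bK(H_0^\bK)$. That each column $\tau(e,w)\Phi$ belongs to $\SA$ follows from the cocycle relation \eqref{eq:A:cocycle}: for a translation $\bft = (\tu(l\varpi), \tu(m\varpi))$, one has
\[
 \tau(\bft)\tau(e,w)\Phi = \tau(e,w)\tau\bigl((e,w)^{-1}\bft(e,w)\bigr)\Phi,
\]
and since the translation subgroup is normal in $\bW$, the conjugate $(e,w)^{-1}\bft(e,w)$ remains a translation, so $\Phi \in \SA$ forces $\tau(e,w)\Phi \in \SA$. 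Hence $U$ solves the bqKZ system matrix-wise. For invertibility, I would exploit the asymptotic expansion $\Phi = \clW\Psi$ of \cref{fct:A:Lem5.1} with leading coefficient $K_{0,0} = T_{w_0}$; combining the cocycle formulas of \cref{lem:A:C10C01} with the limit analysis of \cref{fct:A:lim} determines the leading behavior of the transformed columns $\tau(e,w)\Phi$, and checking nondegeneracy of the resulting leading matrix on the chosen basis yields $U \in \GL_\bK(H_0^\bK)$.

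\textbf{Step (2).} Given any $\End(H_0)$-valued meromorphic solution $U'$, set $F \ceq U^{-1} U' \in \bK \otimes \End(H_0)$, well defined by Step (1). Since both $U$ and $U'$ satisfy the matrix bqKZ equation, for all $l, m \in \bZ$ one has
\[
 U(q^{-l}x, q^m\xi) F(q^{-l}x, q^m\xi) = U'(q^{-l}x, q^m\xi) = C_{l,m}(x,\xi)^{-1} U'(x,\xi) = U(q^{-l}x, q^m\xi) F(x,\xi),
\]
forcing $F(q^{-l}x, q^m\xi) = F(x,\xi)$, i.e., $F \in \bF \otimes \End(H_0)$. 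Conversely, the same calculation shows $UF$ is a solution whenever $F$ is translation-invariant.

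\textbf{Steps (3) and (4).} Item (3) follows by applying Step (2) to column vectors: for $h \in H_0^\bF$, translation-invariance of $h$ together with the bqKZ equation for $U$ gives $Uh \in \SA$, and the resulting $\bF$-linear map $H_0^\bF \to \SA$ is injective by invertibility of $U$ and surjective since any $f \in \SA$ satisfies $U^{-1} f \in H_0^\bF$ by the same invariance argument. Item (4) is then immediate: $\{k^{-\ell(w)} T_{w_0} T_{w^{-1}} \mid w \in W_0\}$ is an $\bF$-basis of $H_0^\bF$ whose image under the isomorphism $U$ is $\{\tau(e,w)\Phi \mid w \in W_0\}$.

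The main obstacle is the invertibility assertion in Step (1). The leading asymptotic behavior of $\Phi$ itself is given, but controlling the leading terms of the transformed solutions $\tau(e,w)\Phi$ and arranging them into a manifestly nondegenerate matrix requires the precise cocycle computation of \cref{lem:A:C10C01} together with the limit formula of \cref{fct:A:lim}, and a compatibility check with the $\tau_w$-eigenbasis of \cref{fct:A:Lem42} to make the leading matrix triangular (or otherwise invertible) in a suitable asymptotic regime.
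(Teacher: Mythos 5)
Note first that the paper does not prove this statement at all: it is quoted as a Fact from van Meer--Stokman \cite[(5.18), Lemma 5.12, Proposition 5.13]{vMS} (and \cite[Proposition 5.12]{vM}), so the relevant comparison is with the original proof there. Your outline does follow the same route as loc.\ cit.: Steps (2)--(4) are complete and correct as written (the quasi-constancy argument $F\ceq U^{-1}U'$, and the formal deduction of (3), (4) from (1), (2), are exactly the arguments of \cite[Proposition 5.13]{vMS}), and in Step (1) the observation that $\tau(e,w)\Phi\in\SA$ because $\tau(\bft)\tau(e,w)=\tau(e,w)\tau\bigl((e,w)^{-1}\bft(e,w)\bigr)$ with $(e,w)^{-1}\bft(e,w)$ again a translation is also the standard and correct argument.

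The genuine gap is the invertibility of $U$ in Step (1), which you flag as ``the main obstacle'' but do not resolve, and your proposed repair is misdirected. The missing ingredient is precisely \cite[Lemma 5.12]{vMS}: one must show that $\tau(e,w)\Phi$ is \emph{again} an asymptotically free solution in the sense of \cref{fct:A:Lem5.1}, of the form $\clW\Psi_w$ with $\Psi_w$ having a normally convergent expansion in $x^{-1},\xi$ whose leading coefficient is exactly $k^{-\ell(w)}T_{w_0}T_{w^{-1}}$ --- this leading-term computation for the dual intertwiner action on the spectral side is the whole content behind the normalization appearing in the definition of $U$. Once it is done, $U=\mathrm{id}+O(x^{-1},\xi)$ relative to the basis $\{k^{-\ell(w)}T_{w_0}T_{w^{-1}}\}_{w\in W_0}$, so $\det U$ is a meromorphic function that is not identically zero, giving $U\in\GL_{\bK}(H_0^{\bK})$. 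Your suggestion to force triangularity via the eigenbasis $\{\tau_w\}$ of \cref{fct:A:Lem42} does not substitute for this: that basis diagonalizes the $\eta_L$-action of $\bC[Y^{\pm1}]$, which governs the geometric-side asymptotics of $C_{1,0}$ in \cref{fct:A:lim}, whereas the columns of $U$ are produced by $\tau(e,w)$ acting on the spectral side; without the leading-coefficient computation, the nondegeneracy of your ``leading matrix'' is exactly what remains unproven. A small but real slip in the same step: for $W_0=\{e,s_1\}$ the defining set is $\{T_1,\;k^{-1}T_1^2\}=\{T_1,\;k^{-1}+(1-k^{-2})T_1\}$, not $\{k^{-1},T_1\}$ (you appear to have used $T_1^2=1$ instead of $T_1^2=(k-k^{-1})T_1+1$); it is still a basis of $H_0$, with determinant $-k^{-1}$ relative to $\{1,T_1\}$, so well-definedness survives, but the miscomputation matters because the correct leading coefficients are what make $U$ asymptotic to the identity.
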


We turn to the second step, which requires the following \cref{fct:A:Lem65}--\cref{fct:A:Lem610}.

\begin{fct}[{\cite[Lemma 6.5 (ii), (6.3)]{vMS}}]\label{fct:A:Lem65}
For $F \in \End_{\bK}\bigl(H_0^{\bK}\bigr)$, we denote by
\begin{align}\label{eq:A:phi}
 \phi^F_{\chi,v} \ceq \chi(F v) \in \bK
\end{align}
the matrix coefficient of $F$ with respect to $\chi \in H_0^*$ and $v \in H_0$. Also, using $U$ in \cref{fct:A:Prp5.13}, we define a twisted algebra homomorphism $\vt'\colon D_q \to \End\bigl(\End_{\bK}(H_0^{\bK})\bigr)$ by 
\[
 \vt'(f)F = f F, \quad \vt'(\bfw) F = \bfw(F) U^{-1}(\tau(\bfw)U)
\]
for $f \in \bC(x,\xi)$, $\bfw \in \bW$ and $F \in \End_{\bK}\bigl(H_0^{\bK}\bigr)$.
Then we have the following.
\begin{enumerate}
\item 
$\vt'$ is an algebra homomorphism.
\item
For $D  = \sum_{\bfs \in \bW_0} D_{\bfs} \bfs \in D_q^{\bW}$ (see \eqref{eq:A:D}), we have 
\begin{align}\label{eq:A:pD}
 \phi^{\vt'(D)U}_{\chi,v} = \sum_{\bfs \in \bW_0} D_{\bfs}(\phi^{C_{\bfs}^{-1}U}_{\chi,v}).
\end{align}
\item \label{i:fct:A:Lem65:155}
If $\chi \in H_0^*$ satisfies $\chi(C_{\bfs}U)=\chi(U)$ for all $\bfs \in \bW_0$, then we have
\[
 \Res(D)(\phi^U_{\chi,v}) = \phi^{\vt'(D)U}_{\chi,v}
\]
for any $D \in D_q^{\bW}$ and $v \in H_0$.
\end{enumerate}
\end{fct}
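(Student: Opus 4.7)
The argument proceeds in three parts corresponding to the three conclusions.

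For part (1), the plan is to verify the defining relations of the smash product $\bD_q^{\bW} = \bW \ltimes \bC(x,\xi)$ under $\vt'$. The relations $\vt'(fg) = \vt'(f)\vt'(g)$ for $f,g \in \bC(x,\xi)$ and the semidirect product identity $\vt'(\bfw)\vt'(f) = \vt'(\bfw(f))\vt'(\bfw)$ are immediate from the defining formula of $\vt'$ together with the $\bW$-equivariance $\bfw(fF) = \bfw(f)\bfw(F)$. The substantive check is the group composition $\vt'(\bfw_1\bfw_2) = \vt'(\bfw_1)\vt'(\bfw_2)$, which after direct expansion reduces to the identity $U^{-1}(\tau(\bfw_1\bfw_2)U) = \bfw_1\bigl(U^{-1}(\tau(\bfw_2)U)\bigr) \cdot U^{-1}(\tau(\bfw_1)U)$. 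Setting $\tau(\bfw) = C_{\bfw}\bfw$ to identify $\tau(\bfw)U$ with the matrix $C_{\bfw}\bfw(U)$, this identity becomes a direct consequence of the cocycle property $C_{\bfw_1\bfw_2} = C_{\bfw_1}\bfw_1 C_{\bfw_2}\bfw_1^{-1}$ of \cref{fct:A:Cw}, equivalently of the homomorphism property of $\tau$ in \cref{fct:A:tau}.

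For part (2), the plan is a direct expansion. Writing $D = \sum_{\bfs \in W_0 \times W_0}D_{\bfs}\bfs$ as in \eqref{eq:A:D}, part (1) gives $\vt'(D)U = \sum_{\bfs}\vt'(D_{\bfs})\vt'(\bfs)U$. Since $D_{\bfs} \in \bD_q$ is supported on translations $\bft \in \tu(\Lambda) \times \tu(\Lambda)$ and $U$ is a bqKZ solution by \cref{fct:A:Prp5.13}, we have $\tau(\bft)U = U$ so that the associated cocycle factor $U^{-1}(\tau(\bft)U)$ equals the identity; consequently $\vt'(\bft)V = \bft(V)$, and $\vt'(D_{\bfs})$ reduces to the ordinary $q$-difference operator $D_{\bfs}$ acting entrywise on matrix-valued functions of $(x,\xi)$. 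Applying the matrix coefficient functional $\phi^{-}_{\chi,v}$ to $\vt'(\bfs)U = \bfs(U)\cdot U^{-1}(\tau(\bfs)U)$ and unwinding $\tau(\bfs) = C_{\bfs}\bfs$, the result rearranges, via the interplay between the entrywise $\bfs$-action and matrix multiplication, into $\sum_{\bfs}D_{\bfs}\bigl(\phi^{C_{\bfs}^{-1}U}_{\chi,v}\bigr)$, yielding \eqref{eq:A:pD}.

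For part (3), the plan is a direct substitution. The hypothesis $\chi(C_{\bfs}U) = \chi(U)$ for all $\bfs \in \bW_0$, applied after substituting $C_{\bfs}^{-1}U$ for $U$, gives $\chi(C_{\bfs}^{-1}U) = \chi(U)$, and hence $\phi^{C_{\bfs}^{-1}U}_{\chi,v} = \chi(C_{\bfs}^{-1}Uv) = \chi(Uv) = \phi^U_{\chi,v}$ for every $v \in H_0$. Plugging this into the formula \eqref{eq:A:pD} supplied by part (2) and recognizing $\sum_{\bfs}D_{\bfs} = \Res(D)$ from the definition \eqref{eq:A:Res}, the identity $\Res(D)(\phi^U_{\chi,v}) = \phi^{\vt'(D)U}_{\chi,v}$ follows at once. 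The main obstacle throughout is the careful bookkeeping in part (2): the expression $\tau(\bfw)U$ must be consistently interpreted as the element of $\GL_{\bK}(H_0^{\bK})$ obtained by column-wise application of the semi-linear operator $\tau(\bfw) = C_{\bfw}\bfw$ to the $\bK$-linear $U$, and one must track how the entrywise $\bfw$-action on matrix coefficients interacts with matrix multiplication in order to produce the rearrangement into the stated form.
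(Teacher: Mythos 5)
Note first that the paper gives no proof of \cref{fct:A:Lem65}: it is quoted from \cite{vMS} verbatim, so your attempt can only be measured against the argument that the cited Lemma 6.5 requires, reconstructed from the paper's own conventions (entrywise $\bW$-action on matrices, column-wise application of $\tau(\bfw)$, the cocycle of \cref{fct:A:Cw}). Your three-part architecture is the right one, and your part (3) is sound: the hypothesis $\chi(C_{\bfs}Uv)=\chi(Uv)$ for $v\in H_0$ extends $\bK$-linearly to $v\in H_0^{\bK}$, and substituting $v\mapsto U^{-1}C_{\bfs}^{-1}Uv$ gives $\chi(C_{\bfs}^{-1}Uv)=\chi(Uv)$, which is exactly what your ``substitution'' step needs.

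There is, however, a genuine gap in part (1) which propagates into part (2). Set $A_{\bfw}\ceq U^{-1}(\tau(\bfw)U)=U^{-1}C_{\bfw}\,\bfw(U)$, with $\bfw(U)$ the entrywise action. Since $\tau$ is a group homomorphism and $\tau(\bfw_1)$ is $\bfw_1$-semilinear over $\bK$, applying $\tau(\bfw_1)$ column-wise to the matrix identity $\tau(\bfw_2)U=U A_{\bfw_2}$ yields $\tau(\bfw_1\bfw_2)U=(\tau(\bfw_1)U)\,\bfw_1(A_{\bfw_2})$, i.e.
\[
 A_{\bfw_1\bfw_2}=A_{\bfw_1}\,\bfw_1(A_{\bfw_2}),
\]
which is equivalent to the cocycle relation \eqref{eq:A:cocycle}. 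Your ``substantive check'' asserts instead $A_{\bfw_1\bfw_2}=\bfw_1(A_{\bfw_2})\,A_{\bfw_1}$ --- the same two factors in the opposite order --- and claims it is a direct consequence of the cocycle property. In the noncommutative group $\GL_{\bK}(H_0^{\bK})$ these are different identities, and it is the reversed one, not yours, that the cocycle property delivers; so your justification of homomorphy fails as written. The same sidedness problem undermines part (2): taking $D=\bfs$ in \eqref{eq:A:pD} and letting $\chi,v$ range over $H_0^*\times H_0$ shows that the rearrangement you invoke is equivalent to the identity $\vt'(\bfs)U=C_{\bfs}^{-1}U$, whereas your reading gives $\vt'(\bfs)U=\bfs(U)\,U^{-1}C_{\bfs}\,\bfs(U)$, and equating the two amounts to $(C_{\bfs}\,\bfs(U)U^{-1})^2=\mathrm{id}$, which has no reason to hold; no amount of ``interplay between the entrywise action and matrix multiplication'' rearranges $\chi\bigl(\bfs(U)U^{-1}C_{\bfs}\bfs(U)v\bigr)$ into $\chi(C_{\bfs}^{-1}Uv)$. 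Both difficulties disappear simultaneously once the twist factor enters through its inverse: with $\vt'(\bfw)F\ceq\bfw(F)\,A_{\bfw}^{-1}=\bfw(FU^{-1})\,C_{\bfw}^{-1}U$, the relation $A_{\bfw_1\bfw_2}=A_{\bfw_1}\bfw_1(A_{\bfw_2})$ gives homomorphy at once; one gets $\vt'(\bfw)U=C_{\bfw}^{-1}U$ for all $\bfw\in\bW$ (using $\bft(U)=C_{\bft}^{-1}U$ for translations, i.e., \cref{fct:A:Prp5.13}, together with the cocycle relation), and then (2) and (3) follow exactly along the lines you outline. So the plan is correct, but the one identity you single out as the crux is false in the form you state it; a repaired proof must either establish the reversed relation and match the definition of $\vt'$ to it, or track the sidedness carefully enough to see that $U^{-1}(\tau(\bfw)U)$ must act through its inverse.
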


\begin{fct}[{\cite[Proposition 6.9]{vMS}}]\label{fct:A:Prp69}
For $h \in H(1/k)$, we have 
\begin{align}\label{eq:A:6.6}
 \vt'(D_h^x) U = \eta_L(h^\dagger) U,
\end{align}
where $\dagger\colon H(1/k) \to H(k)$ is the unique algebra anti-isomorphism satisfying
\[
 T_1^\dagger = T_1^{-1}, \quad \pi^\dagger = \pi^{-1}.
\] 
Similarly, for $h' \in H(k)$, we have 
\begin{align}\label{eq:A:6.7}
 \vt'(D_{h'}^\xi) U = C_\iota \iota(\eta_L({h'}^\ddagger)) C_\iota U,
\end{align}
where $\ddagger\colon H(k) \to H(k)$ is the unique algebra anti-involution satisfying
\[
 T_1^\ddagger = T_1, \quad \pi^\ddagger = \pi^{-1}.
\] 
\end{fct}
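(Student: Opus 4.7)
I would prove both identities by induction on the length of a reduced expression in the Hecke algebra generators, with the base case handled by a direct computation on generators, and the inductive step propagated by a commutation between $\vt'(D_g^x)$ and left-multiplication by $\eta_L(A)$. Structurally, $\vt'$ is an algebra homomorphism (\cref{fct:A:Lem65}\,(i)) and $\rho^x_{1/k,q},\rho^\xi_{k,1/q}$ are algebra homomorphisms, while $\eta_L \circ (\cdot)^\dagger$ and $\eta_L \circ (\cdot)^\ddagger$ are algebra anti-homomorphisms; the two identities thus compare a homomorphism with an anti-homomorphism, so the inductive step requires a commutation to switch the variance.

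\textbf{Step 1: commutation and reduction to generators.} Since $D_h^x = \rho^x_{1/k,q}(h)$ lies in the subalgebra $\bC(x)[W\times\sbr{e}] \subset \bD_q^\bW$, every $\bfw$ occurring in an expansion $D_h^x = \sum_{\bfw} f_\bfw \bfw$ acts only on the $x$-coordinate. For $A \in H(k)$, the operator $\eta_L(A) \in \End_\bK(H_0^\bK)$ is a $\bK$-combination of elements of $\End(H_0)$ whose $\bK$-factors depend only on $\xi$, since the $Y$-action in \eqref{eq:A:H-Lmod} enters through the $\xi$-component of the $\bL$-module structure. Consequently $\bfw\bigl(\eta_L(A)F\bigr) = \eta_L(A)\bfw(F)$, and the definition $\vt'(\bfw)F = \bfw(F)\,U^{-1}\tau(\bfw)U$ yields
\[
\vt'(D_g^x)\bigl(\eta_L(A)F\bigr) = \eta_L(A)\,\vt'(D_g^x)F \qquad (g \in H(1/k),\ A \in H(k),\ F \in \End_\bK(H_0^\bK)).
\]
Granted this, if the first identity holds for $h_2$, then $\vt'(D_{h_1h_2}^x)U = \vt'(D_{h_1}^x)\bigl[\eta_L(h_2^\dagger)U\bigr] = \eta_L(h_2^\dagger)\,\vt'(D_{h_1}^x)U$, and applying the generator case for $h_1$ gives $\eta_L(h_2^\dagger h_1^\dagger)U = \eta_L((h_1h_2)^\dagger)U$. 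Hence it suffices to verify \eqref{eq:A:6.6} on the generators $T_1$ and $\pi$ of $H(1/k)$.

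\textbf{Step 2: base case and $\iota$-transport.} For $g=T_i$ with $i\in\sbr{0,1}$, expand $D_{T_i}^x = c(x_i;k^{-1})\,s_i^x + b(x_i;k^{-1})$ by \eqref{eq:A:T1}; unwind $\tau(s_i) = d_{s_i}(x)^{-1}\wt{\sigma}_{s_i}$ by \eqref{eq:A:tau} and $\wt{\sigma}_{s_i}(F) = \wtS_i F$ by \eqref{eq:A:wtsigma}; and use \eqref{eq:A:wtS0'} to express $\wtS_i$ in terms of $\rho_{k,q}(T_i)$. Cancelling the rational prefactors against $d_{s_i}(x)^{-1}$ and invoking the Hecke quadratic relation $(T_i-k)(T_i+k^{-1})=0$ in $H(k)$, the expression for $\vt'(D_{T_i}^x)U$ collapses to $\eta_L(T_i^{-1})U = \eta_L(T_i^\dagger)U$. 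The case $g = \pi$ is parallel: $D_\pi^x = u^x$, and the computation reduces directly to $\eta_L(\pi^{-1})U = \eta_L(\pi^\dagger)U$. The $\xi$-identity \eqref{eq:A:6.7} is then obtained by transport through the involution $\iota \in \bW$: since $\iota$ swaps $x,\xi$ up to inversion by \eqref{eq:A:bWbL} and $\iota(\Phi)=\Phi$ by \cref{fct:A:Lem5.1}\,(i), conjugation by $C_\iota = \tau(\iota)\iota^{-1}$ converts the $x$-identity \eqref{eq:A:6.6} into the $\xi$-identity \eqref{eq:A:6.7}; the resulting twist of $\dagger$ by the duality anti-involution $*$ is precisely $\ddagger$, by its characterization $T_1^\ddagger = T_1,\ \pi^\ddagger = \pi^{-1}$.

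\textbf{Main obstacle.} The main difficulty is the base-case computation in Step 2 for $g=T_i$: the rational factors $c(x_i;k^{-1}),\,b(x_i;k^{-1}),\,d_{s_i}(x)^{-1}$ and the expansion of $\wtS_i$ must combine, via the Hecke quadratic relation, to yield the inverse relation $T_i^{-1} = T_i + (k^{-1}-k)$ in $H(k)$. This is delicate because the input algebra $H(1/k)$ entering through $\rho^x_{1/k,q}$ carries parameter $k^{-1}$ while the output $\eta_L$ is tied to $H(k)$, so the $k\leftrightarrow k^{-1}$ exchange must be tracked carefully through every prefactor. Granted this computation, the structural Steps 1 and 3 go through routinely.
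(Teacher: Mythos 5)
Two preliminary remarks. First, the paper itself contains no proof of this statement: it is imported verbatim as a Fact, with the proof delegated to \cite[Proposition 6.9]{vMS}, so there is no in-paper argument to compare against; your proposal has to be judged against the source. Measured that way, your structural skeleton is sound and does match the architecture of the vMS proof: the commutation in Step 1 is correct (the matrix entries of $\eta_L(A)$ with respect to the basis $\{T_w\}$ lie in $\bC[\xi^{\pm1}]$, because right multiplication by $g(Y)$ is exactly the $\xi$-part of the $\bL$-module structure \eqref{eq:A:H-Lmod}, while $D^x_g$ lies in $\bC(x)[W\times\{e\}]$ and touches only the $x$-coordinate), and together with the anti-multiplicativity of $\dagger$ it legitimately reduces \eqref{eq:A:6.6} to the generators $T_1$ and $\pi$; likewise, obtaining \eqref{eq:A:6.7} from \eqref{eq:A:6.6} by $\iota$-transport via $C_\iota$ is the intended mechanism (compare \eqref{eq:A:CewCwe}), and the statement's formula already encodes it.

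The genuine gap is in Step 2, and it is not the $k\leftrightarrow k^{-1}$ bookkeeping you flag as the main obstacle. Your sketch computes $\vt'(D^x_{T_i})U$ as if $\vt'((s_i,e))U$ were $\tau(s_i,e)U=d_i(x)^{-1}\wtS_i U$, i.e., it silently discards the twist in the definition $\vt'(\bfw)F=\bfw(F)\,U^{-1}\bigl(\tau(\bfw)U\bigr)$ from \cref{fct:A:Lem65}. These are genuinely different operators: already for a translation $\bft$ one has $\vt'(\bft)U=\bft(U)=C_{\bft}^{-1}U$, whereas $\tau(\bft)U=U$. Since $U$ is not invariant under $W_0\times\{e\}$ (its columns $\tau(e,w)\Phi$ involve the quasi-periodic factor $\clW$ of \cref{fct:A:Lem5.1}), the cancellation of the prefactors $c(x_i;k^{-1})$, $b(x_i;k^{-1})$, $d_i(x)^{-1}$ against the Hecke quadratic relation cannot by itself "collapse" the expression to $\eta_L(T_i^{-1})U$: after the cancellation you are left with a term carrying the reflected solution $(s_i,e)(U)$ rather than $U$. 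Eliminating that term is precisely where the substance of \cite[Proposition 6.9]{vMS} lives: one must invoke \cref{fct:A:Prp5.13} — since $\bW_0$ normalizes the translation lattice, $\tau(\bfw)$ preserves $\SKZ$ for $\bfw\in\bW_0$, hence $\tau(\bfw)U=UF_{\bfw}$ with $F_{\bfw}\in\bF\otimes\End(H_0)$, and $F_{\bfw}$ has to be identified explicitly (using the column structure $\tau(e,w)\Phi$ of $U$ and the normalization $K_{0,0}=T_{w_0}$ of the asymptotic expansion of $\Phi$) before the generator computation can close. Without this input your base case does not go through as written, so the proposal, while correctly organized at the reduction and transport levels, is missing the central idea of the proof.
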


\begin{fct}[{\cite[Lemma 6.10]{vMS}}]\label{fct:A:Lem610}
For $p \in \bC[z^{\pm1}]^{W_0}$, we have
\[
 p(Y)^\dagger = p(Y)^\ddagger = p(Y^{-1}).
\]
\end{fct}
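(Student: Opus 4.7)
The plan is to reduce both identities to the single generator $p_1(z) = z + z^{-1}$ of the invariant ring $\bC[z^{\pm 1}]^{W_0}$. Since $\dagger$ and $\ddagger$ are anti-morphisms and any Laurent polynomial in a single element $X$ is a linear combination of powers of $X$ (which commute with themselves), for any such anti-morphism $\sigma$ one has $\sigma(p(X)) = p(\sigma(X))$. Thus the problem will reduce to verifying $p_1(Y)^\dagger = p_1(Y^{-1})$ and $p_1(Y)^\ddagger = p_1(Y^{-1})$; the general case then follows formally by writing an arbitrary $W_0$-invariant $p$ as a polynomial in $p_1$.

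For $\dagger$, the computation will be immediate: since $Y = U T_1$, we get $Y^\dagger = T_1^{-1} U^{-1}$ in $H(k)$, which is precisely the inverse of the element $U T_1 \in H(k)$. Identifying this with $Y^{-1}$ in the target algebra yields $Y^\dagger = Y^{-1}$, and therefore $p(Y)^\dagger = p(Y^{-1})$ will hold for every (not necessarily symmetric) Laurent polynomial $p$.

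The $\ddagger$ case will carry the main content, because $Y^\ddagger = T_1 U^{-1} = T_1 U$ differs from $Y^{-1}$, so the symmetry of $p$ is essential. The key step will be to use the Hecke relation $T_1^{-1} = T_1 - (k - k^{-1})$ together with the braid-type relation $T_1 U = U T_0$ (equivalent to $T_0 = U T_1 U$) to rewrite
\[
 p_1(Y) = U T_1 + T_1^{-1} U = U T_1 + U T_0 - (k - k^{-1}) U = U \bigl(T_0 + T_1 - (k - k^{-1})\bigr).
\]
Applying $\ddagger$, and using $U^\ddagger = U^{-1} = U$, $T_1^\ddagger = T_1$, and $T_0^\ddagger = (U T_1 U)^\ddagger = U T_1 U = T_0$, the right-hand side will become
\[
 \bigl(T_0 + T_1 - (k - k^{-1})\bigr) U = T_0 U + \bigl(T_1 - (k - k^{-1})\bigr) U = U T_1 + T_1^{-1} U = p_1(Y),
\]
where we use $T_0 U = U T_1 = Y$ and $(T_1 - (k - k^{-1})) U = T_1^{-1} U = Y^{-1}$. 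Since $p_1$ is $W_0$-invariant one also has $p_1(Y^{-1}) = p_1(Y)$, which completes the proof of $p_1(Y)^\ddagger = p_1(Y^{-1})$.

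The main obstacle is conceptual rather than technical: although $Y^\ddagger \neq Y^{-1}$ as individual elements of $H(k)$, the Laurent-symmetrization $Y^n + Y^{-n}$ is fixed by $\ddagger$, with the discrepancy absorbed by the Hecke relation. The factorization $p_1(Y) = U\bigl(T_0 + T_1 - (k - k^{-1})\bigr)$ makes this transparent, since the middle factor is visibly $\ddagger$-invariant.
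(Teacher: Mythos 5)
Your proof is correct. Note first that the paper itself gives no proof of this statement: it is quoted as a Fact from \cite[Lemma 6.10]{vMS}, so there is nothing in the text to compare line by line, and what you have produced is a self-contained rank-one verification. Your computation checks out in the paper's conventions: $Y^\dagger=(UT_1)^\dagger=T_1^{-1}U^{-1}=T_1^{-1}U=Y^{-1}$ (so, as you observe, the $\dagger$-identity needs no symmetry of $p$ and no reduction to $p_1$), while for $\ddagger$ the factorization $p_1(Y)=U\bigl(T_1+T_0-(k-k^{-1})\bigr)$ with the visibly $\ddagger$-fixed middle factor, together with $T_0U=Y$ and $\bigl(T_1-(k-k^{-1})\bigr)U=T_1^{-1}U=Y^{-1}$, is exactly right, as is the formal reduction of general $p\in\bC[z^{\pm1}]^{W_0}=\bC[p_1]$ to $p_1$ via $\sigma(F(p_1(Y)))=F(\sigma(p_1(Y)))$ for an anti-morphism $\sigma$. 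For comparison, the argument behind the general-rank statement in \cite{vMS} can be phrased more conceptually, and in rank one it reads: since
\[
 Y^\ddagger = T_1U = T_1(UT_1)T_1^{-1} = T_1\,Y\,T_1^{-1},
\]
one has $p(Y)^\ddagger = T_1\,p(Y)\,T_1^{-1}$, and for $W_0$-invariant $p$ the element $p(Y)$ lies in the center of $H(k)$ by Bernstein's theorem, so conjugation by $T_1$ acts trivially; finally $p(Y^{-1})=p(Y)$ by symmetry. Your hands-on computation for $p_1$ is precisely an explicit rank-one proof of this centrality, so the two routes buy different things: the centrality argument generalizes immediately to higher rank and to all of $\bC[Y^{\pm1}]^{W_0}$ at once, while yours is elementary, uses only the defining relations \eqref{eq:A:Hrel}, and makes the cancellation mechanism (the Hecke relation absorbing the discrepancy $Y^\ddagger\neq Y^{-1}$) completely transparent.
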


Now we can explain:

\begin{proof}[Part \textup{(ii)} of the proof of \cref{fct:A:chi+}]
We want to show $\chi_+(f) \in \SMR(k,q)$ for $f \in \SKZ(1/k,q)$. By \cref{fct:A:Prp5.13} \ref{i:fct:A:Prp5.13:U'} and the $\bF$-linearity of $\chi_+$, it is enough to consider the case $f=U v$ with $v \in H_0(1/k)$. Then $\chi_+(f)=\phi^U_{\chi_+,v}$ by \eqref{eq:A:phi}. 

Let us check the first equality of \eqref{eq:A:bMR}, extending it to general $p \in \bC[T]^{W_0}$. By \eqref{eq:A:Lp}, we have
\begin{align*}
 (L_p^x \phi_{\chi_+,v}^U)(t,\gamma) = 
 \bigl(\Res(D^x_{p(Y)})(\phi^U_{\chi_+,v})\bigr)(t,\gamma).
\end{align*}
Now, by \cref{lem:A:Lem66}, $\chi_+$ satisfies the condition of \cref{fct:A:Lem65} \ref{i:fct:A:Lem65:155}. Then we have 
\[
 \bigl(\Res(D^x_{p(Y)})(\phi^U_{\chi_+,v})\bigr)(t,\gamma) =
 \phi^{\vt'(D^x_{p(Y)})U}_{\chi_+,v}(t,\gamma),
\]
Then, by \eqref{eq:A:6.6} in \cref{fct:A:Prp69} and by \cref{fct:A:Lem610}, we have
\[
 \phi^{\vt'(D^x_{p(Y)})U}_{\chi_+,v}(t,\gamma) = 
 \phi^{\eta_L(p(Y)^\dagger)U}_{\chi_+,v}(t,\gamma) =
 \phi^{\eta_L(p(Y^{-1}))U}_{\chi_+,v}(t,\gamma).
\]
Finally, by \cref{fct:A:Lem42} and that $p$ is $W_0$-invariant, we have
\[
 \phi^{\eta_L(p(Y^{-1}))U}_{\chi_+,v}(t,\gamma) = p(\gamma^{-1}) \phi^U_{\chi_+,v}(t,\gamma). 
\]
Hence we have the desired equality 
$(L_p^x {\chi_+}(f))(t,\gamma)=p(\gamma^{-1}) \chi_+(f)(t,\gamma)$.

Similarly, we can prove the second equality of \eqref{eq:A:bMR}, using \eqref{eq:A:6.7} instead of \eqref{eq:A:6.6}. 
\end{proof}

\begin{exm}\label{exm:A:clW}
We cite from \cite{vMS,vM,S} two examples of the function $\clW$ in \eqref{eq:A:clW}.
\begin{enumerate}
\item \label{i:exm:A:clW:1}
We denote the Jacobi theta function with elliptic nome $q$ by 
\[
 \theta(z;q) \ceq (q,z,q/z;q)_\infty = \prod_{n \in \bN}(1-q^{n+1})(1-q^nz)(1-q^{n+1}/z),
\]
using the $q$-shifted factorial \eqref{eq:0:fqf}. It enjoys the properties
\begin{align}\label{eq:A:thprp}
 \theta(qx;q)=\theta(x^{-1};q)=-x^{-1}\theta(x;q), \quad \theta(qx^{-1};q)=\theta(x;q),
\end{align}
Then, denoting 
\begin{align}\label{eq:A:theta}
 \theta(z,z';q) \ceq \theta(z;q)\theta(z';q), 
\end{align}
we define the meromorphic function $\clW^{A_1}$ of $x,\xi$ by 
\begin{align}\label{eq:A:clWA}
 \clW^{A_1}(x,\xi) = \clW^{A_1}(x,\xi;k,q) \ceq 
 \frac{\theta(-q^{\sqd}x \xi;q^{\shf})}{\theta(-q^{\sqd}kx,-q^{\sqd}k^{-1}\xi;q^{\shf})}.
\end{align}
By the above identities, it satisfies the properties \eqref{eq:A:clW-eq} and \eqref{eq:A:clW-sd}. Let us write them again:
\begin{align}
\label{eq:A:Weq}
&\clW^{A_1}(q^{\pm1/2}x,\xi;k,q) = (k/\xi)^{\pm1}\clW^{A_1}(x,\xi;k,q), \\
\label{eq:A:Wsd}
&\clW^{A_1}(\xi^{-1},x^{-1};k,q) = \clW^{A_1}(x,\xi;k^*,q).
\end{align}
We used the redundant notation $k^*=k$ again for the comparison with the $\CvC$ case \eqref{eq:CC:Wsd}.

\item
For later use, let us cite another function $\hW \in \bK=\clM(x,\xi)$ from \cite[p.279]{S}:
\begin{align}\label{eq:A:hW}
 \hW^{A_1}(x,\xi) = \hW^{A_1}(x,\xi;k,q) \ceq 
 \frac{\theta(-q^{\sqd}k^{-1}x\xi;q^{\shf})}{\theta(-q^{\sqd}x;q^{\shf})}.
\end{align}
This function satisfies the $q$-difference equation
\begin{align}\label{eq:A:hWeq}
 \hW^{A_1}(q^{\pm1/2}x,\xi;k,q) = (k/\xi)^{\pm1}\hW^{A_1}(x,\xi;k,q), 
\end{align}
but does not satisfy the self-duality.
\end{enumerate}
\end{exm}

\begin{rmk}\label{rmk:A:clW}
We give a few comments on the function $\clW^{A_1}$ in \cref{exm:A:clW} \ref{i:exm:A:clW:1}.
\begin{enumerate}
\item 
The function $\clW^{A_1}$ is equivalent to $G(t,\gamma)$ in \cite[(5.8)]{vM}, and equivalent to the function $\clW$ \cite[\S3.2]{S} with $k$ replaced by $k^{-1}$. This parameter difference comes from the choice of the basic representation $\rho^x_{k^{-1},q}$ in \cite{vMS,vM} and $\rho^x_{k,q}$ in \cite{S} (see \cref{rmk:A:rho}). 

\item \label{i:rmk:A:clW:2}
Let us explain the function $G(t,\gamma)$ in \cite{vM}, and how to obtain the function $\clW^{A_1}(x,\xi)$ from it. We use the torus $T=\Hom_{\textup{Group}}(\Lambda,\bC^\times)$, the notation $t^\lambda$ of the value of $t \in T$ at $\lambda \in \Lambda$, the notation of a point $(t,\gamma) \in T \times T$, the ring $\bL' = \bC[T \times T]$ and the isomorphism $\bL' \cong \bL=\bC[x^{\pm1},\xi^{\pm1}]$ explained in \cref{rmk:A:L}. The outline is that $G(t,\gamma)$ is defined to be an element of $\clM(T \times T)$, i.e., a meromorphic function on $T \times T$, and the function $\clW^{A_1}(x,\xi)$ is obtained from $G(t,\gamma)$ under the isomorphism $\clM(T \times T) \cong \clM(x,\xi)$ induced by $\bL' \cong \bL$.

Let $\vartheta = \vartheta^{A_1}$ be the theta function associated to the weight lattice $\Lambda=\bZ \varpi$ of type $A_1$ in the sense of Looijenga \cite{L}. It is a meromorphic function on the torus $T \ceq \Hom_{\bZ}(\Lambda,\bC^\times)$, and the value at a point $t \in T$ is given by
\begin{align}\label{eq:A:vartheta}
 \vartheta(t) = \vartheta^{A_1}(t) \ceq \sum_{\lambda\in\Lambda} q^{\shf\pair{\lambda,\lambda}} t^\lambda
\end{align}
Let us also denote $w_0 \ceq s_1 \in W_0$ and 
\[
 \gamma_0 = \gamma_0^* \ceq k^{\alpha} \in T,
\]
which are borrowed from \cite[(2.3),(2.4)]{S}. There the general types are treated in a uniform way under the notation $\gamma_{0,d}$ for our $\gamma_0^*$. The symbol $*$ indicates the duality anti-involution \eqref{eq:A:*}. Then, the meromorphic function $G$ on $T \times T$ is defined to be 
\begin{align}\label{eq:A:G}
 G(t,\gamma) \ceq 
 \frac{\vartheta( \tu(w_0\gamma)^{-1})}{\vartheta(\gamma_0t)\, \vartheta((\gamma_0^*)^{-1}\gamma)}.
\end{align}

Next we explain how to obtain $\clW^{A_1}(x,\xi)$ from $G(t,\gamma)$. Using the coordinate $x=(t \mto t^{\varpi})$, we can rewrite the lattice theta function as 
\[
 \vartheta(t) = \sum_{n \in \bZ} q^{l^2/4} x^n = \theta(-q^{\sqd}x;q^{\shf}).
\]
Using the other coordinate $\xi = (\gamma \mto \gamma^\varpi)$, we can also rewrite $t w_0(\gamma)^{-1}$ as $(t w_0(\gamma)^{-1})^\varpi=(t \gamma)^\varpi=x \xi$, $\gamma_0 t$ as $(\gamma_0 t)^{\varpi}=k^{\pair{\alpha,\varpi}}t^{\varpi}=kx$, and $(\gamma_0^*)^{-1}\gamma$ as $((\gamma_0^*)^{-1}\gamma)^{\varpi}=k^{-\pair{\alpha,\varpi}}\gamma^\varpi=k^{-1}\xi$. Hence, we obtain the function $\clW^{A_1}(x,\xi)$.

\end{enumerate}
\end{rmk}

\subsection{Bispectral Macdonald-Ruijsenaars function of type \texorpdfstring{$A_1$}{A1}}\label{ss:A:2phi1}

In this subsection, we give an explicit solution of the bispectral Macdonald-Ruijsenaars $q$-difference equation of type $A_1$, following \cite{NS} and \cite[\S5.3]{S}. One caution is that we work on 
\[
 \SMR(1/k,q),
\]
so that the reciprocal parameter $k^{-1}$ is used in this subsection.
As in the previous \cref{fct:A:chi+}, we assume $0<q<1$. Let us denote $\nu \ceq q^{1/2}$.

Let us write again the bispectral Macdonald-Ruijsenaars equation \eqref{eq:A:bMR}:
\begin{align}\label{eq:A:bMR2}
\begin{cases}
 (L^x_{  p_1}f)(x,\xi) &= (\xi+\xi^{-1}) f(x,\xi) \\
 (L^\xi_{p_1}f)(x,\xi) &= (  x+  x^{-1}) f(x,\xi)
\end{cases}.
\end{align}
By \cref{prp:A:Lp1} and \cref{rmk:A:Lp1}, the operators can be written as 
\begin{align}
\label{eq:A:Lp12}
&L^x_{p_1} =  L(x;k,q), \quad L^\xi_{p_1} = L(\xi;k^{-1},q^{-1}), \\
\label{eq:A:L}
&L(x;k,q) \ceq 
 \frac{k-k^{-1}x^{-2}}{1-x^{-2}} T_{\nu,x}^{  }+
 \frac{k^{-1}-kx^{-2}}{1-x^{-2}} T_{\nu,x}^{-1}. 
\end{align}

First, we consider the asymptotic form of the $x$-side $q$-difference equation 
\[
 \bigl(L_{p_1}^x-(\xi+\xi^{-1})\bigr)f(x)=0
\]
in the region $\abs{x}\gg1$. From \eqref{eq:A:L} (also recall \cref{rmk:A:Lp1}), the asymptotic form is 
\[
 L_{p_1}^x \approx L_{(\infty)}^x \ceq kT_{\nu,x}+k^{-1}T_{\nu,x}^{-1}.
\]
Similarly, in the region $\abs{\xi}\ll1$, we have
\[
 L_{p_1}^\xi \approx L_{(0)}^\xi \ceq  k^{-1}T_{\nu,\xi}+kT_{\nu,\xi}^{-1},
\]
Now recall the functions $\clW^{A_1}(x,\xi;1/k,q)$ and $\hW^{A_1}(x,\xi;1/k,q)$:
\begin{align}\label{eq:A:WhW}
 \clW^{A_1}(x,\xi;1/k,q) =
 \frac{\theta(-\nu^{\shf}x\xi;\nu)}{\theta(-\nu^{\shf}k^{-1}x,-\nu^{\shf}k\xi;\nu)}, \quad
 \hW^{A_1}(x,\xi;1/k,q) \ceq 
 \frac{\theta(-\nu^{\shf}kx\xi;\nu)}{\theta(-\nu^{\shf}x;\nu)}.
\end{align}

\begin{lem}\label{lem:A:Linf}
The sets $\{\clW^{A_1}(x,\xi^{\pm1};1/k,q)\}$ and $\{\hW^{A_1}(x,\xi^{\pm1};1/k,q)\}$ are bases of solutions of the asymptotic $q$-difference equation
\[
 \bigl(L^x_{(\infty)} - (\xi+\xi^{-1})\bigr)f(x) = 0.
\]
Similarly, the sets $\{\clW^{A_1}(x^{\pm1},\xi;1/k,q)\}$ and $\{\hW^{A_1}(x^{\pm1},\xi;k1/,q)\}$ are bases of solutions of 
\[
 \bigl(L_{(0)}^\xi-(x+x^{-1})\bigr)g(\xi) = 0.
\]
\end{lem}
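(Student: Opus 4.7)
The plan is to verify each claimed solution by direct substitution into the asymptotic $q$-difference equation using the quasi-periodicity identities \eqref{eq:A:Weq} and \eqref{eq:A:hWeq}, and then to establish the basis property by comparing Floquet multipliers under the $q$-shift.

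First I would treat the $x$-side. Replacing $k$ by $1/k$ in \eqref{eq:A:Weq} gives
\[
 \clW^{A_1}(q^{\pm 1/2}x,\xi;1/k,q) = (k\xi)^{\mp 1}\clW^{A_1}(x,\xi;1/k,q),
\]
so $T_{\nu,x}^{\pm 1}\clW^{A_1}(x,\xi;1/k,q) = (k\xi)^{\mp 1}\clW^{A_1}(x,\xi;1/k,q)$. Substituting into $L^x_{(\infty)} = k T_{\nu,x}+k^{-1}T_{\nu,x}^{-1}$ yields the eigenvalue $k(k\xi)^{-1}+k^{-1}(k\xi) = \xi^{-1}+\xi$, verifying that $\clW^{A_1}(x,\xi;1/k,q)$ is a solution. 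Since the eigenvalue $\xi+\xi^{-1}$ is $s_1$-invariant, replacing $\xi$ by $\xi^{-1}$ supplies the second solution. The function $\hW^{A_1}$ satisfies the identical $x$-quasi-periodicity by \eqref{eq:A:hWeq}, so the same substitution works verbatim with $\clW^{A_1}$ replaced by $\hW^{A_1}$.

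Next I would establish the basis claim. The equation $L^x_{(\infty)}f = (\xi+\xi^{-1})f$ is a second-order linear $q$-difference equation, so its solution space has dimension two over the field of $T_{\nu,x}$-invariant meromorphic functions in $x$ (the elliptic functions with nome $\nu^2$). Two formal eigenfunctions are linearly independent over this field whenever their Floquet multipliers under $T_{\nu,x}$ differ. For $\clW^{A_1}(x,\xi^{\pm 1};1/k,q)$ these multipliers are $(k\xi)^{-1}$ and $\xi/k$, which are distinct for generic $\xi$ (concretely for $\xi^2 \neq 1$), so the two functions form a basis. The same argument applies to $\hW^{A_1}(x,\xi^{\pm 1};1/k,q)$.

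For the $\xi$-side, I would proceed analogously. Using the self-duality \eqref{eq:A:Wsd} (or equivalently a direct theta-function computation on the explicit formula \eqref{eq:A:clWA} with $k \to 1/k$), one obtains
\[
 T_{\nu,\xi}^{\pm 1}\clW^{A_1}(x,\xi;1/k,q) = (k/x)^{\pm 1}\clW^{A_1}(x,\xi;1/k,q),
\]
which together with $L^\xi_{(0)} = k^{-1}T_{\nu,\xi}+kT_{\nu,\xi}^{-1}$ produces the eigenvalue $k^{-1}(k/x)+k(x/k) = x^{-1}+x$. The Floquet multipliers $k/x$ and $kx$ of $\clW^{A_1}(x^{\pm 1},\xi;1/k,q)$ are generically distinct, giving a basis as before. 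The parallel statement for $\hW^{A_1}$ reduces to a direct theta-function calculation using $\theta(\nu z;\nu) = -z^{-1}\theta(z;\nu)$ applied to the explicit expression \eqref{eq:A:WhW}.

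The main obstacle is the $\xi$-side computation for $\hW^{A_1}$: since self-duality is unavailable there, one must rely purely on the theta quasi-periodicities, and one must check that the resulting Floquet multiplier indeed matches $(x+x^{-1})$ in $L^\xi_{(0)}$; the remainder of the argument is otherwise routine.
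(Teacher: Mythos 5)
For the three parts you actually verify --- both $x$-side bases and the $\xi$-side basis for $\clW^{A_1}$ --- your route is exactly the paper's: substitute the quasi-periodicities \eqref{eq:A:Weq}, \eqref{eq:A:hWeq} into the constant-coefficient asymptotic operators, read off the eigenvalues $\xi+\xi^{-1}$ resp.\ $x+x^{-1}$, and conclude by second-orderness plus linear independence. Your Floquet-multiplier justification of independence (distinct $T_{\nu,x}$-multipliers $(k\xi)^{\mp1}$, resp.\ $T_{\nu,\xi}$-multipliers $kx^{\pm1}$, over the field of shift-invariant meromorphic functions, for generic $\xi$ resp.\ $x$) is in fact a sharper statement than the paper's bare appeal to ``the property of the Jacobi theta function $\theta(x;q)$'', and all of your eigenvalue computations are correct.

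However, the step you defer as ``a direct theta-function calculation'' --- the $\xi$-side for $\hW^{A_1}$ --- is precisely where the lemma is delicate, and the calculation does not come out the way your plan assumes. From \eqref{eq:A:WhW} and $\theta(\nu z;\nu)=-z^{-1}\theta(z;\nu)$ one gets
\[
 T_{\nu,\xi}\,\hW^{A_1}(x,\xi;1/k,q) = \bigl(\nu^{1/2}kx\xi\bigr)^{-1}\hW^{A_1}(x,\xi;1/k,q),
\]
a multiplier that \emph{depends on $\xi$}: in $\xi$ the function $\hW^{A_1}$ is theta-like, not an eigenfunction of the shift, so $\hW^{A_1}(x^{\pm1},\xi;1/k,q)$ does \emph{not} solve $\bigl(L^\xi_{(0)}-(x+x^{-1})\bigr)g(\xi)=0$. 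This is forced by the paper's own remark that $\hW^{A_1}$ satisfies \eqref{eq:A:hWeq} (an $x$-shift identity only) but fails the self-duality \eqref{eq:A:Wsd}, so the $\xi$-behaviour cannot mirror the $x$-behaviour; note also that the paper's one-line ``the same argument works for $\hW^{A_1}$ by \eqref{eq:A:hWeq}'' is equally silent on this point, and the garbled parameter ``$k1/$'' in the statement signals a misprint there. The $\xi$-side claim holds for the \emph{dual} functions: the same theta identities give
\[
 T_{\nu,\xi}\,\hW^{A_1}(\xi^{-1},x^{\pm1};1/k,q) = kx^{\pm1}\,\hW^{A_1}(\xi^{-1},x^{\pm1};1/k,q),
\]
whence $L^\xi_{(0)} = k^{-1}T_{\nu,\xi}+kT_{\nu,\xi}^{-1}$ yields the eigenvalue $x+x^{-1}$ in both cases, and the multipliers $kx^{\pm1}$ are distinct for $x^2\neq1$, giving a basis by your own Floquet argument. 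So your proposal should replace the deferred check by this dual-argument version; as written, the ``routine'' step you postpone would refute, not verify, the printed claim.
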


\begin{proof}
As seen before, we have $T_{\nu,x}^{\pm1}f(x)=(k\xi)^{\mp1}f(x)$ for $f(x) \ceq \clW^{A_1}(x^{\pm1},\xi;1/k,q)$, so that these functions are solutions of the $x$-side equation. Since the equation is second-order and these functions are linear independent by the property of the Jacobi theta function $\theta(x;q)$, we have the $x$-side statement. The $\xi$-side is shown similarly using $T_{\nu,\xi}^{\pm1}\clW^{A_1}(x,\xi;1/k,q)=(x/k)^{\mp1}\clW^{A_1}(x,\xi;1/k,q)$. The same argument works for $\hW^{A_1}$.
\end{proof}

Next, let us recall Heine's basic hypergeometric $q$-difference equation \cite[Chap.\ 1, Exercise 1.13]{GR}:
\begin{align}\label{eq:A:Heq}
 \bigl(D_H^z(a,b,c;q)u\bigr)(z)=0, 
\end{align}
where the operator $D_H^z$ is given by
\begin{align}\label{eq:A:DH}
 D_H^z(a,b,c;q) \ceq 
 z(c-a b q z)\pd_q^2 +\Bigl(\frac{1-c}{1-q}+\frac{(1-a)(1-b)-(1-a b q)}{1-q}z\Bigr)\pd_q
 +\frac{(1-a)(1-b)}{(1-q)^2}
\end{align}
with $(\pd_q u)(z) \ceq \bigl(u(z)-u(q z)\bigr)/\bigl((1-q)z\bigr)$.
A solution of \eqref{eq:A:Heq} is given by Heine's basic hypergeometric function
\begin{align}\label{eq:A:2p1}
 u(z) = \qHG{2}{1}{a,b}{c}{q}{z},
\end{align}
where we used the notation \eqref{eq:0:qHG}. 

The following relation between the Macdonald $q$-difference operator of type $A_1$ and Heine's basic hypergeometric $q$-difference equation is well known.

\begin{lem}[{c.f.\ \cite[Lemma 5.4]{S}}]\label{lem:A:L=DH}
Let $\clW(x)$ be a meromorphic function in $x$ satisfying 
\begin{align}\label{eq:A:Wcond}
 \clW(q^{\pm\shf} x) = (k\xi)^{\mp1} \clW(x).
\end{align}
Then, the function $f(x) = \clW(x) u(k^{-2}qx^{-2})$ is a meromorphic solution of the $q$-difference equation
\[
 (L^x_{p_1}f)(x) = (\xi+\xi^{-1})f(x)
\]
if and only if $u(z)$ is a meromorphic solution of the $q$-difference equation
\begin{align*}
 (D_H^z(k^2,k^2\xi^2,q\xi^2)u)(z) = 0, \quad z = k^{-2} q x^{-2}.
\end{align*}
\end{lem}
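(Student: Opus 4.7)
The plan is a direct substitution and change of variable, followed by matching the resulting equation against Heine's $q$-difference equation in three-term form.

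From \eqref{eq:A:L}, write $L(x;k,q) = \alpha(x) T_{\nu,x} + \beta(x) T_{\nu,x}^{-1}$ with $\alpha(x) \ceq (k-k^{-1}x^{-2})/(1-x^{-2})$ and $\beta(x) \ceq (k^{-1}-kx^{-2})/(1-x^{-2})$. Under $x \mapsto q^{\pm 1/2}x$ the variable $z = k^{-2}qx^{-2}$ is sent to $q^{\mp 1}z$, and the hypothesis \eqref{eq:A:Wcond} gives $\clW(q^{\pm 1/2}x) = (k\xi)^{\mp 1}\clW(x)$. Applying $L^x_{p_1}$ to $f = \clW(x)u(z)$ and dividing out the factor $\clW(x)$, the equation $(L^x_{p_1}f)(x) = (\xi+\xi^{-1})f(x)$ reduces to
\[
 \alpha(x)(k\xi)^{-1}u(q^{-1}z) + \beta(x)(k\xi)u(qz) = (\xi+\xi^{-1})u(z).
\]

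Next I would re-express $\alpha, \beta$ as rational functions of $z$ by substituting $x^{-2} = k^2 z/q$, which gives $\alpha(x) = k(q-z)/(q-k^2 z)$ and $\beta(x) = (q-k^4 z)/\bigl(k(q-k^2 z)\bigr)$. Clearing the common denominator $q - k^2 z$ yields the three-term $q$-difference equation
\[
 (q-z)\xi^{-1}u(q^{-1}z) + \xi(q-k^4 z)u(qz) = (\xi+\xi^{-1})(q-k^2 z)u(z). \quad (\star)
\]

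The final step is to verify that $(\star)$ coincides with Heine's equation $D_H^z(k^2,k^2\xi^2,q\xi^2;q)u = 0$. The cleanest route is to bypass the operator form \eqref{eq:A:DH} and derive Heine's equation directly from the coefficient recursion of ${}_2\phi_1(a,b;c;q,z)$: the relation $(1-q^{n+1})(1-cq^n)c_{n+1} = (1-aq^n)(1-bq^n)c_n$ between consecutive Taylor coefficients translates into
\[
 q(1-z)u(z) - (q+c-(a+b)qz)u(qz) + (c-abqz)u(q^2z) = 0,
\]
and after the shift $z \mapsto z/q$ and the substitution $(a,b,c) = (k^2,k^2\xi^2,q\xi^2)$, the identities $a+b = k^2(1+\xi^2)$, $ab = k^4\xi^2$, $c = q\xi^2$ and $1+\xi^2 = \xi(\xi+\xi^{-1})$ show this recovers exactly $(\star)$ up to an overall factor of $\xi$.

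The main obstacle is purely algebraic bookkeeping in the last step: one must track the shift $z \mapsto z/q$ needed to align $(\star)$ with the standard three-term form of Heine's equation and match the parameters carefully. If one insists on working with $D_H^z$ in the differential-like form \eqref{eq:A:DH}, one writes $\pd_q = (1-\eta)/((1-q)z)$ and $\pd_q^2 = (q-(1+q)\eta+\eta^2)/(q(1-q)^2 z^2)$ with $\eta \ceq T_{q,z}$, multiplies through by $(1-q)^2 z$, and performs the same coefficient matching; this is more laborious but equally mechanical.
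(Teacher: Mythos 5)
Your proposal is correct and essentially reproduces the paper's proof: both arguments gauge away $\clW$ via \eqref{eq:A:Wcond}, pass to the variable $z=k^{-2}qx^{-2}$, and match the resulting three-term equation (your $(\star)$ is literally the equation appearing in the paper's computation after clearing the denominator $q-k^2z$) against the three-term form $D'(a,b,c;q)=(c/q-abz)T_{q,z}^2-(1+c/q-(a+b)z)T_{q,z}+(1-z)$ of Heine's equation, with your shift $z\mapsto z/q$ playing the role of the paper's left multiplication by $T_{q,z}^{-1}(1-az)^{-1}$ that produces $D''$. One caution: since the lemma asserts an equivalence for \emph{meromorphic} $u$, deriving the three-term form from the coefficient recursion of $\qhg{2}{1}$ only certifies it for power-series solutions, so the operator identity $(1-q)^2 z\, D_H^z = D'$ that you relegate to the final paragraph as an optional variant is in fact the required step --- exactly the ``direct computation'' with which the paper's proof begins.
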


\begin{proof}
A direct computation yields that the operator $D_H^z(a,b,c;q)$ in \eqref{eq:A:DH} is proportional to
\begin{align*}
 D'(a,b,c;q) \ceq (c/q-abz)T_{q,z}^2-(1+c/q-(a+b)z)T_{q,z}+(1-z).
\end{align*}
If $a/b=q/c$, then $D'(a,ac/q,c;q) = (c/q)(1-a^2 z)T_{q,z}^2-(1+c/q)(1-az)T_{q,z}+(1-z)$.
Hence, defining
\[
 D''(a,c;q) \ceq T_{q,z}^{-1} \frac{1}{1-a z}D'_z(a,ac/q,c;q) = 
 cq^{-1}\frac{1-a^2z/q}{1-az/q}T_{q,z}+\frac{1-z/q}{1-az/q}T_{q,z}^{-1}-(1+c/q),
\]
we have $(D_H^z(a,ac/q,c;q) u)(z) = 0 \iff (D''(a,c;q)u)(z) = 0$.
If moreover $z=k^{-2} q x^{-2}$, $a=k^2$ and $c=q\xi^2$, then we have
\begin{align*}
&\bigl(D_H^z(k^2,k^2\xi^2,q\xi^2;q) u\bigr)(z) = 0 
 \iff \bigl(\xi^{-1}D''(k^2,q\xi^2;q)u\bigr)(z) = 0 \\
&\iff \Bigl(\frac{1-k^{2 }x^{-2}}{1-x^{-2}}\xi^{  }T_{q,z}^{  }+
            \frac{1-k^{-2}x^{-2}}{1-x^{-2}}\xi^{-1}T_{q,z}^{-1}-(\xi+\xi^{-1})\Bigr)u(z)=0.
\end{align*}
On the other hand, by the expression \eqref{eq:A:Lp12} and the condition \eqref{eq:A:Wcond}, we have
\begin{align*}
&\bigl((L^x_{p_1} -(\xi+\xi^{-1}))f\bigr)(x) = 0 \\
&\iff
 \Bigl(\frac{k-k^{-1}x^{-2}}{1-x^{-2}} k^{-1}\xi^{-1}T_{q,z}^{-1} +
       \frac{k^{-1}-kx^{-2}}{1-x^{-2}} k^{  }\xi^{  }T_{q,z}^{  } - (\xi+\xi^{-1})\Bigr)u(z) = 0 \\
&\iff
 \Bigl(\frac{1-k^{ 2}x^{-2}}{1-x^{-2}} \xi^{  } T_{q,z}^{  } +
       \frac{1-k^{-2}x^{-2}}{1-x^{-2}} \xi^{-1} T_{q,z}^{-1} - (\xi+\xi^{-1})\Bigr)u(z) = 0.
\end{align*}
Thus we have the desired equivalence.
\end{proof}

Now we give an explicit bispectral solution of \eqref{eq:A:bMR2}.

\begin{prp}[{c.f.\ \cite[Theorems 2.1, 2.2, (3.13)]{NS}, \cite[Cor.\ 5.5]{S}}]\label{prp:A:2p1}
We denote $\nu \ceq q^{1/2}$.
\begin{enumerate}
\item \label{i:prp:A:2p1:1}
Define the function $f^{A_1}(x,\xi)$ by 
\begin{align}\label{eq:A:f2p1}
\begin{split}
&f^{A_1}(x,\xi) = f^{A_1}(x,\xi;k,q) \ceq \clW^{A_1}(x,\xi;1/k,q) \, \varphi^{A_1}(x,\xi;k,q), \\
&\varphi^{A_1}(x,\xi) = \varphi^{A_1}(x,\xi;k,q) \ceq \frac{(q\xi^2;q)_\infty}{(k^{-2}q\xi^2;q)_\infty}
 \qHG{2}{1}{k^2,k^2\xi^2}{q\xi^2}{q}{\frac{q}{k^2 x^2}}.
\end{split}
\end{align}
Here we used the function $\clW^{A_1}(x,\xi;1/k,q)$ in \eqref{eq:A:WhW}, and assumed $\abs{k^{-2}qx^{-2}}<1$.
Then $f^{A_1}$ satisfies the following properties.
\begin{clist}
\item \label{i:prp:A:2p1:1-1}
It is a solution of the bispectral problem \eqref{eq:A:bMR2}.
\item \label{i:prp:A:2p1:1-2}
It has the symmetry (the inversion invariance in \cite{S})
\[
 f^{A_1}(x,\xi)=f^{A_1}(x^{-1};\xi)=f^{A_1}(x,\xi^{-1}).
\]
\item \label{i:prp:A:2p1:1-3}
It has the self-duality 
\[
 f^{A_1}(x,\xi;k,q) = f^{A_1}(\xi^{-1};x^{-1};k^*,q),
\]
using the redundant notation $k^*=k$ for the comparison with the $\CvC$ case.
\end{clist}
Recalling the $\bW$-action on $\bK=\clM(T \times T)$ in \eqref{eq:A:bWbL}, we express the subset of $\SMR(1/k,q)$ satisfying these properties as 
\[
 \SMRW(1/k,q) \ceq \{f \in \SMR(1/k,q) \mid \text{ (ii), (iii)}\}.
\]
Thus, we can restate the claim as
\[
 f^{A_1} \in \SMRW(1/k,q).
\]

\item \label{i:prp:A:2p1:2}
Defining $\xi_n \ceq k^{-1}\nu^{-n}$ for $n \in \bN$, we have
\begin{align}\label{eq:A:2p1:f=Pl}
\begin{split}
&f^{A_1}(x,\xi_n) = c_n P^{A_1}_n(x), \\  
&c_n \ceq \frac{(-k)^{-n} \nu^{-\binom{n+1}{2}}}{\theta(-k^2\nu^{n+\hf};\nu)}
          \frac{(k^{-2}q^{1-n};q)_\infty}{(k^{-4}q^{1-n};q)_\infty}, \quad
 P^{A_1}_n(x) \ceq x^n \qHG{2}{1}{k^2,q^{-n}}{k^{-2}q^{1-n}}{q}{\frac{q}{k^2x^2}}. 
\end{split}
\end{align}
The function $P^{A_1}_n(x)$ satisfies the following three conditions.
\begin{clist}
\item \label{i:A:Pl:ev}
It is an eigenfunction of the Macdonald-Ruijsenaars $q$-difference operator $L^x_{p_1}$ of type $A_1$.
\item \label{i:A:Pl:1}
It is a Laurent polynomial in $x$ belonging to $x^n \bC[x^{-1}]$, and is invariant under the replacement $x \mto x^{-1}$.
\end{clist}
Moreover, these conditions uniquely determine the function $P^{A_1}_n(x)$ up to constant multiplication, and the eigenvalue in \ref{i:A:Pl:ev} is $p_1(\xi_n^{-1})=\xi_n^{-1}+\xi_n$.
\end{enumerate}
\end{prp}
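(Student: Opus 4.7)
The plan is to verify part (1) in the order (i) for the $x$-side, then (iii), then (i) for the $\xi$-side, then (ii), and finally part (2), leveraging Lemma \ref{lem:A:L=DH} for the qKZ-side equation and classical $\qhg{2}{1}$-transformation identities for the symmetries. For the $x$-side of (1)(i), Lemma \ref{lem:A:L=DH} applies directly: the factor $\clW^{A_1}(x,\xi;1/k,q)$ satisfies the quasi-periodicity condition \eqref{eq:A:Wcond} by \eqref{eq:A:Weq} (after $k \leftrightarrow k^{-1}$), and the $\qhg{2}{1}$ inside $\varphi^{A_1}$ solves Heine's $q$-difference equation $D_H^z(k^2, k^2\xi^2, q\xi^2;q)u = 0$ at $z = q/(k^2x^2)$ by \eqref{eq:A:Heq}. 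The lemma then produces $(L^x_{p_1} f^{A_1})(x,\xi) = (\xi+\xi^{-1}) f^{A_1}(x,\xi)$; the $\xi$-prefactor $(q\xi^2;q)_\infty/(k^{-2}q\xi^2;q)_\infty$ is irrelevant at this step and is included precisely to force self-duality in (iii).

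For (iii), the self-duality of $\clW^{A_1}$ given by \eqref{eq:A:Wsd} reduces the problem to the scalar identity $\varphi^{A_1}(x,\xi;k,q) = \varphi^{A_1}(\xi^{-1},x^{-1};k,q)$, which I would obtain from one of Heine's classical transformations for $\qhg{2}{1}$ (\cite[\S1.4]{GR}) after rewriting the $q$-shifted-factorial ratios; the specific normalization in $\varphi^{A_1}$ is chosen so that Heine's transformation produces a form manifestly symmetric under $(x,\xi) \mapsto (\xi^{-1},x^{-1})$, as recorded in \cite[Theorem 2.1]{NS}. This is the nontrivial input of the whole argument. Once (iii) is in hand, the $\xi$-side of (1)(i) follows by applying the $x$-side equation to $f^{A_1}(\xi^{-1},x^{-1};k,q)$ and invoking the $\iota$-conjugation $L^\xi_{p_1} = \iota L^x_{p_1} \iota$ from Remark \ref{rmk:A:Lp1}. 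For (1)(ii), by (iii) it suffices to show $f^{A_1}(x^{-1},\xi) = f^{A_1}(x,\xi)$; this follows either from a further $\qhg{2}{1}$-identity relating the series at arguments $q/(k^2x^2)$ and $qx^2/k^2$, combined with the theta-ratio $\clW^{A_1}(x^{-1},\xi;1/k,q)/\clW^{A_1}(x,\xi;1/k,q)$, or equivalently by a connection-coefficient computation in the two-dimensional asymptotic solution space furnished by Lemma \ref{lem:A:Linf}.

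For part (2), I would specialize $\xi = \xi_l = k^{-1}\nu^{-l}$, which gives $k^2\xi_l^2 = q^{-l}$ and forces the $\qhg{2}{1}$ in $\varphi^{A_1}$ to terminate as a polynomial of degree $l$ in $q/(k^2x^2)$. Iterating the theta identity $\theta(q^n z;q) = (-z)^{-n}q^{-\binom{n}{2}}\theta(z;q)$ from \eqref{eq:A:thprp} on $\clW^{A_1}(x,\xi_l;1/k,q)$ extracts an overall factor of $x^l$ times a scalar depending only on $k,l,q$, and combining this with the terminated $\qhg{2}{1}$ and the specialized prefactor yields \eqref{eq:A:2p1:f=Pl}, with $c_l$ obtained by routine bookkeeping of $\theta$- and $(\cdot\,;q)_\infty$-prefactors. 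Properties \ref{i:A:Pl:ev} and \ref{i:A:Pl:1} of $P^{A_1}_l$ then follow: \ref{i:A:Pl:ev} is immediate from (1)(i), while \ref{i:A:Pl:1} combines the termination of the $\qhg{2}{1}$ (giving the Laurent polynomial structure) with (1)(ii) (giving the $x \leftrightarrow x^{-1}$ invariance). Uniqueness of $P^{A_1}_l$ up to scalar is a standard consequence of the simplicity of the spectrum of $L^x_{p_1}$ on the finite-dimensional space of $W_0$-invariant Laurent polynomials of degree $\le l$. The main obstacle throughout is the Heine-transformation identity underlying (1)(iii); once it is granted, all other properties cascade.
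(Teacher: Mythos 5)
Your treatment of (1)(i) on the $x$-side, of (1)(iii), and of part (2) coincides with the paper's proof: Lemma \ref{lem:A:L=DH} plus the quasi-periodicity \eqref{eq:A:Weq} handles the $x$-equation (and indeed the $\xi$-prefactor of $\varphi^{A_1}$ is inert there); the self-duality (iii) is exactly the Heine-transformation identity \eqref{eq:A:vp}--\eqref{eq:A:vpsym} imported from \cite[(4.11)]{NS} together with \eqref{eq:A:Wsd}; the $\xi$-side equation then follows by inversion-conjugation of $L^\xi_{p_1}$ as in the paper (which uses $L^\xi_{p_1}=I\,L(\xi;k,q)\,I$ with $I$ the $\xi$-inversion, materially the same as your $\iota$-conjugation); and your specialization $\xi=\xi_l$, termination of the $\qhg{2}{1}$ at $k^2\xi_l^2=q^{-l}$, and theta-bookkeeping for $c_l$ is the paper's computation verbatim.

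The one genuine divergence, and the weak point, is (1)(ii). You propose to prove $f^{A_1}(x^{-1},\xi)=f^{A_1}(x,\xi)$ directly, either by ``a further $\qhg{2}{1}$-identity relating the series at arguments $q/(k^2x^2)$ and $qx^2/k^2$'' or by a connection-coefficient computation in the two-dimensional solution space of Lemma \ref{lem:A:Linf}. Neither route is established in your sketch, and both are substantially harder than you suggest: no simple two-term transformation of the required shape is available (note that the transformed form \eqref{eq:A:vp} is manifestly symmetric under $x^{-2}\leftrightarrow\xi^2$, hence gives (iii), but is \emph{not} manifestly invariant under $x\mapsto x^{-1}$), and the generic inversion-invariant solution is a \emph{two-term} combination with theta-quotient $c$-function coefficients, as in $\clE_+^{A_1}$ of \cref{fct:A:clE+}; showing those coefficients collapse to $(1,0)$ for $f^{A_1}$ is precisely the nontrivial content you would have to supply. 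The paper circumvents this entirely by reordering: it proves part (2) \emph{first}, observes from the explicit formula \eqref{eq:A:2p1:f=Pl} (the Rogers polynomial, cf.\ \eqref{eq:A:Rogers}) that $f^{A_1}(x,\xi_l)=f^{A_1}(x^{-1},\xi_l)$ for every $l\in\bN$, and then upgrades this to all $\xi$ by the identity theorem applied to $g(\xi)\ceq f^{A_1}(x,\xi)-f^{A_1}(x^{-1},\xi)$, after which the $\xi$-inversion invariance follows from (iii). You should adopt this order (your current order, (ii) before (2), leaves (ii) unproved, while (2)(ii) needs no input from (1)(ii) since the Laurent-polynomiality and symmetry of $P^{A_1}_l$ are read off the terminating series); with that repair the rest of your argument, including the standard uniqueness statement in (2), goes through as in the paper.
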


We will give an almost self-consistent proof, except the following equality \eqref{eq:A:vp}.

\begin{fct}[{\cite[(4.11)]{NS}}]
The function $\varphi^{A_1}(x,\xi)$ satisfies
\begin{align}\label{eq:A:vp}
 \varphi^{A_1}(x,\xi) = 
 \frac{(k^2,qx^{-2}\xi^2;q)_\infty}{(k^{-2}qx^{-2},k^{-2}q\xi^2;q)_\infty}
 \qHG{2}{1}{k^{-2}qx^{-2},k^{-2}q\xi^2}{qx^{-2}\xi^2}{q}{k^2}
\end{align}
under the condition $\abs{k}<1$. In particular, we have 
\begin{align}\label{eq:A:vpsym}
 \varphi^{A_1}(x,\xi) = \varphi^{A_1}(\xi^{-1};x^{-1}).
\end{align}
\end{fct}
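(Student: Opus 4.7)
The plan is to derive \eqref{eq:A:vp} directly as a consequence of one of Heine's classical transformation formulas for $\qHG{2}{1}$ (see \cite[(1.4.1)]{GR}):
\[
 \qHG{2}{1}{a,b}{c}{q}{z} = \frac{(b,az;q)_\infty}{(c,z;q)_\infty}\qHG{2}{1}{c/b,z}{az}{q}{b},
\]
valid whenever $|z|<1$ and $|b|<1$.

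First, I would take the defining series of $\varphi^{A_1}(x,\xi)$ from \eqref{eq:A:f2p1} and, using the symmetry of the top parameters in a $\qHG{2}{1}$, write it as
\[
 \qHG{2}{1}{k^2,k^2\xi^2}{q\xi^2}{q}{\tfrac{q}{k^2x^2}}
 = \qHG{2}{1}{k^2\xi^2,k^2}{q\xi^2}{q}{\tfrac{q}{k^2x^2}}.
\]
Now I would apply Heine's transformation above with $a=k^2\xi^2$, $b=k^2$, $c=q\xi^2$, $z=q/(k^2x^2)$. Under the hypotheses $|k|<1$ and $|q/(k^2x^2)|<1$ (the latter being the convergence assumption in the definition of $\varphi^{A_1}$), both conditions are met. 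The substitution produces the prefactor
\[
 \frac{(k^2,\,qx^{-2}\xi^2;q)_\infty}{(q\xi^2,\,k^{-2}qx^{-2};q)_\infty}
\]
(using $az = qx^{-2}\xi^2$ and $z = k^{-2}qx^{-2}$), and the transformed series
\[
 \qHG{2}{1}{k^{-2}q\xi^2,\,k^{-2}qx^{-2}}{qx^{-2}\xi^2}{q}{k^2}
\]
(using $c/b = k^{-2}q\xi^2$, $z=k^{-2}qx^{-2}$, $az = qx^{-2}\xi^2$, and $b=k^2$). Multiplying by the Gamma-type prefactor $(q\xi^2;q)_\infty/(k^{-2}q\xi^2;q)_\infty$ that already appears in the definition of $\varphi^{A_1}$, the factor $(q\xi^2;q)_\infty$ cancels and the remaining factor $(k^{-2}q\xi^2;q)_\infty$ is absorbed into the denominator, yielding exactly \eqref{eq:A:vp}.

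For the symmetry \eqref{eq:A:vpsym}, I would simply inspect the right-hand side of \eqref{eq:A:vp}: under the substitution $(x,\xi)\mapsto(\xi^{-1},x^{-1})$, the argument $k^2$ is fixed, the bottom parameter $qx^{-2}\xi^2$ is fixed, and the two top parameters $k^{-2}qx^{-2}$ and $k^{-2}q\xi^2$ get interchanged. Interchange of the upper parameters does not alter a $\qHG{2}{1}$. The prefactor is manifestly symmetric under the same substitution, since $(k^2,qx^{-2}\xi^2;q)_\infty$ is fixed and the two factors in the denominator $(k^{-2}qx^{-2},k^{-2}q\xi^2;q)_\infty$ are swapped. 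Hence the right-hand side of \eqref{eq:A:vp} is invariant, which by \eqref{eq:A:vp} itself gives \eqref{eq:A:vpsym}.

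The only place that requires care is the bookkeeping of convergence domains: the left-hand side of \eqref{eq:A:vp} is the series for $\varphi^{A_1}$, which is defined under $|q/(k^2x^2)|<1$, while the right-hand side converges for $|k^2|<1$. The statement of the fact already assumes $|k|<1$, and in order to compose the two expressions one works in the intersection $|q/(k^2x^2)|<1$ and $|k|<1$; meromorphic continuation then extends the identity to the full common domain of meromorphy of both sides. I do not anticipate any substantive obstacle — the proof is essentially one application of Heine's transformation and an inspection of the resulting expression.
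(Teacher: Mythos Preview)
Your proof is correct and follows exactly the approach indicated in the paper: the paper states that \eqref{eq:A:vp} ``can be shown using Heine's transformation formula for $\qhg{2}{1}$ series \cite[(1.4.1)]{GR}'', which is precisely what you do, and the symmetry \eqref{eq:A:vpsym} is then read off from the manifest invariance of the right-hand side under $(x,\xi)\mapsto(\xi^{-1},x^{-1})$. Your treatment of the convergence domains and meromorphic continuation is a welcome addition that the paper leaves implicit.
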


The equality \eqref{eq:A:vp} can be shown using Heine's transformation formula for $\qhg{2}{1}$ series \cite[(1.4.1)]{GR}. See also \cite[(4.10)]{NS} for the calculation.

\begin{proof}[{Proof of \textup{\cref{prp:A:2p1}}}]
For \ref{i:prp:A:2p1:1}, we follow the argument of \cite[Lemma 2.18]{S}. Let us denote $\clW^{A_1}(x,\xi)\ceq\clW^{A_1}(x,\xi;1/k,q)$ for simplicity, and recall the quasi-periodicity and the self-duality:
\begin{align}\label{eq:A:TW}
 \clW^{A_1}(\xi^{-1},x^{-1}) = \clW^{A_1}(x,\xi), \quad  
 \clW^{A_1}(\nu x,\xi)=(k\xi)^{-1}\clW(x,\xi).
\end{align}
The first equality of \eqref{eq:A:TW} and \eqref{eq:A:vpsym} yield the self-duality \ref{i:prp:A:2p1:1-3}.
The second equality of \eqref{eq:A:TW} is nothing but the condition \eqref{eq:A:Wcond}, so that \cref{lem:A:L=DH} and \eqref{eq:A:2p1} yield
\begin{align}\label{eq:A:Lxf}
 L_{p_1}^x f^{A_1}(x,\xi) = (\xi+\xi^{-1})f^{A_1}(x,\xi) = p_1(\xi^{-1})f^{A_1}(x,\xi).
\end{align}
On the other hand, \eqref{eq:A:Lp12} shows $L_{p_1}^\xi = L(\xi;k^{-1},q^{-1}) = I L(\xi;k,q) I$, where $I$ is the operator $g(\xi) \mto (Ig)(\xi) \ceq g(\xi^{-1})$ for a function $g(\xi)$. Then, the self-duality \ref{i:prp:A:2p1:1-3} and the eigen-property \eqref{eq:A:Lxf} imply
\begin{align*}
  L^\xi_{p_1}f^{A_1}(x,\xi)
&=\bigl(I L(\xi;k,q) I\bigr) f^{A_1}(\xi^{-1};x^{-1}) = 
  I L(\xi;k,q) f^{A_1}(\xi;x^{-1}) = I\bigl(p_1(x)f^{A_1}(\xi;x^{-1})\bigr) \\
&= p_1(x)f^{A_1}(\xi^{-1};x^{-1}) = (x+x^{-1})f^{A_1}(x,\xi).
\end{align*}
Hence \ref{i:prp:A:2p1:1-3} holds.

Before showing \ref{i:prp:A:2p1:1} \ref{i:prp:A:2p1:1-2}, we show \ref{i:prp:A:2p1:2}.
The equality in the statement is a consequence of 
\[
 \clW^{A_1}(x,\xi_n;1/k,q) = (-\nu^{-\shf}k^{-1}x)^n\nu^{-\binom{n}{2}} = x^nc_n,
\]
which can be checked using $\theta(x;q)=(q,x,q/x;q)_\infty$. The condition \ref{i:prp:A:2p1:2} \ref{i:A:Pl:ev} is a consequence of \eqref{eq:A:Lxf}. The condition \ref{i:prp:A:2p1:2} \ref{i:A:Pl:1} can be checked by the formula \ref{eq:A:2p1:f=Pl} (see also \cref{rmk:A:2p1} \ref{i:rmk:A:2p1:1}). The uniqueness is well-known in the theory of Macdonald polynomials (see also \cref{rmk:A:2p1} \ref{i:rmk:A:2p1:1}).

Now we show the remaining \ref{i:prp:A:2p1:1} \ref{i:prp:A:2p1:1-2}. By \ref{i:prp:A:2p1:2} \ref{i:A:Pl:1}, we have $f^{A_1}(x,\xi_n)=f^{A_1}(x^{-1};\xi_n)$ for any $l \in \bN$. Then, applying the identity theorem in complex analysis to the analytic function $g(\xi) \ceq f^{A_1}(x,\xi) - f^{A_1}(x^{-1};\xi)$, we have $f^{A_1}(x,\xi)=f^{A_1}(x^{-1};\xi)$ for any $\xi$ in the domain of definition. Combining it with the self-duality \ref{i:prp:A:2p1:1} \ref{i:prp:A:2p1:1-3}, we have $f^{A_1}(x,\xi)=f^{A_1}(x,\xi^{-1})$. Hence we have \ref{i:prp:A:2p1:1} \ref{i:prp:A:2p1:1-2}.
\end{proof}

\begin{rmk}\label{rmk:A:2p1}
Some comments on \cref{prp:A:2p1} are in order.
\begin{enumerate}
\item \label{i:rmk:A:2p1:1}
Defining $\beta \in \bC$ by $k=\nu^{\beta}$, the Laurent polynomial $P^{A_1}_n$ is equal to 
\begin{align}\label{eq:A:Rogers}
 P^{A_1}_n(x) = 
 \bnm{\beta+n-1}{n}{q}^{-1} \sum_{i+j=n} \bnm{\beta+i-1}{i}{q} \bnm{\beta+j-1}{j}{q} x^{i-j},
\end{align}
where we used the $q$-binomial coefficient \eqref{eq:0:qbin}. It is nothing but the Macdonald symmetric polynomial of type $A_1$ \cite[(6.3.7)]{M}, and is proportional to the continuous $q$-ultraspherical polynomial, or the Rogers polynomial. See \cite[\S6.3, pp.156--157]{M} for the detail.

\item
In \cite{NS}, Noumi and Shiraishi gave an explicit bispectral solution $f(x_1,\dotsc,x_n;s_1,\dotsc,s_n)$ of type $\GL_n$. The above solution $f^{A_1}(x,\xi)$ is obtained by specializing $(x_1,x_2)=(x,x^{-1})$ and $(s_1,s_2)=(\xi,\xi^{-1})$ in the solution $f(x_1,x_2;s_1,s_2)$ of type $\GL_2$. See also Stokman \cite[Corollary 5.5]{S} for the uniqueness of $f(x_1,x_2;s_1,s_2)$.

\end{enumerate}
\end{rmk}

Let us cite another bispectral solution.

\begin{fct}[{\cite[Theorem 4.6, (5.18)]{S}}]\label{fct:A:clE+}
Define a meromorphic function $\clE_+^{A_1}(x,\xi) = \clE_+^{A_1}(x,\xi;k,q) \in \bK=\clM(x,\xi)$ by
\begin{align}\label{eq:A:cexp}
\begin{split}
 \clE_+^{A_1}(x,\xi;k,q) 
&\ceq  
 \frac{\theta(-\nu^{\shf}k;\nu)}{\theta(-\nu^{\shf}\xi;\nu)}
 \frac{(k^2\xi^{-2},k^2;q)_\infty}{(\xi^{-2},k^4;q)_\infty} \hW^{A_1}(x,\xi;1/k,q)
 \qHG{2}{1}{k^2,k^2\xi^2}{q\xi^2}{q}{\frac{q}{k^2x^2}} + (\xi \mto \xi^{-1}) \\
&=
 \frac{\theta(-\nu^{\shf}k,-\nu^{\shf}kx\xi;\nu)}{\theta(-\nu^{\shf}\xi,-\nu^{\shf}x;\nu)}
 \frac{(k^2\xi^{-2},k^2;q)_\infty}{(\xi^{-2},k^4;q)_\infty}
 \qHG{2}{1}{k^2,k^2\xi^2}{q\xi^2}{q}{\frac{q}{k^2x^2}} + (\xi \mto \xi^{-1}),
\end{split}
\end{align}
where the second term is obtained by replacing $\xi$ in the first term with $\xi^{-1}$.
Then the function $\clE_+^{A_1}$ enjoys the following properties \ref{i:prp:A:2p1:1-1}--\ref{i:prp:A:2p1:1-3}.
\begin{clist}
\item \label{i:A:clE+1}
It is a solution of the bispectral problem \eqref{eq:A:bMR2}.
\item \label{i:A:clE+2}
It has the symmetry (the inversion invariance in \cite{S})
\[
 \clE_+^{A_1}(x,\xi) = \clE_+^{A_1}(x^{-1};\xi) = \clE_+^{A_1}(x,\xi^{-1}).
\]
\item \label{i:A:clE+3}
It has the self-duality 
\[
 \clE_+^{A_1}(x,\xi;k,q) = \clE_+^{A_1}(\xi^{-1};x^{-1};k^*,q),
\]
using the redundant notation $k^*=k$ for the comparison with the $\CvC$ case. 
\end{clist}
Recalling the $\bW$-action on $\bK=\clM(x,\xi)$ in \eqref{eq:A:bWbL}, we express the subset of $\SMR(1/k,q)$ satisfying these properties as 
\[
 \SMRW(1/k,q) \ceq \{f \in \SMR(1/k,q) \mid \text{ (ii), (iii)}\}.
\]
Thus, we can restate the claim as
\[
 \clE_+^{A_1} \in \SMRW(1/k,q).
\]
Following \cite{S}, we call it \emph{the basic hypergeometric function of type $A_1$}.
\end{fct}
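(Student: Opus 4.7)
The plan is to verify \ref{i:A:clE+1}--\ref{i:A:clE+3} in the order: the $L^x_{p_1}$-eigenequation first, then the self-duality \ref{i:A:clE+3}, and finally the remaining assertions. The main obstacle is the self-duality; once it is established, the inversion invariance and the $\xi$-side of the bispectral equation follow formally from what has already been built up for $f^{A_1}$.

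\textit{Step 1 ($L^x_{p_1}$-eigenequation).} Each of the two summands of $\clE_+^{A_1}$ in the first form of \eqref{eq:A:cexp} factors as $\Phi(\xi)\cdot\hW^{A_1}(x,\xi;1/k,q)\cdot{}_2\phi_1(k^2,k^2\xi^2;q\xi^2;q,q/(k^2x^2))$ with $\Phi(\xi)$ an $x$-independent prefactor of theta and $q$-Pochhammer symbols. By \eqref{eq:A:hWeq} with $k$ replaced by $k^{-1}$, the function $\hW^{A_1}(x,\xi;1/k,q)$ satisfies the quasi-periodicity $\clW(q^{\pm1/2}x)=(k\xi)^{\mp1}\clW(x)$ which is the hypothesis on $\clW$ in \cref{lem:A:L=DH}. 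Combining \cref{lem:A:L=DH} with the fact \eqref{eq:A:Heq}--\eqref{eq:A:2p1} that ${}_2\phi_1(k^2,k^2\xi^2;q\xi^2;q,z)$ solves Heine's $q$-difference equation with parameters $(a,b,c)=(k^2,k^2\xi^2,q\xi^2)$, each summand solves $(L^x_{p_1}-(\xi+\xi^{-1}))f=0$. Since the eigenvalue $\xi+\xi^{-1}=p_1(\xi^{-1})$ is invariant under $\xi\mapsto\xi^{-1}$, the sum $\clE_+^{A_1}$ inherits the $x$-side equation.

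\textit{Step 2 (self-duality).} I would pass from each summand of $\clE_+^{A_1}$ to the already self-dual building block $f^{A_1}$ of \eqref{eq:A:f2p1}. Using \eqref{eq:A:WhW} and \eqref{eq:A:f2p1}, the first summand of \eqref{eq:A:cexp} takes the form $Q(x,\xi)\, f^{A_1}(x,\xi;k,q)$ with $Q(x,\xi)$ an explicit rational function of theta functions and $q$-Pochhammer symbols, and the second summand is $Q(x,\xi^{-1})\, f^{A_1}(x,\xi^{-1};k,q)$. The $\xi$-inversion of $f^{A_1}$ from \cref{prp:A:2p1}\ref{i:prp:A:2p1:1-2} collapses the symmetrization to $\clE_+^{A_1}(x,\xi)=R(x,\xi)\, f^{A_1}(x,\xi;k,q)$ where $R(x,\xi)\ceq Q(x,\xi)+Q(x,\xi^{-1})$ is automatically invariant under $\xi\mapsto\xi^{-1}$. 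Applying the self-duality of $f^{A_1}$ from \cref{prp:A:2p1}\ref{i:prp:A:2p1:1-3}, the desired identity $\clE_+^{A_1}(x,\xi)=\clE_+^{A_1}(\xi^{-1},x^{-1})$ reduces to
\[
 R(x,\xi) = R(\xi^{-1},x^{-1}).
\]
This is an identity purely among theta and $q$-Pochhammer symbols; it follows from the symmetries $\theta(-\nu^{1/2}z^{-1};\nu)=\theta(-\nu^{1/2}z;\nu)$ (a consequence of $\theta(\nu/w;\nu)=\theta(w;\nu)$) together with the Jacobi-type factorization $(z;q)_\infty(q/z;q)_\infty=\theta(z;q)/(q;q)_\infty$, which recast the asymmetric Pochhammer ratios inside $R$ as ratios of theta functions and enable a direct matching of the two sides.

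\textit{Step 3 (remaining properties).} The $\xi$-inversion half of \ref{i:A:clE+2} is manifest from the construction of $\clE_+^{A_1}$ as a $\xi$-symmetrization. The $x$-inversion half follows from self-duality combined with $\xi$-inversion via the chain
\[
 \clE_+^{A_1}(x^{-1},\xi)\overset{\text{\ref{i:A:clE+3}}}{=}\clE_+^{A_1}(\xi^{-1},x)\overset{\text{\ref{i:A:clE+2}}}{=}\clE_+^{A_1}(\xi^{-1},x^{-1})\overset{\text{\ref{i:A:clE+3}}}{=}\clE_+^{A_1}(x,\xi).
\]
Finally, the $\xi$-side of \ref{i:A:clE+1} is obtained exactly as in the closing paragraph of the proof of \cref{prp:A:2p1}: writing $L^\xi_{p_1}=IL(\xi;k,q)I$ with $I\colon g(\xi)\mapsto g(\xi^{-1})$ and invoking the self-duality, one gets
\[
 L^\xi_{p_1}\clE_+^{A_1}(x,\xi) = IL(\xi;k,q)\clE_+^{A_1}(\xi^{-1},x^{-1}) = I\bigl(p_1(x)\clE_+^{A_1}(\xi^{-1},x^{-1})\bigr) = p_1(x)\clE_+^{A_1}(x,\xi).
\]
The main obstacle throughout is Step 2: even after the clean reduction to $R(x,\xi)=R(\xi^{-1},x^{-1})$, the bookkeeping of theta arguments --- in particular reconciling $\theta(-\nu^{1/2}kx\xi;\nu)$ with its image $\theta(-\nu^{1/2}k/(x\xi);\nu)=\theta(-\nu^{1/2}x\xi/k;\nu)$ under $(x,\xi)\mapsto(\xi^{-1},x^{-1})$ --- requires careful combination with the $\xi$-inversion symmetrization and with the Pochhammer ratios appearing in $Q$.
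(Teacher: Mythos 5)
The paper does not actually prove this Fact: it is quoted verbatim from \cite[Theorem 4.6, (5.18)]{S}, and the remark following it explains that in \cite{S} the function is \emph{defined} by its $c$-function expansion $\clE_+ = \sum_{w\in W_0}\frc(t,w\gamma)\Phi(t,w\gamma)$ in the self-dual Harish-Chandra series, with \eqref{eq:A:cexp} being the $A_1$ specialization of the $\GL_2$ formula. So your attempt at a self-contained proof must stand on its own. Your Step 1 is correct and is a legitimate transplant of the paper's proof of \cref{prp:A:2p1}: the function $\hW^{A_1}(x,\xi;1/k,q)$ satisfies the quasi-periodicity \eqref{eq:A:Wcond} by \eqref{eq:A:hWeq}, so \cref{lem:A:L=DH} applies to each summand, and the eigenvalue $p_1(\xi^{-1})$ is $\xi$-inversion invariant. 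Your Step 3 is also formally correct (modulo a harmless typo in the middle term of your $L^\xi_{p_1}$ chain, where the inner $I$ should first turn $\clE_+^{A_1}(\xi^{-1},x^{-1})$ into $\clE_+^{A_1}(\xi,x^{-1})$), but note it is entirely driven by the self-duality of Step 2, so everything hinges there.

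Step 2 contains a genuine gap. The reduction $\clE_+^{A_1}=R\,f^{A_1}$ with $R(x,\xi)=Q(x,\xi)+Q(x,\xi^{-1})$ is fine, and writing $\vartheta(z)\ceq\theta(-\nu^{\shf}z;\nu)$ and applying exactly the Jacobi factorization you invoke, one gets explicitly
\[
 Q(x,\xi) = \frac{\vartheta(k)\,(k^2;q)_\infty}{(k^4;q)_\infty}\,
 \frac{\theta(k^2\xi^{-2};q)}{\theta(\xi^{-2};q)}\,
 \frac{\vartheta(kx\xi)\,\vartheta(k^{-1}x)\,\vartheta(k\xi)}{\vartheta(x)\,\vartheta(\xi)\,\vartheta(x\xi)}.
\]
But then, using only $\vartheta(z^{-1})=\vartheta(z)$, one computes
\[
 \frac{Q(\xi^{-1},x^{-1})}{Q(x,\xi)} =
 \frac{\theta(k^2x^{2};q)\,\theta(\xi^{-2};q)}{\theta(x^{2};q)\,\theta(k^2\xi^{-2};q)}\cdot
 \frac{\vartheta(k^{-1}x\xi)}{\vartheta(kx\xi)} \neq 1,
\]
and the crossed pairing $Q(x,\xi)$ versus $Q(\xi^{-1},x)$ fails in the same way. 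So no ``direct matching of the two sides'' is possible: $R(x,\xi)=R(\xi^{-1},x^{-1})$ is a genuine two-term-against-two-term identity of Riemann--Weierstrass type among products of four theta functions, and closing it requires either the theta addition formula or a Liouville-type argument (one checks $R$ is invariant under $\xi\mapsto q\xi$ with multiplier $1$, so one must compare poles and residues of both sides on the elliptic curve), neither of which your sketch supplies. This is precisely the point where \cite{S} proceeds by a different, structural route --- self-duality of $\clE_+$ is inherited from the self-duality of $\Phi$ (cf.\ \cref{fct:A:Lem5.1}) together with the duality of the $c$-functions, with the two-term $\qhg{2}{1}$/very-well-poised connection formulas doing the analytic work --- rather than by direct theta manipulation. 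As it stands, your argument fully establishes only the $x$-side eigenequation; properties (ii), (iii) and the $\xi$-side of (i) all rest on the unproven identity $R(x,\xi)=R(\xi^{-1},x^{-1})$.
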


\begin{rmk}
Some comments on the function $\clE_+^{A_1}$ are in order.
\begin{enumerate}
\item 
As explained right after \cite[Definition 2.19]{S}, we have the basic hypergeometric function of arbitrary type. The reduced case, including the above $\clE_+^{A_1}(x,\xi;k,q)$, was introduced by Cherednik \cite{C97,C09} under the name of global spherical function. The non-reduced case (type $\CvC)$ was introduced by Stokman \cite{S03}, and the uniform approach was discussed in \cite{S}. The $\GL_2$ type is written down in \cite[(5.18)]{S}, from which we can recover the $A_1$ case.

\item
Although we take \eqref{eq:A:cexp} as the definition of the basic hypergeometric function $\clE_+^{A_1}$, the actual statement of \cite[Theorem 4.6]{S} is that $\clE_+$ (of arbitrary type) has \emph{the $c$-function expansion} with respect to the self-dual basic Harish-Chandra series $\Phi$ (see \cref{fct:A:Lem5.1} for type $A_1$), and defined for generic $\eta \in T$. The $c$-function expansion is given in the form 
\begin{align*}
 \clE_+(t,\gamma;k,q) = \sum_{w \in W_0} \frc(t,w\gamma;k,q) \Phi(t,w\gamma;k,q).
\end{align*}
\end{enumerate}
\end{rmk}

\section{Type \texorpdfstring{$(C^\vee_1,C_1)$}{CC}}\label{s:CC}

We discuss the type $\CvC$, or the non-reduced type. See also \cite[\S3, \S5.2]{S}.

\subsection{Extended affine Hecke algebra}\label{ss:CC:H}

First, we recall the affine root system of type $(C^{\vee}_1,C_1)$ and the extended affine Weyl group, following \cite[\S1, \S2, \S6.4]{M}.

We consider the one-dimensional real Euclidean space $(V,\pair{\cdot,\cdot})$ with
\[
  V = \bR\ep, \quad \pair{\ep,\ep}=1.
\]
Similarly as in \cref{sss:A:W}, we denote by $ F$ the space of affine real functions on $V$, and identify it with $V \oplus \bR c$. Using the gradient map $D\colon  F \to V$, we extend $\pair{\cdot,\cdot}$ to $ F$. 

Let $S(C^{\vee}_1,C_1) \ceq \{m(\pm\ep+\hf n) \mid m \in\{1,2\}, \, n\in\bZ\}$ be the affine root system $S(C^{\vee}_1,C_1)$ in the sense of Macdonald \cite{M}. A basis is given by $\{a_0\ceq\hf c-\ep, a_1\ceq\ep\}$, and the corresponding simple reflections $s_i\colon V \to V$ for $i=0,1$ are given by the formula \eqref{eq:A:W0V} with $a_i^\vee\ceq2a_i/\pair{a_i,a_i}=2a_i \in  F$. Explicitly, we have
\begin{align}\label{eq:CC:W0onV}
 s_1(r\ep)=-r\ep, \quad s_0(r\ep)=(1-r)\ep \quad (r\in\bR).
\end{align}

We denote $W_0 \ceq \langle s_1 \rangle \subset O(V,\pair{\cdot,\cdot})$, which is isomorphic to $\fS_2$. The $W_0$-action \eqref{eq:CC:W0onV} on $V$ preserves
\[
 \Lambda \ceq \bZ\ep \subset V,
\]
the coroot lattice of the root system $R(C_1)=\{\pm2\ep\}$ generated by $(2\ep)^\vee=\ep$. We also denote by $\tu(\Lambda)=\sbr{\tu(\lambda) \mid \lambda\in \Lambda}$ is the abelian group with relations $\tu(\lambda) \tu(\mu)=\tu(\lambda+\mu)$ for $\lambda,\mu\in\Lambda$. The group $\tu(\Lambda)$ acts on $V$ by translation \eqref{eq:A:LamonV}. Then, the extended affine Weyl group $W$ of $S(C^{\vee}_1,C_1)$ is defined to be the subgroup of the isometries on $(V,\pair{\cdot,\cdot})$ generated by $W_0$ and $\tu(\Lambda)$. 
\begin{align}\label{eq:CC:W}
 W \ceq W_0 \ltimes \tu(\Lambda).
\end{align}
In particular, we have the relation 
\begin{align}\label{eq:CC:W0P}
 s_1\tu(\lambda)s_1 = \tu(s_1(\lambda)) \quad (\lambda\in\Lambda)
\end{align}
with $s_1(\lambda)$ given by \eqref{eq:CC:W0onV}.

As an abstract group, $W$ is generated by $s_0$ and $s_1$ with fundamental relations 
\begin{align}\label{eq:CC:Wrel}
 s_0^2=s_1^2=e.
\end{align}
The following relations hold in $W$.
\begin{align}\label{eq:CC:tep}
 \tu(\ep) = s_0s_1, \quad \tu(-\ep) = s_1s_0.
\end{align}
Compare the first relation with \eqref{eq:A:tep2}: 
denoting $s_i^{A_1}$ ($i=0,1$) for the generators of the extended Weyl group $W^{A_1}$ of $S(A_1)$, 
we have $ \tu(\alpha)=s_0^{A_1}s_1^{A_1}$.

Next, we recall the extended affine Hecke algebra $H$ associated to the affine root system $S(C^{\vee}_1,C_1)$. For the detail, see \cite[\S4, \S6.4]{M}. Hereafter we fix nonzero complex numbers $k_1,k_0,l_1,l_0$ and denote
\begin{align}\label{eq:CC:tu}
 \ulk \ceq (k_1,k_0), \quad \ull \ceq (l_1,l_0).
\end{align}
The symbols $k_1$ and $k_0$ are borrowed from \cite{NSt}.
\begin{rmk}\label{rmk:CC:N}
Our parameters $(k_1,k_0,l_1,l_0)$ correspond to $(t_1^{\shf},t_0^{\shf},l_1^{\shf},l_0^{\shf})$ in \cite{N} and \cite{T}.
\end{rmk}

\begin{dfn}\label{dfn:CC:H(k)}
The extended affine Hecke algebra $H(\ulk)$ is the $\bC$-algebra generated by $T_1$ and $T_0$ with fundamental relations 
\begin{align}\label{eq:CC:Hrel}
 (T_i-k_i)(T_i+k_i^{-1}) = 0 \quad (i=1,0).
\end{align}
In this \cref{s:CC}, we denote $H \ceq H(\ulk)$ for simplicity.
\end{dfn}

As in \cref{ss:A:H}, we denote by $\ell(w)$ the length of $w \in W$. If we have a reduced expression $w=s_{i_1} \dotsm s_{i_l}$, $i_j \in \{0,1\}$, then $\ell(w) = l$. For such $w \in W$, we set
\begin{align*}
 T_w \ceq T_{i_1} \dotsm T_{i_l}  \in H.
\end{align*}
Then $T_w$ is independent of the choice of reduced expression. We also define $Y^{\pm1} \in H$ by
\begin{align}\label{eq:CC:Y}
 Y \ceq T_0 T_1, \quad Y^{-1} \ceq T_1^{-1}T_0^{-1},
\end{align}
which can be regarded as deformations of $\tu(\ep)\in W$ given in \cref{eq:CC:tep}. 
As in the case of type $A_1$ (\cref{ss:A:H}), the monomials in $\bC[Y^{\pm1}] \subset H$ are denoted as $Y^{\lambda} \ceq Y^l$ for $\lambda = l\ep\in\Lambda$, $l \in \bZ$.
We also have a $\bC$-linear isomorphism $H \cong H_0\otimes \bC[Y^{\pm1}]$, where 
\[
 H_0 \ceq \bC + \bC T_1 
\]
is the subalgebra of $H$ generated by $T_1$.

\begin{rmk}\label{rmk:CC:Y}
Our choice \eqref{eq:CC:Y} of the Dunkl operator $Y$ follows \cite[\S6.4]{M}, which is the opposite of \cite{N,T,S}. The choice \eqref{eq:CC:Y} is compatible with the choice for type $A_1$ (see \eqref{eq:A:Y2}). 
\end{rmk}

Next, we review Noumi's \cite{N} basic representation $\rho_{\ulk,\ull,q}$ of $H=H(\ulk)$. 
Choose and fix a parameter $q^{\shf}\in\bC^\times$.
The extended affine Weyl group $W$ acts on the Laurent polynomial ring $\bC[x^{\pm1}]$ by
\begin{align}\label{eq:CC:Wx}
 (s_{1,q}f)(x)=f(x^{-1}), \quad (s_{0,q}f)(x)=f(qx^{-1}), \quad 
 (\tu(\ep)_qf)(x)=f(qx)=(T_{q,x}f)(x),
\end{align}
where $T_{q,x}$ denotes the $q$-shift operator on the variable $x$. Then, we have an algebra embedding 
\begin{align}\label{eq:CC:rho}
\begin{split}
 \rho_{\ulk,\ull,q}\colon H(\ulk) \linj \End(\bC[x^{\pm}]), \quad
 \rho(T_i) \ceq c(x_i;k_i,l_i)s_{i,q}+b(x_i;k_i,l_i) \quad (i=1,0)
\end{split}
\end{align}
with $x_1 \ceq x^2$, $x_0 \ceq qx^{-2}$ and
\begin{align}
\label{eq:CC:c}
 c(z;k,l) &\ceq k^{-1} \frac{(1-klz^{\shf})(1+kl^{-1}z^{\shf})}{1-z}, \\
\nonumber
 b(z;k,l) &\ceq k-c(z;k,l) = \frac{(k-k^{-1})+(l-l^{-1})z^{\shf}}{1-z}.
\end{align}
Here we understand $x_1^{\shf}=x$ and $x_0^{\shf}=q^{\shf}x^{-1}$.
We call $\rho_{\ulk,\ull,q}$ \emph{the basic representation of $H(\ulk)$}.

\begin{dfn}\label{dfn:CC:DAHA}
\emph{The double affine Hecke algebra (DAHA) of type $\CvC$}, denoted as
\[
 \bH = \bH(\ulk,\ull,q) = \bH^{\CvC}(\ulk,\ull,q),
\]
is defined to be the $\bC$-subalgebra of $\End(\bC[x^{\pm1}])$ generated by the multiplication operators by $x^{\pm1}$ and the image $\rho_{\ulk,\ull,q}(H(\ulk))$.
\end{dfn}

As an abstract algebra, the DAHA $\bH$ of type $\CvC$ is presented with generators $T_1,T_0,T_1^\vee,T_0^\vee$ and relations
\begin{align}\label{eq:CC:absDAHA}
\begin{split}
&(T_i-k_i)(T_i+k_i^{-1})=0 \quad
 (T_i^\vee-l_i)(T_i^\vee+l_i^{-1})=0 \quad (i=1,0), \\ 
&T_1^\vee T_1 T_0 T_0^\vee=q^{-1/2}.
\end{split}
\end{align}
See \cite{Sa}, \cite{NSt}, \cite[\S4.7]{M} and \cite{C05} for the detail. The symbols $T_i^\vee$ are borrowed from \cite{NSt}. To recover \cref{dfn:CC:DAHA}, we put 
\begin{align}\label{eq:CC:Tvee}
 T_1^\vee = X^{-1}T_1^{-1}, \quad T_0^\vee=q^{-1/2}T_0^{-1}X,
\end{align}
by which we can extend the map $\rho_{\ulk,\ull,q}$ of \eqref{eq:CC:absDAHA} to the embedding $\rho_{\ulk,\ull,q}\colon \bH \inj \End(\bC[x^{\pm1}])$.

Similarly as the type $A_1$, we have the Poincar\'{e}-Birkhoff-Witt decomposition of $\bH$:
\begin{align}\label{eq:CC:PBW}
 \bH \cong \bC[X^{\pm1}] \otimes H_0 \otimes \bC[Y^{\pm1}],
\end{align} 
and \emph{the duality anti-involution} 
\begin{align}\label{eq:CC:*}
 *\colon \bH(\ulk,\ull,q) \lto \bH(\ulk^*,\ull^*,q), \quad h \lmto h^*, 
\end{align}
which is a unique $\bC$-algebra anti-involution determined by
\[
 T_1^* \ceq T_1, \quad 
 (Y^\lambda)^* \ceq x^{-\lambda}, \quad 
 (x^\lambda)^* \ceq Y^{-\lambda}
\]
for $\lambda\in\Lambda$ and 
\begin{align}\label{eq:CC:*ku}
 (\ulk^*,\ull^*) = (k_1^*,k_0^*,l_1^*,l_0^*) \ceq (k_1,l_1,k_0,l_0).
\end{align}
We also denote by 
\begin{align}\label{eq:CC:H*}
 H(\ulk,\ull)^* \subset \End(\bC[x^{\pm1}])
\end{align}
the image of $H(\ulk,\ull) \subset \bH(\ulk,\ull,q)$ under the duality anti-involution $*$.

\subsection{Bispectral quantum Knizhnik-Zamolodchikov equation}\label{ss:CC:qKZ}

Let us explain the bispectral qKZ equation of the affine root system $S(C_1^{\vee},C_1)$, mainly following \cite[\S 4.1, \S 4.2]{T}. Hereafter we choose and fix $k_1,k_0,l_1,l_0,q^{\shf} \in \bC^\times$, and consider the affine Hecke algebra $H=H(\ulk)$, the basic representation $\rho_{\ulk,\ull,q}\colon H(\ulk) \inj \End(\bC[x^{\pm1}])$ and the DAHA $\bH=\bH(\ulk,\ull,q)$.

\subsubsection{The affine intertwiners}

Following \cite[\S1.3]{C05} and \cite[\S 4.2]{T}, we introduce the affine intertwines of type $\CvC$. 
We set $x_1 \ceq x^2$, $x_0 \ceq qx^{-2}$, and define $\wtS_1,\wtS_0 \in \End(\bC[x^{\pm1}])$ by
\begin{align}
\label{eq:CC:wtS01}
 \wtS_i \ceq d_i(x) s_i, \quad
 d_i(x) = d_i(x;\ulk,\ull,q) \ceq k_i^{-1}(1-k_il_ix_i^{\shf})(1+k_il_i^{-1}x_i^{\shf}) \quad (i=0,1).
\end{align}
The elements $\wtS_1$ and $\wtS_0$ belong to the subalgebra $\bH \subset \End(\bC[x^{\pm1}])$ since
\begin{align}
 \wtS_i = (1-x_i)\rho_{\ulk,\ull,q}(T_i)-(k_i-k_i^{-1})-(l_i-l_i^{-1})x_i^{\shf}.
\end{align} 
More generally, for each $w \in W$, taking a reduced expression $w=s_{j_1}\cdots s_{j_r}$ with $j_1,\dotsc,j_r \in \{0,1\}$, we define the element $\wtS_w \in \bH$ by 
\begin{align}\label{eq:CC:wtS}
 \wtS_w \ceq d_{j_1}(x) \cdot (s_{j_1} d_{j_2})(x) \cdot \dotsm \cdot 
              (s_{j_1} \dotsm s_{j_{r-1}}d_{j_r})(x) \cdot w,
\end{align}
The element $\wtS_w \in \bH$ is independent of the choice of reduced expression $w=s_{j_1}\cdots s_{j_r}$ by the same argument as the type $A_1$ case, using
\begin{align}\label{eq:CC:dw}
 d_w(x) \ceq d_{j_1}(x) \cdot (s_{j_1}d_{j_2})(x) \cdot \dotsm \cdot (s_{j_1} \dotsm s_{j_{r-1}}d_{j_r})(x)
\end{align}
Also, by \cite[\S4.1]{T}, we have
\begin{align}\label{eq:CC:wtS'}
 \wtS_w = \wtS_{j_1} \dotsm \wtS_{j_r}. 
\end{align}
We call the elements $\wtS_w$ in \eqref{eq:CC:wtS} \emph{the affine intertwiners of type $\CvC$}.

\subsubsection{The double extended affine Weyl group}

As in the case of type $A_1$ (\cref{sss:A:bW}), let us consider the ring
\[
 \bL \ceq \bC[x^{\pm1},\xi^{\pm1}] \cong \bC[x^{\pm1}] \otimes \bC[\xi^{\pm1}].
\]
We can regard $\bH$ as an $\bL$-module by
\begin{align}\label{eq:CC:H-Lmod}
 (f \otimes g)h \ceq f(x) \, h \, g(Y)
\end{align}
for $f=f(x) \in \bC[x^{\pm1}]$, $g=g(\xi) \in \bC[\xi^{\pm1}]$ and $h \in \bH$, where $x$ is understood as the multiplication operator by $x$ itself, and $Y$ is the Dunkl operator. By the PBW type decomposition \eqref{eq:CC:PBW}, we have an $\bL$-module isomorphism 
\begin{align}\label{eq:CC:bH=LH0}
 \bH \cong H_0^\bL \ceq \bL \otimes H_0.
\end{align}
As in the case of type $A_1$, we regard $f(x,\xi) \in H_0^{\bL}$ as a function of $x,\xi$ valued in $H_0$.

The double extended Weyl group $\bW$ is introduced in the same way \eqref{eq:A:bW} as the type $A_1$ case.
Let $\iota$ denote the nontrivial element of the group $\bZ_2\ceq \bZ/2\bZ$, and define $\bW$ to be  the semi-direct product group 
\begin{align*}
 \bW \ceq \bZ_2 \ltimes (W \times W),
\end{align*}
with $\iota \in \bZ_2$ acting on $W \times W$ by $\iota (w,w') = (w',w) \iota$ for $(w,w') \in W \times W$. 

The group $W$ acts on $\bL$ in the same way as the type $A_1$ (see \cref{sss:A:bW}). 
Define the involution $\diamond\colon W \to W$ by \eqref{eq:A:diamond}, i.e., $w^\diamond \ceq w$ for $w \in W_0$ and $\tu(\lambda)^\diamond \ceq \tu(-\lambda)$ for $\lambda\in\Lambda$. 
Then the $\bW$-action on $\bL$ is given by
\begin{align}\label{eq:CC:bWbL}
 (w f)(x) \ceq (w_q f)(x), \quad (w'g)(\xi) \ceq ((w'^\diamond)_qg)(\xi), \quad 
 (\iota F)(x,\xi) = F(\xi^{-1},x^{-1})
\end{align}
for $w \in W = W \times \{e\} \subset \bW$, $w' \in W = \{e\} \times W \subset \bW$ and $f=f(x), g=g(\xi), F=F(x,\xi) \in \bL$. Here $w_q$ denotes the $W$-action in \eqref{eq:CC:Wx}. 


We also define $\wt{\sigma}_{(w,w')}, \wt{\sigma}_\iota \in \End_{\bC}(\bH)$ by
\begin{align*}
 \wt{\sigma}_{(w,w')}(h) \ceq \wtS_w h \wtS^*_{w'}, \quad 
 \wt{\sigma}_{\iota }(h) \ceq h^*
\end{align*}
for $h \in \bH$, where $*$ is the duality anti-involution \eqref{eq:CC:*}. 
Then, as in \cref{fct:A:wsHL}, we have 
\begin{align}\label{eq:CC:wsHL}
 \wt{\sigma}_{(w,w')}(f h) = ((w,w')f)\wt{\sigma}_{(w,w')}(h), \quad 
 \wt{\sigma}_{\iota }(f h) = (\iota f)\wt{\sigma}_{\iota }(h)
\end{align}
for $h \in \bH$, $f \in \bL$ and $w,w'\in W$. 
The proof is essentially the same as \cref{fct:A:wsHL} (\cite[Lemma 3.5]{vM}).

\subsubsection{The cocycle}

As in the case of type $A_1$ (see \eqref{eq:A:H0K}), we denote by 
\[
 \bK \ceq \clM(x,\xi)
\]
the meromorphic functions of variables $x,\xi$, and define
\[
 H_0^\bK \ceq \bK \otimes H_0 \cong \bK \otimes_{\bL} \bH,
\]
We can express an element $f \in H_0^{\bK}$ as \eqref{eq:A:finH0K}: $f=\sum_{w \in W_0}f_w T_w \in H_0^{\bK}$, $f_w \in \bK$. The $\bW$-action \eqref{eq:CC:bWbL} on $\bL$ naturally extends to that on $\bK$, and we have a $\bW$-action on $H_0^{\bK}$ by the formula \eqref{eq:A:bWH0K}:
\begin{align}\label{eq:CC:vMS3.1}
 \bfw f \ceq \sum_{w \in W_0} (\bfw f_w) T_w
\end{align}
for $f=\sum_{w \in W_0}f_w T_w \in H_0^{\bK}$ and $\bfw \in \bW$.

By the argument right before \cref{fct:A:tau}, we have $\wt{\sigma}_{(w,w')}, \wt{\sigma}_{\iota} \in \End_{\bC}(H_0^{\bK})$ such that the formulas \eqref{eq:CC:wsHL} are valid for $f \in \bK$ and $h \in H_0^{\bK}$. Then, similarly as \cref{fct:A:tau}, we have:

\begin{fct}[{\cite[\S4.2]{T}}]
There is a unique group homomorphism 
$\tau\colon \bW \to \GL_{\bC}(H_0^\bK)$ satisfying 
\begin{align*}
 \tau(w,w') (f) = d_w(x)^{-1} d_{w'}^*(\xi^{-1})^{-1} \cdot \wt{\sigma}_{(w,w')}(f), \quad
 \tau(\iota)(f) = \wt{\sigma}_{\iota}(f)
\end{align*}
for $w,w'\in W$ and $f \in H_0^\bK$. Here we denoted by $d^*_{w'}$ the image of $d_{w'}$ under the duality anti-involution $*$ in \eqref{eq:CC:H*}, and $\cdot$ denotes the $\bL$-action \eqref{eq:CC:H-Lmod}.
\end{fct}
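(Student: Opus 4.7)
The plan is to mirror the proof of the type $A_1$ analogue \cref{fct:A:tau}, namely \cite[Theorem 3.3]{vMS} and \cite[Theorem 3.6]{vM}, adapted to the non-reduced setting, where the simple reflections are just $s_0, s_1$ (no diagram-automorphism generator) and the duality $*$ swaps parameters $(k_i, u_i) \lrto (u_i, k_i)$ as in \eqref{eq:CC:*ku}. All essential inputs are already in place in the excerpt: the intertwiners are multiplicative along reduced words, $\wtS_w = \wtS_{j_1} \cdots \wtS_{j_r}$, by \eqref{eq:CC:wtS'}; the scalars $d_w(x)$ satisfy the cocycle identity $d_{ww'}(x) = d_w(x)(w d_{w'})(x)$ for reduced products (visible from \eqref{eq:CC:dw}); and the twisting maps $\wt\sigma_{(w,w')}, \wt\sigma_\iota$ are $\bL$-equivariant by \eqref{eq:CC:wsHL}, which extends by $\bK$-linearity to $H_0^\bK$. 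Uniqueness of $\tau$ is immediate since $\bW$ is generated by $\iota$ together with $(s_i, e)$ and $(e, s_i)$ for $i = 0, 1$, and the defining formulas fix $\tau$ on these generators.

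For existence, first I would confirm that each $\tau(\bfw)$ gives a well-defined $\bC$-linear endomorphism of $H_0^\bK \cong \bK \otimes_\bL \bH$: the extended equivariance shows that $\wt\sigma_{(w,w')}, \wt\sigma_\iota$ preserve $H_0^\bK$, and multiplying by the $\bK$-scalar $d_w(x)^{-1} d_{w'}^*(\xi^{-1})^{-1}$ preserves it as well. Invertibility of $\tau(\bfw)$ then follows from the homomorphism property applied to $\bfw \bfw^{-1} = e$, so the bulk of the work is verifying that $\tau$ respects the defining relations of $\bW$.

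This verification reduces to the following checks on generators: \textbf{(a)} $\tau(s_i, e)^2 = \mathrm{id} = \tau(e, s_i)^2$ for $i = 0, 1$; \textbf{(b)} $\tau(s_i, e)$ and $\tau(e, s_j)$ commute for all $i, j$; and \textbf{(c)} $\tau(\iota)^2 = \mathrm{id}$ together with $\tau(\iota) \tau(w, w') \tau(\iota) = \tau(w', w)$. Check \textbf{(b)} is essentially automatic: on $\bH$, the maps $\wt\sigma_{(s_i, e)}$ (left-multiplication by $\wtS_{s_i}$) and $\wt\sigma_{(e, s_j)}$ (right-multiplication by $\wtS_{s_j}^*$) commute, and their normalizations live in $\bC(x)$ and $\bC(\xi)$ respectively, hence commute against the relevant $\bW$-actions on $\bK$. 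Check \textbf{(c)} follows from $*$ being a $\bC$-algebra anti-involution on $\bH$ together with the $\iota$-compatibility of the intertwiner data $\wtS_w, d_w$.

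The main obstacle is check \textbf{(a)} in the non-reduced case $s_i s_i = e$. Here one must evaluate $\wtS_{s_i}^2$ inside $\bH$ explicitly: from \eqref{eq:CC:wtS01} and $s_{i,q}^2 = 1$ on $\bC[x^{\pm 1}]$ one obtains the scalar identity $\wtS_{s_i}^2 = d_{s_i}(x)\,(s_i d_{s_i})(x) \cdot 1_\bH \in \bL \cdot 1_\bH$. Applying $\tau(s_i, e)$ twice then produces the normalization $d_{s_i}(x)^{-1}\,(s_i d_{s_i})(x)^{-1}$, the second factor arising from the equivariance applied to the first $d_{s_i}(x)^{-1}$ scalar, which cancels the scalar in $\wtS_{s_i}^2$ exactly, yielding $\tau(s_i, e)^2 = \mathrm{id}$. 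The analogue for $\tau(e, s_i)$ on the $\xi$-side uses $\wtS_{s_i}^*$ and the parameter swap \eqref{eq:CC:*ku}, and is formally identical. This rank-one verification is a direct specialization of the calculation carried out by Takeyama in \cite[\S4.2]{T} for the full $(C^\vee_n, C_n)$ case.
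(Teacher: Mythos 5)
Your proposal is correct and follows exactly the route the paper intends: the paper states this as a Fact imported from \cite[\S4.2]{T} with the remark that it goes ``similarly as \cref{fct:A:tau}'' (i.e.\ \cite[Theorem 3.3]{vMS}, \cite[Theorem 3.6]{vM}), and your outline is precisely that adaptation, with the key rank-one verification $\wtS_i^2 = d_i(x)\,(s_i d_i)(x)$ via $s_{i,q}^2=1$, the cocycle property of $d_w$ along reduced words from \eqref{eq:CC:dw} and \eqref{eq:CC:wtS'}, and the $\iota$-relations handled by the anti-involution $*$ with the parameter swap \eqref{eq:CC:*ku}.
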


By the $\bW$-action \eqref{eq:CC:vMS3.1} on $H_0^{\bK}$, we can regard $\GL_{\bK}(H_0^{\bK})$ as a $\bW$-group via the corresponding conjugation action: 
\[
 (\bfw,A) \lmto \bfw A \bfw^{-1} \quad (\bfw \in \bW, \ A \in \GL_{\bK}(H_0^{\bK})).
\]
Then, we have the following analogue of \cref{fct:A:Cw}.

\begin{fct}[{\cite[\S4.2]{T}}]\label{fct:CC:Cw}
The map 
\begin{align}\label{eq:CC:Cw}
 \bfw \lmto C_{\bfw} \ceq \tau(\bfw) \bfw^{-1}
\end{align}
is a cocycle of $\bW$ with values in the $\bW$-group $\GL_{\bK}(H_0^\bK) \cong \bK \otimes \GL_{\bC}(H_0)$. 
\end{fct}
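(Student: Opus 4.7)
The statement is the direct $\CvC$ analogue of \cref{fct:A:Cw} for type $A_1$, and the proof should follow the same pattern. With $\tau$ having just been constructed as a group homomorphism $\bW \to \GL_{\bC}(H_0^{\bK})$ satisfying the semi-linearity relation $\tau(\bfw)(gf) = \bfw(g)\tau(\bfw)(f)$ for $g \in \bK$ and $f \in H_0^{\bK}$, essentially all ingredients are already in place. The plan is to verify two things: first, that $C_{\bfw} \ceq \tau(\bfw)\bfw^{-1}$ actually lies in $\GL_{\bK}(H_0^{\bK})$, i.e.\ that it is both $\bK$-linear and invertible as a $\bC$-linear endomorphism; and second, the cocycle identity $C_{\bfw\bfw'} = C_{\bfw}\,\bfw C_{\bfw'}\bfw^{-1}$.

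For the first point, $\bK$-linearity follows from a direct computation: given $g \in \bK$ and $f \in H_0^{\bK}$, the action of $\bfw^{-1}$ on $H_0^{\bK}$ satisfies $\bfw^{-1}(g f) = \bfw^{-1}(g)\bfw^{-1}(f)$ by \eqref{eq:CC:vMS3.1}, and then the semi-linearity of $\tau(\bfw)$ combined with $\bfw\bfw^{-1} = e$ brings $g$ outside, giving $C_{\bfw}(g f) = g\,C_{\bfw}(f)$. Invertibility is automatic since $\tau(\bfw)$ and $\bfw^{-1}$ both lie in $\GL_{\bC}(H_0^{\bK})$. For the cocycle identity, a purely formal manipulation using the homomorphism property of $\tau$ yields
\[
 C_{\bfw\bfw'} = \tau(\bfw\bfw')(\bfw\bfw')^{-1} = \tau(\bfw)\tau(\bfw')(\bfw')^{-1}\bfw^{-1} = \bigl(\tau(\bfw)\bfw^{-1}\bigr)\bigl(\bfw\tau(\bfw')(\bfw')^{-1}\bfw^{-1}\bigr) = C_{\bfw}\,\bigl(\bfw C_{\bfw'}\bfw^{-1}\bigr),
\]
after inserting $\bfw^{-1}\bfw = e$ in the middle, all products being compositions in $\GL_{\bC}(H_0^{\bK})$.

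I anticipate no substantive obstacle: the argument is entirely formal once the preceding construction of $\tau$ and the $\bW$-action \eqref{eq:CC:vMS3.1} on $H_0^{\bK}$ are in place. The only subtle point worth flagging is the consistency between ambient spaces: the conjugation $\bfw(\cdot)\bfw^{-1}$ on $\GL_{\bK}(H_0^{\bK})$ that makes it a $\bW$-group is exactly the restriction of the conjugation in $\GL_{\bC}(H_0^{\bK})$ used above, and one should check a posteriori, using that $\bfw$ acts on $\bK$ by field automorphisms, that $\bfw C_{\bfw'}\bfw^{-1}$ remains $\bK$-linear; but this is immediate from the first point.
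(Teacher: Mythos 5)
Your proof is correct and is essentially the intended argument: the paper states this as a Fact without proof, citing \cite[\S4.2]{T} (and \cite[Corollary 3.4]{vMS}, \cite[Corollary 3.8]{vM} for the type $A_1$ analogue \cref{fct:A:Cw}), and the verification there is exactly your two-step check — $\bK$-linearity and invertibility of $C_{\bfw}$ from the semi-linearity $\tau(\bfw)(gf)=(\bfw g)\tau(\bfw)(f)$ together with the fact that $\bW$ acts on $\bK$ by field automorphisms, then the cocycle identity by inserting $\bfw^{-1}\bfw$ and using that $\tau$ is a group homomorphism. Your closing remark that the conjugation $\bfw C_{\bfw'}\bfw^{-1}$ stays $\bK$-linear is the right point to flag, and your justification of it is correct.
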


We denote $C_{\bfw}(x,\xi)$ to stress that the cocycle can be regarded as a meromorphic function of $x,\xi$ valued in $\GL_{\bC}(H_0)$

\begin{dfn}\label{dfn:CC:SOL}
Denote $C_{l,m} \ceq C_{(\tu(l \ep), \tu(m \ep))}$ for $l,m \in \bZ$. 
The system of $q$-difference equations 
\begin{align*}
 C_{l,m}(x,\xi)f(q^{-l}x,q^m \xi) = f(x,\xi) \quad (l,m \in \bZ)
\end{align*}
for $f=f(x,\xi) \in H_0^{\bK}$ is called \emph{the bispectral quantum KZ equations} (\emph{the bqKZ equations} for short) \emph{of type $\CvC$}. We also denote 
\[
 \SC = \SC(\ulk,\ull,q) \ceq \{f \in H_0^\bK \mid \text{$f$ satisfies the bqKZ equations of type $\CvC$}\}.
\]
In this \cref{s:CC}, we abbreviate $\SKZ \ceq \SC$.
\end{dfn}

Similarly as \cref{lem:A:C10C01}, we can compute the action of $C_{1,0}$ and $C_{0,1}$ on $H_0^\bK$.
We define an algebra homomorphisms $\eta_L\colon H \to \End_{\bK}(H_0^\bK)$ by 
\begin{align}
\label{eq:CC:etaL}
 \eta_L(A)\Bigl(\sum_{w \in W_0} f_w T_w\Bigr) \ceq \sum_{w \in W_0} f_w(A T_w),
\end{align}
for $A \in H$ and $f = \sum_{w \in W_0}f_w T_w \in H_0^{\bK}$.
Similarly, using the subspace $H^* \subset \bH$ in \eqref{eq:CC:H*}, 
we define an algebra anti-homomorphism $\eta_R\colon H^* \to \End_{\bK}(H_0^\bK)$ by
\begin{align}
\label{eq:CC:etaR}
 \eta_R(A)\Bigl(\sum_{w \in W_0} f_w T_w\Bigr) \ceq \sum_{w \in W_0} f_w (T_w A)
\end{align}
for $A \in H^*$ and $f = \sum_{w \in W_0}f_w T_w \in H_0^{\bK}$.

\begin{lem}\label{lem:CC:C10C01}
The cocycles $C_{1,0}, C_{0,1} \in \GL_{\bK}(H_0^{\bK}) \cong \bK \otimes \GL(H_0)$, regarded as functions of $x$ and $\xi$ are expressed as  
\begin{align}\label{eq:CC:C10C01}
 C_{1,0} = R^L_0(x_0)R^L_1(x_1'), \quad 
 C_{0,1} = R_0^R(\xi_0')R_1^R(\xi_1'),
\end{align}
where we denoted $x_0 \ceq q x^{-2}$, $x_1' \ceq q^2x^{-2}$, $\xi_0'\ceq q\xi^2$, $\xi_1'\ceq q^2\xi^2$ and 
\begin{align*}
 R^L_i(z) &\ceq c_i(z)^{-1}\bigl(\eta_L(T_i)-b_i(z)\bigr)  \\
          &= \frac{k_i}{(1-k_il_iz^{\shf})(1+k_il_i^{-1}z^{\shf})}
             \bigl((1-z)\eta_L(T_i^{ })-(k_i-k_i^{-1})-(l_i-l_i^{-1})z^{\shf}\bigr), \\
 R^R_i(z) &\ceq c_i^*(z)^{-1}\bigl(\eta_R(T_i^*)-b_i^*(z)\bigr)  \\
          &= \frac{k_i^*}{(1-k_i^*l_i^*z^{\shf})(1+k_i^*(l_i^*)^{-1}z^{\shf})}
             \bigl((1-z)\eta_R(T_i^*)-(k_i^*-(k_i^*)^{-1})-(l_i^*-(l_i^{*})^{-1})z^{\shf}\bigr) 
\end{align*}
for $i=0,1$, using the duality anti-involution $*$ in \eqref{eq:CC:*}.
\end{lem}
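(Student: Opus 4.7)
The plan is to parallel the proof of \cref{lem:A:C10C01}, together with the omitted proof of \cref{lem:A:C20C02}. The crucial point is that in type $\CvC$ the translation $\tu(\ep)$ decomposes as $\tu(\ep) = s_0 s_1$ by \eqref{eq:CC:tep}, exactly as $\tu(\alpha) = s_0 s_1$ in type $A_1$ (see \eqref{eq:A:tep2}). The cocycle relation \eqref{eq:A:cocycle} then reduces the task to computing the simpler cocycles $C_{(s_i, e)}$ and $C_{(e, s_i)}$ for $i = 0, 1$.

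First I would compute $C_{(s_i, e)}$ directly from \eqref{eq:CC:Cw} and the definition of $\tau$. Exactly as in the derivation of $C_{1,0}$ in \cref{lem:A:C10C01}, for $f = \sum_{w \in W_0} f_w T_w \in H_0^{\bK}$ one gets
\[
 C_{(s_i, e)} f = d_i(x)^{-1} \sum_w f_w \wtS_i T_w.
\]
Substituting the second form of $\wtS_i$ in \eqref{eq:CC:wtS01} and using the two identities $d_i(x)^{-1}(1 - x_i) = c(x_i; k_i, u_i)^{-1}$ and $(1 - x_i) \, b(x_i; k_i, u_i) = (k_i - k_i^{-1}) + (u_i - u_i^{-1}) x_i^{\shf}$ (both immediate from \eqref{eq:CC:c}), the left multiplication by $T_i$ on the $H_0$-component is absorbed into $\eta_L(T_i)$ via \eqref{eq:CC:etaL}, yielding $C_{(s_i, e)} = R^L_i(x_i)$. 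Applying the cocycle relation \eqref{eq:A:cocycle} to $\tu(\ep) = s_0 s_1$ gives $C_{1,0} = C_{(s_0, e)} \cdot (s_0, e) C_{(s_1, e)} (s_0, e)^{-1}$. Since $\eta_L(T_1)$ commutes with the $\bK$-action (it only touches the $H_0$-component), and the $(s_0, e)$-action on $\bK$ sends $x \mapsto q x^{-1}$ by \eqref{eq:CC:Wx}, one finds $(s_0, e) R^L_1(x_1) (s_0, e)^{-1} = R^L_1(x_1')$ with $x_1' = q^2 x^{-2}$. This produces $C_{1,0} = R^L_0(x_0) R^L_1(x_1')$ as claimed.

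For the second equality, the cleanest route uses the symmetry $C_{(e, w)}(x, \xi) = C_\iota C_{(w, e)}(\xi^{-1}, x^{-1}) C_\iota$ of \cite[Remark 4.4]{vM} (recorded as \eqref{eq:A:CewCwe} in the $A_1$ case), which holds for arbitrary root types. Conjugation by $C_\iota$ intertwines $\eta_L$ with $\eta_R$ while transforming the parameters $(\ulk, \ulu)$ to $(\ulk^*, \ulu^*)$ according to \eqref{eq:CC:*ku}, and the substitution $(x, \xi) \mapsto (\xi^{-1}, x^{-1})$ sends $x_0 = q x^{-2} \mapsto q \xi^2 = \xi_0'$ and $x_1' = q^2 x^{-2} \mapsto q^2 \xi^2 = \xi_1'$. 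This yields $C_{0,1} = R^R_0(\xi_0') R^R_1(\xi_1')$. The main obstacle will be the bookkeeping in this last step: one must verify that $C_\iota$-conjugation correctly implements both the replacement $\eta_L(T_i) \leftrightarrow \eta_R(T_i^*)$ and the parameter change in $c(\cdot; k_i, u_i) \leftrightarrow c(\cdot; k_i^*, u_i^*)$ (similarly for $b$). Should this become unwieldy, a fallback is to compute $C_{(e, s_i)}$ directly from its definition, by applying $\wt{\sigma}_{(e, s_i)}$ to $f = \sum_w f_w T_w$ and using the $\bL$-module structure \eqref{eq:CC:H-Lmod} to convert right multiplication by $Y^\lambda$ into multiplication by $\xi^\lambda$; this is notationally heavier but conceptually identical to the left-side computation.
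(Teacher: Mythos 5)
Your proposal is correct, and for $C_{1,0}$ it is essentially identical to the paper's proof: the paper likewise computes $C_{(s_1,e)}=R_1^L(x_1)$ and $C_{(s_0,e)}=R_0^L(x_0)$ by the same direct calculation as in \cref{lem:A:C10C01} (your two scalar identities $d_i(x)^{-1}(1-x_i)=c(x_i;k_i,u_i)^{-1}$ and $(1-x_i)b(x_i;k_i,u_i)=(k_i-k_i^{-1})+(u_i-u_i^{-1})x_i^{\shf}$ are exactly the computation it leaves implicit), and then applies the cocycle relation to $\tu(\ep)=s_0s_1$ to obtain $C_{1,0}=C_{(s_0,e)}\bigl(s_0^x R_1^L(x_1)\bigr)=R_0^L(x_0)R_1^L(x_1')$. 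The only divergence is on the spectral side: the paper dualizes the simple-reflection cocycles individually, $C_{(e,s_1)}=R_1^L(x_1)^*=R_1^R(\xi^{-2})$ and $C_{(e,s_0)}=R_0^L(x_0)^*=R_0^R(\xi_0')$, and then reapplies the cocycle relation $C_{0,1}=C_{(e,s_0)}(C_{(e,s_1)})^{(e,s_0)}$, whereas you conjugate the full translation cocycle by $C_\iota$ via \eqref{eq:CC:CewCwe}. The two routes are equivalent in content, since \eqref{eq:CC:CewCwe} is itself a consequence of the same duality anti-involution, and the ``bookkeeping obstacle'' you flag is real but resolves exactly as you indicate: in type $\CvC$, unlike $A_1$, the parameters are not self-dual, so the conjugation by $C_\iota$ must be understood as implementing $(\ulk,\ulu)\mapsto(\ulk^*,\ulu^*)$ per \eqref{eq:CC:*ku} along with $\eta_L(T_i)\mapsto\eta_R(T_i^*)$; note also that the order of the two factors is preserved under this conjugation because the two reversals involved ($*$ being an anti-involution and $\eta_R$ an anti-homomorphism) cancel, which is why one lands on $R_0^R(\xi_0')R_1^R(\xi_1')$ rather than the product in the opposite order. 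Your fallback option, computing $C_{(e,s_i)}$ directly through the $\bL$-module structure \eqref{eq:CC:H-Lmod}, is in effect what the paper's proof does.
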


\begin{proof}
We denote by $s_i^x$ and $s_i^\xi$ for $i=0,1$ the action \eqref{eq:CC:bWbL} of $s_i$ in terms of variables $x$ and $\xi$ of $\bK=\clM(x,\xi)$. Explicitly, for $f(x,\xi) \in \bK$, we have
\begin{align*}
&(s_1^xf)(x,\xi) = f(x^{-1},\xi), \quad (s_0^xf)(x,\xi) = f(qx^{-1},\xi), \\
&(s_1^\xi f)(x,\xi) = f(x,\xi^{-1}), \quad (s_0^\xi f)(x,\xi) = f(x,q^{-1}\xi^{-1}). 
\end{align*}
By a similar calculation as \cref{lem:A:C10C01}, the cocycle values for $(s_1,e)$ and $(s_0,e)$ are given by $C_{(s_1,e)} = R_1^L(x_1)$ with $x_1 \ceq x^2$ and $C_{(s_0,e)}=R_0^L(x_0)$, respectively. Then the cocycle condition gives 
\[
 C_{1,0}=C_{(s_0s_1,e)}=C_{(s_0,e)}(C_{(s_1,e)})^{(s_0,e)}=R_0^L(x) \bigl(s_0^xR_1^L(x_1)\bigr) =
 R_0^L(x_0)R_1^L(x_1'),
\]
where $s_0^x$ means the $(s_1,e)$-action given in \eqref{eq:CC:bWbL}.

Next, using the duality anti-involution $*$ and the $\bK$-action \eqref{eq:CC:H-Lmod}, the cocycle values for $(e,s_1)$ and $(e,s_0)$ are given by $C_{(e,s_1)} = R_1^L(x_1)^* = R_1^R(\xi^{-2})$ and $C_{(e,s_0)}=R^L_0(x_0)^*=R^R_0(\xi_0')$ with $\xi_0' = (x_0)^* = q \xi^2$. Thus, we have
\[
 C_{0,1} = C_{(e,s_0s_1)} = C_{(e,s_0)}(C_{(e,s_1)})^{(e,s_0)} = 
 R_0^R(\xi_0') \bigl(s_0^\xi R_1^R(\xi^{-2})\bigr) = R_0^R(\xi_0')R_1^R(\xi_1').
\]
\end{proof}

\begin{rmk}
Some comments on \cref{lem:CC:C10C01} are in order.
\begin{enumerate}
\item 
Explicitly, we have 
\begin{align}\label{eq:CC:JK}
 C_{1,0}=J_0(x)J_1(x), \quad C_{1,0}=K_0(\xi)K_1(\xi)
\end{align}
with
\begin{align*}
 J_0(x) &\ceq \frac{k_0}{(1-k_0l_0q^{\shf}x^{-1})(1+k_0l_0^{-1}q^{\shf}x^{-1})} \cdot \\
        &\quad \cdot \bigl((1-qx^{-2})\eta_L(T_0)-(k_0-k_0^{-1})-(l_0-l_0^{-1})q^{\shf}x^{-1}\bigr),\\
 J_1(x) &\ceq \frac{k_1}{(1-k_1l_1qx^{-1})(1+k_1l_1^{-1}qx^{-1})}
              \bigl((1-q^2x^{-2})\eta_L(T_1)-(k_1-k_1^{-1})-(l_1-l_1^{-1})qx^{-1}\bigr), \\
 K_0(\xi) &\ceq \frac{l_1}{(1-l_1l_0q^{\shf}\xi)(1+l_1l_0^{-1}q^{\shf}\xi)}
                \bigl((1-q\xi^2)\eta_R(T_0^{*})-(l_1-l_1^{-1})-(l_0-l_0^{-1})q^{\shf}\xi\bigr), \\
 K_1(\xi) &\ceq \frac{k_1}{(1-k_1k_0q\xi)(1+k_1k_0^{-1}q\xi)}
                \bigl((1-q^2\xi^2)\eta_R(T_1)-(k_1-k_1^{-1})-(k_0-k_0^{-1})q\xi\bigr).
\end{align*}

\item
As in \cref{rmk:A:CewCwe}, we have
\begin{align}\label{eq:CC:CewCwe}
 C_{(e,w)}(x,\xi) = C_\iota C_{(w,e)}(\xi^{-1},x^{-1}) C_\iota
\end{align}
for any $w \in W$. The formulas \cref{eq:CC:C10C01} are compatible with \ref{eq:CC:CewCwe}. 

\item
The formulas \eqref{eq:CC:C10C01} are also consistent with the computation of $C_{0,1}$ in the final paragraph of \cite[\S 4.2]{T}. Note that we are working on the different choice \eqref{eq:CC:Y} of $Y$ from loc.\ cit.
\end{enumerate}
\end{rmk}

For later use, we give a $\CvC$-analogue of \cref{fct:A:lim}.

\begin{lem}\label{lem:CC:C(0)}
Let $\clA \ceq \bC[x^{-1}] \subset \bL=\bC[x^{\pm1},\xi^{\pm1}]$, and $\clQ_0(\clA)$ be the subring of the quotient field $\clQ(\clA)=\bC(x)$ consisting of rational functions which are regular at $x^{-1}=0$. Considering $\clQ_0(\clA) \otimes \bC[\xi^{\pm1}]$ as subring of $\bC(x,\xi)$, we have 
\begin{align}\label{eq:CC:10A}
 C_{1,0} \in (\clQ_0(\clA) \otimes \bC[\xi^{\pm1}]) \otimes \End H_0.
\end{align}
Moreover, setting $C^{(0)}_{1,0} \ceq \rst{C_{1,0}}{x^{-1}=0} \in \bC[\xi^{\pm1}] \otimes \End H_0$, we have
\begin{align}\label{eq:CC:100}
 C^{(0)}_{1,0} = k_1k_0 \eta_L(T_1Y^{-1}T_1^{-1}).
\end{align}
Similarly, defining $\clB \ceq \bC[\xi] \subset \bL$ and $\clQ_0(\clB) \subset \clQ(\clB)=\bC(\xi)$ to be the subring consisting of rational functions which are regular at the point $\xi=0$, we have
\[
 C_{0,1} \in (\bC[x^{\pm1}] \otimes \clQ_0(\clB)) \otimes \End H_0.
\] 
Moreover, setting $C^{(0)}_{0,1} \ceq \rst{C_{0,1}}{\xi=0} \in \bC[x^{\pm1}] \otimes \End H_0$, we have
\begin{align}\label{eq:CC:010}
 C^{(0)}_{0,1} = k_1l_1 \eta_R(T_1Y^{-1}T_1^{-1}).
\end{align}
\end{lem}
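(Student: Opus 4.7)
The plan is to mirror the proof of \cref{fct:A:lim} closely, using the two-factor decompositions $C_{1,0} = R_0^L(x_0)R_1^L(x_1')$ and $C_{0,1} = R_0^R(\xi_0')R_1^R(\xi_1')$ from \cref{lem:CC:C10C01}, in which $x_0 = qx^{-2}$, $x_1' = q^2x^{-2}$, $\xi_0' = q\xi^2$ and $\xi_1' = q^2\xi^2$ all vanish in the relevant asymptotic limits $x^{-1}\to 0$ and $\xi\to 0$.

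For \eqref{eq:CC:10A}, I would inspect \eqref{eq:CC:c} and observe that each $c_i(z; k_i, u_i)$ is regular and nonzero at $z = 0$ with value $k_i^{-1}$, so $c_i(z)^{-1}$ is regular at $z = 0$ with value $k_i$. Consequently each factor $R_i^L(x_i)$ lies in $\clQ_0(\clA) \otimes \bC[\xi^{\pm1}] \otimes \End(H_0)$, which gives \eqref{eq:CC:10A}. To compute $C_{1,0}^{(0)}$, I evaluate each factor at $x^{-1} = 0$:
\[
 R_i^L(x_i)\big|_{x^{-1}=0} = k_i\bigl(\eta_L(T_i) - (k_i - k_i^{-1})\bigr) = \eta_L(k_i T_i^{-1}),
\]
where the last equality uses the quadratic Hecke relation $T_i^{-1} = T_i - (k_i - k_i^{-1})$ implied by \eqref{eq:CC:Hrel}. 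Since $\eta_L$ is an algebra homomorphism, composing the two limits yields $C_{1,0}^{(0)} = k_0 k_1 \eta_L(T_0^{-1} T_1^{-1})$, and $Y = T_0 T_1$ from \eqref{eq:CC:Y} gives $T_1 Y^{-1} T_1^{-1} = T_0^{-1} T_1^{-1}$, proving \eqref{eq:CC:100}.

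For \eqref{eq:CC:010}, the same asymptotic analysis applied at $\xi = 0$ gives $R_i^R(z)|_{\xi=0} = \eta_R(k_i^* (T_i^*)^{-1})$ with $(\ulk^*, \ulu^*)$ from \eqref{eq:CC:*ku}. The essential new point is that $\eta_R$ is an algebra anti-homomorphism on $H^*$, so the composition reverses order:
\[
 C_{0,1}^{(0)} = \eta_R\bigl(k_0^* (T_0^*)^{-1}\bigr)\, \eta_R\bigl(k_1^* (T_1^*)^{-1}\bigr) = k_1^* k_0^* \, \eta_R\bigl((T_1^*)^{-1}(T_0^*)^{-1}\bigr).
\]
Since $k_1^* k_0^* = k_1 u_1$ by \eqref{eq:CC:*ku}, and a direct calculation with the duality anti-involution \eqref{eq:CC:*} yields $(T_1^*)^{-1}(T_0^*)^{-1} = T_1^{-1} x T_1 = (T_1 Y^{-1} T_1^{-1})^*$ in $\bH$, this matches the $*$-interpretation of the right-hand side already implicit in the type $A_1$ prototype \cref{fct:A:lim}, yielding \eqref{eq:CC:010}.

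The main obstacle is notational rather than conceptual: one must carefully track the order reversal coming from the anti-homomorphism property of $\eta_R$ and interpret the symbol $\eta_R(T_1 Y^{-1} T_1^{-1})$ via the duality anti-involution $*$, since strictly speaking $T_1 Y^{-1} T_1^{-1} \in H$ lies outside the formal domain $H^*$ of $\eta_R$. Once that bookkeeping is in place, the computation is a straightforward reduction to the quadratic Hecke relations for $T_0$ and $T_1$.
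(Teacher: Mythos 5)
Your proof is correct and follows essentially the same route as the paper's: factor the cocycle via \cref{lem:CC:C10C01}, evaluate each factor in the limit using $c_i(z)\to k_i^{-1}$ together with the quadratic Hecke relation $T_i^{-1}=T_i-k_i+k_i^{-1}$, and recombine using $Y=T_0T_1$, so that $T_1Y^{-1}T_1^{-1}=T_0^{-1}T_1^{-1}$. You in fact go slightly beyond the paper, which carries out only the $C_{1,0}$ half and omits $C_{0,1}$ entirely: your careful handling of the order reversal under the anti-homomorphism $\eta_R$, the identification $k_1^*k_0^*=k_1u_1$ from \eqref{eq:CC:*ku}, and the reading of the right-hand side of \eqref{eq:CC:010} through the duality anti-involution, namely as $\eta_R\bigl((T_1Y^{-1}T_1^{-1})^*\bigr)=\eta_R\bigl((T_1^*)^{-1}(T_0^*)^{-1}\bigr)=\eta_R(T_1^{-1}xT_1)$ with the argument genuinely lying in $H^*$, is exactly the bookkeeping needed to make that formula literally well-defined, and it is carried out correctly.
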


\begin{proof}
We only show the statements for $C_{1,0}$. 
By the expression \eqref{eq:CC:C10C01} of $C_{1,0}$, we have $C_{1,0} \in (\clQ_0(\clA) \otimes \bC[\xi^{\pm1}]) \otimes \End H_0$. 
To get \eqref{eq:CC:100},  we compute 
\begin{align*}
  \lim_{x \to \infty}C_{1,0} 
&=\bigl(\lim_{x \to \infty} J_1(x)\bigr)\bigl(\lim_{x \to \infty}J_0(x)\bigr)  
 =k_0(\eta_L(T_0)-k_0+k_0^{-1}) k_1(\eta_L(T_1)-k_1+k_1^{-1}) \\
&=k_1k_0\eta_L(T_0^{-1})\eta_L(T_1^{-1}) 
 =k_1k_0\eta_L(T_1YT_1^{-1}).
\end{align*}
Here we used 
$T_i^{-1}=T_i-k_i+k_i^{-1}$ from \eqref{eq:CC:Hrel} and $Y=T_1T_0$ from \eqref{eq:CC:Y}.
\end{proof}


Let us also record the $\CvC$-version of \cref{fct:A:Lem42}. 

\begin{fct}[{c.f.\ \cite[Lemma 4.2]{vM}}]\label{fct:CC:Lem42}
For $w \in W_0$, we set
\[
 \tau_w \ceq \eta_L(\wtS_{w^{-1}}^*) T_e \in \bC[\xi^{\pm1}] \otimes H_0 \subset H_0^{\bK}.
\]
Then the following statements hold.
\begin{enumerate}
\item
$\{\tau_w \mid w \in W_0\}$ is a $\bK$-basis of $H_0^{\bK}$ consisting of eigenfunctions for the $\eta_L$-action of $\bC[Y^{\pm1}] \subset \bH$ on $H_0^{\bK}$.
\item
For $p \in \bC[\xi^{\pm1}]$ and $w \in W_0$, we have $\eta_L(p(Y)) \tau_w(\xi) = (w^{-1}p)(\xi) \tau_w(\xi)$ as $H_0$-valued regular functions in $\xi$.
\end{enumerate}
\end{fct}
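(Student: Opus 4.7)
The plan is to parallel the proof of the type $A_1$ analog (\cref{fct:A:Lem42}, following \cite[Lemma 4.2]{vMS}), which adapts to the $(C_1^\vee, C_1)$ setting via two structural ingredients already established in the excerpt: the twisted $\bL$-equivariance \eqref{eq:CC:wsHL} of the maps $\wt{\sigma}_{(w,w')}$, and the algebra-homomorphism property of $\eta_L$. Since $W_0=\{e,s_1\}$, only $\tau_e$ and $\tau_{s_1}$ need to be treated, which makes the verification completely explicit.

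First, $\tau_e = \eta_L(\wtS_e^*) T_e = T_e$ is immediate. For $w = s_1$, I would start from the expression $\wtS_{s_1} = (1-X^2)T_1 - (k_1 - k_1^{-1}) - (u_1 - u_1^{-1})X$ in $\bH$ arising from \eqref{eq:CC:wtS01}, apply the duality anti-involution \eqref{eq:CC:*} using $T_1^*=T_1$ and $X^*=Y^{-1}$ to obtain
\[
 \wtS_{s_1}^* = T_1(1-Y^{-2}) - (k_1-k_1^{-1}) - (u_1-u_1^{-1})Y^{-1},
\]
and evaluate $\eta_L(\wtS_{s_1}^*)T_e$ through the PBW identification \eqref{eq:CC:bH=LH0}, yielding the explicit formula
\[
 \tau_{s_1} = (1-\xi^{-2})T_1 - \bigl((k_1-k_1^{-1}) + (u_1-u_1^{-1})\xi^{-1}\bigr)T_e \;\in\; \bC[\xi^{\pm 1}]\otimes H_0.
\]

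For part (2), the key is the intertwining identity $q(Y)\wtS_{s_1}^* = \wtS_{s_1}^*(s_1 q)(Y)$ for $q\in\bC[\xi^{\pm 1}]$, which I would read off by specializing \eqref{eq:CC:wsHL} to $(w,w')=(e,s_1)$ with $h=1$ and $f=1\otimes q$ and using $s_1^\diamond=s_1$. Applying $\eta_L$, invoking its algebra-homomorphism property, the base eigenvalue $\eta_L(p(Y))T_e = p(\xi)T_e$ from the $\bL$-action \eqref{eq:CC:H-Lmod}, and the $\bK$-linearity of $\eta_L(\wtS_{s_1}^*)$, I conclude
\[
 \eta_L(p(Y))\tau_{s_1} = \eta_L(\wtS_{s_1}^*)\bigl((s_1 p)(\xi)T_e\bigr) = (s_1 p)(\xi)\,\tau_{s_1} = (s_1^{-1}p)(\xi)\,\tau_{s_1},
\]
using $s_1^{-1}=s_1$. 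Part (1) then follows from the explicit shape of $\tau_{s_1}$: its $T_1$-coefficient $1-\xi^{-2}$ is a unit in $\bK$, so $\{\tau_e,\tau_{s_1}\}$ is $\bK$-linearly independent in the two-dimensional space $H_0^\bK \cong \bK \oplus \bK T_1$ and hence a basis.

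The main subtlety will be in formulating and using the intertwining identity correctly: the duality $*$ swaps the Hecke parameters according to \eqref{eq:CC:*ku}, so $\wtS_{s_1}^*$ naturally lives in $\bH(\ulk^*,\ulu^*,q)$ rather than in the source DAHA where $\eta_L$ was defined. The cleanest interpretation is to view everything at the level of operators on $\bC[x^{\pm 1}]$, inside which both DAHAs sit as subalgebras, and to verify the intertwining as an operator identity there; once this parameter matching is handled correctly, the rest of the argument is essentially a transcription of the $A_1$ case, which is vacuous in that setting because $k^*=k$.
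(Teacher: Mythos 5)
Your overall strategy---an explicit rank-one computation reducing everything to $\tau_e$ and $\tau_{s_1}$---is legitimate (the paper itself omits the proof and defers to van Meer's reduced-type argument), but your dualization step contains a genuine parameter error that makes part (2) fail for your explicit $\tau_{s_1}$. You apply $*$ to the source intertwiner $\wtS_{s_1}\in\bH(\ulk,\ulu,q)$ and obtain the coefficient $(u_1-u_1^{-1})$; however, the statement asserts that $\tau_{s_1}$ is an eigenfunction for the $\eta_L$-action of $\bC[Y^{\pm1}]\subset H(\ulk)$, where $Y=T_0T_1$ and $T_0$ obeys the quadratic relation with parameter $k_0$. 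In $H(\ulk)$ one has the cross relation $YT_1=T_1Y^{-1}+(k_1-k_1^{-1})Y+(k_0-k_0^{-1})$ (indeed $YT_1=T_0T_1^2=(k_1-k_1^{-1})Y+T_0$ and $T_1Y^{-1}=T_0^{-1}$), and a direct check shows that $S\ceq T_1(1-Y^{-2})-(k_1-k_1^{-1})-d\,Y^{-1}$ satisfies the required intertwining $YS=SY^{-1}$ if and only if $d=k_0-k_0^{-1}$. With your $d=u_1-u_1^{-1}$ one gets instead
\[
 YS-SY^{-1}=\bigl((k_0-k_0^{-1})-(u_1-u_1^{-1})\bigr)(1-Y^{-2}),
\]
so $\eta_L(Y)\tau_{s_1}=\xi^{-1}\tau_{s_1}+\bigl((k_0-k_0^{-1})-(u_1-u_1^{-1})\bigr)(1-\xi^{-2})T_e$, which is not an eigen-equation for generic parameters.

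The source of the error is precisely the "cleanest interpretation" in your last paragraph: realized via their own basic representations, $\bH(\ulk,\ulu,q)$ and $\bH(\ulk^*,\ulu^*,q)$ are \emph{different} subalgebras of $\End(\bC[x^{\pm1}])$, and $*$ is not an operator-level identification between them, so "verifying the intertwining as an operator identity there" does not repair the parameter mismatch. The correct reading---the one the paper uses throughout on the spectral side---is that $\wtS_{w'}^*$ denotes the $*$-image of the intertwiner taken with the \emph{dual} parameters $(\ulk^*,\ulu^*)$ of \eqref{eq:CC:*ku}, so that it lands in $H(\ulk)\subset\bH(\ulk,\ulu,q)$; since $u_1^*=k_0$, this gives
\[
 \wtS_{s_1}^*=T_1(1-Y^{-2})-(k_1-k_1^{-1})-(k_0-k_0^{-1})Y^{-1}, \qquad
 \tau_{s_1}=(1-\xi^{-2})T_1-\bigl((k_1-k_1^{-1})+(k_0-k_0^{-1})\xi^{-1}\bigr)T_e,
\]
consistent with the Bernstein relation above and corroborated by the paper's own dual-parameter formulas: the $R_i^R$ in \cref{lem:CC:C10C01}, the factor $(k_0-k_0^{-1})q\xi$ in $K_1(\xi)$ of \eqref{eq:CC:JK}, and $C^{(0)}_{0,1}=k_1u_1\,\eta_R(T_1Y^{-1}T_1^{-1})$ in \eqref{eq:CC:010}, where $k_1u_1=k_1^*k_0^*$. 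Once the coefficient is corrected, the rest of your argument goes through unchanged: the derivation of $p(Y)\wtS_{s_1}^*=\wtS_{s_1}^*(s_1p)(Y)$ from \eqref{eq:CC:wsHL} with $h=1$, the base case $\eta_L(p(Y))T_e=p(\xi)T_e$, and part (1), which survives because the $T_1$-coefficient $1-\xi^{-2}$---the only input to your linear-independence argument---is unaffected. Your closing remark that the $A_1$ case renders the issue "vacuous" identifies exactly why the transcription is dangerous: type $A_1$ cannot distinguish the slots $u_1$ and $k_0$, and the non-reduced case does.
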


The proof for the reduced type in \cite{vM} also works for the non-reduced type $\CvC$, so we omit it.

\subsection{Bispectral Askey-Wilson $q$-difference equation}\label{ss:CC:bMR}

As in \cref{ss:A:bMR}, we consider the crossed product algebra
\[
 \bD_q^{\bW} \ceq \bW \ltimes \bC(x,\xi) 
\]
where $\bW$ acts on $\bC(x,\xi)$ by \eqref{eq:A:bWbL}, and also consider the subalgebra
\[
 \bD_q \ceq \bigl(\tu(\Lambda)\times\tu(\Lambda)\bigr) \ltimes \bC(x,\xi) \subset \bD_q^\bW,
\]
which is identified with the algebra of $q$-difference operators on $\bC(x,\xi)$.
We can expand $D \in \bD_q^\bW$ as
\begin{align}\label{eq:CC:D}
 D = \sum_{\bfw \in \bW} f_{\bfw} \bfw = \sum_{\bfs \in W_0 \times W_0} D_{\bfs} \bfs,
\end{align}
where $f_{\bfw} \in \bC(T \times T)$ and $D_{\bfs} = \sum_{\bft \in \tu(\Lambda)\times \tu(\Lambda)} g_{\bft \bfs} \bft \in \bD_q$. We also use $\Res\colon \bD_q^\bW \to \bD_q$ given by 
\begin{align}\label{eq:CC:Res}
 \Res(D) \ceq \sum_{\bfs \in W_0 \times W_0} D_{\bfs}.
\end{align} 

Next, following \eqref{eq:A:rhox} and \eqref{eq:A:rhoxi}, we introduce two realizations of the basic representation of type $(C^\vee_1,C_1)$. Let us denote
\[
 (1/\ulk,1/\ull) \ceq (1/k_1,1/k_0,1/l_1,1/l_0).
\] 
Then, the first is given by the algebra homomorphism 
\begin{align}\label{eq:CC:rhox}
 \rho^x_{1/\ulk,1/\ull,q}\colon H(1/\ulk) \lto \bC(x)[W\times\sbr{e}] \subset \bD_q^{\bW}
\end{align}
given by the map $\rho_{1/\ulk,1/\ull,q}$ in \eqref{eq:CC:rho}. The second is 
\begin{align}\label{eq:CC:rhoxi}
 \rho^\xi_{\ulk^*,\ull^*,1/q}\colon H(\ulk^*) \lto \bC(\xi)[\sbr{e}\times W] \subset \bD_q^\bW.
\end{align}
Then, recalling \cref{dfn:A:DxDY,dfn:A:LxLY}, let us introduce:

\begin{dfn}\label{dfn:CC:Dxxi}
For $h \in H(1/\ulk)$ and $h' \in H(\ulk^*)$, we define $D_h^x,D_{h'}^\xi \in \bD^\bW_q$ by
\[
 D_{h }^{x} \ceq \rho^{x}_{1/\ulk,1/\ull,q}(h), \quad 
 D_{h'}^\xi \ceq \rho^\xi_{\ulk^*,\ull^*,1/q}(h').
\]
Also, for an invariant polynomial $p=p(z) \in \bC[z^{\pm1}]^{W_0}=\bC[z+z^{-1}]$, we define $L_p^x,L_p^\xi \in \bD_q$ by 
\begin{align}\label{eq:CC:Lp1}
 L_p^{x} = L_p^{x}(\ulk,\ull,q) \ceq \Res(D^x_{p(Y)}), \quad 
 L_p^\xi = L_p^\xi(\ulk,\ull,q) \ceq \Res(D^\xi_{p(Y)}),
\end{align}
where we regarded $p(Y) \in H(1/\ulk)$ for $L_p^x$, and $p(Y) \in H(\ulk^*)$ for $L_p^\xi$,
and used the map $\Res$ in \eqref{eq:CC:Res}.
\end{dfn}

As in \cref{dfn:A:p1}, we denote by $p_1(z) \ceq z+z^{-1}$, which is the generator of the invariant polynomial ring $\bC[z^{\pm1}]^{W_0}$. Then, similarly as in \cref{prp:A:Lp1}, we can compute $L_{p_1}^x$ and $L_{p_1}^\xi$ using the function $c(z;t,l)$ in \eqref{eq:CC:c}. Let us denote the action of $w \in W$ on the functions of $x$ given in \eqref{eq:CC:Wx} as $w^x$. It is compatible with $\rho^x_{1/k,q}$ in \eqref{eq:CC:rhox}, and explicitly, 
\begin{align}\label{eq:CC:wx}
 s_0^x(x) \ceq q x^{-1}, \quad s_1^x(x) = x^{-1}, \quad \tu(\varpi)^x(x) = q^{\shf}x. 
\end{align}
We also denote by $w^\xi$ the action on functions of $\xi$. It is compatible with $\rho^\xi_{k,1/q}$ in \eqref{eq:CC:rhoxi}, and explicitly,  
\begin{align}\label{eq:CC:wxi}
 s_0^\xi(\xi) \ceq q^{-1} \xi^{-1}, \quad s_1^\xi(\xi) = \xi^{-1}, \quad 
 \tu(\varpi)^\xi(\xi) = q^{-\shf} \xi. 
\end{align}

\begin{prp}\label{prp:CC:Lp1}
We have
\begin{align} 
\label{eq:CC:Lp1x}
 L_{p_1}^x    &=    k_1k_0+(k_1k_0)^{-1}+(k_1k_0)^{-2}D^{x}_{\tAW}, & 
 D_{\tAW}^{x} &\ceq A(x)(T_{q,x}-1)+A(x^{-1})(T_{q,x}^{-1}-1), \\
\label{eq:CC:Lp1xi}
 L_{p_1}^\xi  &=    k_1l_1+(k_1l_1)^{-1}+(k_1l_1)^{ 2}D^\xi_{\tAW}, & 
 D_{\tAW}^\xi &\ceq A^*(\xi^{-1})(T_{q,\xi}-1)+A^*(\xi)(T_{q,\xi}^{-1}-1)
\end{align}
with
\begin{gather*}
 A(z) \ceq 
 \frac{(1-k_1l_1z)(1+k_1l_1^{-1}z)(1-k_0l_0q^{-\shf}z)(1+k_0l_0^{-1}q^{-\shf}z)}{(1-z^2)(1-q^{-1}z^2)}, \\
 A^*(z) \ceq 
 \frac{(1-k_1k_0z)(1+k_1l_1^{-1}z)(1-l_1l_0q^{-\shf}z)(1+l_1l_0^{-1}q^{-\shf}z)}{(1-z^2)(1-q^{-1}z^2)}.
\end{gather*}
\end{prp}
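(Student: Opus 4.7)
The plan is to mirror the proof of \cref{prp:A:Lp1}, starting from $L_{p_1}^x = \Res\bigl(\rho^x_{1/\ulk, 1/\ulu, q}(T_0 T_1 + T_1^{-1} T_0^{-1})\bigr)$ via \eqref{eq:CC:Y} and \eqref{eq:CC:rho}. Writing $\rho(T_i) = \alpha_i(x) s_i^x + b_i(x)$ with $\alpha_i(x) \ceq c(x_i; 1/k_i, 1/u_i)$ and $b_i(x) = 1/k_i - \alpha_i(x)$, and using the Hecke relation in $H(1/\ulk)$ to derive $\rho(T_i^{-1}) = \alpha_i(x) s_i^x + (k_i - \alpha_i(x))$, I would expand the four resulting products and push every reflection to the right. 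The compositions $s_0^x s_1^x$ and $s_1^x s_0^x$ produce translations $\tu(\pm\ep)^x$, while stray $s_0^x$-factors split as (translation)$\cdot s_1^x$. Applying $\Res$---the sum over $\bfs \in W_0$ of the $\bfs$-components---collapses many cross-terms via the trivial identities $\alpha_i + b_i = 1/k_i$ and $\alpha_i + (k_i - \alpha_i) = k_i$, leaving an expression of the form $C_0(x) + C_+(x)\, T_{q,x} + C_-(x)\, T_{q,x}^{-1}$.

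The heart of the proof, and its main obstacle, is the explicit factorization of the coefficients into the product form of $A(x)$. I would first show, by solving a single quadratic (using the identity $(u_1^{-1} - u_1)^2 + 4 = (u_1^{-1} + u_1)^2$), that
\[
 b_1(x) + k_1 = \frac{(1 + k_1 u_1^{-1} x)(1 - k_1 u_1 x)}{k_1\,(1 - x^2)}.
\]
An analogous manipulation on $\alpha_0(x) = c(q x^{-2}; 1/k_0, 1/u_0)$, after clearing the denominator via $1 - q x^{-2} = -q x^{-2}(1 - q^{-1} x^2)$ and factoring the numerator, would yield
\[
 \alpha_0(x) = \frac{(1 - k_0 u_0 q^{-\shf} x)(1 + k_0 u_0^{-1} q^{-\shf} x)}{k_0\,(1 - q^{-1} x^2)}.
\]
Multiplying these two factorizations produces the desired scalar multiple of $A(x)$ as $C_+(x)$; the coefficient $C_-(x)$ then equals the same scalar times $A(x^{-1})$ by the parallel computation (or, equivalently, by the $s_1^x$-symmetry of the final operator). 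The constant $C_0(x)$ is pinned down by $L_{p_1}^x \cdot 1 = (Y + Y^{-1}) \cdot 1 = (k_0 k_1)^{-1} + k_0 k_1$, and one rearranges into the form \eqref{eq:CC:Lp1x}.

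For $L_{p_1}^\xi$, the calculation is verbatim the same after the substitutions dictated by \cref{dfn:CC:Dxxi}: replace $(x, 1/\ulk, 1/\ulu, q)$ throughout by $(\xi, \ulk^*, \ulu^*, 1/q)$. The duality-induced parameter swap \eqref{eq:CC:*ku} interchanges $k_0 \leftrightarrow u_1$ in the coefficient formulas, and the inversion $q \to 1/q$ flips the sign of every $q$-exponent, converting $A$ into $A^*$; the constant piece becomes $k_1 u_1 + (k_1 u_1)^{-1}$, consistent with $Y \cdot 1 = k_1^* k_0^* = k_1 u_1$ in the basic representation of $H(\ulk^*)$, and this yields \eqref{eq:CC:Lp1xi}.
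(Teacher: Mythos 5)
Your proposal is correct and follows essentially the same route as the paper's proof: the same expansion of $\Res\bigl(\rho^x_{1/\ulk,1/\ulu,q}(T_0T_1+T_1^{-1}T_0^{-1})\bigr)$ with reflections pushed to the right, the same two key factorizations (your identity for $b_1(x)+k_1$ is exactly the paper's $k_1+k_1^{-1}-c_1=c(x^2;k_1,u_1)=s_1^x(c_1)$, and your $\alpha_0$ is its $c_0$), and the same parameter substitution $(x,k_1,k_0,u_1,u_0,q)\mapsto(\xi,k_1^{-1},u_1^{-1},k_0^{-1},u_0^{-1},q^{-1})$ for \eqref{eq:CC:Lp1xi}. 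The only cosmetic deviation is that you pin down the constant term by evaluating on the constant function $1$ where the paper collects it directly in the expansion; and your noncommittal ``scalar multiple of $A(x)$'' works out to $(k_1k_0)^{-1}$, agreeing with the paper's proof and with \eqref{eq:CC:Lp12} (the exponent $-2$ displayed in \eqref{eq:CC:Lp1x} is an inconsistency internal to the paper, not a gap in your argument).
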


\begin{proof}
Let us compute $L_{p_1}^x = \Res(D_{Y+Y^{-1}}^x)$. Since $Y=T_0T_1$ and $s_0=\tu(\ep)s_1$, using \eqref{eq:CC:Hrel}, \eqref{eq:CC:wx} and \eqref{eq:CC:rho}, we have
\begin{align*}
& D^x_{Y+Y^{-1}} 
 =\rho^x_{1/\ulk,1/\ull,q}(T_0T_1+T_1^{-1}T_0^{-1}) \\
&=\bigl(k_0^{-1}+c_0(\tu(\ep)^x s_1^x-1)\bigr) \bigl(k_1^{-1}+c_1(s_1^x-1)\bigr) 
 +\bigl(k_1+c_1(s_1^x-1)\bigr) \bigl(k_0+c_0(\tu(\ep)^x s_1^x-1)\bigr) \\
&=k_1^{-1}k_0^{-1}+k_1^{-1}c_0(\tu(\ep)^x s_1^x-1)+k_0^{-1}c_1(s_1^x-1)
  +c_0(c_1'\tu(\ep)^x s_1^x-c_1)(s_1^x-1) \\
&\quad
 +k_1k_0+k_1c_0(\tu(\ep)^xs_1^x-1)+k_0c_1(s_1^x-1)+c_1(c_0' s_1^x-c_0)(\tu(\ep)^x s_1^x-1),
\end{align*}
where $w^x$ is given by \eqref{eq:CC:wx} and, using the function $c$ in \eqref{eq:CC:c}, we denoted  
\begin{align*}
&c_1  \ceq c(x^2;k_1^{-1},l_1^{-1}), & 
&c_1' \ceq \tu(\ep)^xs_1^x(c_1), \\
&c_0  \ceq c(qx^{-2};k_0^{-1},l_0^{-1}), & 
&c_0' \ceq s_1^x(c_0) = c(qx^2;k_0^{-1},l_0^{-1}). 
\end{align*}
Then, using $(c_0's_1^x-c_0)(\tu(\ep)^x s_1^x-1)=c_0'\tu(-\ep)^x-c_0's_1^x-c_0\tu(\ep)^xs_1^x+c_0$ and 
\begin{align*}
&\Res(\tu(\ep)^x s_1^x-1) = \tu(\ep)^x-1, \quad \Res(s_1^x-1) = 0, 
\end{align*}
we have
\begin{align*}
  \Res(D^x_{Y+Y^{-1}})
&=k_1^{-1}k_0^{-1}+k_1^{-1}c_0(\tu(\ep)^x-1) \\
&\quad
 +k_1k_0+k_1c_0(\tu(\ep)^x-1)+c_1(c_0'\tu(-\ep)^x-c_0'-c_0\tu(\ep)^x+c_0) \\
&=k_1k_0+k_1^{-1}k_0^{-1}+c_0(k_1+k_1^{-1}-c_1)(\tu(\ep)^x-1)+c_1c_0'(\tu(-\ep)^x-1).
\end{align*}
Now, using the identity
\[
 k_1+k_1^{-1}-c_1 = k_1^{-1} \frac{(1-k_1l_1x)(1+k_1l_1^{-1}x)}{1-x^2} = c(x^2;k_1,l_1) = 
 c(x^{-2};k_1^{-1},l_1^{-1}) = s_1^x(c_1),
\]
we have $c_0(k_1+k_1^{-1}-c_1) = c_0 \cdot s_1^x(c_1)= s_1^x(c_0'c_1)$. 
Then, by $\tu(\ep)^x=T_{q,x}$, we have 
\[
 L_{p_1}^x = \Res(D^x_{Y+Y^{-1}}) = 
 k_1k_0+k_1^{-1}k_0^{-1}+\bigl(s_1^x(c_0'c_1)\bigr)(T_{q,x}-1)+c_0'c_1(T_{q,x}^{-1}-1).
\]
Denoting $A(x) \ceq s_1^x(c_0'c_1)$, we obtain \eqref{eq:CC:Lp1x}.
The formula \eqref{eq:CC:Lp1xi} of $L_{p_1}^\xi$ is obtained from $L_{p_1}^x$ by replacing $(x,k_0,k_1,l_0,l_1,q)$ with $(\xi,l_1^{-1},k_1^{-1},l_0^{-1},k_0^{-1},q^{-1})$.
\end{proof}

\begin{rmk}[{c.f.\ \cite[pp.54--55]{N}}]\label{rmk:CC:Lp1}
The operators $D^x_{\tAW}$ and $D^\xi_{\tAW}$ are equivalent to \emph{the Askey-Wilson second order $q$-difference operator} \cite[(5.7)]{AW}:
\begin{gather*}
 D_{\tAW}(z;a,b,c,d,q) \ceq 
 A^+(z;a,b,c,d,q)(T_{q,z}-1)+A^+(z^{-1};a,b,c,d,q)(T_{q,z}^{-1}-1), \\
 A^+(z;a,b,c,d,q) \ceq \frac{(1-a z)(1-b z)(1-c z)(1-d z)}{(1-z^2)(1-q z^2)}.
\end{gather*}
The precise relation with $A(x),A^*(\xi)$ in \eqref{eq:CC:Lp1x}, \eqref{eq:CC:Lp1xi} is given by
\[
 A(x) = A^+(x;a,b,c',d',q), \quad 
 A^*(\xi) = A^+(\xi;\add,\bd,c'^*,d'^*,q)
\]
with the parameters 
\begin{gather*}
 \{a,b,c',d'\} \ceq \{k_1l_1,-k_1l_1^{-1},q^{-\shf}k_0l_0,-q^{-\shf}k_0l_0^{-1}\}, \\ 
 \{\add,\bd,c'^*,d'^*\} \ceq \{k_1^{-1}k_0^{-1},-k_1^{-1}k_0,q^{-\shf}l_1^{-1}l_0^{-1},-q^{-\shf}l_1^{-1}l_0\}.
\end{gather*}
The reciprocal parameter $q^{-1}$ appearing above originates from our choice \eqref{eq:CC:Y} of the Dunkl operator $Y$. As mentioned in \cref{rmk:CC:Y}, the choice in \cite{N,T,S} is the opposite, and for that choice, the above construction of the $q$-difference operator on $x$ which is equal to the original Askey-Wilson operator $D^x_{\tAW}(x;a,b,c,d,q)$.

The ordinary parameters and \emph{the dual parameters} of Askey-Wilson polynomials are given as
\begin{gather*}
 \{a,b,c,d\}         \ceq \{k_1l_1,-k_1l_1^{-1},q^{\shf}k_0l_0,-q^{\shf}k_0l_0^{-1}\}, \\ 
 \{\add,\bd,\cd,\dd\} \ceq \{k_1k_0,-k_1k_0^{-1},q^{\shf}l_1l_0,-q^{\shf}l_1l_0^{-1}\}.
\end{gather*}
There are related by the duality anti-involution $*$ (see \eqref{eq:CC:*}) as 
\[
 a^*=\sqrt{abcd/q}, \quad b^*=ab/a^*, \quad c^*=ac/a^*, \quad d^*=ad/a^*. 
\]
\end{rmk}

By \cref{rmk:CC:Lp1}, it is natural to name the bispectral problem as:

\begin{dfn}\label{dfn:CC:bAW}
The following system of eigen-equations for $f=f(x,\xi) \in \bK$ is called \emph{the bispectral Askey-Wilson $q$-difference equation of type $(C^\vee_1,C_1)$}, and \emph{the bAW equation} for short.
\begin{align}\label{eq:CC:bAW}
\begin{cases}
 (L^{x}_{p_1}f)(x,\xi) &= p_1(\xi^{-1}) f(x,\xi), \\
 (L^\xi_{p_1}f)(x,\xi) &= p_1(x) f(x,\xi).
\end{cases}
\end{align}
The solution space is denoted as 
\[
 \SAW(\ulk,\ull,q) \ceq \{f \in \bK \mid \text{$f$ satisfies \eqref{dfn:CC:bAW}}\}.
\]
\end{dfn}

\subsection{Bispectral qKZ/AW correspondence}\label{ss:CC:cor}

Here we give a $(C^\vee_1,C_1)$-analogue of \cref{ss:A:qKZ/MR}, using the reciprocal parameters
\[
 (1/\ulk,1/\ull) \ceq (1/k_1,1/k_0.1/l_1,1/l_0).
\]
Similarly as in \cref{dfn:A:chi+}, we define a $\bK$-linear function $\chi_+\colon H_0(1/\ulk) \to \bC$ by
\begin{align}\label{eq:CC:chi+Tw}
 \chi_+(T_w) \ceq k_1^{-\ell(w)}
\end{align}
for the basis element $T_w \in H_0(1/\ulk)$ ($w \in W_0$). 
It is extended to $H_0(1/\ulk)^\bK \ceq \bK \otimes_{\bL} H_0(1/\ulk)$ as
\begin{align}\label{eq:CC:chi+K}
 \chi_+\colon H_0(1/\ulk)^\bK \lto \bK, \quad 
 \sum_{w \in W_0} f_w T_w \lmto \sum_{w\in W_0} f_w \chi_+(T_w).
\end{align}

Below is a $\CvC$-analogue of \cref{fct:A:chi+}.

\begin{thm}[{c.f.\ \cite[\S3]{S}}]\label{thm:CC:chi+}
Assume $0<q<1$. Then the map $\chi_+$ restricts to an injective $\bF$-linear $\bW_0$-equivariant map
\[
 \chi_+\colon \SKZ(1/\ulk,1/\ull,q) \lto \SAW(\ulk,\ull,q),
\]
where $\bW_0$ is the subgroup of $\bW$ defined by
\[
 \bW_0 \ceq \bZ_2 \ltimes (W_0\times W_0) \subset \bW,
\]
and $\bF$ is the subspace of $\bK = \clM(T \times T)$ defined by
\[
 \bF \ceq \sbr{f(t,\gamma)\in\bK \mid \bigl((\tu(\lambda),\tu(\mu))f\bigr)(t,\gamma)=f(t,\gamma), 
 \  \forall \, (\lambda,\mu) \in \Lambda\times\Lambda}.
\]
\end{thm}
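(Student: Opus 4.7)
The plan is to parallel the three-step proof of \cref{fct:A:chi+} in the type $A_1$ case, adapting each step to the non-reduced setting using the tools prepared in \cref{ss:CC:qKZ,ss:CC:bMR}. The three steps are: (i) $\chi_+$ restricts to an $\bF$-linear $\bW_0$-equivariant map $\SKZ(1/\ulk,1/\ulu,q) \to \bK$; (ii) the image is contained in $\SAW(\ulk,\ulu,q)$; and (iii) the restricted map is injective.

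For (i), the key reduction is a $\CvC$-analogue of \cref{lem:A:Lem66}: for every $\bfw \in \bW_0$ and $F \in H_0^\bK$, one has $\chi_+(C_\bfw F) = \chi_+(F)$. Since $\bW_0$ is generated by $\iota$ and $(s_1,e)$, and since $C_\iota$ sends $T_w$ to $T_{w^{-1}}$ (same length), the identity $\chi_+ \circ C_\iota = \chi_+$ is immediate. For the generator $(s_1,e)$, \cref{lem:CC:C10C01} yields $C_{(s_1,e)} = R_1^L(x_1) = c_1(x_1)^{-1}(\eta_L(T_1) - b_1(x_1))$ where $c_1,b_1$ are the functions of \cref{eq:CC:c} with parameters $(1/k_1,1/u_1)$, so that $b_1 = k_1^{-1} - c_1$. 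Since $\chi_+$ is the $\bC$-algebra character of $H_0(1/\ulk)$ sending $T_1 \mapsto k_1^{-1}$, the computation
\[
 \chi_+(C_{(s_1,e)}h) = c_1(x_1)^{-1}\bigl(k_1^{-1} - b_1(x_1)\bigr)\chi_+(h) = \chi_+(h)
\]
for $h \in H_0$ yields the identity on the generator, and $\bK$-linearity extends it to $H_0^\bK$. Combined with $\tau(\bfw) = C_\bfw \bfw$ and the fact that $\tau(\bfw)$ preserves $\SKZ$ (by the cocycle property of \cref{fct:CC:Cw}), one concludes $\chi_+(\tau(\bfw)f) = \chi_+(\bfw f) = \bfw \, \chi_+(f)$ for $f \in \SKZ$, giving $\bW_0$-equivariance; $\bF$-linearity is immediate from the definition \eqref{eq:CC:chi+K}.

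For (ii), the two substeps of the $A_1$ argument apply once their non-reduced analogues are established. First, one needs a self-dual basic asymptotically free solution $\Phi \in \SKZ(1/\ulk,1/\ulu,q)$ with leading term $T_{w_0}$ (analogue of \cref{fct:A:Lem5.1}), whose asymptotic form at $|x| \gg 1$ is governed by $C_{1,0}^{(0)}$ of \cref{lem:CC:C(0)} evaluated on the reciprocal parameters, yielding the $\bF$-basis $\{\tau(e,w)\Phi\}_{w \in W_0}$ of $\SKZ$ (analogue of \cref{fct:A:Prp5.13}). Second, the twisted intertwining identities (analogues of \cref{fct:A:Lem65,fct:A:Prp69,fct:A:Lem610}) give $\vt'(D^x_{p(Y)})U = \eta_L(p(Y^{-1}))U$ for $p \in \bC[z^{\pm1}]^{W_0}$, via anti-involutions $\dagger, \ddagger$ of the appropriate affine Hecke algebras sending $T_i \mapsto T_i^{\mp1}$. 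Chasing $\chi_+(Uv) = \phi^U_{\chi_+,v}$ through $L^x_p = \Res(D^x_{p(Y)})$ and using \cref{fct:CC:Lem42} then produces the $x$-side eigenvalue $p_1(\xi^{-1})$; the $\xi$-side eigenvalue $p_1(x)$ follows by the $\iota$-twisted version of the same argument.

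The main obstacle is the non-reduced analogue of \cref{fct:A:Lem5.1}: constructing $\Phi$ and verifying its self-duality together with the normal convergence of its power series expansion on a bi-annulus $B_\ve^{-1} \times B_\ve$, now with the four Hecke parameters $(k_1,k_0,u_1,u_0)$ and the more intricate cocycles \eqref{eq:CC:C10C01}. This is, however, essentially carried out in \cite[\S3]{S} in the uniform affine-root setting, and we plan to transcribe the relevant estimates to our rank-one non-reduced conventions. Once $\Phi$ is in hand, injectivity (step (iii)) is the same formal consequence as in \cite[Corollary 6.21]{vMS}: the operator $U$ restricts to an $\bF$-linear isomorphism $H_0^\bF \xrightarrow{\sim} \SKZ$, and the vanishing of the matrix coefficient $\phi^U_{\chi_+,v}$ forces $v=0$ thanks to the leading term normalization $K_{0,0} = T_{w_0}$.
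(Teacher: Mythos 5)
Your proposal is correct and takes essentially the same route as the paper: the same three-part division, with part (i) resting on $\chi_+(C_{\bfw}F)=\chi_+(F)$ verified on the generators $\iota$ and $(s_1,e)$ via the character property $\chi_+(T_1)=k_1^{-1}$ exactly as in the paper, and part (ii) resting on the basic asymptotically free solution $\Phi$ together with the twisted intertwining identities, with injectivity deferred to \cite[Corollary 6.21]{vMS} just as the paper does. The only cosmetic difference is that where you defer the construction of $\Phi$ to \cite[\S3]{S}, the paper proves it directly (\cref{prp:CC:Lem5.1}) via the gauge-transformed operators $D_{1,0},D_{0,1}$ and their limits $D^{(0)}_{1,0},D^{(0)}_{0,1}$ (\cref{lem:CC:D00}), invoking \cite[Theorem A.6]{vMS} --- the same asymptotic mechanism you sketch through $C^{(0)}_{1,0}$ with reciprocal parameters.
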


The strategy of proof is the same as the type $A_1$ (\cref{ss:A:qKZ/MR}). Denoting $\SKZ\ceq\SKZ(1/\ulk,1/\ull,q)$ and $\SAW\ceq\SAW(1/\ulk,1/\ull,q)$, we can divide the proof into three parts.
\begin{clist}
\item 
$\chi_+$ restricts to an $\bF$-linear $\bW_0$-equivariant map $\chi_+\colon \SKZ \to \bK$.
\item
The image $\chi_+(\SKZ)$ is contained in $\SAW$.
\item
$\chi_+\colon \SKZ \to \SAW$ is injective
\end{clist}
We write down the arguments of part (i) and the first half of part (ii). The rest arguments are similar as the type $A_1$, and we omit them.

\begin{proof}[Part \textup{(i)} of the proof of \cref{thm:CC:chi+}]
Similarly as \cref{lem:A:Lem66}, we have 
\begin{align}\label{eq:CC:Lem66}
 \chi_+(C_{\bfw} F) = \chi_+(F)
\end{align}
for each $\bfw \in \bW_0$ and $F \in H_0(1/\ulk)^{\bK}$. The proof is quite similar as \cref{lem:A:Lem66}, once we use $C_{(e,s_1)}=C_\iota C_{(s_1,e)} C_{\iota}$ 
and replace the expression \eqref{eq:A:Lem66} of $C_{(s_1,e)}h$ for $h \in H_0$ by
\[
 C_{(s_1,e)} h = 
 d(x^2;1/k_1,1/l_1)^{-1}\bigl((1-x^2)\eta_{L}(T_1)-(k_1^{-1}-k_1)-(l_1^{-1}-l_1)x\bigr)h.
\]

Then, in the same way as \cref{ss:A:qKZ/MR}, we can show that $\chi_+$ is $\bW_0$-equivariant using \eqref{eq:CC:Cw}, \eqref{eq:CC:Lem66} and \eqref{eq:CC:bWbL}, and that $\chi_+$ restricts to an $\bF$-linear map $\SKZ \to \bK$ using \cref{dfn:CC:SOL}, \cref{eq:CC:chi+Tw} and \eqref{eq:CC:chi+K}.
\end{proof}

Similarly as the type $A_1$, the part (ii) of the proof consists of two steps.
\begin{itemize}
\item 
Describe of $\SKZ$ in terms of the basic asymptotically free solution $\Phi$.
\item
Analyze the map $\chi_+$ using $\Phi$.
\end{itemize}
The second step is quite the same as the type $A_1$, and we omit the detail. The first step requires the following \cref{prp:CC:Lem5.1}, which is a $\CvC$-analogue of \cref{fct:A:Lem5.1}, and a simple modification of \cref{fct:A:Prp5.13}.


\begin{prp}\label{prp:CC:Lem5.1}
Denote $w_0 \ceq s_1 \in W_0$. Let 
\[
 \clW(x,\xi) = \clW(x,\xi;\ulk,\ull,q) \in \bK = \clM(x,\xi)
\]
be a meromorphic function satisfying the $q$-difference equations 
\begin{align}\label{eq:CC:clW-eq}
 \clW(q^lx,\xi) = (k_1k_0 \xi)^{-l} \clW(x,\xi) \qquad (l \in \bZ)
\end{align}
and the self-duality
\begin{align}\label{eq:CC:clW-sd}
 \clW(\xi^{-1},x^{-1};\ulk^*,\ull^*,q) = \clW(x,\xi;\ulk,\ull,q).
\end{align}
Then, there is a unique element $\Psi \in H_0(1/\ulk)^{\bK}$ satisfying the following conditions.
\begin{clist}
\item 
We have
\[
 \Phi \ceq \clW \Psi \in \SKZ.
\]

\item
We have a series expansion 
\[
 \Psi(x,\xi) = \sum_{m,n \in \bN} K_{m,n} x^{-m} \xi^{n \alpha}
 \quad (K_{\alpha,\beta} \in H_0)
\]
for $(x,\xi) \in B_{\ve}^{-1} \times B_\ve$ with $B_{\ve}$ being some open ball of radius $\ve>0$, which is normally convergent on compact subsets of $B_{\ve}^{-1} \times B_{\ve}$.
\item
$K_{0,0}=T_{w_0}$.
\end{clist}
We call the solution $\Phi$ \emph{the basic asymptotically free solution of the bqKZ equation of type $(C^\vee_1,C_1)$}.
\end{prp}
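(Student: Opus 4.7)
The plan is to mimic the proof of \cref{fct:A:Lem5.1} for type $A_1$, constructing $\Psi$ as a formal Laurent series in $x^{-1}$ and $\xi$ and determining its coefficients from the bqKZ equations by an order-by-order recursion. Since the translation subgroup $\tu(\Lambda)\times\tu(\Lambda)$ is generated by $(\tu(\ep),e)$ and $(e,\tu(\ep))$, the full system of \cref{dfn:CC:SOL} is equivalent to the two $q$-difference equations $C_{1,0}(x,\xi)\Phi(q^{-1}x,\xi) = \Phi(x,\xi)$ and $C_{0,1}(x,\xi)\Phi(x,q\xi) = \Phi(x,\xi)$. Substituting $\Phi=\clW\Psi$ and using the quasi-periodicity \eqref{eq:CC:clW-eq} to clear the $\clW$-factors, these become
\begin{align*}
 (k_1k_0)\xi \, C_{1,0}(x,\xi)\,\Psi(q^{-1}x,\xi) = \Psi(x,\xi), \quad
 (k_1k_0)^{-1}\xi^{-1}\, C_{0,1}(x,\xi)\,\Psi(x,q\xi) = \Psi(x,\xi).
\end{align*}

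Next I would analyze the leading terms. By \cref{lem:CC:C(0)} applied to the reciprocal-parameter algebra $H(1/\ulk)$, the cocycle $C_{1,0}$ is regular at $x^{-1}=0$ with leading coefficient $C_{1,0}^{(0)}=(k_1k_0)^{-1}\eta_L(T_1Y^{-1}T_1^{-1})$, and similarly $C_{0,1}$ is regular at $\xi=0$. Plugging in the ansatz $\Psi(x,\xi)=\sum_{m,n\ge 0}K_{m,n}x^{-m}\xi^{n\alpha}$ and isolating the $(0,0)$-coefficient of each equation, the leading-order constraints read $\xi\,\eta_L(T_1Y^{-1}T_1^{-1})K_{0,0}=K_{0,0}$ together with its dual on the $\xi$-side. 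By \cref{fct:CC:Lem42} applied to $H(1/\ulk)$, the element $T_{w_0}=T_{s_1}$ is precisely the common $\eta_L$- (resp.\ $\eta_R$-) eigenvector with eigenvalue $\xi^{-1}$ (resp.\ $x^{-1}$) that makes both leading equations compatible under the self-duality \eqref{eq:CC:clW-sd}, which forces the normalization $K_{0,0}=T_{w_0}$.

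Next I would solve the recursion. Expanding $C_{1,0}$ in powers of $x^{-1}$ and $C_{0,1}$ in powers of $\xi$ (both expansions converge on appropriate polydiscs by \cref{lem:CC:C(0)}), collecting coefficients of $x^{-m}\xi^{n\alpha}$ in each equation yields linear relations of the form $\bigl(\mathcal{L}_{m,n}-1\bigr)K_{m,n} = \Xi_{m,n}\bigl(\{K_{m',n'}\}_{m'<m\text{ or }n'<n}\bigr)$, where $\mathcal{L}_{m,n}$ is a $q$-shift of the leading operator $\xi\eta_L(T_1Y^{-1}T_1^{-1})$ acting on $H_0$. The invertibility of $\mathcal{L}_{m,n}-1$ for $(m,n)\ne(0,0)$ follows at generic parameters from the eigenvalue decomposition of \cref{fct:CC:Lem42}, since the two $W_0$-eigenvalues of $\eta_L(Y)$ are separated by a factor of $q^{-m}$ only at $(m,n)=(0,0)$. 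The two recursions (from the $x$- and $\xi$-equations) yield the same $K_{m,n}$ by the cocycle relation of \cref{fct:CC:Cw} for the commuting pair $(\tu(\ep),e),(e,\tu(\ep))$, which gives the required compatibility.

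The main obstacle, as in the reduced case, is the analytic step of establishing normal convergence on some polydisc $B_\ve^{-1}\times B_\ve$. This follows by the same majorant argument as \cite[\S5, Appendix]{vM}: the Taylor coefficients of $C_{1,0}$ and $C_{0,1}$ admit geometric-type bounds on any compact subset avoiding the polar divisors read off from \cref{lem:CC:C10C01}, and together with a spectral gap estimate for $\mathcal{L}_{m,n}-1$ this controls $\|K_{m,n}\|_{H_0}$ geometrically in $(m,n)$. Uniqueness of $\Psi$ is immediate since the recursion determines every $K_{m,n}$ from $K_{0,0}=T_{w_0}$. The non-reduced features appear only through the explicit form of $C_{1,0},C_{0,1}$ in \cref{lem:CC:C10C01} and are handled exactly as in the $A_1$ case once the correct reciprocal-parameter conventions are fixed.
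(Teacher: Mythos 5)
Your overall route coincides with the paper's: twist by $\clW$ to convert the bqKZ system into a holonomic $q$-difference system for $\Psi$, identify the leading operators at $x^{-1}=0$ and $\xi=0$ via \cref{lem:CC:C(0)} with reciprocal parameters, pin down the constant term using the eigenbasis of \cref{fct:CC:Lem42}, and then run an order-by-order recursion with a convergence estimate. The only structural difference is that the paper does not redo the recursion by hand: it verifies (in \cref{lem:CC:D00}) that the limiting operators $D^{(0)}_{1,0}$ and $D^{(0)}_{0,1}$ commute, have joint spectrum in $q^{\bN}$, and have $(1,1)$-eigenspace exactly $\bC T_{w_0}$, and then invokes the general theorem \cite[Theorem A.6]{vMS}, which packages your invertibility, compatibility and majorant-convergence steps. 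That difference is cosmetic.

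There is, however, a concrete error in your setup, and it sits precisely at the non-reduced subtlety this proposition is about. The multiplier relating $\clW(x,q\xi)$ to $\clW(x,\xi)$ is obtained from the quasi-periodicity \eqref{eq:CC:clW-eq} \emph{through the self-duality} \eqref{eq:CC:clW-sd}, so it involves the dual parameters and the geometric variable: one gets $\clW(x,q\xi)=\bigl(k_1^*k_0^*\bigr)^{\pm1}x^{\mp1}\,\clW(x,\xi)$ with $k_1^*k_0^*=k_1u_1$ by \eqref{eq:CC:*ku} --- a function of $x$, not of $\xi$. Accordingly the paper's proof of \cref{lem:CC:D00} arrives at $D_{0,1}=(k_1u_1x)^{-1}C_{0,1}$, consistent with the $\xi$-side limit $C^{(0)}_{0,1}=k_1u_1\,\eta_R(T_1Y^{-1}T_1^{-1})$ of \eqref{eq:CC:010}. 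Your displayed multiplier $(k_1k_0)^{-1}\xi^{-1}$ is wrong on both counts: $k_1k_0$ should be $k_1u_1$, and the factor must be a power of $x$, not $\xi^{-1}$. As written, your $\xi$-side recursion fails: the factor $\xi^{-1}$ is singular at the expansion point $\xi=0$, so there is no leading-order constant equation there, and it cannot cancel the $\eta_R$-eigenvalue $x^{-1}$ of $T_{w_0}$ that you yourself invoke (in the reduced $A_1$ case $k^*=k$ masks exactly this, which is presumably how the slip arose --- note it contradicts your closing remark that the conventions are "fixed"). A smaller point: the leading equations only force $K_{0,0}\in H_0[1,1]=\bC T_{w_0}$; the equality $K_{0,0}=T_{w_0}$ is the normalization (iii), not a consequence of self-duality. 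With the $\xi$-side equation corrected, the rest of your argument goes through as in the paper.
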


Let us give some preliminaries for the proof of \cref{prp:CC:Lem5.1}. Given a function $\clW \in \bK$ satisfying \eqref{eq:CC:clW-eq} and \eqref{eq:CC:clW-sd}, we write 
\begin{align*}
 D_{1,0}(x,\xi) &\ceq \clW(x,\xi)^{-1} C_{1,0}(x,\xi) \clW(q^{-\ep}x,\xi), \\
 D_{0,1}(x,\xi) &\ceq \clW(x,\xi)^{-1} C_{0,1}(x,\xi) \clW(x,q^{ \ep}\xi),
\end{align*}
which are regarded as $\End\bigl(H_0(1/\ulk)\bigr)$-valued meromorphic functions in $x,\xi$. 
We have $f\in H_0(1/\ulk)^\bK$ if and only if $g\ceq\clW(x,\xi)^{-1}f$ satisfies the holonomic system of $q$-difference equations 
\begin{align*}
\begin{cases}
 D_{1,0}(x,\xi)g(q^{-\ep}x,\xi) = g(x,\xi) \\
 D_{0,1}(x,\xi)g(x,q^{ \ep}\xi) = g(x,\xi)
\end{cases}
\end{align*}
as $\End\bigl(H_0(1/\ulk)\bigr)$-valued rational functions in $x,\xi$. 
Now recall from \cref{lem:CC:C(0)} 
\begin{align*}
 \clA \ceq \bC[x^{-1}] \subset \bC[x^{\pm1}], \quad 
 \clB \ceq \bC[   \xi] \subset \bC[\xi^{\pm1}]
\end{align*}
and 
\begin{align*}
&Q_0(\clA) \ceq \sbr{f(x^{-1})/g(x^{-1}) \in Q(\clA) \mid g(0)\neq0} \subset Q(\clA)=\bC(x), \\
&Q_0(\clB) \ceq \sbr{f(\xi)/g(\xi)       \in Q(\clB) \mid g(0)\neq0} \subset Q(\clB)=\bC(\xi).
\end{align*}

\begin{lem}[{c.f.\ \cite[Lemma 5.2]{vMS}}]\label{lem:CC:D00}
The operators $D_{1,0}$ and $D_{0,1}$ satisfy the following properties.
\begin{enumerate}
\item $D_{1,0}\in (Q_0(\clA)\otimes \clB)\otimes \End\bigl(H_0(1/\ulk)\bigr)$ and 
$D_{0,1}\in (\clA\otimes Q_0(\clB))\otimes \End\bigl(H_0(1/\ulk)\bigr)$
\item 
Define $D^{(0)}_{1,0},D^{(0)}_{0,1} \in \End\bigl(H_0(1/\ulk)\bigr)$ by
\[
 D^{(0)}_{1,0} \ceq \rst{D_{1,0}}{x^{-1}=0}, \quad D^{(0)}_{0,1} \ceq \rst{D_{0,1}}{\xi=0}.
\]
Then, denoting $w_0 \ceq s_1$, we have
\begin{align}\label{eq:CC:Hol}
 D^{(0)}_{1,0}(T_{w_0}T_w) = 
 \begin{cases} T_{1} & (w=e) \\ 0 & (w=s_1) \end{cases}, \quad 
 D^{(0)}_{0,1}(T_{w_0}T_w) = 
 \begin{cases} T_{1} & (w=e) \\ 0 & (w=s_1) \end{cases}.
\end{align}
\end{enumerate}
\end{lem}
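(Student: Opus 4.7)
My plan runs as follows. For the $D_{1,0}$ half of Part (1), I first use the quasi-periodicity \eqref{eq:CC:clW-eq} at $l=-1$ to compute the scalar ratio $\clW(q^{-\ep}x,\xi)/\clW(x,\xi)=k_1k_0\xi$, so that $D_{1,0}=k_1k_0\xi\cdot C_{1,0}$ (the scalar commutes with the operator-valued $C_{1,0}$). By \cref{lem:CC:C(0)} applied in the $H(1/\ulk)$-setting, we already have $C_{1,0}\in(\clQ_0(\clA)\otimes\bC[\xi^{\pm1}])\otimes\End H_0$ with $C_{1,0}^{(0)}=(k_1k_0)^{-1}\eta_L(T_1Y^{-1}T_1^{-1})$; the leading scalar becomes $(k_1k_0)^{-1}$ under the substitution $\ulk\mapsto1/\ulk$. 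To refine $\bC[\xi^{\pm1}]$ to $\clB=\bC[\xi]$, I track minimum $\xi$-degrees through the decomposition $C_{1,0}=R_0^L(x_0)R_1^L(x_1')$ of \cref{lem:CC:C10C01}: the $R$-coefficients are rational in $x$ alone; $\eta_L(T_1)$ is $\xi$-independent (since $T_1T_w\in H_0$ for $w\in W_0$); and the PBW identity $T_0=T_1Y^{-1}-\kappa_0$ in $H(1/\ulk)$, which comes from the Bernstein-Lusztig-type relation $T_1Y-Y^{-1}T_1=-\kappa_0-\kappa_1Y$ (with $\kappa_i\ceq k_i-k_i^{-1}$), places the minimum $\xi$-degree of $\eta_L(T_0)$ at $-1$ from its $(2,1)$-entry. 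The outer $\xi$ absorbs this, giving $D_{1,0}\in(\clQ_0(\clA)\otimes\clB)\otimes\End H_0$. For Part (2), the prefactors telescope to $D_{1,0}^{(0)}=\xi\cdot\eta_L(T_1Y^{-1}T_1^{-1})$; the chain $\eta_L(T_1^{-1})T_{s_1}=T_e\to\eta_L(Y^{-1})T_e=\xi^{-1}T_e\to\eta_L(T_1)(\xi^{-1}T_e)=\xi^{-1}T_{s_1}$ then produces $D_{1,0}^{(0)}(T_{s_1})=T_1$, and expanding $T_{s_1}^2$ via the quadratic relation of $H(1/\ulk)$ and running the analogous chain on $T_e$ furnishes the remaining vanishing.

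For the $D_{0,1}$ half, I derive the dual $\xi$-shift identity for $\clW$ by combining the self-duality \eqref{eq:CC:clW-sd} with \eqref{eq:CC:clW-eq} applied at the starred parameters (using in particular $k_0^*=u_1$); the outcome is $\clW(x,q\xi)/\clW(x,\xi)=k_1u_1/x$, hence $D_{0,1}=(k_1u_1/x)\cdot C_{0,1}$. A parallel $x$-degree analysis using the $\eta_R$-formula for $C_{0,1}$ from \cref{lem:CC:C10C01,lem:CC:C(0)} then produces $D_{0,1}\in(\clA\otimes\clQ_0(\clB))\otimes\End H_0$ and $D_{0,1}^{(0)}=x\cdot\eta_R(T_1Y^{-1}T_1^{-1})$, and the action on $T_{w_0}T_w$ is the mirror of the $D_{1,0}^{(0)}$ computation---in fact it follows directly from it via the $\iota$-duality identity \eqref{eq:CC:CewCwe}.

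The main obstacle is the PBW bookkeeping in the non-reduced affine Hecke algebra $H(1/\ulk)$: the element $T_0$ has no pure expression in $H_0\otimes\bC[Y^{\pm1}]$ and must be unwound through the Bernstein-Lusztig-type commutation, and the parameter inversion $\ulk\mapsto1/\ulk$ flips the sign of every Hecke constant ($T_i^2=-\kappa_iT_i+1$ and $T_i^{-1}=T_i+\kappa_i$ in $H(1/\ulk)$), so the delicate cancellation of the $k_1k_0$-factors (and $k_1u_1$ on the $\xi$-side) between the $\clW$-ratio and the leading cocycle is a sensitive consistency check; I would cross-verify it against the quadratic relations for $\eta_L(T_i)$ and against the diagonalization of $\eta_L(Y)$ in the $\tau$-basis of \cref{fct:CC:Lem42}.
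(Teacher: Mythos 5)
Your proposal is correct, and for the prefactor computations it coincides with the paper's proof: the paper likewise applies \eqref{eq:CC:clW-eq} with $l=-1$ to get $D_{1,0}=k_1k_0\xi\,C_{1,0}$, combines \eqref{eq:CC:clW-eq} at the starred parameters with the self-duality \eqref{eq:CC:clW-sd} for the $\xi$-shift constant, and telescopes against \cref{lem:CC:C(0)} at reciprocal parameters to obtain $D^{(0)}_{1,0}=\xi\,\eta_L(T_1Y^{-1}T_1^{-1})$ and $D^{(0)}_{1,0}(T_1)=T_1$. You genuinely diverge in two places. For the refinement of $\bC[\xi^{\pm1}]$ to $\clB$ in part (1), the paper merely points at the explicit formulas of \cref{lem:CC:C10C01}, whereas your degree count through $C_{1,0}=R_0^L(x_0)R_1^L(x_1')$ using the (correct) identity $T_0=T_1Y^{-1}-\kappa_0$ in $H(1/\ulk)$ makes the mechanism explicit: the minimum $\xi$-degree $-1$ sits in the $T_e\mapsto\xi^{-1}T_1$ entry of $\eta_L(T_0)$ and is absorbed by the outer $\xi$; this is a useful sharpening. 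For the vanishing value in part (2), the paper does not run your direct normal-ordering chain but instead uses the eigenbasis $\tau_w$ of \cref{fct:CC:Lem42}, computing $D^{(0)}_{1,0}(\eta_L(T_1)\tau_{s_1})=\xi^2\,\eta_L(T_1)\tau_{s_1}$ and specializing at $\xi=0$; your Bernstein--Lusztig/PBW computation is more elementary and self-contained, while the paper's route makes visible the eigenvalue structure in $q^{\bN}$ that is reused in the proof of \cref{prp:CC:Lem5.1}. Your alternative of transferring the $\xi$-side via \eqref{eq:CC:CewCwe} is also legitimate, provided you track the parameter starring which the displayed \eqref{eq:CC:CewCwe} suppresses.

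Two bookkeeping points deserve attention. First, your constant $\clW(x,q\xi)/\clW(x,\xi)=k_1u_1x^{-1}$ (via $k_1^*k_0^*=k_1u_1$) is the correct one; the paper's proof prints $(k_1u_1x)^{-1}$, which is inconsistent with its own part (2), since only $D_{0,1}=k_1u_1x^{-1}C_{0,1}$ combined with $C^{(0)}_{0,1}=(k_1u_1)^{-1}\eta_R(T_1Y^{-1}T_1^{-1})$ at reciprocal parameters yields $D^{(0)}_{0,1}(T_1)=T_1$. But then your telescoped form should read $D^{(0)}_{0,1}=x^{-1}\eta_R(T_1Y^{-1}T_1^{-1})$, not $x\,\eta_R(\cdots)$, under the paper's convention that this $\eta_R$ denotes right multiplication by $(T_1Y^{-1}T_1^{-1})^*=T_1^{-1}xT_1$, which sends $T_1\mapsto xT_1$. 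Second, make the final evaluation at $\xi=0$ explicit in part (2): in $H(1/\ulk)$ your chain gives $\xi\,\eta_L(T_1Y^{-1}T_1^{-1})(T_1^2)=\xi^2T_e+(k_0-k_0^{-1})\xi\,T_1$, which is not identically zero; the claimed $D^{(0)}_{1,0}(T_{w_0}T_{s_1})=0$ holds only after this specialization, which is implicit in reading $D^{(0)}_{1,0}$ as a constant element of $\End\bigl(H_0(1/\ulk)\bigr)$, and which the paper performs explicitly at the end of its proof.
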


\begin{proof}
For the first half of (1), 
note that the $q$-difference equation \eqref{eq:CC:clW-eq} with $\lambda=-\ep$ yields 
\begin{align}\label{eq:CC:D10c}
 D_{1,0}(x,\xi) = \clW(x,\xi)^{-1} C_{1,0}(x,\xi) \clW(q^{-1}x,\xi) = k_1k_0\xi C_{1,0}(x,\xi),
\end{align}
By the explicit expression of $C_{1,0}$ (\cref{lem:CC:C10C01}), we have
$D_{1,0} \in (Q_0(\clA)\otimes \clB) \otimes \End(H_0)$.

For the second half, using \eqref{eq:CC:clW-eq} and \eqref{eq:CC:clW-sd}, we have
\begin{align*}
 D_{0,1}(x,\xi) = \clW^{\CvC}(x,\xi)^{-1}C_{0,1}(x,\xi)\clW^{\CvC}(x,q\xi) = 
 (k_1u_1x)^{-1}C_{0,1}(x,\xi).
\end{align*}
By the explicit expression of $C_{0,1}$ (\cref{lem:CC:C10C01}), we have
$D_{0,1} \in (\clA\otimes Q_0(\clB)) \otimes \End(H_0)$.
 
Next, we will show the first half of (2). By the above computation \eqref{eq:CC:D10c} and \cref{lem:CC:C(0)}, we have
\begin{align}
 D^{(0)}_{1,0} = \rst{D_{1,0}}{x^{-1}=0} = k_1k_0 \xi C^{(0)}_{1,0}.
\end{align}
Let us compute $D^{(0)}_{1,0}(T_1)$. Since $\eta_L(T_1Y^{-1}T_1^{-1})(T_1)=\xi^{-1}T_1$, we have
\begin{align*}
 D^{(0)}_{1,0}(T_1) = k_1k_0 \xi C^{(0)}_{1,0}(T_1) = 
 \xi \eta_L(T_1Y^{-1}T_1^{-1})(T_1) = T_1,
\end{align*}
using \eqref{eq:CC:100} with reciprocal parameters $1/\ulk$ in the second equality.
Hence we obtain $D^{(0)}_{1,0}(T_1)=T_1$.
For $D^{(0)}_{1,0}(T_e)$, note that $\tau_w \ceq \eta_L(\wtS_{w^{-1}}^*) T_e$ $(w\in W_0)$ form a $\bK$-basis of $H_0^\bK$ (\cref{fct:CC:Lem42}) and $\eta(T_{w_0})\tau_w\in \clB\otimes \End(H_0)$.
By \cref{fct:CC:Lem42} and \eqref{eq:CC:100}, we obtain
\begin{align*}
 D^{(0)}_{1,0}(\eta(T_1)\tau_{s_1})
=k_1k_0 \xi C^{(0)}_{1,0}(\eta(T_1)\tau_{s_1}) 
=\xi \eta_L(T_1Y^{-1}T_1^{-1})(\eta(T_1)\tau_{s_1}) 
=\xi^2 \eta(T_1)\tau_{s_1}.
\end{align*}
as identities in $\clB\otimes \End(H_0)$. Specializing at $\xi=0$, we obtain $D^{(0)}_{1,0}(T_e)=0$.

The second half of (2) can be shown similarly using \eqref{eq:CC:010}. We omit the detail.
\end{proof}

\begin{proof}[Proof of Proposition \ref{prp:CC:Lem5.1}]
\cref{lem:CC:D00} implies that the operators $D^{(0)}_{1,0}$ and $D^{(0)}_{0,1}$ on $H_0(1/\ulk,1/\ull)$ commute with each other. We denote the simultaneous eigenspace decomposition of $H_0(1/\ulk,1/\ull)$ as
\begin{align*}
 H_0(1/\ulk,1/\ull) = \bigoplus_{(a,b) \in \bC^2}H_0[a,b], \quad 
 H_0[a,b] \ceq \sbr{v \in H_0 \mid D^{(0)}_{1,0}(v)=av, \ D^{(0)}_{0,1}(v)=bv}
\end{align*}
Since $H_0(1/\ulk,1/\ull)$ is finite dimensional, the subset $S \subset \bC^2$ for which $H_0[a,b] \ne 0$ is finite. We also have $(1,1)\in S$ and $H_0[1,1]=\bC T_1$ by \cref{lem:CC:D00}. Furthermore, $a,b \in q^{\bN}$ for all $(a,b) \in S$. Under these conditions, the holonomic system of $q$-difference equations \ref{eq:CC:Hol} admits a unique solution $\Psi$ satisfying the desired properties by the general theory developed in \cite[Theorem A.6]{vMS}.
\end{proof}

\begin{exm}
We give an example of the function $\clW$ in \cref{prp:CC:Lem5.1}. As in the case of type $A_1$ (\cref{exm:A:clW} \ref{i:exm:A:clW:1}), using the Jacobi theta function $\theta(z;q) \ceq (q,z,q/z;q)_\infty$, we define 
\begin{align}\label{eq:CC:clW}
 \clW^{\CvC}(x,\xi) = \clW^{\CvC}(x,\xi;\ulk,\ull) \ceq 
 \frac{\theta(-q^{\shf}x\xi;q)}{\theta(-q^{\shf}(k_1k_0)^{-1}x,-q^{\shf}k_1l_1\xi;q)}.
\end{align}
It satisfies the $q$-difference equation \eqref{eq:CC:clW-eq} in the form 
\[
 \clW^{\CvC}(q^{\pm1}x,\xi) = (k_1k_0\xi)^{\mp1}\clW^{\CvC}(x,\xi),
\]
and the self-duality \eqref{eq:CC:clW-sd} in the form
\begin{align}\label{eq:CC:Wsd}
 \clW^{\CvC}(\gamma^{-1},t^{-1};\ulk^*,\ull^*) = \clW^{\CvC}(t,\gamma;\ulk,\ull).
\end{align}
Here we used the duality anti-involution $*$ in \eqref{eq:CC:*}.
\end{exm}

\begin{rmk}
As in the case of type $A_1$ case (\cref{rmk:A:clW}), 
the function $\clW^{\CvC}$ is nothing but the function $G$ of \cref{rmk:A:clW} \ref{i:rmk:A:clW:2} introduced by \cite{vM}:
\[
 G(t,\gamma) \ceq 
 \frac{\vartheta( \tu(w_0\gamma)^{-1})}{\vartheta(\gamma_0t) \, \vartheta((\gamma_0^*)^{-1}\gamma)}
\]
whose lattice theta function $\vartheta(t)=\vartheta^{A_1}(t)$ is replaced by 
\[
 \vartheta(t) \ceq \sum_{\lambda\in\Lambda} q^{\pair{\lambda,\lambda}/2} t^\lambda, \quad 
 \Lambda = \bZ \ep,
\]
and the parameters $\gamma_0,\gamma_0^*$ are replaced by
\begin{align}\label{eq:CC:g0}
 \gamma_0 \ceq (k_1k_0)^{-\ep}, \, \gamma_0^* \ceq (k_1l_1)^{-\ep} \in T.
\end{align}
\end{rmk}

\subsection{Bispectral Askey-Wilson function}

In this subsection, we cite from \cite{S02,S} an example of explicit solution of the bispectral Askey-Wilson  $q$-difference equation. As in the previous \cref{thm:CC:chi+}, we assume $0<q<1$.

Let us write again the bispectral Askey-Wilson $q$-difference equation \eqref{eq:CC:bAW} for $f(x,\xi) \in \bL=\bC[x^{\pm1},\xi^{\pm1}]$ for the reciprocal parameters $\SAW(1/\ulk,1/\ull)$:
\begin{align}\label{eq:CC:bMR2}
\begin{cases}
 (L^x_{  p_1}f)(x,\xi) &= (\xi+\xi^{-1}) f(x,\xi) \\
 (L^\xi_{p_1}f)(x,\xi) &= (  x+  x^{-1}) f(x,\xi)
\end{cases}.
\end{align}
By \cref{prp:CC:Lp1,rmk:CC:Lp1}, the operators are given by
\begin{gather}\label{eq:CC:Lp12}
 L_{p_1}^{x} = k_1k_0+(k_1k_0)^{-1}+(k_1k_0)^{-1}D^{x}_{\tAW}, \quad
 L_{p_1}^\xi = k_1l_1+(k_1l_1)^{-1}+(k_1l_1)^{  }D^\xi_{\tAW}, \\
\nonumber
 D_{\tAW}^{x} \ceq D_{\tAW}(x;a,b,c,d,q), \quad 
 D_{\tAW}^\xi \ceq D_{\tAW}(\xi;\ai,\bi,\ci,\di,q^{-1}), \\
\label{eq:CC:abcd}
 \{a,b,c,d\} \ceq \{k_1l_1,-k_1l_1^{-1},q^{\shf}k_0l_0,-q^{\shf}k_0l_0^{-1}\}, \\ 
\label{eq:CC:wtabcd}
 \{\add,\bd,\cd,\dd\} \ceq 
 \{k_1k_0,-k_1k_0^{-1},q^{\shf}l_1l_0,-q^{\shf}l_1l_0^{-1}\}
\end{gather}
with
\begin{gather}
\nonumber
 D_{\tAW}(x;q,a,b,c,d) \ceq A(x)(T_{q,x}-1)+A(x^{-1})(T_{q,x}^{-1}-1), \\
\label{eq:CC:A}
 A(x) \ceq \frac{(1-ax)(1-bx)(1-cx)(1-dx)}{(1-x^2)(1-q x^2)}.
\end{gather}

As mentioned in \cref{rmk:CC:Lp1}, the $q$-difference operator $D_{\tAW}^x$ was introduced by Askey and Wilson \cite{AW}. Using the symbol $(x_1,\dotsc,x_r;q)_n$ in \eqref{eq:0:fqf}, they showed that the basic hypergeometric polynomial
\begin{align}\label{eq:CC:AW}
 P_n(x;a,b,c,d;q) \ceq \frac{(ab,ac,ad;q)_n}{a^n} 
 \qHG{4}{3}{q^{-n},abcdq^{n-1},ax,a/x}{ab,ac,ad}{q}{q} \quad (n \in \bN)
\end{align}
is an eigenfunction of $D_{\tAW}^x$, and the eigenvalue is $-(1-q^{-n})(1-q^{n-1}abcd)$. 
This claim is restated as
\[
 L_{p_1}^xP_n(x;a,b,c,d;q) = (q^n\add+q^{-n}\ai)P_n(x;a,b,c,d;q)
\]
under the parameter correspondence \eqref{eq:CC:abcd} and \eqref{eq:CC:wtabcd} (c.f.\ \cite[p.55]{N}). The Laurent polynomial $P_n(x;a,b,c,d;q)$ is called \emph{the Askey-Wilson polynomial}.

In order to treat the bispectral problem \eqref{eq:CC:bMR2}, we need to consider non-polynomial eigenfunctions of the Askey-Wilson second order $q$-difference operator $D_{\tAW}$. In literature, such an eigenfunction is given in terms of a very-well-poised $\qhg{8}{7}$ series under the name of \emph{the Askey-Wilson function}. Here we give a brief review, and refer to \cite[\S3]{S02} for more information.

Following Gasper and Rahman \cite[(2.1.11)]{GR}, we denote 
\[
 \vwp{8}{7}(a_1;a_4,a_5,a_6,a_7,a_8;q,z) \ceq 
 \qHG{8}{7}{a_1,qa_1^{\shf},-qa_1^{\shf},a_4,a_5,a_6,a_7,a_8}
           {a_1^{\shf},-a_1^{\shf},\tfrac{qa_1}{a_4},\tfrac{qa_1}{a_5},
            \tfrac{qa_1}{a_6},\tfrac{qa_1}{a_7},\tfrac{qa_1}{a_8}}{q}{z},
\]
which is a very-well-poised basic hypergeometric series in the sense of \cite[the line after (2.1.9)]{GR}.
Then, the Askey-Wilson function $\phi_\xi(x)=\phi_\xi(x;a,b,c,d;q)$ is defined by \cite[(3.1)]{S02}
\begin{align*}
 \phi_\xi(x) \ceq 
 \frac{(qax\xi/\dd,qa\xi/\dd x,qabc/d;q)_\infty}
      {(\add\bd\cd\xi,q\xi/\dd,qx/d,q/dx,bc,qb/d,qc/d;q)_\infty}
 \vwp{8}{7}(\add\bd\cd\xi/q;ax,a/x,\add\xi,\bd\xi,\cd\xi;q,q/\dd\xi).
\end{align*}
It satisfies the eigen-equation 
\begin{align}
 (L_{p_1}^x \phi_\xi)(x) = (\xi+\xi^{-1})\phi_\xi(x),
\end{align}
the self-duality 
\begin{align}\label{eq:CC:psd}
 \phi_\xi(x;a,b,c,d;q) = \phi_x(\xi;a^*,b^*,c^*,d^*;q),
\end{align}
and the symmetry (the inversion invariance in \cite{S})
\begin{align}\label{eq:CC:ps}
 \phi_\xi(x)=\phi_\xi(x^{-1})=\phi_{\xi^{-1}}(x).
\end{align}
The properties \eqref{eq:CC:psd} and \eqref{eq:CC:ps} are the consequences of the equality
\cite[(3.2)]{S}:
\begin{align*}
 \phi_\xi(x) = \, 
& \frac{(qabc/d;q)_\infty}{(bc,qa/d,qb/d,qc/d,q/ad;q)_\infty}
  \qHG{4}{3}{ax,a/x,a^*\xi,a^*/\xi}{ab,ac,ad}{q}{q} \\
&+\frac{(ax,a/x,a^*\xi,a^*/\xi,qabc/d;q)_\infty}{(qx/d,q/dx,q\xi/d^*,q/d^*\xi,ab,ac,bc,qa/d,ad/q;q)_\infty}
  \qHG{4}{3}{qx/d,q/dx,q\xi/d^*,q/d^*\xi}{qb/d,qc/d,q^2/ad}{q}{q},
\end{align*}
which can be shown by a form \cite[(2.10.10)]{GR} of Bailey's transformation formulas.
The above equality also yields
\begin{align*}
 \phi_{\xi_n}(x) = \frac{(qabc/d;q)_\infty}{(bc,qa/d,qb/d,qc/d,q/ad;q)_\infty}
 \qHG{4}{3}{q^{-n},abcdq^{n-1},ax,a/x}{ab,ac,ad}{q}{q}, \quad 
 \xi_n \ceq (a^{*})^{-1}q^{-n},
\end{align*}
which is proportional to the Askey-Wilson polynomial $P_n(x)$ \eqref{eq:CC:AW}.

Let us consider the asymptotic form of the Askey-Wilson $q$-difference equation
$\bigl(L^x_{p_1}-(\xi+\xi^{-1})\bigr)f(x)=0$ in the region $\abs{x}\gg1$.
Since the functions $A(x)$ and $A(x^{-1})$ in \eqref{eq:CC:A} behave as $A(x) \approx (a^*)^2$ and $A(x^{-1})\approx 1$, we have the asymptotic form  
\[
 L^x_{p_1} \approx \add T_{q,x}+\ai T_{q,x}^{-1}.
\]
Now, recall the function $\clW^{\CvC}(x,\xi)$ given in \eqref{eq:CC:clW}:
\[
 \clW^{\CvC}(x,\xi) = \frac{\theta(-\nu x\xi;q)}{\theta(-\nu x/\add,-\nu \xi a;q)},
\]
where $\nu \ceq q^{\shf}$. By $\theta(qx;q)=-x^{-1}\theta(x;q)$, we have $T_{q,x}^{\pm1}\clW^{\CvC}(x,\xi)=(a^*\xi)^{\mp1}\clW^{\CvC}(x,\xi)$, which implies that the set $\{\clW^{\CvC}(x,\xi^{\pm1})\}$ is a basis of solutions of the asymptotic $q$-difference equation
\[
 \bigl(\add T_{q,x}+\ai T_{q,x}^{-1}-(\xi+\xi^{-1})\bigr)f(x)=0.
\]
Similarly, the $\xi$-side asymptotic $q$-difference equation in the region $\abs{\xi} \ll 1$ is given by
\[
 L^\xi_{p_1} \approx a T_{q,\xi}^{-1}+a^{-1}T_{q,\xi},
\]
and since $T_{q,\xi}^{\pm1}\clW^{\CvC}(x,\xi)=(a/x)^{\pm1}\clW^{\CvC}(x,\xi)$,
the set $\{\clW^{\CvC}(x^{\pm1},\xi)\}$ is a basis of solutions of the asymptotic equation
\[
 \bigl(a^{-1}T_{q,\xi}+a T_{q,\xi}^{-1}-(x+x^{-1})\bigr)g(\xi)=0,
\]

By the argument in \cref{ss:CC:cor}, we have a unique element $\wh{\Phi} \ceq \chi_+(\Phi) \in \SAW$ of the form $\wh{\Phi}=\clW^{\CvC}g$, where $g=g(x)$ has a convergent series expansion around $\abs{x}=\infty$ with constant coefficient being $1$. By \cite[Proposition 5.2, (5.8)]{S}, $\wh{\Phi}$ is written down as
\begin{align*}
 \wh{\Phi}(x,\xi) = \clW^{\CvC}(x,\xi) 
&\cdot \frac{(qa\xi/\add x,qb\xi/\add x,qc\xi/\add x,q\add\xi/dx,d/x;q)_\infty}
            {(q/ax,q/bx,q/dx,q^2\gamma^2/dx;q)_\infty} \\
&\cdot \vwp{8}{7}(q\xi^2/dx;q\xi/\add,q\xi/\dd,\bd\xi,\cd\xi,q/dx;q,d/x).
\end{align*}

\begin{rmk}
Our solution $\wh{\Phi}(x,\xi)$ is equivalent to the solution $\wh{\Phi}_{\eta}(t,\gamma)$ in \cite[(5.8)]{S} up to quasi-constant multiplication.
\end{rmk}

Now we cite a $\CvC$-analogue of \cref{fct:A:clE+}.

\begin{fct}[{c.f.\ \cite[Proposition 5.2]{S}}]
The function $\clE_+^{\CvC}(x,\xi) = \clE_+^{\CvC}(x,\xi;\ulk,\ull,q)$ given by
\[
 \clE_+^{\CvC}(x,\xi) \ceq 
 \frac{(qax\xi/\dd,qa\xi/\dd x,qa/d,q/ad;q)_\infty}
      {(\add\bd\cd\xi,q\xi/\dd,qx/d,q/dx;q)_\infty} 
 \vwp{8}{7}(\add\bd\cd\xi/q;ax,a/x,\add\xi,\bd\xi,\cd\xi;q,q/\dd x).
\]
enjoys the following properties.
\begin{clist}
\item
It is a solution of the bispectral problem \eqref{eq:CC:bMR2}. 
\item
It has the symmetry 
\[
 \clE_+^{\CvC}(x,\xi) = \clE_+^{\CvC}(x^{-1};\xi) = \clE_+^{\CvC}(x,\xi^{-1}).
\]
\item
It has the self-duality
\begin{align}\label{eq:CC:sd}
 \clE_+^{\CvC}(x,\xi;\ulk,\ull,q) = \clE_+^{\CvC}(\xi^{-1};x^{-1},\ulk^*,\ull^*,q).
\end{align}
\end{clist}
Thus, defining $\SAWW \ceq \{f \in \SAW \mid \text{(ii), (iii)}\}$, we have
\[
 \clE_+^{\CvC} \in \SAWW.
\] 
The function $\clE_+^{\CvC}$ is called \emph{the basic hypergeometric series of type $\CvC$}.
\end{fct}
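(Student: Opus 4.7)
The plan is to verify the three properties (i)--(iii) by reducing each to classical identities for the very-well-poised $\vwp{8}{7}$ series and to the analogous self-duality \eqref{eq:CC:psd} and inversion invariance \eqref{eq:CC:ps} of the Askey-Wilson function $\phi_\xi(x;a,b,c,d;q)$ that have already been recorded in the text. The guiding principle is that $\clE_+^{\CvC}(x,\xi)$ differs from $\phi_\xi(x)$ only in the last argument of the $\vwp{8}{7}$-factor ($q/\dd x$ in place of $q/\dd\xi$) and in its scalar prefactor; these two quantities are related by a Bailey-type four-term transformation such as \cite[(2.10.10)]{GR}.

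For property (ii), I would apply Bailey's transformation to rewrite the $\vwp{8}{7}$-factor of $\clE_+^{\CvC}(x,\xi)$ as a sum of two terminating balanced $\qhg{4}{3}$-series, in exactly the same manner as the expansion of $\phi_\xi(x)$ displayed right after \eqref{eq:CC:psd} in the text. After collecting the $q$-Pochhammer prefactors, the two resulting $\qhg{4}{3}$-series are manifestly symmetric in $x^{\pm 1}$ (since $ax$ and $a/x$ enter only through the numerator parameters of one $\qhg{4}{3}$) and in $\xi^{\pm 1}$ (since $\ad\xi$ and $\ad/\xi$ enter only through the corresponding numerator parameters), yielding the desired inversion invariance. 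For property (iii), the duality \eqref{eq:CC:*ku} exchanges $(k_0,u_1)$, which by \cref{rmk:CC:Lp1} induces the swap $(a,b,c,d) \lrto (\ad,\bd,\cd,\dd)$; inspection of the $\qhg{4}{3}$-expansion produced in (ii) shows that the simultaneous substitution $(x,\xi) \mto (\xi^{-1},x^{-1})$ combined with $(a,b,c,d) \lrto (\ad,\bd,\cd,\dd)$ permutes the two $\qhg{4}{3}$-summands and leaves the prefactor invariant, giving the self-duality. Finally, property (i) is deduced from the same expansion together with the classical eigen-equation $L_{p_1}^x \phi_\xi = (\xi+\xi^{-1})\phi_\xi$: writing $\clE_+^{\CvC}(x,\xi)$ as a $\xi\lrto\xi^{-1}$-symmetric combination involving $\phi_\xi(x)$ produces the first equation in \eqref{eq:CC:bMR2}, and the second equation follows by transporting the first through the self-duality (iii) and using the rank one intertwining $L_{p_1}^\xi = \iota L_{p_1}^x \iota$ (a special case of \cite[Lemma 6.2]{vM}).

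The main obstacle will be the bookkeeping of the many $q$-shifted factorial prefactors produced by Bailey's transformation and verifying that they assemble cleanly with the prefactor of $\clE_+^{\CvC}$ into a form that is simultaneously symmetric under $x \lrto x^{-1}$ and self-dual under $(x,\xi,a,d) \lrto (\xi^{-1},x^{-1},\ad,\dd)$. While conceptually the argument parallels the proof of the self-duality and inversion invariance of $\phi_\xi$, the specific normalization appearing in the definition of $\clE_+^{\CvC}$ forces us to check the precise prefactor identity by hand; in particular, one must verify that the Hecke-parameter duality swap concretely sends $\{k_1u_1,-k_1u_1^{-1},q^{\shf}k_0u_0,-q^{\shf}k_0u_0^{-1}\}$ to $\{k_1k_0,-k_1k_0^{-1},q^{\shf}u_1u_0,-q^{\shf}u_1u_0^{-1}\}$ and that this matches exactly the Askey-Wilson parameter duality $a^*=\sqrt{abcd/q},\ b^*=ab/a^*,\ c^*=ac/a^*,\ d^*=ad/a^*$ recalled in \cref{rmk:CC:Lp1}.
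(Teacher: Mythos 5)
First, note what you are comparing against: the paper does not prove this statement at all — it is quoted as a \textbf{Fact} from \cite[Proposition 5.2]{S}, and the only in-paper groundwork is the derivation, just above, of the self-duality \eqref{eq:CC:psd} and inversion invariance \eqref{eq:CC:ps} of the Askey-Wilson function $\phi_\xi(x)$ from the two-term $\qhg{4}{3}$-expansion obtained via Bailey's transformation \cite[(2.10.10)]{GR}. Your strategy — expand into two balanced $\qhg{4}{3}$'s, read off the $x\lrto x^{-1}$ and $\xi\lrto\xi^{-1}$ invariance and the self-duality termwise, then transport the $x$-side eigenvalue equation through the duality to get the $\xi$-side — is exactly the route that this groundwork suggests, and your parameter bookkeeping is correct: the swap $k_0 \lrto u_1$ from \eqref{eq:CC:*ku} does send $\{k_1u_1,-k_1u_1^{-1},q^{\shf}k_0u_0,-q^{\shf}k_0u_0^{-1}\}$ to $\{k_1k_0,-k_1k_0^{-1},q^{\shf}u_1u_0,-q^{\shf}u_1u_0^{-1}\}$, matching $a^*=\sqrt{abcd/q}$, $b^*=ab/a^*$, $c^*=ac/a^*$, $d^*=ad/a^*$. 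Two small inaccuracies: under the simultaneous substitution $(x,\xi,a,b,c,d)\mto(\xi^{-1},x^{-1},\ad,\bd,\cd,\dd)$ each $\qhg{4}{3}$-summand is \emph{individually} invariant (the parameter sets $\{ax,a/x,\ad\xi,\ad/\xi\}$ and $\{ab,ac,ad\}=\{\ad\bd,\ad\cd,\ad\dd\}$ are preserved), so the two summands are fixed rather than permuted; and for the $\xi$-side equation the conjugation identity must carry the dual parameters, i.e.\ $L^\xi_{p_1}(\ulk,\ulu,q)$ is the $\xi$-inversion conjugate of the Askey-Wilson operator in $\xi$ with parameters $(\ad,\bd,\cd,\dd)$, not of $L^x_{p_1}$ verbatim, as in the $\CvC$ refinement of \cref{rmk:CC:Lp1}.

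There is, however, one genuine gap in your opening move. Bailey's transformation \cite[(2.10.10)]{GR} applies to $\vwp{8}{7}(a_1;a_4,\dotsc,a_8;q,z)$ only when $z=q^2a_1^2/(a_4a_5a_6a_7a_8)$. With the parameters in the displayed definition one has $a_1=\ad\bd\cd\xi/q$ and $a_4\dotsm a_8=a^2\ad\bd\cd\xi^3$, and since $\ad\bd\cd=qa^2/\dd$ this constraint forces $z=q/\dd\xi$ — \emph{not} the printed $q/\dd x$. So with the formula taken at face value your Bailey step is inapplicable, and the claim that the series with arguments $q/\dd x$ and $q/\dd\xi$ "are related by a Bailey-type four-term transformation" is unjustified. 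The resolution is that the printed argument is a typo (compare \cite{S} and the Askey-Wilson function literature): with the correct argument $q/\dd\xi$, the $\vwp{8}{7}$-factor of $\clE_+^{\CvC}$ coincides with that of $\phi_\xi(x)$, whence
\begin{align*}
 \clE_+^{\CvC}(x,\xi) =
 \frac{(qa/d,\,q/ad,\,bc,\,qb/d,\,qc/d;q)_\infty}{(qabc/d;q)_\infty}\,\phi_\xi(x),
\end{align*}
a multiple of $\phi_\xi$ by a factor independent of both $x$ and $\xi$. Then (i) and (ii) are immediate from the recorded eigen-equation and the properties \eqref{eq:CC:psd}, \eqref{eq:CC:ps}, and (iii) reduces to checking that this constant is duality-invariant, which holds because $q\ad/\dd=bc$, $\bd\cd=qa/d$, $q\bd/\dd=qb/d$, $q\cd/\dd=qc/d$, $q/\ad\dd=q/ad$ and $q\ad\bd\cd/\dd=qabc/d$, so both the numerator set and the denominator are preserved. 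You should make this correction explicit; once it is in place your argument is a legitimate self-contained proof, indeed more than the paper itself supplies.
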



\section{Specialization}\label{s:sp}

In \cite[\S2.6]{YY}, we introduced four embeddings of affine root systems of type $A_1$ into type $\CvC$. They are given by certain specializations of the parameters $(\ulk,\ull)$, and are characterized to preserve the Macdonald inner product under which the Macdonald-Koornwinder polynomials are orthogonal. Among the four specializations, the one given by 
\begin{align}\label{eq:sp:sp}
 (\ulk,\ull) = (k,1,1,1)
\end{align}
has the special property that it is also compatible with the duality anti-involution \eqref{eq:CC:*}. 
In this section, we show that this specialization yields the commutative diagram mentioned in \cref{s:0}:
\begin{center}
\begin{tikzcd}
 \SC \ar[rr,hook,"\chi_+^{\CvC}"  ] \ar[d,hook,"\tsp"'] & & \SAW \ar[d,hook,"\tsp"] \\ 
 \SA \ar[rr,hook,"\chi_+^{A_1}"']                       & & \SMR
\end{tikzcd}
\end{center}

\subsection{The bispectral qKZ equations}

Recall the subalgebras $H_0^{A_1}(k) \subset \bH^{A_1}(k,q)$ and $H_0^{\CvC}(\ulk) \subset \bH^{\CvC}(\ulk,\ull,q)$, both of which have the basis $\{T_e=1,T_{s_1}=T_1\}$. Let us identify these linear spaces, and denote it by $H_0$. As in the previous sections, let us use the notation $\bK = \clM(x,\xi)$ and $H_0^{\bK} = \bK \otimes H_0$.

Then, the solution spaces of bispectral qKZ equations of type $A_1$ and of type $\CvC$ (\cref{dfn:A:SKZ} and \cref{dfn:CC:SOL}) can be expressed as
\begin{align*}
         \SA(k,q) &= \{f \in H_0^{\bK} \mid 
                       \text{$f$ satisfies the bqKZ equations of type $A_1$}\}, \\
 \SC(\ulk,\ull,q) &= \{f \in H_0^{\bK} \mid 
                       \text{$f$ satisfies the bqKZ equations of type $\CvC$}\}.
\end{align*}
Then we can show:

\begin{prp}\label{prp:sp:SCA}
For the specialized parameters $(\ulk,\ull)=(k,1,1,1)$, we have the relation
\[
 \SC(k,1,1,1,q) \subset \SA(k,q).
\]
\end{prp}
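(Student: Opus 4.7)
The goal is to show that any $f\in H_0^{\bK}$ satisfying all bqKZ equations of type $\CvC$ at the specialized parameters also satisfies all bqKZ equations of type $A_1$. Since both systems are generated by their $(1,0)$ and $(0,1)$ cocycles via the cocycle property (\cref{fct:A:Cw}, \cref{fct:CC:Cw}), it suffices to compare the generating cocycles after specialization.

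\textbf{Step 1 (specialization of coefficient data).} First I would check directly from \eqref{eq:CC:c} that at $(\ulk,\ulu)=(k,1,1,1)$ one has
\[
c(z;k,1)=k^{-1}\tfrac{1-k^2 z}{1-z}=c^{A_1}(z;k),\qquad b(z;k,1)=\tfrac{k-k^{-1}}{1-z}=b^{A_1}(z;k),
\]
while $c(z;1,1)=1$ and $b(z;1,1)=0$. Consequently, in the specialized $\CvC$ cocycle factors of Lemma \ref{lem:CC:C10C01}, $R^{L,\CvC}_1(z)|_{\tsp}=R^{L,A_1}_1(z)$ and $R^{L,\CvC}_0(z)|_{\tsp}=\eta_L^{\CvC}(T_0)|_{\tsp}$ (the rational prefactor and the constant shift disappear), with the analogous statements for $R^{R,\CvC}_i$ via the duality \eqref{eq:CC:*ku}.

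\textbf{Step 2 (matching $T_0^{\CvC}|_{\tsp}$ with $U^{A_1}$).} Since $k_0=1$ specializes the Hecke relation \eqref{eq:CC:Hrel} for $T_0^{\CvC}$ to $T_0^2=1$, the defining relation $Y=T_0T_1$ in \eqref{eq:CC:Y} forces $T_0=T_1Y^{-1}$ in $\bH^{\CvC}|_{\tsp}$. This matches the PBW identity $U=T_1Y^{-1}$ in $\bH^{A_1}$, derivable from $Y=UT_1$ and $U^2=1$. Computing the action on the basis $\{T_e,T_1\}$ of $H_0$ via the left-multiplication definition \eqref{eq:A:etaL} or \eqref{eq:CC:etaL}, both operators are represented by the matrix $\bigl(\begin{smallmatrix}0&\xi\\\xi^{-1}&0\end{smallmatrix}\bigr)$, so $\eta_L^{\CvC}(T_0)|_{\tsp}=\eta_L^{A_1}(U)$ as elements of $\GL_{\bK}(H_0^{\bK})$. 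The same argument with $T_0^*=XT_1=U^*$ gives $\eta_R^{\CvC}(T_0^*)|_{\tsp}=\eta_R^{A_1}(U^*)$.

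\textbf{Step 3 (cocycle comparison).} Substituting Steps 1 and 2 into Lemma \ref{lem:CC:C10C01} yields
\[
C^{\CvC}_{1,0}|_{\tsp}=\eta_L^{A_1}(U)\cdot R^{L,A_1}_1(x_1'),\qquad C^{\CvC}_{0,1}|_{\tsp}=\eta_R^{A_1}(U^*)\cdot R^{R,A_1}_1(\xi_1'),
\]
while Lemma \ref{lem:A:C10C01} gives $C^{A_1}_{1,0}=R^{L,A_1}_0(x_0)\cdot\eta_L^{A_1}(U)$ and analogously for $C^{A_1}_{0,1}$. I would reconcile the two shapes by applying the $\bH^{A_1}$-relation $UT_1=T_0U$ (together with $\eta_L(T_0)=\eta_L(U)\eta_L(T_1)\eta_L(U)$), which converts between the two reduced-expression forms of the cocycle and, combined with the identification of the lattice translations $\tu^{\CvC}(\ep)\in W^{\CvC}$ with the corresponding element of $W^{A_1}$ under the specialization, shows that the specialized $\CvC$ bqKZ equations are equivalent to the $A_1$ bqKZ equations. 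This gives the inclusion $\SC(k,1,1,1,q)\subset\SA(k,q)$.

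\textbf{Main obstacle.} The challenging step is Step 3. The two expressions for the generating cocycle look superficially different---the $\CvC$ form is a product $\eta_L^{A_1}(U)\cdot R^{L,A_1}_1(x_1')$ with $x_1'=q^2 x^{-2}$, while the $A_1$ form is $R^{L,A_1}_0(x_0)\cdot\eta_L^{A_1}(U)$ with $x_0=qx^{-2}$. Matching them requires careful bookkeeping of the shift $x_0\leftrightarrow x_1'$ induced by the $\bW$-action on $\bK$ in the two settings, together with the identity coming from the two reduced expressions $\tu^{A_1}(\varpi)=us_1=s_0 u$ of the fundamental $A_1$-translation; without the identification $\eta_L^{\CvC}(T_0)|_{\tsp}=\eta_L^{A_1}(U)$ from Step 2 (which ultimately stems from specializing $k_0=u_0=u_1=1$), the two cocycles would not combine correctly.
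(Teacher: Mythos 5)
Your Steps 1 and 2 are correct, and Step 2 is in fact a nice clarification: at $(\ulk,\ulu)=(k,1,1,1)$ one has $c^{\CvC}(z;k,1)=c^{A_1}(z;k)$ and $c^{\CvC}(z;1,1)=1$, $b^{\CvC}(z;1,1)=0$, so $R_0^{L,\CvC}(x_0)|_{\tsp}=\eta_L(T_0)|_{\tsp}$, and since $k_0=1$ forces $T_0^2=1$ and hence $T_0=T_1Y^{-1}$, the operator $\eta_L^{\CvC}(T_0)|_{\tsp}$ is indeed represented on the basis $\{T_e,T_1\}$ by $\bigl(\begin{smallmatrix}0&\xi\\ \xi^{-1}&0\end{smallmatrix}\bigr)$, matching $\eta_L^{A_1}(U)$. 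The genuine gap is in Step 3, and it is precisely the point you flag as the ``main obstacle'' and leave unresolved: the reconciliation via $UT_1=T_0U$ cannot work. Since $\eta_L$ is an algebra homomorphism, that relation only yields
\begin{align*}
 \eta_L(U)\,R_1^{L,A_1}(x_1') \;=\; R_0^{L,A_1}(x_1')\,\eta_L(U),
\end{align*}
with the argument still $x_1'=q^2x^{-2}$, whereas $C^{A_1}_{1,0}=R_0^{L,A_1}(x_0)\,\eta_L(U)$ carries $x_0=qx^{-2}$. No Hecke-algebra identity converts one argument into the other, and the two cocycles \emph{must} differ: the $\CvC$ translation $\tu(\ep)$ shifts $x\mapsto qx$, while the $A_1$ fundamental translation $\tu(\varpi)$ (at parameter $q$) shifts $x\mapsto q^{1/2}x$, so $C^{\CvC}_{1,0}|_{\tsp}$ can only agree with an $A_1$ cocycle attached to a translation producing the same shift. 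Your identification of ``$\tu^{\CvC}(\ep)$ with the corresponding element of $W^{A_1}$'' is exactly the step that is never specified, and with the choice implicit in your Step 3 (namely $\tu(\ep)\leftrightarrow\tu(\varpi)$ at the same $q$) the asserted equivalence of the two bqKZ systems is false.

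The paper closes this gap by comparing against \cref{lem:A:C20C02} rather than \cref{lem:A:C10C01}: the relevant $A_1$ object is the cocycle $C^{A_1}_{2,0}=R_0^L(x_0)R_1^L(x_1')$ of the \emph{doubled} translation $\tu(2\varpi)=\tu(\alpha)=s_0s_1$, which has the same product shape and the same shift arguments $x_0=qx^{-2}$, $x_1'=q^2x^{-2}$ as $C^{\CvC}_{1,0}=R_0^L(x_0)R_1^L(x_1')$ in \cref{lem:CC:C10C01}, the lattice $\bZ\ep$ of type $\CvC$ corresponding to $\bZ\alpha\subset\bZ\varpi=\Lambda^{A_1}$; equivalently, and as the paper itself notes in the proof of \cref{thm:3:sp}, this amounts to the reparametrization $q_{A_1}=q^2$. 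Indeed, your own Steps 1--2 establish
\begin{align*}
 C^{\CvC}_{1,0}(k,1,1,1,q)\;=\;\eta_L(U)\,R_1^{L,A_1}(q^2x^{-2})\;=\;C^{A_1}_{1,0}(k,q^2),
\end{align*}
the cocycle of $\tu(\varpi)=us_1$ at parameter $q^2$ (one checks the asymptotics agree: both tend to $k\,\eta_L(T_1Y^{-1}T_1^{-1})$ as $x^{-1}\to 0$, cf.\ \cref{fct:A:lim} and \cref{lem:CC:C(0)}), so your machinery is sufficient once the comparison is made with the correctly rescaled translation. As written, however, Step 3 compares structurally mismatched cocycles, and the proof does not close without the lattice identification $\tu(\ep)\leftrightarrow\tu(2\varpi)$ (equivalently the $q\leftrightarrow q^2$ bookkeeping) made explicit and the comparison rerun against $C^{A_1}_{2,0}$ and $C^{A_1}_{0,2}$ as in \cref{eq:sp:CC}.
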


\begin{proof}
Denoting by $c^{A_1}(z;k,q) \ceq c(z:k,q)$ the function in \eqref{eq:A:cd}, and by $c^{\CvC}(z;k,l,q) \ceq c(z;k,l,q)$ the function in \eqref{eq:CC:c}, we have
\[
 c^{\CvC}(z;k,1,q) = c^{A_1}(z;k,q).
\]
Then, comparing \cref{lem:A:C20C02} and \cref{lem:CC:C10C01}, we have
\begin{align}\label{eq:sp:CC}
 C_{1,0}^{\CvC}(k,1,1,1,q) = C_{2,0}^{A_1}(k,q), \quad 
 C_{0,1}^{\CvC}(k,1,1,1,q) = C_{0,2}^{A_1}(k,q),
\end{align}
from which we have the claim.
\end{proof}

\begin{thm}\label{thm:4:sp}
The specialization \eqref{eq:sp:sp} yields the commutative diagram
\begin{equation}\label{diag:CC:SOL}
\begin{tikzcd}
 \SC(k,1,1,1,q) \ar[rr,hook,"\chi_+^{\CvC}"  ] \ar[d,hook,"\tsp"'] & & 
 \SAW \ar[d,hook,"\tsp"](k,1,1,1,q) \\ 
 \SA(k,q) \ar[rr,hook,"\chi_+^{A_1}"'] & & \SMR(k,q)
\end{tikzcd}
\end{equation}
\end{thm}

\begin{proof}
We saw the left vertical embedding in \cref{prp:sp:SCA}. Thus, it is enough to check that the specialization maps the bispectral Askey-Wilson equation \eqref{eq:CC:bAW} to the bispectral Macdonald-Ruijsenaars equation \eqref{eq:A:bMR}. Since $(k_1,k_0,l_1,l_0)=(k,1,1,1)$ yields the Askey-Wilson parameters $\{a,b,c,d\}=\{k,-k,q^{\shf},-q^{\shf}\}$, the specialization of the $x$-side equation is computed as 
\begin{align*}
  L^x_{\CvC}(k,1,1,1,q)
&=k+k^{-1}+\frac{k-k^{-1}x^{-2}}{1-x^{-2}}(T_{q,x}-1)+
           \frac{k^{-1}-kx^{-2}}{1-x^{-2}}(T_{q,x}^{-1}-1) \\
&=\frac{k-k^{-1}x^{-2}}{1-x^{-2}}T_{q,x}+\frac{k^{-1}-kx^{-2}}{1-x^{-2}}T_{q,x}^{-1}
 =L^x_{A_1}(k,q^2).
\end{align*}
Note that the parameter $q^2$ in type $A_1$ is compatible with the relation \eqref{eq:sp:CC}.
The $\xi$-side is similarly checked directly, or by the compatibility of the duality anti-involution and the specialization.
\end{proof}

So far we give a computational argument to show the commutative diagram \eqref{diag:CC:SOL}. Let us give another, more conceptual argument.

\begin{lem}
There is an isomorphism of algebras
\[
 \bH^{\CvC}(k,1,1,1,q) \lsto \bH^{A_1}(k,q).
\]
\end{lem}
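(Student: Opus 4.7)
The plan is to prove the isomorphism by reducing both DAHAs to a common three-generator presentation and exhibiting an explicit generator-to-generator map, then checking the relations directly.

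First I would reduce $\bH^\CvC(k,1,1,1,q)$ to a presentation in $T_1, T_0, X$. Using $T_1^\vee = X^{-1}T_1^{-1}$ and $T_0^\vee = q^{-1/2}T_0^{-1}X$ from \eqref{eq:CC:Tvee}, the specialized quadratic relations $(T_1^\vee)^2=1$ and $(T_0^\vee)^2=1$ rewrite (using $T_0^2=1$) as $T_1 X T_1 = X^{-1}$ and $T_0 X T_0 = qX^{-1}$ respectively, while the cubic relation $T_1^\vee T_1 T_0 T_0^\vee = q^{-1/2}$ collapses automatically to $X^{-1}\cdot q^{-1/2}\cdot X = q^{-1/2}$. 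This leaves the compact presentation
\begin{align*}
 (T_1-k)(T_1+k^{-1})=0,\quad T_0^2=1,\quad T_1 X T_1 = X^{-1},\quad T_0 X T_0 = qX^{-1}.
\end{align*}

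Next I would compare with the abstract presentation \eqref{eq:A:absDAHA} of $\bH^{A_1}(k,q)$ (with the evident typo $U^{-1}\to X^{-1}$ corrected in the last relation) and propose the natural map $\phi:\bH^\CvC(k,1,1,1,q)\to\bH^{A_1}(k,q)$ by $T_1\mapsto T_1$, $X\mapsto X$, $T_0\mapsto U$. The Hecke relation, the involution $U^2=1$, and $T_1 X T_1 = X^{-1}$ pass through immediately. The candidate inverse $\psi: T_1\mapsto T_1$, $X\mapsto X$, $U\mapsto T_0$ is manifestly well-defined on these three relations, so once both maps are shown well-defined, they fix generators and the algebras are isomorphic.

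The hard part will be the remaining relation: $T_0 X T_0 = qX^{-1}$ maps under $\phi$ to $UXU = qX^{-1}$, but $\bH^{A_1}(k,q)$ only has $UXU = q^{1/2}X^{-1}$. A PBW-level check rules out any involutive substitute $c\,UX^n \in \langle U,X\rangle$ (all such candidates systematically yield the $q^{1/2}$-conjugation), so the discrepancy must be absorbed into the $q$-parameter itself. I plan to close the gap by invoking the convention implicit in the surrounding text: the $\CvC$ lattice generator $\epsilon$ has $\pair{\epsilon,\epsilon}=1$ and yields $q$-shifts in the basic representation, whereas the $A_1$ fundamental weight $\varpi$ has $\pair{\varpi,\varpi}=1/2$ and yields $q^{1/2}$-shifts, so the natural lattice matching identifies $\epsilon$ with $\alpha=2\varpi$, and thereby the $q$ of $\bH^\CvC$ with the square of the elementary $A_1$-shift. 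This is the same parameter identification underpinning the equality $L^x_\CvC(k,1,1,1,q) = L^x_{A_1}(k,q^2)$ from the proof of \cref{thm:3:sp}. Under this convention the fourth relation aligns, $\phi$ is an algebra homomorphism, and bijectivity follows from inspection of the common PBW decomposition $\bC[X^{\pm1}]\otimes H_0\otimes\bC[Y^{\pm1}]$.
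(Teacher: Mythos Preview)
Your approach is the same as the paper's: both use the abstract presentations \eqref{eq:A:absDAHA} and \eqref{eq:CC:absDAHA} and match generators via $T_1\leftrightarrow T_1$, $T_0\leftrightarrow U$, $X\leftrightarrow X$ (the paper phrases the last as $T_0^\vee=q^{-1/2}UX$, which is the same thing once \eqref{eq:CC:Tvee} is substituted). Your reduction of the specialized $\CvC$ presentation to the four relations in $T_1,T_0,X$ is correct and more explicit than what the paper writes.

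Where you go further than the paper is in actually checking the relations, and there you correctly isolate a genuine discrepancy: the reduced $\CvC$ relation is $T_0XT_0=qX^{-1}$ while the $A_1$ relation is $UXU=q^{1/2}X^{-1}$. The paper's proof does not address this, and indeed under the paper's stated correspondence one computes $(q^{-1/2}UX)^2=q^{-1/2}\neq 1$ in $\bH^{A_1}(k,q)$, so the relation $(T_0^\vee)^2=1$ is not preserved as written. Your fix --- absorbing the factor into the $q$-parameter so that the isomorphism is really $\bH^{\CvC}(k,1,1,1,q)\cong\bH^{A_1}(k,q^2)$ --- is the right one, and it is exactly the parameter match the paper itself uses one paragraph earlier in the proof of \cref{thm:3:sp} (where $L^x_{\CvC}(k,1,1,1,q)=L^x_{A_1}(k,q^2)$) and in \eqref{eq:sp:CC}. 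So your proposal is correct and in fact repairs an imprecision in the lemma's statement that the paper's own proof leaves unexamined.
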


\begin{proof} 
Recall the presentations \eqref{eq:A:absDAHA} of $\bH^{A_1}$ and \eqref{eq:CC:absDAHA} of $\bH^{\CvC}$. The former gives $\bH^{A_1}(k,q)$ as the quotient of the free algebra $\bC\langle T,U,X\rangle$ by the relations
\begin{align*}
 (T-k)(T+k^{-1})=0, \quad U^2=1, \quad TXT=X^{-1}, \quad UXU=q^{1/2}X^{-1}.
\end{align*}
Under the specialization $(\ulk,\ull)=(k,1,1,1)$, the latter gives $\bH^{\CvC}(k,1,1,1,q)$ as the quotient of $\bC\langle T_1,T_0,T_1^\vee,T_0^\vee\rangle$ by the relations
\begin{align}\label{eq:CC:absap}
 (T_1-k)(T_1+k^{-1})=0, \quad (T_0)^2=(T_1^\vee)^2=(T_0^\vee)^2=1, \quad T_1^\vee T_1 T_0 T_0^\vee=q^{-1/2}.
\end{align}
Now, recalling \eqref{eq:CC:Tvee}, we find that the correspondence $T_1=T$, $T_0=U$ and $T_0^\vee=q^{-1/2}UX$ gives the desired isomorphism 
\end{proof}

Since the bispectral correspondence $\chi_+^{A_1}$ is defined in terms of the DAHA $\bH^{A_1}(k,q)$, the restriction to the subalgebra $\bH^{\CvC}(k,1,1,1,1,q)$ will give the correspondence $\chi_+^{\CvC}$. Thus we have the commutative diagram \eqref{diag:CC:SOL}.

\begin{rmk}
We leave it for a future study to give an explicit element in $\SAW(k,1,1,1,q)$ which is mapped to $\SMR(k,q)$ under the right vertical embedding $\tsp$ in \eqref{diag:CC:SOL}. Here we only give a clue to find such an element. If the spectral variable $\xi$ is specialized to $\xi_1=k^{-1}q^{-1/2}$ (see \cref{prp:A:2p1} \ref{i:prp:A:2p1:2}), we have
\begin{align*}
 P_n^{A_1}(x;k^2,q) \ceq x^n \qHG{2}{1}{k^2,q^{-n}}{q^{1-n}/k^2}{q}{\frac{q}{k^2x^2}}
=\frac{1}{(q^n k^2;q)_n} P_n^{}(x;k,1,1,1;q) = P_n^{\CvC}(x;k,1,1,1;q).
\end{align*}
We expect that there is an element $f(x,\xi) \in \SAW(k,1,1,1,q)$ such that the specialized $f(x,\xi_n)$ is equal to $P_n^{\CvC}(x;k,1,1,1;q)$ and the image $\tsp(f(x,\xi_n))$ is equal to $P_n^{A_1}(x;t,q)$.
\end{rmk}


\end{document}